\documentclass[11pt]{article}  %[12pt]{article}
\usepackage[utf8]{inputenc} %[latin1]{inputenc}
\usepackage[english]{babel}
\usepackage[T1]{fontenc}
\usepackage{lmodern}
\usepackage{graphicx, xcolor, mathrsfs}
\usepackage{amsfonts}
\usepackage{stmaryrd}
\usepackage{blkarray, bigstrut}
\usepackage{mathtools}
\usepackage[nice]{nicefrac}
\usepackage{amsmath,amsthm,amssymb}
\usepackage[mathlines]{lineno}
%\linenumbers
\usepackage{mathabx}
\usepackage[normalem]{ulem} % you can use \sout to strike out text
\usepackage{accents}

\usepackage{enumerate}
\usepackage{hyperref} %[colorlinks=true,citecolor=black,linkcolor=black,urlcolor=blue]{hyperref}

\definecolor{MyDarkblue}{rgb}{0,0.08,0.50}
\definecolor{Brickred}{rgb}{0.65,0.08,0}

\usepackage{hyperref,upref}
\hypersetup{
	colorlinks=true,       % false: boxed links; true: colored links
	linkcolor=MyDarkblue,          % color of internal links
	citecolor=Brickred,        % color of links to bibliography
	filecolor=red,      % color of file links
	urlcolor=cyan           % color of external links
}

\pdfstringdefDisableCommands{\def\eqref#1{(\ref{#1})}} %eqref inside section titles
\usepackage[top=.9in, bottom=.9in, left=.5in , right=.5in]{geometry}
\usepackage[skip=5pt]{caption}
\captionsetup{font=small}
\usepackage{xifthen} 
\usepackage{wrapfig}
\usepackage[numbers, square]{natbib}
\usepackage{changes}
\usepackage{comment}
\usepackage{float}
\usepackage{ifthen}
\mathtoolsset{showonlyrefs}
\usepackage{tikz}                      
\usetikzlibrary{arrows}
\usetikzlibrary{graphs,graphs.standard}
\usetikzlibrary{positioning,arrows.meta}
\usetikzlibrary{shapes.multipart}
\usetikzlibrary{automata}
\usepackage{adjustbox}

\usepackage{diagbox}

\setlength{\parskip}{.5\baselineskip} \setlength{\parindent}{0pt}

\newtheorem{thm}{Theorem}[section]

\newtheorem{open}[thm]{Open problem}

\newtheorem{lemma}[thm]{Lemma}
\newtheorem{clm}{Claim}[thm]
\newtheorem{prop}[thm]{Proposition}
\newtheorem{cor}[thm]{Corollary}
\theoremstyle{definition}

\newtheorem{definition}[thm]{Definition}
\newtheorem{assumption}[thm]{Assumption}
\newtheorem{remark}[thm]{Remark}

\def\Var{\mathop{\rm Var}\nolimits}

\newcommand{\R}{\mathbb R}
\newcommand{\N}{\mathbb N}
\newcommand{\E}[2][]{\ensuremath{\mathbb{E}_{#1}\left[#2 \right]}}
\newcommand{\Prob}[2][]{\ensuremath{\mathbb{P}_{#1} \left(#2 \right)}}

\newcommand{\eps}{\varepsilon}

\newcommand{\cL}{\mathcal L}

\newcommand{\cM}{\mathcal M}

\newcommand{\cT}{\mathcal{T}}

\newcommand{\ind}[1]{\mathbf 1_{#1}}
\newcommand{\dd}{\mathrm d}
\newcommand{\sss}{\scriptscriptstyle}
\DeclareMathOperator{\exponentialrv}{Exp}
\newcommand{\Exp}[1]{\exponentialrv\left( #1 \right)}
\DeclareMathOperator{\Path}{path}

\DeclareMathOperator{\Deg}{deg}
\newcommand{\outdeg}[1]{\ensuremath{\Deg^{+}(#1)}}

% Response to referee colours 

% Editor comments. Include your comment colours here.

%Commands Bas
\newcommand*{\be}{\begin{equation}}
	\newcommand*{\ee}{\end{equation}}
\newcommand*{\ba}{\begin{aligned}}
	\newcommand*{\ea}{\end{aligned}}
\renewcommand{\P}[1]{\mathbb P\left(#1\right)}

\newcommand{\e}{\mathrm e}
\newcommand{\cB}{\mathcal B}
\newcommand{\cO}{\mathcal O}
\newcommand{\cP}{\mathcal P}
\newcommand{\cU}{\mathcal U}
\newcommand{\wt}{\widetilde}

\DeclareMathOperator*{\argmin}{arg\,min}

\newcommand{\ensymboldremark}{\hfill$\blacktriangleleft$}

\newcommand{\invisible}[2]{%
	\ifthenelse{\isempty{#1}}
	{}% if #1 is empty
	{#2}% if #1 is not empty
}

\numberwithin{equation}{section}

%Packages Bas
\usepackage{enumitem}
\usepackage{bbm}

\newif\ifdraft
\drafttrue % or \draftfalse
\ifdraft
\else 
\fi

\title{On the structure of genealogical trees associated with explosive Crump-Mode-Jagers branching processes}

\author{T. Iyer\footnote{Weierstrass Institute for Applied Analysis and Stochastics, Mohrenstrasse 39, 10117 Berlin, Germany.} \, \& B. Lodewijks\footnote{University of Augsburg, Universit\"atsstra\ss e 14, 
		86159 Augsburg, Germany. }}
\date{November 27, 2023}

\begin{document}
	\maketitle
	\abstract{ We study the structure of genealogical trees associated with explosive Crump-Mode-Jagers branching processes (stopped at the explosion time), proving criteria for the associated tree to contain a node of infinite degree (a \emph{star}) or an infinite path. Next, we provide uniqueness criteria under which with probability $1$ there exists exactly one of a unique star or a unique infinite path. Under the latter uniqueness criteria we also provide an example where, with strictly positive probability less than $1$, there exists a unique star in the model. We thus illustrate that this probability is not restricted to being $0$ or $1$. Moreover, we provide structure theorems when there is a star, where we prove that certain trees appear as sub-trees in the tree infinitely often. We apply our results to a general discrete evolving tree model, named \emph{explosive recursive trees with fitness}. As a particular case, we study a family of \emph{super-linear preferential attachment models with fitness}. For these models, we derive phase transitions in the model parameters in three different examples, leading to either exactly one star with probability $1$ or one infinite path with probability $1$, with every node having finite degree. Furthermore, we highlight examples where sub-trees $T$ of \emph{arbitrary} size can appear infinitely often; behaviour that is markedly distinct from super-linear preferential attachment models studied in the literature so far.}
	\noindent  \bigskip
	\\
	{\bf Keywords:}  Explosive Crump-Mode-Jagers branching processes, super-linear preferential attachment, preferential attachment with fitness, random recursive trees, condensation. 
	\\\\
	{\bf AMS Subject Classification 2010:} 60J80, 90B15, 05C80. 
	
	\tableofcontents
	
	\section{Introduction}
	
	Given a population of an entity moving towards explosion, that is, the emergence of infinitely many individuals in finite time, what can be said about the genealogical tree associated with the population at the time of explosion? On the one hand, an infinite path in the tree may be interpreted as an infinite line of evolution, with infinitely many `variants' contributing to the explosion; on the other, a node of infinite degree (which we often call a \emph{star}) may be interpreted, informally, as the emergence of a `dominant variant'. This is the goal of the present investigation, where as a simplified model of an evolving population, we use \emph{Crump--Mode--Jagers} branching processes. 
	
	In a CMJ branching process (named after~\cite{crump-mode, jagers-origin}), an ancestral \emph{root} individual produces offspring according to a collection of points on the non-negative real line. Each individual `born' produces offspring according to an identically distributed collection of points, translated by their birth time (see Section~\ref{sec:notation-prelims} for a more formal description). One is generally interested in properties of the population as a function of time. Classical work from the 1970s and '80s related to this model generally deals with the \emph{Malthusian case}, which, informally, refers to the fact that the population grows exponentially in time. These include strong laws of large numbers for \emph{characteristics} associated with the process~\cite{nerman_81}, properties of birth times in the $k$th generation~\cite{kingman1975}, an $x\log{x}$ theorem~\cite{olofsson-x-log-x}, and numerous other results, for example~\cite{berndtsson-jagers, nerman-jagers-84, jagers-nerman89,jagers-89, jagers-nerman-96}; see also the classical books~\cite{arthreya-ney, jagers-book}. A number of more recent results are concerned with \emph{asymptotic fluctuations} associated with the process in the Malthusian case, see, for example, \cite{iksanov-kolesko-fluctuations, janson-cmj-fluctuations, iksanov-marynych-fluctuations, clt-cmj-kolesko}. Other results are motivated by applications of these processes, including M/G/1 queues~\cite{Grishechkin-92}, vaccination and epidemic modelling~\cite{ball-epidemics, ball-epidemic-2016, Levesque-21}, and numerous applications to random graphs, see Section~\ref{sec:rec-fit} below. 
	
	Far fewer results exist for CMJ branching processes when a Malthusian parameter does not exist. In a particular case of reinforced branching process, a \emph{condensation} phase transition can occur, where non-exponential growth occurs due to individuals having random weights that influence their offspring distribution. In this case, a `small' numbers of individuals with large weight produce larger and larger families, which in turn lead a rate of growth faster than exponential~\cite{dereich-mailler-morters}. In more extreme circumstances, individuals produce larger and larger families so quickly that the process explodes in finite time. General criteria for explosion have been provided in terms of the solution of a functional fixed point equation by Komj\'athy~\cite{Komjathy2016ExplosiveCB}, who also extended the necessary and sufficient criteria for explosion in branching random walks in~\cite{amini-et-al}. We refer the reader to \cite{Komjathy2016ExplosiveCB} for a more comprehensive overview of the literature related to explosion in branching processes. In~\cite{bertoin-local-expl}, the authors provide sufficient criteria for \emph{local explosion} in closely related growth-fragmentation processes. Meanwhile, more, sufficient criteria for explosion in CMJ processes are in preparation in~\cite{limiting-structure}. 
	
	\subsection{Random recursive trees with fitness} \label{sec:rec-fit}
	Aside from the applications outlined above, CMJ branching processes are often involved in the analysis of random graph models, more often, random trees. As far as the authors are aware, direct applications date back to Pittel~\cite{pittel}, in providing a new proof for the limiting behaviour of the heights of \emph{random recursive trees} and \emph{affine preferential attachment trees}, but the technique of using continuous-time embeddings to analyse discrete combinatorial processes is more classical, going back to works of Arthreya and Karlin~\cite{arthreya-karlin-split-times, arthreya-karlin-embedding-68}. 
	
	Later, works by, for example~\cite{rudas, bhamidi, holmgren-janson, Oliveira-spencer, pagerank} showed that CMJ branching processes can be applied to a large number of growing tree models. A natural framework of evolving trees, which corresponds to genealogical trees of CMJ branching processes and encompasses many existing models of recursive trees (which we refer to as random recursive trees with fitness~\cite{rec-trees-fit}), posits that nodes $v$ arrive one at a time, and are assigned a random i.i.d.\ \emph{weight} $W_{v}$ sampled from a measure $\mu$ on an arbitrary measure space $(S, \mathcal{S})$ (see Definition~\ref{def:wrt}). Newly arriving nodes then connect, with edges directed outwards from the target nodes, with probability proportional to a general, measurable fitness function $f: \mathbb{N}_{0} \times S \rightarrow [0,\infty)$ that incorporates information about the current out-degree of the target, and its weight. A natural quantity of interest in this model is the proportion of 
	nodes $\wt{N}_{k}$ (at the 
	$n$th time-step) having out-degree $k$. This model may be roughly classified according to the following conjectured phases~\cite{rec-trees-fit}:
	\begin{enumerate}
		\item The \emph{non-condensation phase:} There exists $\lambda > 0$ such that $\sum_{j=1}^{\infty} \mathbb{E} \left[\prod_{i=0}^{j-1} \frac{f(i,W)}{f(i,W) + \lambda}\right] = 1.$ In this case, if $p_{k}$ denotes the limit of $\wt{N}_{k}$, we have $\sum_{k=0}^{\infty} p_{k} = 1$. In other words, all of the mass of edges is distributed around nodes of \emph{microscopic} degrees. 
		\item\label{item:cond} The \emph{condensation phase} (see~\cite{borgs-chayes, dereich-mailler-morters, dereich-ortgiese, dereich-unfolding}):  We have $\sum_{j=1}^{\infty} \mathbb{E}\left[\prod_{i=0}^{j-1} \frac{f(i,W)}{f(i,W) + \lambda}\right] < 1$, for any $\lambda > 0$ such that the sum converges. In this case, $0 < \sum_{k=0}^{\infty} p_{k} < 1$, so that a positive fraction of `mass' is lost from the empirical measure to a sub-linear number of nodes of `large degrees'.  
		\item\label{item:expl} The \emph{extreme-condensation phase:} For any $\lambda > 0$ we have $\sum_{j=1}^{\infty} \mathbb{E}\left[\prod_{i=0}^{j-1} \frac{f(i,W)}{f(i,W) + \lambda}\right] = \infty.$ In this case, $\sum_{k=0}^{\infty} p_{k} = 0$, so that all of the `mass' of edges is concentrated in nodes of `large' degrees. 
	\end{enumerate}
	Part of the goal of this article is to investigate the behaviour of the third phase above, in the \emph{explosive} case that 
	\be 
	\sum_{i=0}^{\infty} \frac{1}{f(i,w)} < \infty,\qquad \text{for $\mu$-almost all $w \in S$.}
	\ee 
	Note that this implies that $\prod_{i=0}^\infty \frac{f(i,W)}{f(i,W)+\lambda}>0$ almost surely, so that the condition in Phase~\ref{item:expl} is satisfied. 
	
	In~\cite{Oliveira-spencer}, Oliveira and Spencer showed that, in the case $f(i,w) = i^{p}$, with $1 + \frac{1}{k} < p \leq  1 + \frac{1}{k-1}$ for some $k\in\N$ (equality only for $k\geq 2$), the infinite tree associated with this process is somehow extreme: it contains a single node of infinite degree, connected to infinitely many children with an associated sub-tree of size at most $k$, and only finitely many with an associated sub-tree of size $k+1$ or larger. This paper uses the fact that the associated CMJ branching process is \emph{explosive}, a technique also exploited in similar works related to `balls-in-bins' processes~\cite{exp-scaling}. Related work by Arthreya~\cite[Theorem~2.1]{arthreya-gen-weight-function} attempts to prove that, according to a certain summability condition, either every node in the infinite tree has finite degree, or with positive probability there exists a single node such that all but finitely many new nodes connect to this node. However, there is a mistake here, in that~\cite[Theorem~2.1b]{arthreya-gen-weight-function} should really state: the probability that there exists a single node such that all but finitely many new-coming nodes connect to this node is zero (indeed, note that~\cite[Corollary~2.2]{arthreya-gen-weight-function} directly contradicts~\cite{Oliveira-spencer}). Nevertheless, the associated summability condition and result in this paper is interesting, and motivates the question of whether there is a critical condition that guarantees the existence of a node of infinite degree in the infinite tree, or every node having finite degree, cf.\ Theorem~\ref{thm:star-path-rif}, below. 
	
	A related question is whether or not, in the associated \emph{recursive tree with fitness} model, the index associated with the node of maximal degree is fixed after some finite time, or changes infinitely often, that is, whether or not there is a \emph{persistent hub}. A unique node of infinite degree in the infinite tree associated with the model thus implies the existence of such a hub. In a slightly different model of evolving graphs, when $f(i,w) = g(i)$ with $g$ being a \emph{concave} sub-linear function, one of the results of Dereich and M\"orters~\cite{dereich-morters-persistence} shows that a persistent hub emerges if and only if $\sum_{i=1}^{\infty} g(i)^{-2} < \infty$. In the recursive tree model described above, Galashin~\cite{galashin2014existence} proved that, if $f(i,w) = g(i)$, with $g$ \emph{convex} and unbounded, a persistent hub always appears. This has been extended to a much wider range of functions $g$, independent of the weight $w$, by Banerjee and Bhamidi in~\cite{banerjee-bhamidi}. 
	
	When weights are added, however, in the sense that $f(i,w)$ may depend on $w$, a different picture emerges. Suppose that $w$ takes values in $[0, \infty)$.  In the case $f(i,w) = w (i+1)$, under a particular set of conditions leading to the \emph{condensation phase} (Item~\ref{item:cond} above), in~\cite{dereich-mailler-morters} the authors show that there is no persistent hub, and the size of the node of maximal degree grows sub-linearly in the size of the tree. In the case $f(i,w) = i+w$ or $f(i,w) = w$, when the weights $w$ are sampled according to certain classes of distributions, in~\cite{bas,Sen21} and~\cite{LodOrt20,Lod21}, respectively, the authors provide critical criteria depending on the parameters of the weight distribution, for the existence, or non-existence, of a persistent hub. A number of other particular models of so called \emph{preferential attachment with fitness} have been studied, see, for example, \cite{vdh-aging-mult-fitness-2017, jonathan-1, pref-att-neighbours}, and related works regarding local weak limits of preferential attachment type models~\cite{berger-noam-chayes-2014, lo2021weak, garavaglia2023universality}.
	
	\subsection{Overview of our contribution and structure}
	In this paper we provide general sufficient conditions for the genealogical tree associated with an explosive CMJ branching process to contain a node of infinite degree  or an infinite path at the explosion time (in Theorems~\ref{thm:star} and~\ref{thm:path}, respectively). When there is a node of infinite degree, we provide criteria for one to see a fixed tree as a sub-tree of a child of that node either infinitely often, or finitely often, in Theorem~\ref{thm:structure}. We also prove uniqueness criteria in Theorem~\ref{thm:uniqueness}, under which there almost surely exists a unique node of infinite degree or a unique infinite path. Under the conditions of the uniqueness theorem, we provide a counter-example in Theorem~\ref{thm:counter-example}, where the events that there is a node of infinite degree, or an infinite path, both have positive probability, less than $1$. Finally, in Theorems~\ref{thm:star-path-rif}, \ref{thm:sub-treecount}, and Corollary~\ref{cor:sumnufinmean} we apply our results to the recursive tree with fitness model and prove phase transitions in three particular models in Theorems~\ref{thrm:cmjexamples} and~\ref{thrm:sub-treecount}. We encourage the reader more interested in this discrete model to refer to these results first. 
	
	The question of whether the genealogical tree of an explosive CMJ branching process contains an infinite path or a node of infinite degree has not been investigated in this level of generality previously. Our techniques involve significant improvements of those of~\cite{Oliveira-spencer} (see also Sections~\ref{sec:techniques-discussion-1} and~\ref{sec:techniques-discussion-2}), and thus allow us to greatly extend the picture associated with the general recursive tree with fitness model. Intriguingly, our results show that when there is a unique node of infinite degree in the infinite tree associated with the model, in many particular cases there exist children of the node of infinite degree that have arbitrarily large, but finite, degree (or even an arbitrarily large, but finite, number of descendants). Previous comments in the literature seem to indicate that it was believed that when there is an infinite degree node, the degrees of all other nodes are bounded, see~\cite[Section~3]{banerjee-bhamidi}. Finally, we remark that the phase transitions related to the emergence of a node of infinite degree are reminiscent of a different notion of \emph{condensation} in conditioned Bienaym\'{e}-Galton-Watson trees~\cite{svante-condensation}.
	
	\subsubsection{Structure of the paper}
	The paper is structured in the following way. Below, in Section~\ref{sec:notation-prelims}, we introduce a formal description of the model and the notation we use in this paper. Section~\ref{sec:results} states the main results, which are most general and require certain assumptions on the inter-birth time distribution. Section~\ref{sec:examples} then discusses the particular example of exponentially distributed inter-birth times and how this relates to a family of discrete tree models coined \emph{recursive trees with fitness}. Here, we derive sufficient conditions such that the assumptions used for the main results are satisfied. Moreover, when considering certain sub-families of recursive trees with fitness, we prove more precise results in terms of phase diagrams for the existence of either unique infinite-degree nodes or unique infinite paths. As mentioned above, we encourage the reader more interested in results related to the discrete recursive tree model (which is also less abstract), to refer to the results of Section~\ref{sec:recursive} first, before reading the section below. 
	
	Aside for a few exceptions, Section~\ref{sec:proofmain} proves the main results of Section~\ref{sec:results}, Section~\ref{sec:applications} proves the most general results of Section~\ref{sec:examples}, and Section~\ref{sec:examplesproof} proves the particular examples of Section~\ref{sec:examples}. Finally, we consider a number of other models in~\hyperlink{sec:app}{Appendix}~\ref{sec:appadd} and~\ref{sec:app}, showing that the assumptions subject to which the main results hold are valid more broadly.
	
	\subsection{Notation and preliminaries} \label{sec:notation-prelims}
	
	In this paper, we consider properties of the genealogical trees associated with Crump-Mode-Jagers branching processes; a tree-valued stochastic process $(\mathscr{T}_{t})_{t \geq 0}$ which one may regard as the genealogical tree representing a population evolving over time. The goal then, is to define a state space of individuals, in this setting, the infinite \emph{Ulam-Harris} tree of potential individuals associated with a common ancestor, \emph{birth times} $\mathcal{B}(u)$ associated with individuals, which themselves are encoded by a random function $(X,W)$, and then define $\mathscr{T}_{t}$ as the set of individuals born up to time $t$. 
	Note that the notation we use is slightly different to the common notation regarding CMJ branching processes, see Remark~\ref{rem:notation-form} further below. 
	
	First, we generally use $\mathbb{N} := \{1, 2, \ldots\}$, $\mathbb{N}_{0} := \mathbb{N} \cup \{0\}$ and for $n \in \mathbb{N}$ we let $[n]:= \{1, \ldots, n\}$. We consider \emph{individuals} in the process as being labelled by elements of the infinite \emph{Ulam-Harris} tree $\cU_\infty : = \bigcup_{n \geq 0} \N^{n}$; where $\N^{0} := \{\varnothing\}$ contains a single element $\varnothing$ which we call  the \emph{root}. 
	We denote elements $u \in \cU_{\infty}$ as a tuple, so that, if $u = (u_{1}, \ldots, u_{k}) \in \N^{k}, k \geq 1$, we write $u = u_{1} \cdots u_{k}$. An individual $u = u_1u_2\cdots u_k$ is to be interpreted recursively as the $u_k$th child of the individual $u_1 \cdots u_{k-1}$; for example, $1, 2, \ldots$ represent the offspring of $\varnothing$. Suppose that $(\Omega, \Sigma, \mathbb{P})$ is a complete probability space and $(S, \mathcal{S})$ is a measure space. We also equip $\mathcal{U}_{\infty}$ with the sigma algebra generated by singleton sets. Then, we fix a random mappings $X: \Omega \times \cU_{\infty} \rightarrow [0, \infty]$, $W: \Omega \times \cU_{\infty} \rightarrow S$, and define $(X, W): \Omega \times \cU_{\infty} \rightarrow [0, \infty] \times S$, so that $u \mapsto (X(u), W_{u})$. 
	In general, for $u \in \cU_{\infty}$ and $j\in \N$, one interprets $W_{u}$ as a `weight' associated with $u$, and $X(uj)$ the waiting time between the birth of the $(j-1)$th and $j$th \emph{child} of $u$.
	
	We introduce some notation related to elements $u \in \cU_{\infty}$: we use $|\cdot|$ to measure the \emph{length} of a tuple $u$, so that, if $u = \varnothing$ we set $|u| = 0$, whilst if $u = u_{1} \cdots u_{k}$ then $|u| = k$. If, for some $x \in \cU_{\infty}$, we have $x = u v$, we say $u$ is a \emph{ancestor} of $x$. We introduce a notation to refer to ancestors: given $\ell \leq |u|$, we set $u_{|_\ell} := u_{1} \cdots u_{\ell}$.  It will be helpful to equip $\cU_{\infty}$ with the lexicographic total order $\leq_{L}$: given elements $u, v$ we say $u \leq_{L} v$ if either $u$ is a ancestor of $v$, or $u_{\ell} < v_{\ell}$ where $\ell = \min \left\{i \in \mathbb{N}: u_{i} \neq v_{i} \right\}$. We say a subset $T \subset \mathcal{U}_{\infty}$ is a \emph{tree} if, given that $u \in T$, we also have $u_{|_{\ell}} \in T$, for each $\ell \leq |u|$. Note that any such trees can be viewed as graphs in the natural way, connecting nodes to their children. 
	
	Now, we use the values of $X$ to associate \emph{birth times} $\mathcal{B}(u)$ to individuals $u \in \cU_{\infty}$. In particular, we define $\mathcal{B}: \Omega \times \cU_{\infty} \rightarrow [0, \infty]$ recursively as follows:  
	\[\mathcal{B}(\varnothing) : = 0 \quad \text{and for $u \in \cU_{\infty}, i \in \mathbb{N}$,} \quad \mathcal{B}(ui) := \mathcal{B}(u) + \sum_{j=1}^{i} X(uj).\]
	Consequentially, a value of $X(ui) = \infty$ indicates that the individual $u$ has stopped producing offspring, and does not produce $i$ children or more. 
	
	Finally, we set $\mathscr{T}_{t} = \{x \in \cU_\infty: \cB(x) \leq t\}$ and identify for each $t \geq 0$, $\mathscr{T}_{t}$ as the \emph{genealogical tree} of individuals with birth time at most $t$. Again we emphasise that one may think of this, intuitively, as the set of all individuals, originating from a common ancestor, that have been born by time $t$. More formally, we identify the process with $(\widetilde{\mathscr{T}}_{t})_{t \geq 0}$; a measurable mapping 
	$\Omega \times [0, \infty) \times \mathcal{U}_{\infty} \rightarrow [0, \infty]$. Then, if $u \in \mathscr{T}_{t}$, we set $\widetilde{\mathscr{T}}_{t}(u) = \mathcal{B}(u)$, and otherwise, set $\widetilde{\mathscr{T}}_{t}(u) = \infty$. In addition, we set $\mathcal{W}_{t} = \{(x, \mathcal{B}(x), W_{x}): x \in \mathscr{T}_{t}\}$, so that $(\mathcal{W}_{t})_{t \geq 0}$ also incorporates information about the random `weights' of individuals in the tree $\mathscr{T}_{t}$. We let $(\mathcal{F}_{t})_{t \geq 0}$ and $(\mathscr{W}_{t})_{t \geq 0}$ denote the filtrations generated by $(\widetilde{\mathscr{T}}_{t})_{t \geq 0}$ and $(\mathcal{W}_{t})_{t \geq 0}$, respectively; and by taking their completions if necessary, assume that both $\mathcal{F}_{t}$ and $\mathscr{W}_{t}$ are complete. By abuse of notation, we use the symbol $\mathscr{T}_{t}$ to refer to  $\widetilde{\mathscr{T}}_{t}$, the set $\mathscr{T}_{t}$, and the graph associated with $\mathscr{T}_{t}$ where the vertex set is $\mathscr{T}_{t}$ and edges connect elements to their children. For a given choice of $X, W$, we say $(\mathscr{T}_{t})_{t \geq 0}$ is the genealogical tree process associated with an $(X,W)$-\emph{Crump-Mode-Jagers} branching process; often, we refer to $(\mathscr{T}_{t})_{t \geq 0}$ directly as an $(X,W)$-\emph{Crump-Mode-Jagers} branching process, viewed as a stochastic process in $t$, adapted to its natural filtration $(\mathscr{F}_{t})_{t \geq 0}$.\footnote{Note that distinct functions $(X,W)$, $(X',W')$ may lead to the same tree, if, for example $X(ui) = X'(ui) = \infty$, but $X(u(i+1)) \neq X'(u(i+1))$, but this is only a formal technicality, which we can overcome by viewing $(X,W)$ as an appropriate \emph{equivalence class} of functions.} 
	
	For $u \in \cU_{\infty}$, we let $\mathcal{P}_{i}(u)$ denote the time, after the \emph{birth} of $u$, required for $u$ to produce $i$ offspring. That is,
	\be\label{eq:P}
	\mathcal{P}_{i}(u) := \sum_{j=1}^{i} X(uj) \quad \text{and} \quad \mathcal{P}(u) := \mathcal{P}_{\infty}(u) = \sum_{j=1}^{\infty} X(uj).
	\ee
	Note that, as a result of this definition, for any $u, v \in \cU_{\infty}$ with $u=u_1 \cdots u_{k}$ and $v= v_1 \cdots v_{\ell}$, if we set $uv_0 := u$ we have 
	\begin{equation} \label{eq:birth-time-identity}
		\mathcal{B}(uv) - \mathcal{B}(u) = \sum_{j=0}^{\ell-1} \left(\mathcal{P}_{v_{j+1}}(uv_{|_{j}})\right). 
	\end{equation}
	It will also be beneficial to extend the notation $\mathcal{P}$ to arbitrary trees $T$: for $T \subseteq \mathcal{U}_{\infty}$, we define 
	\be \label{eq:P-t}
	\mathcal{P}_{T}(u) := \inf\left\{t > 0: \mathcal{B}(uv) - \mathcal{B}(u) < t\text{ for all } v \in T \right\}. 
	\ee
	Thus, with the above notation $\mathcal{P}_{i}(u) = \mathcal{P}_{[i]}(u)$. We also set $\mathcal{P}_{T} := \mathcal{P}_{T}(\varnothing)$, $\cP_i=\cP_i(\varnothing)$, and $\cP:=\cP(\varnothing)$.
	
	We generally assume a dependence between the values $(\mathcal{P}_{i}(u))_{i \in \mathbb{N} \cup \{\infty\}}$ and $W_{u}$. However, for brevity of notation, we often do not explicitly indicate this dependence. We use the notation $\mathcal{P}_{i}$ and $\mathcal{P}$ to denote generic copies of random variables distributed like $\mathcal{P}_{i}(u)$, and $\mathcal{P}(u)$ respectively. 
	
	For each $u \in \cU_{\infty}$ it will be helpful to a have a map $\xi^{(u)}: \Omega \times [0, \infty] \rightarrow \mathbb{N}$ indicating the number of children $u$ has produced, more precisely, we define $\xi^{(u)}(t)$ such that 
	\[
	\xi^{(u)}(t) = \begin{cases}
		\sum_{i=1}^{\infty}\mathbf{1}_{\{ \mathcal{P}_i(u)\leq s\}} &  \text{if $t = \mathcal{B}(u) + s, s \in [0, \infty]$;}
		\\ 0 & \text{otherwise.}
	\end{cases}
	\]
	With regards to the process $(\mathscr{T}_{t})_{t \geq 0}$, we define the stopping times $(\tau_{k})_{k \in \mathbb{N}_{0}}$ such that \[\tau_{k} := \inf\{t \geq 0: |\mathscr{T}_{t}| \geq k\},\] 
	where we adopt the convention that the infimum of the empty set is $\infty$. 
	One readily verifies that $(|\mathscr{T}_{t}|)_{t \geq 0}$ is right-continuous, and thus $|\mathscr{T}_{\tau_{k}}| \geq k$. For each $k \in \mathbb{N}$ we define the tree $\mathcal{T}_{k}$ as the tree consisting of the first $k$ individuals in $\mathscr{T}_{\tau_{k}}$ ordered by birth time, breaking ties lexicographically. We call $\tau_{\infty} := \lim_{k \to \infty} \tau_{k}$ the \emph{explosion time} of the process. We also define the tree $\mathcal{T}_{\infty} := \bigcup_{k=1}^{\infty} \mathcal{T}_{k}$. Note that it may be the case that $\left| \mathcal{T}_{\infty} \right| < \infty$; in this case $\tau_{\infty} = \infty$ and
	\begin{equation} \label{eq:extinction}
		\mathcal{T}_{\infty} = \left\{x \in \mathcal{U}_{\infty}: \mathcal{B}(x) < \infty \right\},
	\end{equation}
	and the set on the right-hand side is finite. If $|\mathcal{T}_{\infty}| < \infty$, we say \emph{extinction} occurs, otherwise \emph{survival} occurs. For a non-negative real-valued random variable $X$ and $\lambda > 0$ we let $\mathcal{M}_{\lambda}(Z), \mathcal{L}_{\lambda}(Z)$ denote the associated moment generating function and Laplace transform, respectively, i.e.
	\[
	\mathcal{M}_{\lambda}(Z) := \E{\e^{\lambda Z}} \quad \text{and} \quad \mathcal{L}_{\lambda}(Z) := \E{\e^{-\lambda Z}}.
	\]
	Moreover, if the random variable $Z$ has, additionally, some dependence on a random variable $W \in S$, we write 
	\[
	\cM_\lambda(Z;W):=\E{\e^{\lambda Z}\Big| W}\quad\text{and} \quad \mathcal{L}_{\lambda}(Z; W) := \E{\e^{-\lambda Z}\Big| W}.
	\]
	In addition, for real valued random variables $Z_1$ and $Z_2$, we say $Z_1 \leq_{S} Z_{2}$, if, for each $a \in \mathbb{R}$
	\[
	\Prob{Z_1 > a} \leq \Prob{Z_2 > a}.  
	\]
	Finally, for $r \geq 0$ we use $\Exp{r}$ to denote the exponential distribution with parameter $r$.
	
	\begin{remark} \label{rem:notation-form}
		With the more commonly used notation for CMJ branching processes, one assigns a point process (denoted $\xi^{(u)}$) to each $u \in \cU_{\infty}$, and refers to the points $\sigma^{(u)}_{1} \leq  \sigma^{(u)}_{2}, \ldots$ associated with this point process (in the notation used here $\mathcal{B}(u1), \mathcal{B}(u2), \ldots$). We do not use this framework here, because, this requires one to be able to write the measure $\xi^{(u)} = \sum_{i=1}^{\infty} \delta_{\sigma_{i}}$, which requires one to impose $\sigma$-finiteness assumptions on the point process (see, for example, \cite[Corollary~6.5]{last-penrose}). This $\sigma$-finiteness is implied by the classical Malthusian condition, but, in this general setting, we believe it is easier to have a framework where one can directly refer to the points $\mathcal{B}(u1), \mathcal{B}(u2), \ldots$ 
	\end{remark}
	
	\section{Statements of main results}\label{sec:results}
	In this paper, we are interested in properties of the infinite tree $\mathcal{T}_{\infty}$, in particular the question of whether or not this tree contains an infinite path or a node of infinite degree. This section deals with results in their most general form: Section~\ref{sec:globass} states some global assumptions imposed throughout the paper, Section~\ref{sec:star} deals with criteria for a node of infinite degree (or star), Section~\ref{sec:structure-} deals with criteria for an infinite path, and structural properties of the tree $\mathcal{T}_{\infty}$ when there is a star, and Section~\ref{sec:unique} deals with uniqueness properties, providing criteria for a the appearance of a unique star or unique infinite path (but not both) to appear almost surely. In Theorem~\ref{thm:counter-example} we also show that in the regime where there is, almost surely, exactly one of a unique star or infinite path, either may appear with positive probability. Finally, we provide an overview of the proof techniques used, and the relation to existing literature in Section~\ref{sec:techniques-discussion-1}. 
	
	\subsection{Global assumptions}\label{sec:globass}
	In general in this paper, we assume that the values of $(X(u j))_{j \in \mathbb{N}}$ depend on $W_{u}$. 
	We also assume that
	the sequences of random variables
	\begin{equation} \label{eq:cmj-assumption}
		((X(uj))_{j \in \mathbb{N}}, W_{u}) \quad \text{are i.i.d.\ for different $u \in \cU_{\infty}$};
	\end{equation}
	although we expect that some of our techniques may carry over to a more general setting. 
	For a given $w \in \mathcal{S}$, we let $(X_{w}(uj))_{j \in \mathbb{N}}$ denote a sequence $(X(uj))_{j \in \mathbb{N}}$, conditionally on the weight $W_{u} = w$. Another common assumption we use throughout is the following: for any given $w \in S$ and any $u\in\cU_\infty$, the sequence of random variables 
	\be\label{eq:starindep}
	(X_w(ui))_{i\in\N} \quad \text{is mutually independent.}
	\ee
	The existence of an explosive $(X,W)$-CMJ process satisfying~\eqref{eq:cmj-assumption} is a straightforward application of (for example) the Kolmogorov extension theorem (indeed, this is the classical definition of a CMJ process). 
	
	We also generally assume that the event $\left\{|\mathcal{T}_{\infty}| = \infty \right\}$ has positive probability, and 
	\begin{equation} \label{eq:expl-almost-surely} 
		\Prob{\tau_{\infty} < \infty \, \big | \, |\mathcal{T}_{\infty}| = \infty} = 1.
	\end{equation}
	That is, the process is almost surely explosive, when conditioned on survival. In general, we say an event $A$ occurs \emph{almost surely on survival} if $\Prob{A \, \big | \, |\mathcal{T}_{\infty}| = \infty} = 1$. In all statements in this paper referring to an ``explosive $(X,W)$-CMJ process'', we assume it satisfies~\eqref{eq:cmj-assumption} and~\eqref{eq:expl-almost-surely}. 
	
	In this paper, we also rely on the following well-known fact in graph theory.
	
	\begin{lemma}[K{\H o}nig's Lemma]\label{lemma:konig}
		Any infinite tree contains a node of infinite degree or an infinite path.
	\end{lemma}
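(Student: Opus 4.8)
The plan is to prove the equivalent statement that an infinite tree $T \subseteq \mathcal{U}_{\infty}$ in which every node has finite degree must contain an infinite path; the alternative, that some node has infinite degree, is immediate. The argument is an explicit greedy construction of the path, proceeding from the root. For $u \in T$ write $T_{u} := \{ v \in \mathcal{U}_{\infty} : uv \in T \}$ for the subtree of $T$ hanging from $u$: by the ancestor-closure property defining trees, each $T_{u}$ is again a tree, its length-one elements are in bijection with the children of $u$ in $T$, and $T_{\varnothing} = T$.

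First I would isolate the combinatorial step driving the induction: for any $u \in T$ one has the decomposition $T_{u} = \{\varnothing\} \cup \bigcup_{j} \{ j w : w \in T_{uj} \}$, where the union runs over the children $uj$ of $u$ in $T$. Since $u$ has finite degree this is a finite union, so if $T_{u}$ is infinite then at least one subtree $T_{uj}$ must be infinite, by pigeonhole. Then I would run the recursion: set $u^{(0)} := \varnothing$, which has $T_{u^{(0)}} = T$ infinite; and given $u^{(n)} \in T$ with $T_{u^{(n)}}$ infinite, use the previous step to choose a child $u^{(n+1)} := u^{(n)} j$ with $T_{u^{(n+1)}}$ still infinite. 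By construction $(u^{(n)})_{n \geq 0}$ is an infinite path in $T$, which proves the lemma.

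The statement is classical, so I do not expect any real obstacle. The one point worth a remark is set-theoretic: the recursion makes countably many selections, implicitly using dependent choice; this can be sidestepped by always choosing the least $j \in \N$ for which $T_{uj}$ is infinite, making the construction canonical. The only other thing requiring care is bookkeeping the identification between nodes of $T$ below $u$, elements of $T_{u}$, and concatenation in $\mathcal{U}_{\infty}$, so that the object produced genuinely is an infinite path $\varnothing, u^{(1)}, u^{(2)}, \ldots$ with each $u^{(n+1)}$ a child of $u^{(n)}$.
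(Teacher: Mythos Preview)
Your proof is correct and is the standard argument for K{\H o}nig's Lemma. The paper itself does not prove this statement at all: it is merely stated as ``a well-known fact in graph theory'' and used as a black box, so there is nothing to compare against.
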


	\subsection{Sufficient criteria for a star}\label{sec:star}
	In this subsection, we provide sufficient criteria for the infinite tree $\mathcal{T}_{\infty}$ to contain an infinite star. Our main assumptions are as follows.
	\begin{assumption} \label{ass:star}
		We have the following conditions.
		\begin{enumerate}
			\item\label{item:stardom} There exist non-negative real-valued random variables $(Y_{n})_{n \in \mathbb{N}_{0}}$ with finite mean such that, for any $w \in S$, 
			\begin{equation} \label{eq:stochastic-bound}
				\sum_{i=n+1}^{\infty} X_{w}(i) \leq_{S} Y_{n}. 
			\end{equation}
			\item\label{item:starlimsup} If we let $\mu_{n} := \E{Y_{n}}$, then we also have, for some $c \in (0, \infty)$,
			\begin{equation} \label{eq:limsup-mgf}
				\limsup_{n \to \infty} \mathcal{M}_{c\mu^{-1}_{n}}(Y_n) < \infty.
			\end{equation}
			Moreover, we assume that $(\mu_{n})_{n \in \mathbb{N}_{0}}$ is non-increasing in $n$, with $\lim_{n \to \infty} \mu_{n} = 0$.
			\item\label{item:starindep} For each $n \in \mathbb{N}$ and any given $w \in S$, the random variables $(X_{w}(i))_{i \in \mathbb{N}}$ are mutually independent.
			\item\label{item:starnonzero} For each $n \in \mathbb{N}$, 
			\begin{align}
				\sum_{i=n+1}^{\infty} X(i) &> 0 \quad \text{almost surely},  \label{eq:non-instantaneous-sideways}
				\shortintertext{and additionally, we have}
				\E{\xi(0)} &= \E{\sup\{k: X(k) = 0\}} < 1. \label{eq:non-instantaneous-explosion}
			\end{align} 
			\item\label{item:starlaplace} With $c$ as appearing in Equation~\eqref{eq:limsup-mgf},
			\be\label{eq:laplacesum}
			\sum_{i=1}^{\infty}\E{\mathcal{L}_{c\mu^{-1}_{i}}(\mathcal{P}_{i}(\varnothing); W)} < \infty.  
			\ee
		\end{enumerate}
	\end{assumption}
	
	\begin{remark}
		Note that under Condition~\ref{item:stardom} of Assumption~\ref{ass:star}, we have $\Prob{\left|\mathcal{T}_{\infty} \right| = \infty} =1$. 
		{\Large\ensymboldremark}
	\end{remark}
	
	\begin{remark} \label{rem:assumpt-motiv}
		In Assumption~\ref{ass:star} we can consider Conditions~\ref{item:stardom} and~\ref{item:starindep} as a \emph{uniform explosivity} condition: it implies that, for any $u \in \mathcal{U}_{\infty}$ and any $\eps > 0$, 
		\[
		\lim_{L \to \infty} \Prob{\sum_{j=L+1}^{\infty} X_{j}(u) \geq \eps \, \bigg | \, X_{1}(u), \ldots, X_{L}(u)} = 0,
		\] 
		with the convergence uniform in $X_{1}(u), \ldots, X_{L}(u)$; a fact that is crucial for Lemma~\ref{lem:expl-min-exp-time}, and hence the proof of Theorem~\ref{thm:star}, to hold. Condition~\ref{item:starnonzero} is there as a technical assumption, used, for example, in Proposition~\ref{prop:finite-l-moderate} and Lemma~\ref{lemma:tree-ident}. It ensures that  $\tau_{k} < \tau_{\infty}$ for each $k \in \mathbb{N}$. Indeed, if, for example, $\E{\xi(0)} > 1$, the tree consisting of all the individuals born \emph{instantaneously} at time $0$ is the genealogical tree of a supercritical Bienaym\'{e}-Galton-Watson branching process. Hence, with positive probability  this tree is infinitely large, and thus there may be no node of infinite degree in this infinite tree. Condition~\ref{item:starlimsup} is used to prove the Chernoff type concentration bound in Lemma~\ref{lem:moment-prob-bounds} which, when combined with the summability condition in Condition~\ref{item:starlaplace}, leads to the proof of the crucial Proposition~\ref{prob:local-explosions}. {\Large\ensymboldremark}
	\end{remark}
	
	\noindent The conditions of Assumption~\ref{ass:star} allow us to formulate the following theorem.
	
	\begin{thm}[Infinite star] \label{thm:star}
		Under Assumption~\ref{ass:star}, almost surely the infinite tree $\cT_{\infty}$ contains a node of infinite degree (i.e.\ an infinite \emph{star}). 
	\end{thm}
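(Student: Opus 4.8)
The plan is to show that, almost surely, there exists some individual $u \in \mathcal{T}_\infty$ such that infinitely many of its children $u1, u2, \ldots$ lie in $\mathcal{T}_\infty$, i.e.\ $\mathcal{B}(uj) < \tau_\infty$ for infinitely many $j$. Since $\mathcal{B}(uj) = \mathcal{B}(u) + \mathcal{P}_j(u)$ and $\mathcal{P}_j(u) \uparrow \mathcal{P}(u)$, this amounts to finding $u$ with $\mathcal{B}(u) + \mathcal{P}(u) \le \tau_\infty$, i.e.\ $u$ explodes (as a sub-CMJ rooted at $u$) before the global explosion time is reached. Equivalently, writing $\mathcal{P}(u)$ for the time needed for $u$ to produce all its infinitely many children, we want $\mathcal{B}(u) + \mathcal{P}(u) \le \tau_\infty$. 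The heuristic driving the whole argument is that Condition~\ref{item:stardom} together with Condition~\ref{item:starindep} forces $\mathcal{P}(u)$ to be \emph{very} small with reasonable probability — indeed $\mathbb{E}[\sum_{i=n+1}^\infty X_w(i)] \le \mu_n \to 0$ uniformly in $w$ — so among the first $k$ individuals born, one expects some individual whose remaining explosion time $\mathcal{P}(u)$ is tiny relative to how much time is left until $\tau_\infty$.

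The first step is to quantify how fast $\mathcal{P}_i(u)$ converges to $\mathcal{P}(u)$: I would use Condition~\ref{item:starlimsup} (the uniform moment generating function bound~\eqref{eq:limsup-mgf}) together with a Chernoff/Markov bound to show that the tail $\sum_{i=n+1}^\infty X_w(i)$, which is stochastically dominated by $Y_n$ with mean $\mu_n$, satisfies a concentration estimate of the form $\mathbb{P}(\sum_{i>n} X_w(i) \ge t) \le C e^{-c t/\mu_n}$ uniformly in $w$ — this is the content of the referenced Lemma~\ref{lem:moment-prob-bounds} / Lemma~\ref{lem:expl-min-exp-time}. Combined with Condition~\ref{item:starlaplace}, the summability $\sum_i \mathbb{E}[\mathcal{L}_{c\mu_i^{-1}}(\mathcal{P}_i(\varnothing); W)] < \infty$ controls, via a first-moment/Borel--Cantelli argument, the probability that the $i$-th individual to be born fails to have a small enough $\mathcal{P}$; this should be Proposition~\ref{prob:local-explosions}. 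Roughly: $\mathcal{L}_{c\mu_i^{-1}}(\mathcal{P}_i(\varnothing); W) = \mathbb{E}[e^{-c\mu_i^{-1}\mathcal{P}_i(\varnothing)}\mid W]$ measures the chance that $\mathcal{P}_i$ is of order $\mu_i$ or smaller, and the tail bound from~\eqref{eq:limsup-mgf} then shows the full $\mathcal{P} = \mathcal{P}_i + (\text{remainder})$ is also of that order with comparable probability.

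The second step is to leverage this against the global explosion time. Conditionally on survival, $\tau_\infty < \infty$ by~\eqref{eq:expl-almost-surely}, and $\tau_k \uparrow \tau_\infty$ strictly (here Condition~\ref{item:starnonzero} is essential: $\mathbb{E}[\xi(0)] < 1$ rules out an instantaneous supercritical Bienaymé--Galton--Watson explosion, so each $\tau_k < \tau_\infty$, and~\eqref{eq:non-instantaneous-sideways} ensures individuals do not produce all offspring instantly). So for the $k$-th born individual $v$, with high probability $\tau_\infty - \mathcal{B}(v) = \tau_\infty - \tau_k$ is positive; meanwhile $\mathcal{P}(v)$ has, by Step 1, probability bounded below (uniformly, using i.i.d.-ness~\eqref{eq:cmj-assumption} and the independence~\eqref{eq:starindep}) of being smaller than any fixed $\varepsilon$. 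Since there are infinitely many individuals $v$, and the events $\{\mathcal{P}(v) \text{ small}\}$ are "spread out" enough, a conditional Borel--Cantelli / second-moment argument should give that infinitely often we find $v$ with $\mathcal{B}(v) + \mathcal{P}(v) \le \tau_\infty$ — in fact it suffices to find one such $v$. The key technical device is the uniform decay in Remark~\ref{rem:assumpt-motiv}: $\mathbb{P}(\sum_{j>L}X_j(u)\ge\varepsilon \mid X_1(u),\ldots,X_L(u)) \to 0$ uniformly, which lets one couple the "remaining explosion time of $v$" to an independent small quantity regardless of the history $\mathscr{F}_{\tau_k}$.

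The main obstacle I anticipate is the dependence structure: the candidate individuals $v = \mathcal{T}_k \setminus \mathcal{T}_{k-1}$ and their explosion times are not independent of each other nor of $\tau_\infty$, and $\tau_\infty$ itself is built out of all the $\mathcal{P}$'s. The resolution is to work conditionally on $\mathscr{F}_{\tau_k}$ (or $\mathscr{W}_{\tau_k}$): given the tree up to its $k$-th vertex, the future offspring times of $v$ are fresh i.i.d.\ copies by~\eqref{eq:cmj-assumption}, and the uniform explosivity lets one bound the relevant conditional probabilities independently of the (complicated) conditioning. One then sums/Borel--Cantellis these conditional estimates, using~\eqref{eq:laplacesum} to guarantee convergence of the relevant series and hence that the "bad" event (no individual explodes before $\tau_\infty$) has probability zero on survival. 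Threading the quantifiers — choosing $\varepsilon$, then the index $n = n(\varepsilon)$ in the tail bound, then the window of indices $i$ to which Borel--Cantelli is applied — carefully enough that everything is uniform, is the delicate bookkeeping that the formal proof will need to handle.
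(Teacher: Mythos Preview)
Your proposal has a genuine conceptual gap: the argument is circular. You correctly identify the goal as finding $u^*$ with $\mathcal{B}(u^*)+\mathcal{P}(u^*)\le\tau_\infty$, but your plan to locate such a $u^*$ by a Borel--Cantelli argument (``among infinitely many individuals $v$, each has positive probability that $\mathcal{P}(v)$ is small'') does not close. The inequality you need is $\mathcal{P}(v_k)\le\tau_\infty-\tau_k$ for some $k$, and the right-hand side shrinks to $0$; how fast it shrinks depends on the entire future of the process, and in particular on whether the explosion is caused by a star or by an infinite path. If the explosion were driven by an infinite path, there would be no reason for any individual $\mathcal{P}(v_k)$ to beat the gap $\tau_\infty-\tau_k$. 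So you cannot compare $\mathcal{P}(v_k)$ to a quantity that already encodes the answer you seek.

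You also misread Condition~\ref{item:starlaplace}. The summability $\sum_i\mathbb{E}[\mathcal{L}_{c\mu_i^{-1}}(\mathcal{P}_i;W)]<\infty$ is not a \emph{lower} bound saying ``$\mathcal{P}$ is often small''; it is an \emph{upper} bound saying ``it is rare for the first $i$ births of a node to take time of order $\mu_i$''. The paper uses it (via Lemma~\ref{lem:moment-prob-bounds} and Lemma~\ref{lem:cons-bound-gen}) in a first-moment argument to prove that the set
\[
S=\{u\in\cU_\infty:\mathcal{B}(u)\le\tau_\infty,\ u\text{ explodes before all its ancestors}\}
\]
is almost surely \emph{finite} (Proposition~\ref{prob:local-explosions} plus Proposition~\ref{prop:finite-l-moderate}). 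Separately, Lemma~\ref{lem:expl-min-exp-time} shows $\tau_\infty=\inf_{u}\{\mathcal{B}(u)+\mathcal{P}(u)\}$. Since any minimiser of $\mathcal{B}(u)+\mathcal{P}(u)$ must explode before each of its ancestors, the infimum is really over the finite set $S$ and is therefore attained by some $u^*$; that $u^*$ has infinite degree in $\mathcal{T}_\infty$. The key idea you are missing is this reduction to a finite candidate set---it is what breaks the circularity.
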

	
	\noindent The proof of Theorem~\ref{thm:star} appears in Section~\ref{sec:starproof}.
	
	\subsection{Sufficient criteria for an infinite path and structural results in the star regime} \label{sec:structure-}
	In this subsection, we provide sufficient criteria for $\mathcal{T}_{\infty}$ to contain an infinite path and whether or not $\mathcal{T}_{\infty}$ contains infinitely many copies of a fixed tree $T$. We first state the following assumption. 
	
	\begin{assumption} \label{ass:path}
		There exists a collection of numbers $\left\{\nu^{w}_{n} \in [0, \infty): w \in S, n \in \mathbb{N}\right\}$, such that for any $w \in S$,
		\begin{align} 
			\sum_{i=1}^{\infty} \Prob{\mathcal{P} < \nu^{w}_{i}} &= \infty,\label{eq:div-condition}
			\shortintertext{and}
			\hspace{-1cm}\liminf_{i \to \infty} \Prob{\sum_{j=i+1}^{\infty} X_{w}(j) \geq \nu^{w}_{i}} &> 0. \label{eq:smallest-expl-prob}
		\end{align}
	\end{assumption}
	
	\begin{remark} 
		Assumption~\ref{ass:path} intuitively states that, conditionally on the weight $w$ of the root $\varnothing$, infinitely many children $i$ of $\varnothing$ produce an infinite offspring within time $\nu_i^w$. On the other hand, the root takes at least $\nu_i^w$ amount of time after the birth of its $i$th child to produce an infinite offspring, with a probability that is bounded from below, uniformly in $i$. Hence, infinitely many children $i$ explode before their parent $\varnothing$. {\Large\ensymboldremark}
	\end{remark}
	
	\noindent We can then formulate the following theorem. 
	
	\begin{thm}[Infinite path] \label{thm:path}
		Under Assumption~\ref{ass:path}, the tree $\cT_{\infty}$ contains an infinite path almost surely on survival. 
	\end{thm}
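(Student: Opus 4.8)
The plan is to construct an infinite path in $\mathcal{T}_\infty$ by following the explosion downwards. Work throughout on the event of survival, so that $\tau_\infty < \infty$ by~\eqref{eq:expl-almost-surely}. For $u \in \cU_\infty$ write $E(u) := \inf\{t \geq 0 : \#\{x : ux \in \mathscr{T}_t\} = \infty\}$ for the explosion time of the subtree rooted at $u$, and $\rho(u) := E(u) - \mathcal{B}(u) \in [0, \infty]$ for its relative explosion time; thus $\rho(u) < \infty$ precisely when the subtree at $u$ is infinite, and $\rho(\varnothing) = \tau_\infty$. First I would record the recursive identity
\[
\rho(u) \;=\; \min\Bigl(\mathcal{P}(u),\ \inf_{i \geq 1}\bigl(\mathcal{P}_i(u) + \rho(ui)\bigr)\Bigr),
\]
which holds because the subtree at $u$ is infinite if and only if either $u$ has infinitely many children born in finite time or some child $ui$ has an infinite subtree. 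By~\eqref{eq:cmj-assumption}, conditionally on $W_u = w$ the subtree rooted at $u$ is a copy of the whole $(X,W)$-process; in particular $(\rho(ui))_{i\geq 1}$ are i.i.d.\ and — since $\rho(ui)$ is a function of $((X(u'j))_j, W_{u'})$ over the strict descendants $u'$ of $ui$ — the family $(\rho(ui))_{i\geq 1}$ is independent of $(X_w(uj))_{j\geq 1}$.

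The heart of the argument is a local step: for every $u$, conditionally on $W_u = w$, with probability at least $p(w) := \liminf_{i\to\infty}\Prob{\sum_{j>i}X_w(j) \geq \nu^w_i} > 0$ the explosion \emph{propagates to a child}, meaning that $\rho(u) < \mathcal{P}(u)$, the infimum above is attained at some index $I(u)$, $\rho(uI(u)) < \infty$, and $E(uI(u)) = E(u)$. To see this, note that $\mathcal{P}(ui) < \nu^w_i$ forces $\rho(ui) \leq \mathcal{P}(ui) < \nu^w_i$; since by~\eqref{eq:div-condition} $\sum_i \Prob{\mathcal{P} < \nu^w_i} = \infty$ and the events $\{\mathcal{P}(ui) < \nu^w_i\}$ are independent over $i$, the second Borel--Cantelli lemma makes $\mathcal{I}(u) := \{i : \rho(ui) < \nu^w_i\}$ a.s.\ infinite. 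Conditioning on $\mathcal{I}(u) = \{i_1 < i_2 < \cdots\}$ — which is independent of $(X_w(uj))_j$ — the events $\{\sum_{j>i_k}X_w(uj) \geq \nu^w_{i_k}\}$ have probability with $\liminf_k \geq p(w)$ by~\eqref{eq:smallest-expl-prob} (as $i_k\to\infty$), so at least one of them occurs with conditional probability $\geq p(w)$. On that event, for the corresponding $i_k$ we get $\mathcal{P}_{i_k}(u) + \rho(ui_k) < \mathcal{P}_{i_k}(u) + \nu^w_{i_k} \leq \mathcal{P}(u)$, whence the infimum is $< \mathcal{P}(u) \leq \infty$; a minimizing sequence of indices cannot escape to infinity (otherwise $\mathcal{P}_j(u) \to \mathcal{P}(u)$ would force $\rho(uj) \to \rho(u) - \mathcal{P}(u) < 0$, or $\to -\infty$ if $\mathcal{P}(u)=\infty$), so it is attained at some $I(u)$ with $\rho(u) = \mathcal{P}_{I(u)}(u) + \rho(uI(u))$, i.e.\ $E(uI(u)) = E(u)$.

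I would then build the path greedily: set $v_0 = \varnothing$ and, whenever the propagation event at $v_k$ occurs, put $v_{k+1} := v_k I(v_k)$. While the construction proceeds, $E(v_k) = E(v_0) = \tau_\infty$ for all $k$, and telescoping $\rho(v_j) = \mathcal{P}_{I(v_j)}(v_j) + \rho(v_{j+1})$ gives $\mathcal{B}(v_k) = \tau_\infty - \rho(v_k)$; as subtree explosions are non-instantaneous ($\rho(v_k) > 0$), each $v_k$ has $\mathcal{B}(v_k) < \tau_\infty$ and so belongs to $\mathcal{T}_\infty$ (on survival only finitely many individuals are born by any time $< \tau_\infty$, so every individual born strictly before $\tau_\infty$ lies in some $\mathcal{T}_m$), making $(v_k)_{k\geq 0}$ an infinite path in $\mathcal{T}_\infty$. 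It remains to show that, a.s.\ on survival, the construction never terminates. This is where the bulk of the work lies: the per-step bound above is not sufficient, since $p(W_{v_k})$ need not be uniformly positive along the path, and a single child can fail to carry the explosion with positive probability. The plan here is to exploit the self-similar branching structure rather than a single greedy descent — using that every explosive node has \emph{infinitely many} explosive children (the a.s.\ infinite set $\mathcal{I}$), the independence of disjoint subtrees, and the regeneration property from~\eqref{eq:cmj-assumption} and~\eqref{eq:expl-almost-surely} — and to show by a recursive argument, carefully separating the conditioning ``$E(ui)$ realises the infimum'' from the fresh-copy law of the subtree at $ui$, that with probability one on survival the explosion propagates through infinitely many generations.

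The main obstacle is precisely this last step: promoting ``propagation occurs with probability bounded below at each step'' to ``an infinite propagating path exists almost surely on survival''. The difficulty is compounded by the need to keep every birth time strictly below $\tau_\infty$, which is exactly why one must follow the explosion-\emph{carrying} child $I(u)$ — yielding the telescoping identity $\mathcal{B}(v_k) = \tau_\infty - \rho(v_k)$ — rather than an arbitrary explosive child of $v_k$, for which no comparable control of $\mathcal{B}$ is available. The remaining ingredients — the recursive identity for $\rho$, the two Borel--Cantelli applications, and the birth-time telescoping — are routine.
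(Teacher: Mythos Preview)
Your approach has a genuine gap at exactly the point you flag, but the gap is avoidable with an observation you missed, and once that observation is made the whole greedy-path construction becomes unnecessary.

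The key point is this: in your ``local step'' you only extract that \emph{at least one} of the events $\{\sum_{j>i_k}X_w(uj)\geq\nu^w_{i_k}\}$ occurs with probability $\geq p(w)$, because these events involve overlapping tails and are not independent. But under the standing assumption~\eqref{eq:starindep} the increments $(X_w(uj))_{j\geq 1}$ are mutually independent, so you can pass to a \emph{sparse} subsequence $(i_{\ell_n})_n$ of the Borel--Cantelli indices $\{i:\mathcal{P}(ui)<\nu^w_i\}$ chosen so that the \emph{disjoint-block} events
\[
\Bigl\{\sum_{j=i_{\ell_n}+1}^{i_{\ell_{n+1}}}X_w(uj)\;\geq\;\nu^w_{i_{\ell_n}}\Bigr\}
\]
are independent; each still has probability bounded below by (say) $p(w)/2$ for $n$ large, by approximating the full tail $\sum_{j>i_{\ell_n}}X_w(uj)$ by a sufficiently long partial sum before choosing $i_{\ell_{n+1}}$. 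A second application of Borel--Cantelli then shows that, conditionally on $W_u=w$, \emph{infinitely many} children $ui$ satisfy $\mathcal{B}(ui)+\mathcal{P}(ui)<\mathcal{B}(u)+\mathcal{P}(u)$, and this holds almost surely for \emph{every} $u\in\cU_\infty$ simultaneously.

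With this stronger local statement the global argument collapses: any node $u^*\in\mathcal{T}_\infty$ of infinite degree would satisfy $\mathcal{B}(u^*)+\mathcal{P}(u^*)=\tau_\infty$, but then some child $u^*j$ has $\mathcal{B}(u^*j)+\mathcal{P}(u^*j)<\tau_\infty$, contradicting the definition of $\tau_\infty$. So $\mathcal{T}_\infty$ has no infinite-degree node, and K{\H o}nig's lemma gives the infinite path directly. This is the paper's route, and it entirely sidesteps the ``promote positive-probability to almost-sure'' difficulty you identified --- as well as the secondary issue that your telescoping argument needs $\rho(v_k)>0$, which Assumption~\ref{ass:path} alone does not guarantee.
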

	
	\noindent The proof of Theorem~\ref{thm:path} appears in Section~\ref{sec:inf-path-structure-proof}. 
	
	Similar criteria to those related to the criteria for an infinite path allow us to also determine results related to the \emph{structure} of $\mathcal{T}_{\infty}$ in the sense that, when we know that $\cT_\infty$ contains an infinite star, certain sub-structures appear infinitely often; others only finitely often. We define $\mathcal{T}_{\infty}(\downarrow u) := \left\{v \in \mathcal{T}_{\infty}: v = uw, \; w \in \mathcal{U}_{\infty} \right\}$ as the sub-tree in $\cT_\infty$ rooted at $u$. For a fixed tree $T$ containing $\varnothing$ and $u \in \mathcal{U}_{\infty}$, we define $uT:= \left\{uv: v \in T\right\}$. We say such a tree $T$ is a sub-tree rooted at $u \in \mathcal{T}_{\infty}$, if, $uT \subseteq \mathcal{T}_{\infty}(\downarrow u)$. Recalling Equation~\eqref{eq:P-t} we then have the following set of assumptions.
	
	\begin{assumption} \label{ass:structure}
		Let $(\mathscr{T}_{t})_{t \geq 0}$ be an explosive $(X,W)$-CMJ process. For a given finite tree $T \subseteq \mathcal{U}_{\infty}$ containing $\varnothing$ we have the following conditions.
		\begin{enumerate}
			\item\label{item:treediv-cond} There exists a collection of numbers $\left\{\nu^{w}_{n} \in [0, \infty): w \in S, n \in \mathbb{N}_{0}\right\}$, such that for any $w \in S$,
			\begin{align} 
				\sum_{i=1}^{\infty} \Prob{\mathcal{P}_{T} < \nu^{w}_{i}} &= \infty,\label{eq:div-condition-sub-tree}
				\shortintertext{and}
				\hspace{-1cm}	\liminf_{i \to \infty} \Prob{\sum_{i=j+1}^{\infty} X_{w}(j) \geq \nu^{w}_{i}} &> 0.\label{eq:smallest-expl-prob-sub}
			\end{align}
			\item\label{item:treedomsum} For any $w \in S$ and with $(\widetilde{X}_{w}(i), i \in \mathbb{N}) \sim (X_{w}(i), i \in \mathbb{N})$, independent of the process $(\mathscr{T}_{t})_{t \geq 0}$, 
			\begin{equation} \label{eq:sum-cond-sub-tree}
				\sum_{i=1}^{\infty} \Prob{\mathcal{P}_{T} < \sum_{j = i+1}^{\infty} \wt X_{w}(j)} < \infty.
			\end{equation}
		\end{enumerate}
	\end{assumption}
	
	\noindent We can then formulate the following theorem. 
	
	\begin{thm}[Sub-tree count] \label{thm:structure}
		Let $(\mathscr{T}_{t})_{t \geq 0}$ be an explosive $(X,W)$-CMJ process. Then, for any finite tree $T \subseteq \mathcal{U}_{\infty}$ containing $\varnothing$:
		\begin{enumerate}
			\item \label{item:structure-1} If Condition~\ref{item:treediv-cond} of Assumption~\ref{ass:structure} is satisfied, almost surely,  if $u \in \mathcal{T}_{\infty}$ has infinite degree, $T$ appears infinitely often as a sub-tree rooted at a child of $u$. 
			\item \label{item:structure-2} If Condition~\ref{item:treedomsum} of Assumption~\ref{ass:structure} is satisfied, almost surely,  if $u \in \mathcal{T}_{\infty}$ has infinite degree, $T$ appears only finitely often as a sub-tree of a child of $u$. In particular, if Assumption~\ref{ass:star} is satisfied, almost surely, the tree $T$ appears only finitely often as a sub-tree of $\mathcal{T}_{\infty}$.
		\end{enumerate}
	\end{thm}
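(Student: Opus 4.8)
My plan is to derive both parts from a common structural reduction, after which part~\ref{item:structure-2} becomes a first-Borel--Cantelli argument and part~\ref{item:structure-1} a conditional second-Borel--Cantelli argument. \emph{Structural reduction.} I would first show that if $u \in \mathcal{T}_\infty$ has infinite degree then $\tau_\infty = \mathcal{B}(u) + \mathcal{P}(u)$: all children $uj$ then belong to $\mathcal{T}_\infty$, so $\mathcal{B}(uj) \uparrow \mathcal{B}(u)+\mathcal{P}(u) \le \tau_\infty$, while if any node not lying in a sub-tree rooted at some $uj$, or any proper descendant of some $uj$, exploded strictly before $\mathcal{B}(u)+\mathcal{P}(u)$, then only finitely many $uj$ would be born before $\tau_\infty$ and lie in $\mathcal{T}_\infty$, contradicting infinite degree. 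Writing $E_u := \{u\text{ has infinite degree in }\mathcal{T}_\infty\}$ and $r_j := \sum_{m>j}X(um)$, it follows that on $E_u$ the child $uj$ has ``budget'' $\tau_\infty - \mathcal{B}(uj) = r_j$, and — since $x \in \mathcal{T}_\infty$ iff $\mathcal{B}(x) < \tau_\infty$ up to the boundary case $\mathcal{B}(x) = \tau_\infty$ — the event that $T$ appears rooted at $uj$ inside $\mathcal{T}_\infty$ sits between $\{\mathcal{P}_T(uj) < r_j\}$ and $\{\mathcal{P}_T(uj) \le r_j\}$. Two further facts I would record: given $W_u = w$, the variable $\mathcal{P}_T(uj)$ depends only on the sub-tree rooted at $uj$, hence is independent of $r_j$ and has the law of a generic $\mathcal{P}_T$; and, by the branching property together with~\eqref{eq:cmj-assumption}, conditionally on $\mathcal{C} := \sigma(W_u, (X(um))_m)$ the events $F_j := \{\mathcal{P}_T(uj) < r_j\}$ are independent with $\mathbb{P}(F_j\mid\mathcal{C}) = \mathbb{P}(\mathcal{P}_T < r_j)$. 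By countability it suffices to fix $u$ throughout.

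\emph{Part~\ref{item:structure-2}.} On $E_u$, the event that $uj$ roots a copy of $T$ is contained in $F_j' := \{\mathcal{P}_T(uj) \le r_j\}$, and by the independence above $\mathbb{P}(F_j'\mid W_u = w) = \mathbb{P}(\mathcal{P}_T \le \sum_{m>j}\widetilde{X}_w(m))$ with $\widetilde{X}_w$ independent of $\mathcal{P}_T$; this is summable in $j$ by Condition~\ref{item:treedomsum}, so the conditional first Borel--Cantelli lemma gives finitely many such $j$, almost surely and in particular on $E_u$. For the final sentence, under Assumption~\ref{ass:star} a star $u$ exists almost surely (Theorem~\ref{thm:star}) and $\tau_\infty = \mathcal{B}(u)+\mathcal{P}(u)$, so every node of $\mathcal{T}_\infty$ is an ancestor of $u$, equals $u$, or lies in some $\mathcal{T}_\infty(\downarrow uj)$; there are finitely many $j$ with $r_j$ bounded away from $0$, and for those $\mathcal{T}_\infty(\downarrow uj)$ is finite (its sub-tree does not explode within $r_j$), contributing finitely many copies of $T$; for the remaining $j$, writing the number of copies of $T$ formed below $uj$ by time $t$ as $\sum_x \mathbf{1}\{\mathcal{B}(x)+\mathcal{P}_T(x)\le t\}$ and using independence of $\mathcal{B}(x)$ and $\mathcal{P}_T(x)$, the conditional expectation of this count is at most $\mathbb{E}[|\mathscr{T}_t|]\,\mathbb{P}(\mathcal{P}_T\le t)$, and since $\mathbb{E}[|\mathscr{T}_t|] \to \mathbb{E}[|\mathscr{T}_0|] < \infty$ as $t\downarrow 0$ by~\eqref{eq:non-instantaneous-explosion}, summing over $j$ and using Condition~\ref{item:treedomsum} again gives a finite total expectation, hence finitely many copies of $T$ in all of $\mathcal{T}_\infty$, almost surely.

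\emph{Part~\ref{item:structure-1}.} By the conditional second Borel--Cantelli lemma applied to the events $F_j$ (independent given $\mathcal{C}$), $\{F_j\ \text{i.o.}\} = \{\sum_j \mathbb{P}(\mathcal{P}_T < r_j) = \infty\}$ almost surely, and on $E_u$ we have $\{F_j\ \text{i.o.}\} \subseteq \{T\text{ appears i.o.\ rooted at a child of }u\}$. So the theorem reduces to: almost surely on $E_u$, $S := \sum_j \mathbb{P}(\mathcal{P}_T < r_j) = \infty$, where $W_u = w$ and $r_j \downarrow 0$. I expect this to be the main obstacle. I would argue: $\{S = \infty\}$ is a tail event of the independent family $(X_w(um))_m$, so $\mathbb{P}(S = \infty \mid W_u = w) \in \{0,1\}$; on the value $0$ I would show $\mathbb{P}(E_u \mid W_u = w) = 0$ too (so nothing is claimed), using that on $E_u$ no sub-tree rooted at a $uj$ explodes within its budget, which forces $\sum_j \mathbb{P}(\Xi < r_j) < \infty$ with $\Xi$ the explosion time of a fresh copy of the process — so it suffices to exclude simultaneous almost-sure finiteness of $\sum_j \mathbb{P}(\mathcal{P}_T < r_j)$ and $\sum_j \mathbb{P}(\Xi < r_j)$; and this is precisely what Condition~\ref{item:treediv-cond} prevents, since independence of $r_j$ from a generic $\mathcal{P}_T$ (resp.\ $\Xi$) and clause~\eqref{eq:smallest-expl-prob-sub} give $\mathbb{E}[\mathbb{P}(\mathcal{P}_T < r_j)\mid W_u = w] \ge \delta\,\mathbb{P}(\mathcal{P}_T < \nu^w_j)$ for large $j$, whence $\mathbb{E}[S\mid W_u = w] = \infty$ by~\eqref{eq:div-condition-sub-tree}, which one upgrades to $S = \infty$ almost surely once it is checked — via a concentration estimate for the nested sums $r_j = \sum_{m>j}X_w(um)$ around their means — that $S$ is essentially deterministic. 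The conceptual tension here, that conditioning on $E_u$ pushes the budgets $r_j$ to decay fast while Condition~\ref{item:treediv-cond} keeps them decaying slowly enough at scale $\nu^w$ for $T$ to reform infinitely often, is the heart of the argument.
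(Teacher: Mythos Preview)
Your first-Borel--Cantelli argument for Part~\ref{item:structure-2} (the claim about children of a fixed star) matches the paper's.

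The genuine gap is in Part~\ref{item:structure-1}. After conditioning on $\mathcal{C} = \sigma(W_u,(X(um))_m)$ you need $S := \sum_j \mathbb{P}(\mathcal{P}_T < r_j) = \infty$ almost surely (on $E_u$). You correctly derive $\mathbb{E}[S \mid W_u = w] = \infty$ from~\eqref{eq:div-condition-sub-tree} and~\eqref{eq:smallest-expl-prob-sub}, but the promised ``upgrade to $S = \infty$ a.s.\ via a concentration estimate for the nested sums $r_j$ around their means'' is not available: Condition~\ref{item:treediv-cond} gives only the one-sided lower-tail bound~\eqref{eq:smallest-expl-prob-sub} on $r_j$, no variance or concentration information whatsoever. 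Since each summand of $S$ is bounded by $1$, $\mathbb{E}[S] = \infty$ is perfectly compatible with $S < \infty$ a.s.; the Kolmogorov 0--1 law tells you $\mathbb{P}(S = \infty) \in \{0,1\}$ but not which. Your fallback --- arguing that $E_u$ forces $\sum_j \mathbb{P}(\Xi < r_j) < \infty$ and that this is incompatible with $S < \infty$ --- does not close the gap either: nothing in the assumptions rules out both sums being simultaneously finite for a realisation of $(r_j)$.

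The paper sidesteps this by using the \emph{deterministic} sequence $(\nu^w_j)$ as an intermediary and applying second Borel--Cantelli \emph{twice} (this is exactly the argument of Lemma~\ref{lemma:explodingchild}, which the proof of Theorem~\ref{thm:structure} invokes verbatim). First, conditionally on $W_u = w$, the $\mathcal{P}_T(uj)$ are i.i.d.\ and independent of $(X_w(um))_m$; by~\eqref{eq:div-condition-sub-tree} and second Borel--Cantelli, $\mathcal{P}_T(uj) < \nu^w_j$ for infinitely many $j$, almost surely. Second, along this random set of good indices, one passes to a further subsequence $(i_{\ell_n})$ spaced so that the blocks $\sum_{m = i_{\ell_n}+1}^{i_{\ell_{n+1}}} X_w(um)$ are disjoint and hence independent by~\eqref{eq:starindep}; by~\eqref{eq:smallest-expl-prob-sub} each such block exceeds $\nu^w_{i_{\ell_n}}$ with probability bounded below, so second Borel--Cantelli gives $r_{i_{\ell_n}} \ge \nu^w_{i_{\ell_n}} > \mathcal{P}_T(u\,i_{\ell_n})$ infinitely often. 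This two-stage decoupling through the deterministic thresholds $\nu^w$ is the missing idea; it requires no concentration of $r_j$ at all.

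A secondary issue: your argument for the final sentence of Part~\ref{item:structure-2} relies on $\mathbb{E}[|\mathscr{T}_t|] \to \mathbb{E}[|\mathscr{T}_0|] < \infty$ as $t \downarrow 0$, but for an explosive process $\mathbb{E}[|\mathscr{T}_t|] = \infty$ whenever $\mathbb{P}(\tau_\infty \le t) > 0$, which is typically the case for every $t > 0$. The paper instead appeals to the finiteness of the set of nodes that explode before all their ancestors (Equation~\eqref{eq:Sfin} in the proof of Theorem~\ref{thm:star}).
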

	
	\noindent The proof of Theorem~\ref{thm:structure} appears in Section~\ref{sec:inf-path-structure-proof}. 
	
	\subsection{Uniqueness conditions related to the existence of a star or an infinite path}\label{sec:unique}
	In many cases, we expect $\mathcal{T}_{\infty}$ to contain \emph{exactly one} node of infinite degree or \emph{exactly one} infinite path and also, often expect \emph{co-existence} of an infinite path and node of infinite degree to be impossible. This leads us to the following assumption.
	
	\begin{assumption} \label{ass:uniqueness}
		We have the following conditions.
		\begin{enumerate}
			\item\label{item:uniquenonzero} Condition~\ref{item:starnonzero} of Assumption~\ref{ass:star} is satisfied. 
			\item\label{item:uniquenoexplatom} The distribution of $\mathcal{P}(\varnothing)$ contains no atom on $[0, \infty)$.
			\item\label{item:uniquenobirthatom} For some $\eps > 0$ and for each $i \in \mathbb{N}$, the distribution of $\mathcal{B}(i)$ contains no atom on $[0, \eps)$.
		\end{enumerate}
	\end{assumption}
	
	\noindent We then have the following result. 
	
	\begin{thm}[Unique infinite star or path] \label{thm:uniqueness}
		Let $(\mathscr{T}_{t})_{t \geq 0}$ be an explosive $(X,W)$-CMJ process. Then:
		\begin{enumerate}
			\item If Conditions~\ref{item:uniquenonzero} and~\ref{item:uniquenoexplatom} of Assumption~\ref{ass:uniqueness} are satisfied, $\mathcal{T}_{\infty}$ contains at most $1$ node of infinite degree, almost surely. 
			\item If Conditions~\ref{item:uniquenonzero} and~\ref{item:uniquenobirthatom} of Assumption~\ref{ass:uniqueness} are satisfied, $\mathcal{T}_{\infty}$ contains at most $1$ infinite path, almost surely.
			\item\label{item:uniquepors} If all the conditions of Assumption~\ref{ass:uniqueness} are satisfied, almost surely, on survival, $\mathcal{T}_{\infty}$ contains exactly one of the following: a node of infinite degree, an infinite path. 
		\end{enumerate}
	\end{thm}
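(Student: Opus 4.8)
The plan is to prove the three parts in order, each by a "two competitors cannot tie" argument exploiting the no-atom hypotheses, and then combine them using K\H{o}nig's Lemma (Lemma~\ref{lemma:konig}). For part~(1), suppose for contradiction that $\mathcal{T}_\infty$ contains two distinct nodes $u \neq v$ of infinite degree. Since both have infinitely many descendants born before $\tau_\infty < \infty$ (on survival), each of $u$ and $v$ must "explode" in the sense that $\mathcal{B}(u) + \mathcal{P}(u) = \tau_\infty = \mathcal{B}(v) + \mathcal{P}(v)$; more carefully, the birth times of the children of $u$ accumulate at $\mathcal{B}(u)+\mathcal{P}(u)$, and for $\mathcal{T}_\infty$ to be the set of nodes with finite birth time while $|\mathcal{T}_\infty| = \infty$, one shows $\mathcal{B}(u)+\mathcal{P}(u) = \tau_\infty$ for every infinite-degree node. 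Now if $u$ is an ancestor of $v$ (or vice versa), then $\mathcal{P}(v) = 0$, contradicting Condition~\ref{item:starnonzero} (Equation~\eqref{eq:non-instantaneous-sideways} gives $\mathcal{P}(v) > 0$ a.s.). If $u,v$ are incomparable, write $x$ for their most recent common ancestor and $x i, x j$ ($i \neq j$) for the respective ancestors-or-equal just below $x$; then $\mathcal{B}(xi) + \mathcal{P}_{\text{rest}} = \mathcal{B}(xj) + \mathcal{P}_{\text{rest}}'$ forces an almost-sure equality between two independent random variables, each of which is (a shift of $\mathcal{B}(x)$ by) an independent copy of a sum of the form $\mathcal{P}_i(\cdot)$ plus a further independent $\mathcal{P}(\cdot)$-type term; by the i.i.d.\ assumption~\eqref{eq:cmj-assumption} and mutual independence~\eqref{eq:starindep}, the difference of these two quantities has a distribution that is the convolution of an independent copy of $\mathcal{P}(\varnothing)$ (from one side) with an independent atomless-complement — using Condition~\ref{item:uniquenoexplatom} (no atom of $\mathcal{P}(\varnothing)$ on $[0,\infty)$) this difference has no atom at $0$, so the tie has probability $0$. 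A union bound over the countably many pairs $(u,v)$ finishes part~(1).

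For part~(2), suppose $\mathcal{T}_\infty$ contains two distinct infinite paths. They share a common initial segment and then diverge at some node $x$, with the two paths passing through distinct children $xi$ and $xj$. An infinite path through $xi$ means infinitely many strict descendants of $xi$ along that ray have finite birth time, i.e.\ the birth times along the ray stay bounded by $\tau_\infty$; the same for $xj$. The key point is that for an infinite path to exist through $xi$ one needs, in particular, $\mathcal{B}(xi) < \tau_\infty$, and by the structure of the ray one gets a.s.\ that $\mathcal{B}(xi) = \liminf$ of birth times along competing rays — the cleanest route is to argue that the \emph{first} node along each divergent ray, $xi$ resp.\ $xj$, must be born strictly before $\tau_\infty$, and that the existence of \emph{two} infinite paths from $x$ would (via an argument analogous to part~(1), now comparing $\mathcal{B}(xi)$ and $\mathcal{B}(xj)$ against the common explosion time of everything below $x$) force $\mathcal{B}(xi) = \mathcal{B}(xj)$ for some $i \neq j$, and more generally produce an atom of $\mathcal{B}(k)$ near $0$ after recentering. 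Condition~\ref{item:uniquenobirthatom} — no atom of $\mathcal{B}(i)$ on $[0,\eps)$ — is precisely what rules this out; the recentering is needed because the relevant quantity is an increment $\mathcal{B}(xi) - \mathcal{B}(x)$ which, conditionally, is distributed as $\mathcal{B}(i)$, and two such increments being equal would put mass on $\{0\}$, contradicting atomlessness near $0$. Again a countable union bound closes part~(2).

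For part~(3), by Lemma~\ref{lemma:konig} the infinite tree $\mathcal{T}_\infty$ (which on survival is indeed infinite) contains a node of infinite degree \emph{or} an infinite path; parts~(1) and~(2) make each of these unique when it occurs, so it remains to show that \emph{co-existence} is a null event: $\mathcal{T}_\infty$ cannot contain both a node $u$ of infinite degree and an infinite path $\pi$. If $u$ lies on $\pi$, then $u$ has a child $u'$ on $\pi$ with an infinite line of descendants; but $u$ having infinite degree forces $\mathcal{B}(u) + \mathcal{P}(u) = \tau_\infty$, while $u'$ lying on an infinite path forces $\mathcal{B}(u') < \tau_\infty$ together with $\mathcal{B}(u') + \mathcal{P}(u') = \tau_\infty$ as well — so again we get two independent $\mathcal{P}$-type sums summing to the same value at two different places, and Condition~\ref{item:uniquenoexplatom} kills it. If $u$ is not on $\pi$, let $x$ be the meeting point: one of $u, \pi$ goes through child $xi$ and the other through $xj$, and we compare $\mathcal{B}(xi) + (\text{independent explosion time below } xi)$ with $\mathcal{B}(xj) + (\text{independent explosion time below } xj)$, both equal to $\tau_\infty$; by independence and the no-atom condition on $\mathcal{P}(\varnothing)$ (for the side carrying the infinite-degree node's sub-explosion) this is again a probability-zero tie. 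Summing over the countably many candidate pairs completes part~(3).

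The main obstacle I expect is \textbf{part~(2)}: unlike the infinite-star case, an infinite path is not pinned down by a single "explosion at $\tau_\infty$" event at one vertex — one has to argue carefully that two competing infinite rays force an \emph{exact} coincidence of two conditionally-$\mathcal{B}(i)$-distributed increments (or of an increment with the complement of a tail sum), and that the no-atom hypothesis is being applied to exactly the right random variable after the right recentering. Getting the conditioning bookkeeping right — i.e.\ that conditionally on the relevant past, the two competing quantities are genuinely independent with at least one factor atomless — is where the care is needed; everything else is a routine countable union bound over pairs of vertices in $\mathcal{U}_\infty$.
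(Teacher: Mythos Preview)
Your overall architecture --- a countable union bound over pairs, using a no-atom condition to show each tie has probability~$0$ --- matches the paper's, and your part~(1) is essentially the paper's argument. But parts~(2) and~(3) have a real gap: you are applying the no-atom idea to the \emph{wrong} random variables.

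For part~(2), two infinite rays diverging at $x$ through $xi$ and $xj$ do \emph{not} force $\mathcal{B}(xi)=\mathcal{B}(xj)$; there is no reason the two children should be born simultaneously. What is forced is that both rays ``reach infinity'' at the same time, namely $\tau_\infty$. The paper makes this precise by introducing, for each $u\in\mathcal{U}_\infty$, the random variable
\[
\tau_{\Path}(u):=\inf\Big\{t\geq 0:\ \exists\, v\in\mathbb{N}^\infty\ \text{such that}\ \textstyle\sum_{j=0}^{i-1}\mathcal{P}_{v_{j+1}}(uv_{|_j})\leq t\ \text{for all }i\Big\},
\]
the time after $\mathcal{B}(u)$ until an infinite path through $u$ appears. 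Two infinite paths in $\mathcal{T}_\infty$ then force $\mathcal{B}(u)+\tau_{\Path}(u)=\mathcal{B}(v)+\tau_{\Path}(v)=\tau_\infty$ for suitable incomparable $u,v$, and the tie has probability~$0$ provided $\tau_{\Path}(\varnothing)$ has no atom on $[0,\infty)$. That last fact is the missing lemma (Lemma~\ref{lem:no-atom} in the paper), and \emph{this} is where Condition~\ref{item:uniquenobirthatom} is actually used: one shows by a bootstrapping argument that if $\tau_{\Path}(\varnothing)$ had a smallest atom near some level $a$, then decomposing through the first generation would force the set $\{x\in[0,\eps):\Prob{\tau_{\Path}=a-x}>0\}$ to have positive $\mathcal{B}(i)$-measure, contradicting that the set of atoms is countable and $\mathcal{B}(i)$ has no atoms on $[0,\eps)$.

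Your part~(3) inherits the same problem: the assertion ``$u'$ lying on an infinite path forces $\mathcal{B}(u')+\mathcal{P}(u')=\tau_\infty$'' is false --- a node on an infinite path need not itself have infinitely many children, so $\mathcal{P}(u')$ is irrelevant. The correct statement is $\mathcal{B}(u')+\tau_{\Path}(u')=\tau_\infty$, and then co-existence of a star at $u$ and a path is ruled out by comparing $\mathcal{B}(u)+\mathcal{P}(u)$ with $\mathcal{B}(v)+\tau_{\Path}(v)$ for some $v$ with $|v|>|u|$ (so that $\tau_{\Path}(v)$ is independent of everything involving $u$), and invoking the atomlessness of $\tau_{\Path}(\varnothing)$ once more. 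In short: define $\tau_{\Path}$, prove it is atomless (this is the nontrivial step), and then your union-bound strategy goes through verbatim for all three parts.
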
   
	
	\noindent The proof of Theorem~\ref{thm:uniqueness} appears in Section~\ref{sec:uniqueproof}. 
	
	\begin{remark}
		Though assumed to hold throughout, Theorem~\ref{thm:uniqueness} can be proved without assuming~\eqref{eq:starindep}.{\Large\ensymboldremark}
	\end{remark}
	
	\begin{remark}
		A case when the tree $\mathcal{T}_{\infty}$ has more than one infinite path with positive probability is when \emph{time $0$ explosion} can occur, i.e.\ when $\E{\xi(0)} > 1$, so that $\tau_{\infty} = 0$ with positive probability, and $\mathcal{T}_{\infty}$ is the genealogical tree of a supercritical Bienaym\'{e}-Galton-Watson branching process. This case is ruled out by Item~\ref{item:uniquenoexplatom} of Assumption~\ref{ass:uniqueness}, which, in particular, implies that $\xi(0) = 0$ almost surely. {\Large\ensymboldremark}
	\end{remark}
	
	\noindent Given Item~\ref{item:uniquepors} of Theorem~\ref{thm:uniqueness}, one might expect the event that $\mathcal{T}_{\infty}$ contains a node of infinite degree to occur with probability $0$ or $1$: for example, perhaps one might expect this to event to `look like' a tail event, measurable with respect to the tail sigma algebra of an appropriate filtration. The following theorem shows that this is not actually the case in full generality.
	
	\begin{thm} \label{thm:counter-example}
		There exists an explosive $(X,W)$-CMJ process satisfying the conditions of Assumption~\ref{ass:uniqueness} such that
		\begin{equation} \label{eq:star-non-tail}
			\Prob{\mathcal{T}_{\infty} \text{ contains a node of infinite degree } }\in (0,1). 
		\end{equation}
	\end{thm}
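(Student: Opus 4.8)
The plan is to construct an explicit two-weight $(X,W)$-CMJ process in which the event that $\mathcal{T}_\infty$ contains a star is, in effect, determined by the behaviour of the root and its very first children, hence has intermediate probability. The idea is to let the weight $W$ take two values $\{a, b\}$ with probabilities $q$ and $1-q$ each, where weight $a$ makes an individual ``fast'' (strongly explosive, tending to swallow its subtree into a star) and weight $b$ makes an individual ``slow'' (explosive but with an infinite path forming instead). We design the inter-birth laws so that, conditionally on the root having weight $a$, Assumption~\ref{ass:star} holds for the sub-process and so $\mathcal{T}_\infty$ a.s.\ contains a star; whereas conditionally on the root having weight $b$ (and, say, on every child of the root also having weight $b$, an event of positive probability), Assumption~\ref{ass:path} holds and $\mathcal{T}_\infty$ a.s.\ contains an infinite path with all degrees finite. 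By Theorem~\ref{thm:uniqueness}(\ref{item:uniquepors}) these two outcomes are mutually exclusive (after checking Assumption~\ref{ass:uniqueness}), so the star-event has probability at least $q>0$ and at most $1 - (\text{positive probability of the all-}b\text{ path scenario}) < 1$.

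Concretely, I would take the inter-birth times, conditionally on weight $w\in\{a,b\}$, to be independent with $X_w(j) \sim \Exp{f(j-1,w)}$ for an explosive fitness function $f$, i.e.\ work inside the ``recursive tree with fitness'' sub-family; the excerpt flags that Section~\ref{sec:examples} gives sufficient conditions on $f$ for Assumptions~\ref{ass:star}, \ref{ass:path}, \ref{ass:structure}, \ref{ass:uniqueness} to hold, so I would pick two fitness profiles (one super-linear enough to trigger the star criterion, e.g.\ $f(i,a) = (i+1)^{p}$ with $p$ large, and one whose explosion is ``just barely'' fast, chosen so the path criterion~\eqref{eq:div-condition} holds while the star criterion fails) and verify the hypotheses for each. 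The key computation is that for the slow weight $b$, the sums $\sum_i \Prob{\mathcal{P}<\nu^b_i}$ diverge for a suitable choice of thresholds $\nu^b_i$ while simultaneously $\liminf_i \Prob{\sum_{j>i} X_b(j)\ge \nu^b_i}>0$; one natural choice is $\nu^b_i$ on the order of the tail sum's typical size $\sum_{j>i}f(j-1,b)^{-1}$, since exponential tail sums concentrate, giving both the lower bound in~\eqref{eq:smallest-expl-prob} and — provided $f(\cdot,b)$ grows slowly enough that $\Prob{\mathcal{P}<\nu}$ decays no faster than $1/i$ — the divergence in~\eqref{eq:div-condition}.

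The step I expect to be the main obstacle is arranging that on the ``root has weight $b$'' event the star-event genuinely has zero conditional probability, not merely that the path criterion applies to the root. Theorem~\ref{thm:path} only guarantees an infinite path somewhere; to conclude the star-event fails I must invoke the uniqueness dichotomy in Theorem~\ref{thm:uniqueness}(\ref{item:uniquepors}), which requires checking Assumption~\ref{ass:uniqueness} globally (no atom in $\mathcal{P}(\varnothing)$, no atom of $\mathcal{B}(i)$ near $0$, and Condition~\ref{item:starnonzero} of Assumption~\ref{ass:star}) — these are easy for the exponential construction since $\mathcal{P}(\varnothing)$ is a sum of independent exponentials with no atom, $\xi(0)=0$, and $\mathcal{B}(i)$ is absolutely continuous. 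The genuinely delicate part is that the path criterion and the star criterion must both be checkable on the \emph{same} process: I need $f(\cdot,b)$ to satisfy~\eqref{eq:div-condition}--\eqref{eq:smallest-expl-prob} but to \emph{violate} the star conditions of Assumption~\ref{ass:star} (otherwise, by Theorem~\ref{thm:star}, a star would appear a.s.\ regardless of the root's weight, forcing the probability to $1$). Tuning a single parameter — e.g.\ the exponent or the rate of growth of $f(\cdot,b)$ — across the boundary between ``path regime'' and ``star regime'' is therefore the crux; I would exhibit one explicit $f(\cdot,b)$ strictly in the path regime (for instance $f(i,b)$ of order $i\log^2 i$ or a polynomially super-linear $f(i,b)=(i+1)^{p'}$ with $p'$ close to $1$ but in the range where the path phenomenon dominates), cite the relevant lemma from Section~\ref{sec:examples} that confirms it lies outside the scope of Theorem~\ref{thm:star}, and conclude via Theorem~\ref{thm:path} and the dichotomy. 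Finally, to get the strict upper bound $<1$ I note that the event ``$W_\varnothing = b$ and $W_i = b$ for all $i$ up to the first few children that explode'' already forces a path by a union/percolation argument along the lines of the proof of Theorem~\ref{thm:path}, and this event has positive probability.
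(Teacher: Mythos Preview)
Your high-level idea --- use a two-type weight so that one type favours a star and the other favours a path --- matches the paper's strategy, but the concrete construction you propose cannot be made to work with the tools available, and the paper's construction differs in two essential respects.

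First, with a bounded two-point weight $W\in\{a,b\}$ and all-exponential inter-birth times $X_w(j)\sim\Exp{f(j-1,w)}$, the global star criterion of Theorem~\ref{thm:star-path-rif} Item~\ref{item:star-explosive-rif} is \emph{automatically} satisfied: taking $w^*=b$ (the slower profile) and computing $\E{\prod_i f(i,W)/(f(i,W)+c\mu_n^{-1})}$ term by term, both the $W=a$ and $W=b$ contributions are summable in $n$ (this is exactly the content of the remark after Theorem~\ref{thrm:cmjexamples} that bounded weights always yield a star). Hence Theorem~\ref{thm:star} applies globally and the star probability is $1$, not in $(0,1)$. Conversely, Assumption~\ref{ass:path} requires, for \emph{every} $w\in S$ (in particular for the fast weight $w=a$), that infinitely many children out-explode a weight-$w$ parent; but a weight-$a$ parent has tail sum $\sum_{j>i}X_a(j)$ of typical size $\Theta(i^{-(p-1)})$, while each child's explosion time $\mathcal{P}$ is stochastically bounded below by the weight-$a$ sum $\sum_j X_a(j)$, whose lower tail at level $i^{-(p-1)}$ decays like $\exp(-C i^{(p-1)/p})$. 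So~\eqref{eq:div-condition} fails for $w=a$ and you cannot invoke Theorem~\ref{thm:path}. Your fallback (``$W_\varnothing=b$ and every child has weight $b$'') has probability zero, and the weaker event (``first few children have weight $b$'') does not by itself force a path without precisely the kind of Borel--Cantelli argument that requires unbounded weights. Finally, your phrase ``conditionally on the root having weight $a$, Assumption~\ref{ass:star} holds for the sub-process'' is a category error: Assumptions~\ref{ass:star} and~\ref{ass:path} are statements about the \emph{law} of the process, and conditioning on $W_\varnothing$ does not alter the (mixed) law of the descendants.

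The paper's actual construction repairs both issues simultaneously. The weight is $W_u=(R_u,I_u)$ with $I_u\in\{0,1\}$ a fair coin and $R_u$ an \emph{unbounded} continuous variable with a power-law tail $\Prob{R_u>x}\ge x^{-(\alpha-1)}$. Type-$0$ individuals have $X_w(i)\sim\Exp{R_u\,i^p}$ with $(\alpha-1)(p-1)<1$, i.e.\ the parameters are chosen to sit in the \emph{path} regime of Theorem~\ref{thrm:cmjexamples}; the heavy tail of $R$ is what makes~\eqref{eq:div-condition} diverge (some children have enormous $R$ and explode arbitrarily fast), and a restricted Borel--Cantelli argument (Claim~\ref{clm:child-explodes-first-on-root}) shows every type-$0$ individual is almost surely beaten by infinitely many type-$0$ children, yielding $\tau_{\Path}(\varnothing)<\infty$ a.s.\ on $\{I_\varnothing=0\}$. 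Type-$1$ individuals, by contrast, have $X(1)\sim\Exp{1}$ but \emph{deterministic} $X(i)=s_i$ for $i\ge2$, with the $s_i$ chosen so small (via~\eqref{eq:det-exp-bound}) that, conditionally on $\{I_\varnothing=1\}$, a direct union bound gives positive probability that no child produces a single grandchild before the root finishes exploding --- hence the root is a star. The dichotomy then follows from Lemma~\ref{lem:arbitrarily-fast-path} (paths can appear arbitrarily fast, hence faster than the deterministic type-$1$ explosion time $\varsigma_1$) together with Theorem~\ref{thm:uniqueness}. Neither the unbounded weight $R_u$ nor the deterministic tail $s_i$ has any analogue in your proposal, and both are essential.
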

	
	\noindent Note that, applying Item~\ref{item:uniquepors} of Theorem~\ref{thm:uniqueness}, Equation~\eqref{eq:star-non-tail} also implies that for such an explosive $(X,W)$-CMJ process,
	\[
	\Prob{\mathcal{T}_{\infty} \text{ contains an infinite path} }\in (0,1).
	\]
	We note that, unlike the other theorems stated in this section, the proof of Theorem~\ref{thm:counter-example} appears in Section~\ref{sec:applications}, in particular 
	in Section~\ref{sec:counter}.  
	
	\subsection{Proof techniques and relation to existing literature} \label{sec:techniques-discussion-1}
	As mentioned in the introduction, Theorem~\ref{thm:star} was proved in the case that the $(X(i), i \in \mathbb{N})$ are independent, with $X(i) \sim \Exp{i^{p}}, p > 1$ in~\cite{Oliveira-spencer}. The technique used in that paper was to show that the number of nodes $u \in \mathcal{U}_{\infty}$ that are \emph{$k$-fertile} in the tree $\mathcal{T}_{\infty}$ (that is, contain a sub-tree of size at least $k+1$), is almost surely finite for all $k>1/(p-1)$~\cite[Lemma~5.1]{Oliveira-spencer}, and then deduce an infinite path cannot exist, almost surely. Applying Lemma~\ref{lemma:konig} then yields the desired conclusion. An immediate generalisation of these techniques to the more general setting considered here does not allow one to to prove Theorem~\ref{thm:star}. Indeed, as we will see in Theorem~\ref{thrm:sub-treecount}, Theorem~\ref{thm:star} applies to cases where  the number of $k$-fertile nodes is \emph{almost surely infinite} for \emph{any} $k\in\N$. Instead, we use a different approach to prove Theorem~\ref{thm:star}. By a first moment method and appropriate concentration bounds (Lemma~\ref{lem:moment-prob-bounds}), we show that the expected number of nodes $a \in \mathcal{U}_{\infty}$, with a high enough initial index $a_{1}$, that explodes before all of its ancestors (that is, produces infinitely many offspring before any of its direct ancestors does), is finite (see~Proposition~\ref{prob:local-explosions}). Combining this with a coupling argument in Proposition~\ref{prop:finite-l-moderate} (a significant generalisation of~\cite[Lemma~5.3]{Oliveira-spencer}), we show that the expected number of nodes that explodes before all of their ancestors is finite. Finally, the \emph{uniform explosivity} assumption (see Remark~\ref{rem:assumpt-motiv}) allows one to deduce that the explosion time of the process $\tau_{\infty}$ is the infimum of the explosion times $\mathcal{B}(u) + \mathcal{P}(u)$ of individuals $u \in \mathcal{U}_{\infty}$. Using the aforementioned first moment arguments, we can show that this infimum coincides with an infimum over a finite set. Hence, at $\tau_{\infty}$ there exists at least one node of infinite degree. 
	
	The proofs of Theorems~\ref{thm:path} and~\ref{thm:structure} use a different approach: by Borel-Cantelli arguments, we can show that \emph{before} the explosion time associated with an individual, either infinitely many children explode themselves (leading to an infinite path, cf.\ Theorem~\ref{thm:path}), or otherwise certain finite sub-trees appear infinitely often when there is a star (cf.\ Theorem~\ref{thm:structure}). The uniqueness conditions appearing in Theorem~\ref{thm:uniqueness} are reminiscent of similar uniqueness conditions appearing, for example, previously in~\cite{Oliveira-spencer} (using the fact that the associated distribution of $\mathcal{P}(\varnothing)$ is smooth). However, this requires novelty when proving the existence of a unique infinite path in the level of generality we consider (see Lemma~\ref{lem:no-atom}).

	\section{Examples of applications and an open problem} \label{sec:examples}
	In this section, we provide applications of our main results, Theorem~\ref{thm:star}, Theorem~\ref{thm:path}, and Theorem~\ref{thm:uniqueness} in the case that the inter-birth times $(X_{w}(i))_{i \in \mathbb{N}}$ are exponentially distributed. If $X_w(i)$ has an exponential distribution with parameter $f(i,w)$, say, the memory-less property and the property of minima of exponential distributions show that $\mathcal{T}_{\infty}$ may be interpreted as the limiting infinite tree in a model of \emph{$(W,f)$-recursive trees with fitness}. In this model, evolving in discrete time, nodes arrive one at a time, are assigned i.i.d.\ weights, and connect to an existing node sampled with probability proportional to its `fitness function'. In this model, we are not only able to provide phase transitions related to the emergence of an infinite path, but also apply Theorem~\ref{thm:structure} to provide criteria for the emergence of a particular sub-tree infinitely often. This is the content of Section~\ref{sec:recursive}. 
	
	In Section~\ref{sec:gen-cmj} we consider more concrete cases, when the weights $W$ are real-valued, closely connected to super-linear preferential attachment models. In Section~\ref{sec:motiv-2} we provide some background for the analysis of such models, in the context of existing literature. Then, Theorem~\ref{thrm:cmjexamples} in Section~\ref{sec:gen-cmj} provides a classification of the phases where one sees a unique node of infinite degree or a unique infinite path, proving phase transitions for three different examples. These results apply not only to the case that the values $(X_{w}(i))_{i \in \mathbb{N}}$ have exponential distributions, but other distribution types (see Remarks~\ref{rem:more-general} and~\ref{rem:otherdistr}); which may be of interest in applications. In Theorem~\ref{thrm:sub-treecount} we are able to characterise the sub-trees of children of the star that can emerge in this model. We discuss implications of these results in the particular `super-linear degree' example in Section~\ref{sec:discussion-phase-diagrams}, which, in particular, allows us to produce phase diagrams in Figures~\ref{fig:mixsub-tree} and~\ref{fig:addsub-tree}.
	
	Finally, we discuss the proof techniques involved in Section~\ref{sec:techniques-discussion-2} and state an open problem in Section~\ref{sec:open}.
	
	\subsection{The structure of explosive recursive trees with fitness} \label{sec:recursive}
	
	Suppose the values of $(X_{w}(i), i \in \mathbb{N})$ are exponentially distributed and independent. The properties related to the exponential distribution yield that the sequence of trees $(\mathcal{T}_{i})_{i \in \mathbb{N}}$ associated with an explosive $(X,W)$-CMJ branching process are identical in law to a sequence of \emph{recursive trees with fitness} which we define below. First, we define the \emph{fitness function} $f: \mathbb{N}_{0} \times S \rightarrow [0, \infty)$ to be a measurable function such that $f(i, w)$ is the rate of the exponential random variable $X_{w}(i+1)$. 
	
	In this section, we generally consider trees as being rooted with edges directed away from the root, and hence the number of `children' of a node corresponds to its \emph{out-degree}. More precisely, given a vertex labelled $v$ in a directed tree $T$ we let $\outdeg{v, T}$ denote its out-degree in $T$.  We now define the recursive tree with fitness model.
	
	\begin{definition}[Recursive tree with fitness]\label{def:wrt}
		Suppose that $(W_{i})_{i \in \mathbb{N}}$ are i.i.d.\ copies of a random variable $W$ that takes values in $S$, and let $f:\mathbb{N}_{0}\times S \rightarrow [0, \infty)$ denote a fitness function. A $(W, f)$-recursive tree with fitness is the sequence of random trees $(\mathcal{T}_{i})_{i \in \mathbb{N}}$ such that: $\mathcal{T}_{0}$ consists of a single node $0$ with weight $W_{0}$ and for $n \geq 1$, $\mathcal{T}_{n}$ is updated recursively from $\mathcal{T}_{n-1}$ as follows:
		\begin{enumerate}
			\item Sample a vertex $j \in \mathcal{T}_{n-1}$ with probability proportional to its fitness, i.e., with probability 
			\begin{equation}
				\frac{f(\outdeg{j, \mathcal{T}_{n-1}}, W_j)}{\sum_{j=0}^{n-1} f(\outdeg{j, \mathcal{T}_{n-1}}, W_j)}.
			\end{equation}
			\item Connect $j$ with an edge directed outwards to a new vertex $n$ with weight $W_{n}$.
		\end{enumerate}
	\end{definition}
	\begin{remark}
		Due to the equivalence in law with trees associated with an $(X,W)$-CMJ branching process, by abuse of notation we refer to a sequence of recursive tree with fitness by $(\mathcal{T}_{i})_{i \in \mathbb{N}}$, despite the fact that the vertex set of these trees is $\mathbb{N}_0$ rather than $\mathcal{U}_{\infty}$. {\Large\ensymboldremark}
	\end{remark}
	\begin{remark}
		The correspondence between recursive trees with fitness and the trees $(\mathcal{T}_{i})_{i \in \mathbb{N}}$ associated with an $(X, W)$-CMJ process, when $X_{w}(i) \sim \Exp{f(i+1,w)}$ holds for all $i\in\N, w\in S$, is a consequence of the memory-less property and the fact that the minimum of exponential random variable is also exponentially distributed, with a rate given by the sum of the rates of the corresponding variables; see for example~\cite[Section~2.1]{rec-trees-fit}. The use of such continuous-time embeddings to analyse combinatorial processes was pioneered by Arthreya and Karlin~\cite{arthreya-karlin-embedding-68}. This correspondence allows us to translate our main results to the infinite recursive tree, which we also denote by $\mathcal{T}_{\infty}$.
		{\Large\ensymboldremark}
	\end{remark}
	
	\subsubsection{The star/path transition in explosive recursive trees with fitness}
	
	Our first result pertains to the existence of a node of infinite degree or an infinite path in recursive trees with fitness. To this end, we make the following assumption:
	\be\label{eq:minfass}
	\exists w^*\in S: \forall w\in S, j\in \N: f(j,w)\geq f(j,w^*) \quad \text{and } \quad \sum_{j=0}^{\infty} \frac{1}{f(j, w^{*})} < \infty.\tag{$w^*$}
	\ee 
	That is, there exists a minimiser $w^*\in S$ that, uniformly in $j\in \N$, minimises $f(j,\cdot)$, and the reciprocals of $f(j,w^*)$ are summable. Moreover, we define
	\be \label{eq:mu-def}
	\mu_n^w:= \sum_{i=n}^\infty \frac{1}{f(i,w)}, \qquad w\in S, n\in \N, \quad \text{and set } \mu_n:=\mu_n^{w^*}.
	\ee 
	Note that $\mu_n^w\leq \mu_n<\infty$ for all $w\in S$ by~\eqref{eq:minfass}.
	
	\begin{thm}[Star/path in explosive recursive trees] \label{thm:star-path-rif}
		Let $(\mathcal{T}_{i})_{i \in \mathbb{N}}$ be a $(W, f)$-recursive tree with fitness and assume $f$ satisfies~\eqref{eq:minfass}. Then, 
		\begin{enumerate}
			\item \label{item:star-explosive-rif}If, for some $c < 1$, we have  \be \label{eq:star-explosive-rif}
			\sum_{n=1}^{\infty} \E{\prod_{i=0}^{\infty}\frac{f(i,W)}{f(i,W) + c\mu_{n}^{-1}}} < \infty, \ee the tree $\mathcal{T}_{\infty}$ contains a unique node of infinite degree, and no infinite path. 
			\item \label{item:path-explosive-rif} If either for some $c > 1$ and all $w \in S$ , we have  
			\be \label{eq:path-explosive-rif}
			\sum_{n=1}^{\infty} \E{\prod_{i=0}^{\infty}\frac{f(i,W)}{f(i,W) + c (\mu^{w}_{n})^{-1} \log{n}}} = \infty,\ee 
			or, as a weaker condition, Equation~\eqref{eq:div-condition} is satisfied with $\nu^{w}_{n} := d \mu^{w}_{n}$ for $d <1$, the tree $\mathcal{T}_{\infty}$ contains a unique infinite path, and no node of infinite degree. 
		\end{enumerate}
	\end{thm}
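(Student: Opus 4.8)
The plan is to deduce both parts from Theorems~\ref{thm:star}, \ref{thm:path} and~\ref{thm:uniqueness}, which I apply to the $(X,W)$-CMJ process with independent $X_w(i)\sim\Exp{f(i-1,w)}$ (in law, the recursive tree with fitness). I first record some elementary facts. Conditionally on $W$, the variable $\mathcal{P}_i(\varnothing)$ is a sum of independent exponentials of rates $f(0,W),\dots,f(i-1,W)$, so $\mathcal{L}_\lambda(\mathcal{P}_i(\varnothing);W)=\prod_{j=0}^{i-1}\frac{f(j,W)}{f(j,W)+\lambda}$ and $\mathcal{L}_\lambda(\mathcal{P}(\varnothing);W)=\prod_{j=0}^{\infty}\frac{f(j,W)}{f(j,W)+\lambda}$; likewise $\sum_{i=n+1}^{\infty}X_w(i)$ has the law of $\sum_{k\ge n}\Exp{f(k,w)}$, which by~\eqref{eq:minfass} and a term-by-term coupling is stochastically dominated by $Y_n:=\sum_{k\ge n}\Exp{f(k,w^*)}$, of finite mean (equal to $\mu_n$ for $n\ge 1$, with $(\E{Y_n})_{n\ge0}$ non-increasing and tending to $0$). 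Here $f(k,w^*)\ge 1/\mu_k\ge 1/\mu_n$ whenever $n\le k$. Also $\E{\mathcal{P}(\varnothing)\mid W}=\sum_{j\ge 0}f(j,W)^{-1}\le\sum_{j\ge 0}f(j,w^*)^{-1}<\infty$, so $\mathcal{P}(\varnothing)<\infty$ a.s.; hence every child of the root has finite birth time, $|\mathcal{T}_\infty|=\infty$ a.s.\ (survival is certain), and $\tau_\infty\le\mathcal{P}(\varnothing)<\infty$ a.s.\ (the process is a.s.\ explosive). Finally $\mathcal{P}(\varnothing)$ and each $\mathcal{B}(i)$, being a countable, respectively finite, convolution of atomless exponentials, are atomless. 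These observations supply all of Assumption~\ref{ass:uniqueness} (its Condition~\ref{item:uniquenonzero}, which is Condition~\ref{item:starnonzero}, holds trivially since $\xi(0)=0$), so Theorem~\ref{thm:uniqueness} applies in both parts.

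\textbf{Part~\ref{item:star-explosive-rif}.} I would verify Assumption~\ref{ass:star} with the constant $c<1$ of~\eqref{eq:star-explosive-rif}. Conditions~\ref{item:starindep} and~\ref{item:starnonzero} are immediate, and Condition~\ref{item:stardom} holds with the $(Y_n)$ above. For Condition~\ref{item:starlimsup}, $\mathcal{M}_{c\mu_n^{-1}}(Y_n)=\prod_{k\ge n}\big(1-c\mu_n^{-1}/f(k,w^*)\big)^{-1}$, and since $c\mu_n^{-1}/f(k,w^*)\le c<1$ for $k\ge n$, the bound $-\log(1-x)\le x/(1-c)$ gives $\log\mathcal{M}_{c\mu_n^{-1}}(Y_n)\le\frac{c\mu_n^{-1}}{1-c}\sum_{k\ge n}f(k,w^*)^{-1}=\frac{c}{1-c}$, uniformly in $n$. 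For Condition~\ref{item:starlaplace}, with $\lambda=c\mu_i^{-1}$ I would factor $\prod_{j=0}^{i-1}\frac{f(j,W)}{f(j,W)+\lambda}=\big(\prod_{j=0}^{\infty}\frac{f(j,W)}{f(j,W)+\lambda}\big)\prod_{j\ge i}\big(1+\frac{\lambda}{f(j,W)}\big)\le \e^{c}\prod_{j=0}^{\infty}\frac{f(j,W)}{f(j,W)+\lambda}$, using $\lambda\sum_{j\ge i}f(j,W)^{-1}\le\lambda\mu_i=c$; taking expectations and summing over $i$, \eqref{eq:star-explosive-rif} yields Condition~\ref{item:starlaplace}. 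Theorem~\ref{thm:star} then produces a node of infinite degree almost surely, and since survival is certain, Item~\ref{item:uniquepors} of Theorem~\ref{thm:uniqueness} forces it to be unique and excludes an infinite path.

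\textbf{Part~\ref{item:path-explosive-rif}.} I would verify Assumption~\ref{ass:path} with $\nu_n^w:=d\mu_n^w$ for a suitable $d<1$; then Theorem~\ref{thm:path} yields an infinite path almost surely (survival being certain) and Theorem~\ref{thm:uniqueness} makes it unique and excludes a star. Condition~\eqref{eq:smallest-expl-prob} holds for every $d<1$ by the Paley--Zygmund inequality: writing $S_i:=\sum_{j>i}X_w(j)$ (of law $\sum_{k\ge i}\Exp{f(k,w)}$) one has $\E{S_i}=\mu_i^w$ and $\Var(S_i)=\sum_{k\ge i}f(k,w)^{-2}\le\big(\sup_{k\ge i}f(k,w)^{-1}\big)\mu_i^w\le(\mu_i^w)^2$, so $\Prob{S_i\ge d\mu_i^w}\ge (1-d)^2(\mu_i^w)^2/\E{S_i^2}\ge (1-d)^2/2$, uniformly in $i$. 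For~\eqref{eq:div-condition}: under the stated weaker hypothesis there is nothing to do; otherwise I would pick $d\in(1/c,1)$ (possible as $c>1$) and use the Chernoff-type bound $\Prob{\mathcal{P}\le t}\ge\mathcal{L}_\lambda(\mathcal{P})-\e^{-\lambda t}$ with $t=d\mu_i^w$ and $\lambda=c(\mu_i^w)^{-1}\log i$, so that $\e^{-\lambda t}=i^{-cd}$ is summable while $\sum_i\mathcal{L}_{c(\mu_i^w)^{-1}\log i}(\mathcal{P})=\sum_i\E{\prod_{j\ge 0}\frac{f(j,W)}{f(j,W)+c(\mu_i^w)^{-1}\log i}}=\infty$ by~\eqref{eq:path-explosive-rif}; subtracting the two series gives $\sum_i\Prob{\mathcal{P}<d\mu_i^w}=\infty$, and $\mathcal{P}$ being atomless this is~\eqref{eq:div-condition}.

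\textbf{Main obstacle.} There is no single deep step; the difficulty is careful bookkeeping of constants. The hypothesis $c<1$ in Part~\ref{item:star-explosive-rif} is used precisely to make the geometric-series estimate $-\log(1-x)\le x/(1-c)$ legitimate for $x=c\mu_n^{-1}/f(k,w^*)$, and the hypothesis $c>1$ in Part~\ref{item:path-explosive-rif} is used precisely to leave room to choose $d\in(1/c,1)$ so that the Chernoff error $\e^{-\lambda t}=i^{-cd}$ remains summable. A second point requiring care is the mismatch between the \emph{infinite} products in~\eqref{eq:star-explosive-rif}--\eqref{eq:path-explosive-rif} and the \emph{finite} products $\mathcal{L}_\lambda(\mathcal{P}_i(\varnothing);W)$ of Assumption~\ref{ass:star}; this is absorbed into the harmless factor $\e^{c}$ using explosivity, $\sum_j f(j,W)^{-1}<\infty$. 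Beyond the cited theorems, the only probabilistic inputs are the two elementary one-sided controls of $\mathcal{P}$ near $0$: the Chernoff lower bound on $\Prob{\mathcal{P}\le t}$ and the Paley--Zygmund lower bound for $\Prob{S_i\ge d\,\E{S_i}}$.
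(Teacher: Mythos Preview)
Your proposal is correct and follows essentially the same route as the paper: verify Assumption~\ref{ass:uniqueness} from smoothness of exponentials, check Assumption~\ref{ass:star} via the stochastic domination $Y_n=\sum_{k\ge n}\Exp{f(k,w^*)}$ and the factoring $\prod_{j=0}^{i-1}=\prod_{j\ge 0}\cdot\prod_{j\ge i}(\cdot)^{-1}$, and check Assumption~\ref{ass:path} via Paley--Zygmund for~\eqref{eq:smallest-expl-prob} and a Chernoff-type lower bound on $\Prob{\mathcal{P}\le t}$ for~\eqref{eq:div-condition}. The only cosmetic differences are that the paper packages your Chernoff bound as Lemma~\ref{lem:lower-bound-infinite-events} (introducing an auxiliary exponential variable, which gives the identical inequality) and obtains the constants $e^{3/2}$ and $(1-c)^{-1}$ where you obtain $e^{c}$ and $e^{c/(1-c)}$; your bounds are in fact slightly cleaner.
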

	
	\noindent The proof of Theorem~\ref{thm:star-path-rif} appears in Section~\ref{sec:applications}, in particular Section~\ref{proof:star-path-rif}. 
	\begin{remark}
		It turns out that Equation~\eqref{eq:path-explosive-rif} is a sufficient condition for Equation~\eqref{eq:div-condition} to be satisfied with $\nu^{w}_{n} := d \mu^{w}_{n}$ for $d <1$. However, we include it as a general comparison to Equation~\eqref{eq:star-explosive-rif}. {\Large\ensymboldremark}
	\end{remark}
	
	\begin{remark} \label{rem:more-general}
		Analogues of  Theorem~\ref{thm:star-path-rif} extend to more general distributions of $X_{w}(i)$, other than exponential distributions. In particular, we can apply the same techniques used to prove Item~\ref{item:path-explosive-rif} of Theorem~\ref{thm:star-path-rif} whenever $(X_{w}(i), i \in \mathbb{N})$ are independent and $\Var{(X_{w}(i))} \leq K \E{X_{w}(i)}^2$ for some $K > 0$, possibly depending on $w$. In this case the expected value in~\eqref{eq:path-explosive-rif} is replaced by the Laplace transform $\E{\mathcal{L}_{c (\mu^{w}_{n})^{-1} \log{n}}(\mathcal{P}(\varnothing); W)}$. See the proof in Section~\ref{proof:star-path-rif} for more details.{\Large\ensymboldremark}
	\end{remark}
	
	\subsubsection{Sub-trees in explosive recursive trees with fitness when there is a star}\label{sec:expsub-tree}
	
	In Section~\ref{sec:structure-}, we described a tree $T$ as a finite subset of the Ulam-Harris tree containing the root, with a natural directed edge structure induced by parents being connected to children. We apply the same notion here, upon identifying labels of elements of $\mathcal{T}_{\infty}$ with the Ulam-Harris labelling. For a tree $T \subseteq \mathcal{U}_{\infty}$ and $u \in \mathcal{T}_{\infty}$ (when we label the elements of $\mathcal{T}_{\infty}$ with the Ulam-Harris labelling), we say that $T$ appears as a sub-tree, rooted at $u$, in $\mathcal{T}_{\infty}$ if $uT \subseteq \mathcal{T}_{\infty}$. Because the presence of `earlier siblings' in a copy of a tree $T$ can influence the probability of a tree emerging \footnote{For example, if the tree $T$ corresponds to a path, it is intuitively less likely to have a path emerge where every node of the path is the first child of its parent rather than a path where some nodes are born later but, by random chance, produce children faster.} it is convenient to also assume that $T$ is \emph{sibling closed}, where we define as follows. If $u = u_1 \cdots u_{m} \in T$ then $u = u_1 \cdots u_{m-1} \ell \in T$ for each $\ell \in [u_{m}]$. 
	
	The occurrence of a sibling-closed tree $T$ in $\mathcal{T}_{\infty}$ may also depend on the order in which the vertices in $T$ appear, which can vary in such a way that they preserve the lexicographic ordering. An \emph{ordering} of a tree $T$ with $|T|=k+1$ vertices, for some $k\in\N$, is a permutation $O:T \rightarrow \left\{0, 1, \ldots, k\right\}$, such that $O(u) \leq O(v)$ if and only if $u \leq_{L} v$. Given an ordering $O$, we generally refer to the vertices of a tree $T$ with $k+1$ vertices as $\left\{v_{0}, \ldots, v_{k} \right\}$, where  $v_{i} := O^{-1}(i)$ for each $i$. Given a sibling-closed tree $T$, we let $\mathcal{O}(T)$ denote the set of all orderings of $T$. For a given ordering $O$ and $j \leq k$, we let $O_{|_j}$ denote the (also sibling-closed) tree on the vertex set $\left\{v_0, \ldots, v_j \right\}$; note that this is well defined because $O$ preserves the order $\leq_{L}$. Also note that each $O_{|_j}$ inherits the natural directed edge structure from $T$. For a given vertex $v_{i}$, with $i \leq j$, $\outdeg{v_{i}, O_{|_j}}$ denotes its out-degree in $O_{|_j}$. 
	
	We then have the following theorem.
	
	\begin{thm}[Sub-tree counts]\label{thm:sub-treecount}
		Fix $k\in\N$, and let $(\mathcal{T}_{i})_{i \in \mathbb{N}}$ be a $(W, f)$-recursive tree with fitness such that $f$ satisfies~\eqref{eq:minfass} and so that $\cT_\infty$ contains a unique star. Moreover, assume that for each $w \in S$ we have $\mu_{n}^{w} \geq c_1(w) \mu_{n}$, where $0 < c_{1}(w) \leq 1$. Let $T$ be a sibling-closed tree, with $|T| = k+1$. The tree $\mathcal{T}_{\infty}$ contains $T$ as a sub-tree infinitely often if and only if 
		\begin{equation} \label{eq:tree-summ-cond}
			\sum_{n=1}^{\infty} \sum_{O \in \mathcal{O}(T)} \E{ \prod_{j=0}^{k} \frac{\prod_{\ell=0}^{\outdeg{v_j, T} - 1} f(\ell, W_{v_j})}{\sum_{i=0}^{j}f(\outdeg{v_i, O_{|_j}},W_{v_i}) \ind{\outdeg{v_i, O_{|_j}} < \outdeg{v_i, T}} + \mu^{-1}_{n}}} = \infty. 
		\end{equation}
	\end{thm}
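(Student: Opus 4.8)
The plan is to translate the statement into the language of the $(X,W)$-CMJ branching process (via the exponential embedding of Definition~\ref{def:wrt}) and to apply Theorem~\ref{thm:structure}, with the bulk of the work being to identify the summability condition~\eqref{eq:tree-summ-cond} with the abstract conditions of Assumption~\ref{ass:structure}. Since $\cT_\infty$ contains a unique star by hypothesis, fix the node $u^{*}$ of infinite degree; "$T$ appears infinitely often as a sub-tree of $\cT_\infty$" is then (up to finitely many exceptions) equivalent to "$T$ appears infinitely often as a sub-tree rooted at a child of $u^{*}$". So the two halves of Theorem~\ref{thm:structure} will give the two directions, provided I verify Condition~\ref{item:treediv-cond} (for the "infinitely often" direction) and Condition~\ref{item:treedomsum} (for the "finitely often" direction) with an appropriate choice of the numbers $\nu^{w}_{n}$, and provided I show that in the exponential case these conditions are \emph{equivalent} to~\eqref{eq:tree-summ-cond}.

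First I would compute, for the exponential model, the quantity $\Prob{\mathcal{P}_{T} < \nu^{w}_{i}}$ — or rather, since $T$ is sibling-closed, the probability that the tree $T$ is realised \emph{exactly} as the first $|T|$ individuals produced in the sub-CMJ rooted at a child, within time $\nu^{w}_{i}$. Summing over orderings $O\in\mathcal{O}(T)$ and conditioning on the weights $(W_{v_j})_{j\le k}$, each ordered realisation corresponds to a sequence of $k$ competing-exponential steps: at step $j$ the relevant rate is $\sum_{i=0}^{j} f(\outdeg{v_i,O_{|_j}},W_{v_i})\ind{\outdeg{v_i,O_{|_j}}<\outdeg{v_i,T}}$ (children that have already reached their final out-degree in $T$ are "frozen"), and the next event must be the correct child being born. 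Running this embedded chain for total time at most $\nu^{w}_{i}$, and then using $\nu^{w}_{i}\asymp \mu^{w}_{i}$ together with $\mu^{w}_n\ge c_1(w)\mu_n$ and $\mu_i^{-1}\to\infty$, one finds that $\Prob{\mathcal{P}_{T}<\nu^{w}_{i}}$ is comparable — up to constants depending on $w$ and on $|T|$ — to the product over $j$ of $\mathbb{E}$-terms of the form $\big(\text{rate}_j\big)\big/\big(\text{rate}_j+\mu_i^{-1}\big)$, times the factor $\prod_{\ell=0}^{\outdeg{v_j,T}-1} f(\ell,W_{v_j})/(\cdots)$ coming from the probability that the newly born individual is the right child. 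Taking expectations over the weights and recognising that the numerator products $\prod_{\ell} f(\ell,W_{v_j})$ match exactly the numerators in~\eqref{eq:tree-summ-cond} should give that $\sum_i \Prob{\mathcal{P}_T<\nu^w_i}$ diverges if and only if the double sum in~\eqref{eq:tree-summ-cond} diverges, uniformly in $w$ (here the role of $w^*$ and the uniform lower bound $\mu_i^w\ge c_1(w)\mu_i$ is crucial to get uniformity, so that~\eqref{eq:div-condition-sub-tree} holds for every $w\in S$). The companion estimate~\eqref{eq:smallest-expl-prob-sub} — that the root takes at least $\nu^w_i\asymp\mu^w_i$ after its $i$th child's birth to explode, with probability bounded below uniformly in $i$ — follows because $\sum_{j\ge i+1} X_w(j)=\sum_{j\ge i+1}\Exp{f(j,w)}$ has mean $\mu^w_{i+1}$ and, by~\eqref{eq:minfass} and a variance bound $\Var\le K(\mathbb{E})^2$, concentrates around its mean enough to exceed $d\mu^w_i$, $d<1$, with positive probability not vanishing in $i$.

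For the converse ("infinitely often $\Rightarrow$~\eqref{eq:tree-summ-cond}"), I would use Condition~\ref{item:treedomsum}: if~\eqref{eq:tree-summ-cond} \emph{converges}, I show $\sum_i \Prob{\mathcal{P}_T<\sum_{j\ge i+1}\wt X_w(j)}<\infty$ by the same comparison, now bounding $\sum_{j\ge i+1}\wt X_w(j)$ from above by a constant multiple of $\mu^w_i$ on a high-probability event (and controlling the complementary event via the variance bound), then invoking Theorem~\ref{thm:structure}\ref{item:structure-2} to conclude $T$ appears only finitely often as a sub-tree of any child of $u^{*}$, hence only finitely often in $\cT_\infty$. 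The main obstacle I anticipate is the bookkeeping in the first step: the embedded competing-exponentials computation must correctly account for (a) summing over all orderings $O$, since different birth-orders of the \emph{same} tree $T$ genuinely have different embedded rates; (b) the "frozen" indicator $\ind{\outdeg{v_i,O_{|_j}}<\outdeg{v_i,T}}$, ensuring that once a vertex has all its $T$-children it no longer contributes to the denominator; and (c) matching the telescoping of the $\mu_n^{-1}$-perturbed denominators across steps $j=0,\dots,k$ against the single sum over $n$ in~\eqref{eq:tree-summ-cond} — this last point is exactly the mechanism (a Borel--Cantelli over $n$ indexing the "lifespan window" of successive children of the star) by which the abstract divergence condition~\eqref{eq:div-condition-sub-tree} becomes the clean product form stated, and it is where the argument of~\cite{Oliveira-spencer} is generalised. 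Once these are set up carefully, both directions follow from Theorem~\ref{thm:structure} together with the uniqueness of the star.
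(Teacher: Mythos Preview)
Your overall strategy matches the paper's proof closely: apply Theorem~\ref{thm:structure} with $\nu^{w}_{n}=d\mu^{w}_{n}$ ($d<1$), decompose $\Prob{\mathcal{P}_{T}<\nu^{w}_{n}}$ over orderings $O\in\mathcal{O}(T)$ and competing-exponential steps, use $\mu^{w}_{n}\ge c_{1}(w)\mu_{n}$ to replace $(\mu^{w}_{n})^{-1}$ by $\mu_{n}^{-1}$ in the denominators, and obtain~\eqref{eq:smallest-expl-prob-sub} via Paley--Zygmund. Your bookkeeping items (a)--(c) correspond exactly to the paper's observations (I)--(III), with Lemma~\ref{lem:exponential-concentration}, Equation~\eqref{eq:exp-conc}, supplying the lower bound on $\Prob{\tau_{O_{|_k}}\le d\mu^{w}_{n}}$.

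There is, however, a gap in your converse. You propose to bound $\sum_{j\ge i+1}\wt X_{w}(j)\le C\mu^{w}_{i}$ on a high-probability event and control the complement ``via the variance bound''. But a Chebyshev bound only gives $\Prob{\sum_{j>i}\wt X_{w}(j)>C\mu^{w}_{i}}\le K/(C-1)^{2}$, a constant in $i$; even the exponential Markov bound with $\lambda=c\mu_{i}^{-1}$ yields $\e^{-cC}/(1-c)$, again constant (the normalised tail $\sum_{j>i}\wt X_{w}(j)/\mu^{w}_{i}$ has a non-degenerate limiting law, so its upper tail cannot decay in $i$). Hence the split $\Prob{\mathcal{P}_{T}<\sum_{j>i}\wt X_{w}(j)}\le\Prob{\mathcal{P}_{T}<C\mu^{w}_{i}}+\Prob{A_{i}^{c}}$ fails to give~\eqref{eq:sum-cond-sub-tree}. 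The paper avoids this by \emph{not} truncating: it first invokes the minimiser $w^{*}$ from~\eqref{eq:minfass} to get $\wt X_{w}(j)\le_{S}\wt X_{w^{*}}(j)$, and then applies the Chernoff bound of Lemma~\ref{lem:exponential-concentration}, Equation~\eqref{eq:exp-conc-2}, \emph{with the random sum $\sum_{j>i}\wt X_{w^{*}}(j)$ still on the right-hand side}. This integrates the MGF of the tail against the Laplace transform of the waiting times and gives directly
\[
\Prob{\tau_{O_{|_k}}\le\textstyle\sum_{j>i}\wt X_{w^{*}}(j)\,\big|\, W_{v_0},\dots,W_{v_k},O}\le\frac{1}{1-c}\prod_{j=0}^{k}\frac{\text{rate}_{j}}{\text{rate}_{j}+c\mu_{i}^{-1}},
\]
which sums in $i$ precisely when~\eqref{eq:tree-summ-cond} converges. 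So the fix is to keep the random tail inside the Laplace/MGF inequality rather than replacing it by a deterministic bound.
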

	
	\begin{cor} \label{cor:sumnufinmean}
		Fix $k\in\N$ and assume that, for each $i \in [k]$ we have $\E{f(i,W)^{k}} < \infty$. Let $T$ be a sibling-closed tree with $k+1$ vertices. Then, under the assumptions of Theorem~\ref{thm:sub-treecount} the tree $\cT_\infty$ contains $T$ as a sub-tree infinitely often if and only if $\sum_{n=1}^\infty \mu_n^k=\infty$.
	\end{cor}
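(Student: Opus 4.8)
The plan is to show that, under the additional moment assumption $\E{f(i,W)^{k}} < \infty$ for each $i \in [k]$, the complicated summability criterion~\eqref{eq:tree-summ-cond} of Theorem~\ref{thm:sub-treecount} is equivalent to the much simpler condition $\sum_{n=1}^{\infty} \mu_{n}^{k} = \infty$. Since Theorem~\ref{thm:sub-treecount} already provides the ``if and only if'' characterisation in terms of~\eqref{eq:tree-summ-cond}, it suffices to prove that, for a fixed sibling-closed tree $T$ with $|T| = k+1$, there are constants $0 < c \leq C < \infty$ (depending on $T$, $k$, and the law of $W$, but not on $n$) such that
\[
c\,\mu_{n}^{k} \;\leq\; \sum_{O \in \mathcal{O}(T)} \E{ \prod_{j=0}^{k} \frac{\prod_{\ell=0}^{\outdeg{v_j, T} - 1} f(\ell, W_{v_j})}{\sum_{i=0}^{j}f(\outdeg{v_i, O_{|_j}},W_{v_i}) \ind{\outdeg{v_i, O_{|_j}} < \outdeg{v_i, T}} + \mu^{-1}_{n}} } \;\leq\; C\,\mu_{n}^{k},
\]
for all sufficiently large $n$. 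Summing over $n$ then gives the claim.

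For the \emph{upper bound}, I would drop the non-negative ``$\sum_{i} f(\cdots)\ind{\cdots}$'' terms from each of the $k+1$ denominators, bounding each denominator below by $\mu_{n}^{-1}$. However, one denominator must be kept honest: in the ordering $O$, the last vertex $v_{k}$ has $\outdeg{v_{k}, T} = 0$ (it is a leaf appearing last), so its factor contributes an empty product in the numerator equal to $1$; more importantly, for the root $v_{0}$ we need to be careful. Actually the cleanest route: there are exactly $k$ vertices $v_{j}$ with $j \geq 1$, each contributing a factor at most $\mu_{n}\prod_{\ell=0}^{\outdeg{v_j,T}-1} f(\ell, W_{v_j})$, while the $j=0$ factor contributes at most $\mu_{n}\prod_{\ell=0}^{\outdeg{v_0,T}-1} f(\ell, W_{v_0})$ as well, giving a total of $\mu_{n}^{k+1}$ times a product of fitness terms. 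This overshoots by one factor of $\mu_{n}$, so I instead keep, for exactly one well-chosen index $j$, the exact denominator, which is bounded below by a single term $f(\outdeg{v_i,O_{|_j}}, W_{v_i})$ for an appropriate ancestor $i$ whose out-degree has not yet saturated; one checks that at every step before $T$ is complete at least one such vertex exists (the parent of the next-to-be-added vertex), so that denominator is $\geq$ some $f(m, W_{v_i}) \wedge \mu_n^{-1}$ — and for large $n$, $\mu_n^{-1}$ dominates generic bounded fitness only on an event of the right size. The upshot after taking expectations and using independence of the weights $(W_{v})_{v \in T}$ together with $\E{f(i,W)^{k}} < \infty$ (which controls the product of at most $k$ fitness factors by Hölder) is a bound of the form $C \mu_{n}^{k}$. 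Summing over the finitely many orderings $O \in \mathcal{O}(T)$ preserves this.

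For the \emph{lower bound}, I would restrict the expectation to the event $E_{n} := \{ f(\ell, W_{v_j}) \leq \mu_{n}^{-1} \text{ for all } j \in \{0,\ldots,k\},\ \ell < \outdeg{v_j, T} \}$, on which every term $f(\outdeg{v_i, O_{|_j}}, W_{v_i})\ind{\outdeg{v_i,O_{|_j}} < \outdeg{v_i,T}}$ appearing in a denominator is at most $\mu_{n}^{-1}$, hence each of the $k+1$ denominators is between $\mu_{n}^{-1}$ and $(k+1)\mu_{n}^{-1}$. On $E_{n}$ the whole product is therefore at least $(k+1)^{-(k+1)} \mu_{n}^{k+1} \prod_{j} \prod_{\ell} f(\ell, W_{v_j})$. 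Wait — this again has $k+1$ powers of $\mu_n$; the resolution is that at least one factor in the product, namely any leaf $v_{j}$ with $\outdeg{v_j,T}=0$, contributes numerator $1$ and a denominator of order $\mu_{n}^{-1}$, but to get exactly $\mu_{n}^{k}$ we should note instead that one vertex — the root in its own factor $j=0$ — has an \emph{empty} ancestor-sum up to the point $j=0$ only when $\outdeg{v_0, O_{|_0}} = 0$, forcing that denominator to equal exactly $\mu_n^{-1}$ (no saturating term), so it is not free; more carefully, the product over $j=0,\dots,k$ has $k+1$ factors but the numerator $\prod_{j}\prod_\ell f(\ell, W_{v_j})$ over the tree $T$ has exactly $\sum_j \outdeg{v_j,T} = k$ fitness factors, each of which on $E_n$ is at least some fixed positive lower bound on the event further intersected with $\{f(\ell, W_{v_j}) \geq \delta\}$ which has positive $W$-probability $\rho > 0$ independent of $n$ (here using that $f$ takes positive values on a set of positive $\mu$-measure). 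Then $\Prob{E_n \cap \{\text{all fitnesses} \geq \delta\}} \geq \rho^{k} - o(1) > 0$ uniformly in large $n$, and the integrand is $\geq c' \mu_n^{k+1} / (\text{the one genuinely tight denominator})$...

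The honest statement is: exactly one denominator, that of the last-added leaf, is of order $\mu_n^{-1}$ and contributes the ``extra'' factor that cancels against there being $k$ (not $k+1$) numerator fitness factors; tracking this bookkeeping precisely is the one place the argument needs care, and it is the \textbf{main obstacle}. Once the two-sided bound $c\mu_n^k \le (\text{sum over }O) \le C\mu_n^k$ is established for large $n$, summing over $n$ and invoking Theorem~\ref{thm:sub-treecount} completes the proof. The moment hypothesis $\E{f(i,W)^{k}}<\infty$ for $i \in [k]$ is used exactly once, in the upper bound, to guarantee that $\E{\prod_{j}\prod_{\ell=0}^{\outdeg{v_j,T}-1} f(\ell, W_{v_j})}$ is finite via Hölder's inequality applied to the (at most $k$) factors, using independence of the weights across distinct vertices of $T$; for the lower bound only positivity of $f$ on a positive-measure set is needed, which is implicit in~\eqref{eq:minfass} since $\mu_{n}^{w^*} < \infty$.
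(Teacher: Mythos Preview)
Your overall strategy---sandwich the inner expectation of~\eqref{eq:tree-summ-cond} between $c\,\mu_n^k$ and $C\,\mu_n^k$, then invoke Theorem~\ref{thm:sub-treecount}---is correct and is precisely what the paper does. The ``$k$ versus $k{+}1$ powers of $\mu_n$'' obstacle you keep bumping into is an artefact of reading the product range in~\eqref{eq:tree-summ-cond} too literally: the tree $T$ has $k$ edges, so there are exactly $k$ waiting-time factors (one per edge addition), and the total numerator contains exactly $\sum_j\outdeg{v_j,T}=k$ fitness factors. With this count fixed, both of your workarounds (keeping one denominator exact for the upper bound; further restricting to $\{f\ge\delta\}$ for the lower bound) are unnecessary.

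\emph{Upper bound.} Drop every $f$-term from every denominator, leaving $\mu_n^{-1}$ in each of the $k$ fractions; this yields at most $\mu_n^{k}\,\E{\prod_j\prod_{\ell<\outdeg{v_j,T}} f(\ell,W_{v_j})}$. By independence of the $(W_{v_j})_j$ the expectation factors over $j$, and each factor is bounded by $\E{f^+(k,W)^{\outdeg{v_j,T}}}<\infty$ from the moment hypothesis, since $\outdeg{v_j,T}\le k$. H\"older is not needed.

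\emph{Lower bound.} Fix one ordering, restrict to $\{f^+(k,W_{v_j})<\mu_n^{-1}\ \text{for all }j\}$ as you propose, and bound each of the $k$ denominators above by $(k{+}1)\mu_n^{-1}$. Factoring via independence gives at least $(k{+}1)^{-k}\mu_n^k\prod_j\E{\mathbf 1_{\{f^+(k,W)<\mu_n^{-1}\}}\,f^-(k,W)^{\outdeg{v_j,T}}}$. The clean step that replaces your $\{f\ge\delta\}$ device is monotone convergence: since $\mu_n^{-1}\to\infty$, each truncated moment increases to $\E{f^-(k,W)^{\outdeg{v_j,T}}}$, which is finite by the moment hypothesis and strictly positive because $f>0$ everywhere. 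Hence for all large $n$ the product of truncated moments exceeds a fixed positive constant, and the lower bound is $c\,\mu_n^k$.
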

	
	\noindent The proof of Theorem~\ref{thm:sub-treecount} appears in Section~\ref{sec:sub-tree-count-general-rif}. On the other hand, the proof of Corollary~\ref{cor:sumnufinmean} appears separately, in Section~\ref{sec:cor-sumnifinmean-proof}. 
	
	\subsection{Phase transitions in specific models of explosive recursive trees with fitness} \label{sec:gen-cmj}
	In this section we investigate three particular cases of the results presented in Section~\ref{sec:recursive}, where we are able to prove \emph{phase transitions} for the structure of the infinite limiting tree $\cT_\infty$ in terms of the \emph{fitness function} and the \emph{vertex-weight distribution}. We assume that the vertex-weights are non-negative and real-valued, i.e.\ they take values in $S=[0, \infty)$. We also assume that the fitness function $f(i,W)$ grows \emph{faster than linear} in the degree (i.e.\ its first argument). These cases are thus examples of \emph{super-linear preferential attachment with fitness}.
	
	\subsubsection{Connection to existing literature: super-linear preferential attachment} \label{sec:motiv-2}
	
	As alluded to in the introduction, Section~\ref{sec:rec-fit}, a model of recursive trees (with fitness) that has received substantial attention the last two decades are preferential attachment models. Such models are thought to serve as a good explanation of the formation \emph{real-world} networks due the \emph{preferential attachment paradigm}, which suggests that networks are constructed by adding vertices and edges successively, in such a way that new vertices prefer to be connected to existing vertices with large degree. In particular, many of such models intrinsically give rise to properties also found in many real-world networks (i.e.\ the scale-free property and (ultra)small-world property), rather than such properties being imposed on the model. We refer to~\cite{Hof16} and the references therein for an extensive overview of the literature on such models and their applications.
	
	Super-linear preferential attachment is a particular type of preferential attachment where new vertices connect to existing vertices with out-degree $i$ with a probability proportional to $f(i)$, for some fitness function $f:\N_0\to(0,\infty)$ such that $\sum_{i=0}^\infty f(i)^{-1}<\infty$. Most often, as in e.g.~\cite{choi-sethuraman-2013,Oliveira-spencer,sethuraman-venkataramani-2019}, the case $f(i)=(i+1)^p$ for some $p>1$ is studied, though there are also choices for $f$ that satisfy the summability condition such that $f(i)i^{-p}\to 0$ as $i\to\infty$ for any $p>1$. We coin these functions \emph{barely super-linear}. Though P\'olya urn models with barely super-linear fitness functions have been studied previously~\cite{GottGross23}, as far as the authors are aware this is the first case such fitness functions are treated for preferential attachment models.
	
	Super-linear preferential attachment models are suggested to possibly explain the formation of real-world networks such as the Internet, where these networks are in a `preasymptotic regime' (are of relatively small size) where the explosive nature of the model cannot be observed yet, based on statistical parameter estimation, simulations, and non-rigorous analysis~\cite{KrapKri08,KunBlatMos13,PhamSherShi16}.
	
	The inclusion of vertex-weights allows for a more heterogeneous and hence more realistic model, where different vertices may behave differently (in distribution), even when their out-degree is the same, as also discussed in the introduction.  The presence of vertex-weights often leads to rich behaviour where phase transitions based on the vertex-weight distribution can be observed (see the introduction for examples), which we show in this section to be case for the examples we consider here, too. 
	
	We study a number of examples for which we can apply the results in Section~\ref{sec:gen-cmj}. We state the  assumptions for the fitness function $f$ and the vertex-weight distribution, after which we present the results related to Theorems~\ref{thm:star-path-rif} and~\ref{thm:sub-treecount}. We conclude the section with a discussion of these results in Section~\ref{sec:discussion-phase-diagrams}, where we also provide some interesting  phase diagrams in Figures~\ref{fig:mixsub-tree} and~\ref{fig:addsub-tree}, and with some open problems in Section~\ref{sec:open}.
	
	\subsubsection{Assumptions for the fitness function and vertex-weight distribution}
	
	When stating the particular assumptions for the fitness function $f$ and the vertex-weight distribution, it is helpful to use the notions of \emph{slowly-varying} and \emph{regularly-varying} functions, which we recall in the following definition.
	
	\begin{definition}
		A measurable function $L: [0, \infty) \rightarrow [0, \infty)$ is said to be \emph{slowly varying} if for any $a > 0$ we have 
		\[
		\lim_{x \to \infty} \frac{L(ax)}{L(x)} = 1. 
		\]
		We say a measurable function $g:[0, \infty) \rightarrow [0, \infty)$ is \emph{regularly varying} with exponent $\beta \in \mathbb{R}$ if $g(x) = x^{\beta}L(x)$, where $L:[0,\infty)\to[0,\infty)$ is slowly varying. Finally, we say that a \emph{random variable} $W$ is regularly varying with exponent $z<0$ if the tail distribution $\P{W\geq x}$ is a regularly-varying function (in $x$) with exponent $z<0$. 
	\end{definition}
	
	We then assume that the fitness function satisfies the following assumption.
	
	\begin{assumption}[Fitness function]\label{ass:f} The fitness function $f$ is such that Equation~\eqref{eq:minfass} is satisfied with $w^*=0$.
		Furthermore, there exists $s:\N_0\to (0,\infty)$, which we call the \emph{degree function}, and continuous functions $g:[0,\infty)\to (0,\infty)$ and $h:[0,\infty)\to[0,\infty)$, which we call the \emph{weight functions}, such that $f$ satisfies
		\be \label{eq:f}
		f(i,w):=g(w)s(i)+h(w), \qquad i\in \N_0, w\geq 0.
		\ee 
		We then distinguish the following two cases, based on the weight functions.
		\begin{itemize}
			\item \hypertarget{additive}{\textbf{Additive weights.}}  $g\equiv 1$ and $h$ is regularly varying with exponent $1$.
			\item \hypertarget{mixed}{\textbf{Mixed weights.}} $g$ and $h$ are regularly varying with exponents $1$ and $\gamma\geq 0$, respectively.
		\end{itemize}
	\end{assumption}

	\begin{remark}
		The assumption that $w^*=0$ is not necessary, but used to simplify notation and computations. The results presented here follow equivalently for $w^*>0$ as well. {\Large \ensymboldremark}
	\end{remark}
	
	\begin{remark}
		The function $g$ and $h$ are regularly varying with exponents $1$ and $\gamma\geq 0$ in the mixed case; $g\equiv 1$ and $h$ is regularly varying with exponent $1$ in the additive case. The choice of the exponents is due to the fact that, when the vertex-weights are regularly varying with exponent $-(\alpha-1)<0$, then $g(W)$ and $h(W)$ are random variables that are regularly varying with exponents $-(\alpha-1)$ and $-(\alpha-1)/\gamma$, respectively (see Lemma~\ref{lemma:regexp} for details). Hence, changing the exponent of, for example, the regularly-varying function $g$ to $\zeta\neq 1$ in the mixed case, is equivalent to changing the exponent of the regularly-varying random variable $W$ from $-(\alpha-1)$ to $-(\alpha-1)/\zeta$ and changing the exponent of the regularly-varying function $h$ from $\gamma$ to $\gamma/\zeta$. As such, we take $\zeta=1$ without loss of generality. The function $h$ is regularly varying with exponent $1$  in the additive case without loss of generality for the same reason. 
		{\Large\ensymboldremark}
	\end{remark}
	
	\noindent Depending on the precise form of the degree function, the model behaviour markedly differs. We assume the \emph{degree function} $s$ satisfies the following assumption.
	\begin{assumption}[Degree function]\label{ass:deg}
		The degree function $s:\N_0\to(0,\infty)$ 
		satisfies one of the following cases.
		\begin{itemize} 
			\item \hypertarget{superlinear}{\textbf{Super-linear preferential attachment.}} $s$ is  regularly varying with exponent $p>1$.
			\item \hypertarget{barsuperlinear}{\textbf{Barely super-linear preferential attachment.}} $s$ is regularly varying with exponent $1$, such that $\sum_{i=0}^\infty s(i)^{-1}<\infty$. 
		\end{itemize}
		As a particular example of the barely super-linear case, we consider 
		\begin{itemize}
			\item \hypertarget{log-stretched}{\textbf{Barely super-linear log-stretched preferential attachment.}}  For some $\beta\in(0,1)$,
			\be \label{eq:slogstretched}
			s(i)=(i+1)\exp((\log(i+1))^\beta), \qquad i\in\N_0.
			\ee
		\end{itemize}
	\end{assumption}
	
	\begin{remark} 
		We note that these choices for the fitness function $f$ are not exhaustive, but do cover a wide range of examples. In particular, the weight types considered, i.e.\ additive or mixed weights, are common in the literature of linear preferential attachment with fitness (see e.g.~\cite{dereich-ortgiese,vdh-aging-mult-fitness-2017,bas,dereich-mailler-morters,rec-trees-fit,pref-att-neighbours}). When the vertex-weights are constant almost surely, the additive and mixed cases all fall into the same classes of super-linear preferential attachment. 
		The barely super-linear class has not been studied previously, as far as the authors are aware. {\Large \ensymboldremark}
	\end{remark}

	\noindent Finally, we require several assumptions on the distribution of the vertex-weights. For different choices of the fitness function $f$, different assumptions are required, which are summarised in the following overview.
	
	\begin{assumption}[Vertex-weight distribution]\label{ass:weights}
		The vertex-weights $(W_i)_{i\in\N}$ are i.i.d.\ and their tail distribution satisfies one (or more) of the following conditions.
		\begin{itemize}
			\item \textbf{Power law.} Let $\alpha>1$. We have the following two conditions. 
			\begin{enumerate} 
				\item There exist $\overline x>0$ and a slowly-varying function $\overline \ell:[0, \infty)\to [0, \infty)$ such that 
				\be \label{eq:weightasspowerlawub}
				\P{W\geq x}\leq \overline \ell(x)x^{-(\alpha-1)}, \qquad x\geq \overline x.
				\ee 
				\item There exist $ \underline x>0$ and a slowly-varying function $\underline \ell:[0, \infty)\to [0, \infty)$ such that 
				\be \label{eq:weightasspowerlawlb}
				\P{W\geq x}\geq \underline\ell(x)x^{-(\alpha-1)}, \qquad x\geq \underline x.
				\ee 
			\end{enumerate}
			\item \textbf{Log-stretched exponential.} Let $\nu>1$. We have the following two conditions.
			\begin{enumerate}
				\item There exist $\overline c,\overline x>0$ such that
				\be \label{eq:weightasslogstrechtedub}
				\P{W\geq x}\leq \e^{-\overline c(\log x)^\nu}, \qquad x>\overline x. 
				\ee 
				\item There exist $\underline c,\underline x> 0$ such that 
				\be \label{eq:weightasslogstrechtedlb}
				\P{W\geq x}\geq \e^{-\underline c(\log x)^\nu}, \qquad x\geq \underline x. 
				\ee 
			\end{enumerate}
		\end{itemize}
	\end{assumption}
	
	\subsubsection{Phase transitions for super-linear preferential attachment models with fitness}
	
	Based on these assumptions for the fitness function, degree function, and vertex-weight distribution, we can formulate the following theorem that treats the appearance of a unique infinite-degree vertex or a unique infinite path in $\cT_\infty$. 
	
	\begin{thm}
		\label{thrm:cmjexamples}
		Let $(\mathcal{T}_{i})_{i \in \mathbb{N}}$ be a $(W, f)$-recursive tree with fitness, where the fitness function $f$ and degree function $s$ satisfy one of the cases in Assumptions~\ref{ass:f} and~\ref{ass:deg}, respectively, and the vertex-weight distribution satisfies one of the cases in Assumption~\ref{ass:weights}. The tree $\mathcal T_\infty$ either contains a unique vertex with infinite degree and no infinite path almost surely, or contains a unique infinite path and no vertex with infinite degree almost surely, when the following conditions are met, based on the \emph{fitness function}, \emph{degree function}, and \emph{vertex-weight} assumptions: 
		\vspace{-7pt}
		\begin{table}[H]
			\centering
			\footnotesize
			\captionsetup{width=0.89\textwidth}
			\begin{tabular}{|c|c|l|l|}
				\hline
				\textbf{Fitness} & \textbf{Degree} &  \textbf{Star} & \textbf{Path} \\  
				\hline
				\hyperlink{mixed}{Mixed} & \hyperlink{superlinear}{Super-linear} & \eqref{eq:weightasspowerlawub} \& $(p-1)(\alpha-1)>\big(\gamma-\tfrac{\gamma-1}{p}\big)\vee 1$ & \eqref{eq:weightasspowerlawlb} \& $(p-1)(\alpha-1)<\big(\gamma-\tfrac{\gamma-1}{p}\big)\vee 1$ \\ 
				\hline 
				\hyperlink{additive}{Additive} & \hyperlink{superlinear}{Super-linear} & \eqref{eq:weightasspowerlawub} \& $p(\alpha-1)>1$ & \eqref{eq:weightasspowerlawlb} \& $p(\alpha-1)<1$ \\
				\hline
				\hyperlink{mixed}{Mixed} & \hyperlink{log-stretched}{Log-stretched} & \eqref{eq:weightasslogstrechtedub} \& $\beta\nu>1$ & \eqref{eq:weightasslogstrechtedlb} \& $\beta \nu < 1$ \\
				\hline  
			\end{tabular}
			\caption{\footnotesize The first column represents the form of the fitness function, as in Assumption~\ref{ass:f}; the second represents the form of the degree function, as in Assumption~\ref{ass:deg}. The third and fourth column, respectively, list the required assumptions on the vertex-weight distribution, as in Assumption~\ref{ass:weights}, together with the choices of the parameters that lead to either a unique node of infinite degree or a unique infinite path.}
		\end{table}
	\end{thm}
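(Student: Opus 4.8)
The plan is to deduce Theorem~\ref{thrm:cmjexamples} directly from Theorem~\ref{thm:star-path-rif}. As Assumption~\ref{ass:f} already guarantees that $f$ satisfies~\eqref{eq:minfass} with $w^*=0$, it suffices to check, in each of the three rows of the table, that the ``Star'' parameter condition implies~\eqref{eq:star-explosive-rif} for some $c<1$ and that the ``Path'' parameter condition implies~\eqref{eq:path-explosive-rif} for some $c>1$ (which, as noted there, suffices for the ``Path'' conclusion). Both conditions are precisely statements about the convergence or divergence of $\sum_{n\geq 1}\E{\Pi(W,\lambda_n)}$, where
\[
\Pi(w,\lambda):=\prod_{i=0}^{\infty}\frac{f(i,w)}{f(i,w)+\lambda},
\]
with $\lambda_n:=c\mu_n^{-1}$ in the star case and $\lambda_n:=c(\mu^w_n)^{-1}\log n$ (for each fixed root weight $w$) in the path case; since $\mu^w_n\asymp\mu_n$ up to a constant depending on $w$, the two choices of $\lambda_n$ differ only by slowly varying factors. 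The first computation is the estimation of $\mu_n=\mu^{w^*}_n=\sum_{i\geq n}f(i,0)^{-1}$: since $f(i,0)\asymp s(i)$, Karamata's theorem gives $\mu_n\asymp n^{1-p}/L_s(n)$ in the super-linear case ($s(i)=i^pL_s(i)$, $p>1$), so that $\lambda_n\asymp n^{p-1}L_s(n)$, while in the barely super-linear log-stretched case the substitution $u=\log x$ in the comparison integral yields $\mu_n\asymp(\log n)^{1-\beta}\e^{-(\log n)^\beta}$, so that $\lambda_n\asymp(\log n)^{\beta-1}\e^{(\log n)^\beta}$ and $\log\lambda_n\asymp(\log n)^\beta$.

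The core step is the estimation of $\Pi(w,\lambda)$ as $\lambda\to\infty$, which I would carry out by splitting the product over $i$ according to whether $g(w)s(i)$ lies (i)~below $h(w)$, where each factor is $\asymp h(w)/(h(w)+\lambda)$; (ii)~between $h(w)$ and $\lambda$, where it is $\asymp g(w)s(i)/\lambda$; or (iii)~above $\lambda$, where it is $\approx 1-\lambda/(g(w)s(i))$; and then summing logarithms using Stirling's formula together with Karamata estimates for the partial sums of $s(i)^{-1}$ and $\log s(i)$. In the super-linear case this gives, up to slowly varying corrections, $\log\Pi(w,\lambda)=-\Theta\bigl((\lambda/g(w))^{1/p}\bigr)+\Theta\bigl((h(w)/g(w))^{1/p}\bigr)$ when $h(w)\lesssim\lambda$ --- the additive term carrying a positive sign, reflecting that a larger additive part accelerates explosion --- and $\log\Pi(w,\lambda)=-\Theta\bigl(\lambda\,g(w)^{-1/p}h(w)^{(1-p)/p}\bigr)$ when $h(w)\gtrsim\lambda$; in particular $\Pi(w,\lambda)$ transitions from being super-polynomially small to being $\asymp 1$ as $w$ crosses a threshold $w^*_\lambda$ with $w^*_\lambda\asymp\lambda^{1/q}$, $q:=\bigl(\gamma-\tfrac{\gamma-1}{p}\bigr)\vee 1$, in the mixed case, and with $w^*_\lambda\asymp\lambda^{p/(p-1)}$ in the purely additive case (where $g\equiv 1$ makes the leading exponent $w$-independent for small $w$ --- this is exactly why the additive threshold differs from the mixed one). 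In the barely super-linear log-stretched case the same regime-splitting shows $\Pi(w,\lambda_n)$ transitions at a threshold $w^*$ with $\log w^*\asymp\log\lambda_n\asymp(\log n)^\beta$.

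It remains to integrate over $W$. Using $g(W)\asymp W$ and $h(W)\asymp W^\gamma$ up to slowly varying factors (cf.\ Lemma~\ref{lemma:regexp}), the transition points above, and the tail bounds of Assumption~\ref{ass:weights}, a Laplace-type argument --- splitting the $W$-integral at the threshold $w^*_{\lambda_n}$ and noting that the region below it contributes only $\e^{-\Theta(n^\delta)}$ for some $\delta>0$ --- gives $\E{\Pi(W,\lambda_n)}\asymp\P{W\gtrsim w^*_{\lambda_n}}$. With the power-law tails~\eqref{eq:weightasspowerlawub}--\eqref{eq:weightasspowerlawlb} this is $\asymp n^{-(p-1)(\alpha-1)/q}$ in the mixed/super-linear case and $\asymp n^{-p(\alpha-1)}$ in the additive/super-linear case (since $w^*_{\lambda_n}\asymp n^{p}$); with the log-stretched-exponential tails~\eqref{eq:weightasslogstrechtedub}--\eqref{eq:weightasslogstrechtedlb} it is $\asymp\e^{-\Theta((\log w^*)^\nu)}=\e^{-\Theta((\log n)^{\beta\nu})}$. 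Summing over $n$: with the upper tail bounds the series converges exactly under the ``Star'' conditions (namely $(p-1)(\alpha-1)>q$, $p(\alpha-1)>1$, and $\beta\nu>1$, respectively), and with the lower tail bounds in their place the matching lower bounds show it diverges under the ``Path'' conditions; the extra $\log n$ in $\lambda_n$ for~\eqref{eq:path-explosive-rif} and the bounded-ratio gap between $\mu^w_n$ and $\mu_n$ only perturb slowly varying (sub-polynomial) factors and are harmless at the strict inequalities. Theorem~\ref{thm:star-path-rif} then yields the claimed dichotomy in each row.

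The main obstacle I anticipate is this two-level asymptotic analysis: propagating the three product-regimes and the $W$-integral splitting through simultaneously, and above all controlling every slowly varying function uniformly --- via Potter bounds --- so that the heuristic $\Theta(\cdot)$'s become genuine two-sided bounds (the phase transitions being governed by exponents that slowly varying factors cannot move across the critical value). The non-power-law estimates needed for the log-stretched example, in particular the asymptotics of $\sum_{i>N}s(i)^{-1}$ for $s$ regularly varying with exponent $1$, require a separate and somewhat delicate treatment.
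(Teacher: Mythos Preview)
Your overall strategy—reduce to Theorem~\ref{thm:star-path-rif} and estimate $\E{\Pi(W,\lambda_n)}$ via a threshold on $W$—is sound, and the threshold exponents you compute ($w^*_{\lambda}\asymp\lambda^{1/q}$ in the mixed super-linear case, $w^*_{\lambda}\asymp\lambda^{p/(p-1)}$ in the additive super-linear case, $\log w^*_{\lambda}\asymp(\log n)^{\beta}$ in the log-stretched case) are the right ones. However, the paper's route is considerably lighter than the two-sided three-regime analysis you propose, in two respects.

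\emph{Star case.} Rather than deriving pointwise asymptotics of $\log\Pi(w,\lambda)$ and then integrating, the paper simply bounds
\[
\E{\Pi(W,c\mu_n^{-1})}\ \le\ \exp\!\Bigl(-c\mu_n^{-1}\sum_{j\ge 0}\frac{1}{f(j,k_n)+c\mu_n^{-1}}\Bigr)+\P{W\ge k_n},
\]
using that $\Pi(w,\lambda)$ is (after replacing $g,h$ by their running suprema, which are again regularly varying) monotone in $w$. One then picks $k_n$ so that the tail term is summable and bounds the exponential by estimating a single sum $\sum_j(f(j,k_n)+c\mu_n^{-1})^{-1}$; no Stirling or regime-splitting of the product is needed. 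The Potter-bound bookkeeping you flag as the main obstacle is still present, but it enters in one place (the sum), not across three regimes simultaneously.

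\emph{Path case.} Here the paper does \emph{not} verify~\eqref{eq:path-explosive-rif}; it instead checks the weaker condition~\eqref{eq:div-condition} with $\nu_n^w=d\mu_n^w$ directly. Conditioning on $\{W\ge k_n\}$ and stochastically dominating $\cP$ by $\widetilde\cP_{k_n}$, Markov's inequality gives
\[
\P{\cP<d\mu_n^w}\ \ge\ \Bigl(1-\tfrac{1}{d\mu_n^w}\sum_{j\ge 0}\tfrac{1}{f(j,k_n)}\Bigr)\P{W\ge k_n},
\]
so one only needs the \emph{mean} comparison $\limsup_n(\mu_n^w)^{-1}\sum_j f(j,k_n)^{-1}<1$ together with non-summability of $\P{W\ge k_n}$. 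This completely bypasses lower-bounding the Laplace transform $\Pi(w,\lambda)$, and in particular the divergence you want comes from the tail of $W$ alone, so the extra $\log n$ in~\eqref{eq:path-explosive-rif} never enters.

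In short: your plan would work, but the paper trades your uniform two-sided control of $\Pi(w,\lambda)$ for one-sided bounds obtained by crude monotonicity (star) and a first-moment/Markov argument (path), which makes the slowly-varying bookkeeping substantially easier.
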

	\noindent The proof of Theorem~\ref{thrm:cmjexamples} appears in Section~\ref{sec:thm-star-phase}.
	
	\begin{remark}\label{rem:otherdistr}
		Though Theorem~\ref{thrm:cmjexamples} is presented for exponentially distributed inter-birth times, we expect the same results to hold for a large family of distributions, such that $X_w(uj)$ has mean $1/f(j-1,w)$ for each $u\in\cU_\infty,j\in\N$, and $w\in [0,\infty)$. In particular, we discuss the extension of Theorem~\ref{thrm:cmjexamples} to the following examples Appendix~\ref{sec:app}:
		\begin{enumerate}
			\item \textbf{Gamma distribution}: For each $u\in \cU_\infty,j\in\N$, and $w\in[0,\infty)$, the inter-birth time $X_w(uj)$ follows a $\mathrm{Gamma}(k,kf(j-1,w))$ distribution, for some $k>0$.
			\item \textbf{Beta distribution}: For each $u\in \cU_\infty,j\in\N$, and $w\in[0,\infty)$, the inter-birth time $X_w(uj)$ equals $\frac{\alpha+\beta}{\alpha}\frac{1}{f(j-1,w)}B(uj)$, where $(B(uj))_{u\in\cU_\infty,j\in\N}$ is a sequence of i.i.d.\ copies of a $\mathrm{Beta}(\alpha,\beta)$ random variable, for some $\alpha\geq 1$ and $\beta\leq (0,1]$.
			\item \textbf{Rayleigh distribution}: For each $u\in \cU_\infty,j\in\N$, and $w\in[0,\infty)$, the inter-birth time $X_w(uj)$ follows a $\mathrm{Rayleigh}(\sqrt{2/\pi}/f(j-1,w))$ distribution. {\Large\ensymboldremark }
		\end{enumerate}
	\end{remark}
	
	\begin{remark}
		In Section~\ref{sec:appadd} we discuss three more examples of barely super-linear preferential attachment with fitness: the~\hyperlink{log-stretched}{log-stretched} case with~\hyperlink{additive}{additive} fitness, as well as a~\hyperlink{poly-log}{poly-log} case with either additive or~\hyperlink{mixed}{mixed} fitness. Again, the extension of the results to the other inter-birth time distributions as in Remark~\ref{rem:otherdistr} apply here, too. 
		
		We do not include these results here, as we cannot prove a complete phase diagram, i.e.\ for certain parameter choices we cannot prove the appearance of a unique infinite-degree vertex nor a unique infinite path.  {\Large\ensymboldremark }
	\end{remark}
	
	\begin{remark}
		When the vertex-weights are almost surely bounded (or, in particular, a deterministic constant), it follows from the above theorem that in all cases, a unique vertex with infinite degree emerges in $\cT_\infty$ almost surely. Indeed, in such a case the vertex-weight distribution would satisfy, the upper bounds in~\eqref{eq:weightasspowerlawub} or~\eqref{eq:weightasslogstrechtedub} for any $\alpha>1,\nu>1$. As such, we can take $\alpha\to \infty,\nu\to\infty$ to conclude the claim. In the case $s(i)=(i+1)^p$ with $p>1$, this recovers the results of Oliveira and Spencer~\cite[Theorem $1.1$]{Oliveira-spencer}.{\Large\ensymboldremark }
	\end{remark}
	
	\begin{remark}
		It is interesting to note that, though two different techniques with distinct assumptions are used to prove Theorems~\ref{thm:star} and~\ref{thm:path} (for the existence of an infinite star or path in $\cT_\infty$), the application of these two general results in Theorem~\ref{thrm:cmjexamples} allows us to obtain a complete phase diagram for the three examples discussed here.{\Large\ensymboldremark }
	\end{remark}
	
	\noindent When the infinite tree $\cT_\infty$ contains a unique vertex with infinite degree almost surely, we can also quantify Theorem~\ref{thrm:sub-treecount}, in the sense that, depending on assumptions on the fitness function $f$ and vertex-weight distribution, we can determine almost surely whether or not $\cT_\infty$ contains an infinite number of copies of which kinds of sub-trees. We remark that we can do this in a relatively general manner, subject to the assumption that $\cT_\infty$ contains a unique vertex with infinite degree. That is, for \emph{any} degree function $s$ that satisfies the (barely) super-linear cases in Assumption~\ref{ass:deg} the results below apply. 
	
	To this end, we define for a finite tree $T$ and constant $z>0$,
	\be\label{eq:nus}
	G_1=G_1(T,z):=\sum_{v\in T} \outdeg{v,T} \ind{\outdeg{v,T}>z},\qquad G_2=G_2(T,z):=\sum_{v\in T}\ind{\outdeg{v,T}>z}.
	\ee 
	We can then have the following result.
	
	\begin{thm}\label{thrm:sub-treecount}
		Let $(\cT_i)_{i\in\N}$ be a $(W,f)$-recursive tree with fitness, where we assume that $f$ satisfies Assumption~\ref{ass:f}, that the degree function $s$ satisfies Assumption~\ref{ass:deg}, and that the vertex-weight distribution satisfies~\eqref{eq:weightasspowerlawub} and~\eqref{eq:weightasspowerlawlb} for some slowly-varying functions $\underline \ell,\overline\ell$, respectively, and some $\alpha>1$. Furthermore, we assume that $\cT_\infty$ contains a unique vertex with infinite degree, almost surely. Fix $k\in\N$ and let $T$ be a sibling-closed tree of size $k+1$. Then, we have the following.
		\begin{itemize}
			\item \textbf{\hyperlink{superlinear}{Super-linear case}}: We take $\gamma=1$ for~\hyperlink{additive}{additive} weights, $\gamma\geq 0$ for~\hyperlink{mixed}{mixed} weights, and set $z=(\alpha-1)/(\gamma\vee 1)$. The tree $T$ almost surely appears infinitely often as a sub-tree of $\cT_\infty$ when
			\be \label{eq:Tsumcond}
			p<1+\frac{1}{k-(G_1(T,z)-zG_2(T,z))}.
			\ee 
			The tree $T$ almost surely appears finitely often as a sub-tree of $\cT_\infty$ when
			\be \label{eq:Tnonsumcond}
			p>1+\frac{1}{k-(G_1(T,z)-zG_2(T,z))}.
			\ee 
			\item \textbf{\hyperlink{barsuperlinear}{Barely super-linear case}}: The tree $T$ appears as a sub-tree of $\cT_\infty$ infinitely often, almost surely.  
		\end{itemize}
	\end{thm}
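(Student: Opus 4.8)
The plan is to deduce everything from Theorem~\ref{thm:sub-treecount} and then perform a regular-variation analysis of the general term of the series~\eqref{eq:tree-summ-cond}. First one verifies that the hypotheses of Theorem~\ref{thm:sub-treecount} hold under the present assumptions: $f$ satisfies~\eqref{eq:minfass} with $w^{*}=0$ by Assumption~\ref{ass:f}, the degree function satisfies Assumption~\ref{ass:deg}, $\cT_\infty$ has a unique star by hypothesis, and the requirement $\mu_n^w\ge c_1(w)\mu_n$ with $c_1(w)\in(0,1]$ holds because, with $f(i,w)=g(w)s(i)+h(w)$, we have $f(i,w)\ge f(i,0)$ for all $w$ (as $w^{*}=0$), so $\mu_n^w\le\mu_n$, while $f(i,w)/f(i,0)\to g(w)/g(0)<\infty$ as $i\to\infty$ and is therefore bounded in $i$, giving $\mu_n^w\ge c_1(w)\mu_n$ for a suitable $c_1(w)$. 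Hence $\cT_\infty$ contains $T$ infinitely often if and only if the series in~\eqref{eq:tree-summ-cond} diverges, and finitely often if and only if it converges, so the task reduces to determining the behaviour of the general term $a_n:=\sum_{O\in\mathcal O(T)}\E{\prod_{j=0}^{k}(\,\cdots\,)}$ as $n\to\infty$.

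The crux is the estimate $a_n\asymp\mu_n^{\beta}$, up to a slowly-varying factor, where
\[
\beta:=k-\bigl(G_1(T,z)-zG_2(T,z)\bigr)=\sum_{v\in T}\min\bigl(\outdeg{v,T},z\bigr),\qquad z:=\frac{\alpha-1}{\gamma\vee1};
\]
the second identity uses $\sum_{v\in T}\outdeg{v,T}=k$ and $\min(d,z)=d-(d-z)_+$. To establish it, substitute $f(i,w)=g(w)s(i)+h(w)$ into~\eqref{eq:tree-summ-cond}; as every degree argument occurring there is at most $k-1$ and $s$ takes only finitely many positive values on $\{0,\dots,k-1\}$, the quantity $f(\ell,W)$ is, uniformly in $\ell\le k-1$, comparable up to constants to $1+g(W)+h(W)$, which by regular variation of $g$ and $h$ (exponents $1$ and $\gamma$) and the power-law tails~\eqref{eq:weightasspowerlawub}--\eqref{eq:weightasspowerlawlb} of $W$ (cf.\ Lemma~\ref{lemma:regexp}) has tail regularly varying with exponent $-z$. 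Writing $M:=\mu_n^{-1}\to\infty$ and integrating the product over the i.i.d.\ weights $(W_{v_j})_{j=0}^k$, one sees that a vertex $v$ enters the numerators through a factor of order $\Phi_v^{\,\outdeg{v,T}}$ (with $\Phi_v\ge1$ of tail exponent $-z$) and enters the denominators through a compensating power of $\Phi_v$ for as long as $v$ is still ``active''; since a vertex of out-degree $d$ is active during at least $d$ construction steps, the cancellations leave, per vertex, a contribution of order $M^{-\min(\outdeg{v,T},z)}$ --- a finite moment $\E{\Phi_v^{\,\outdeg{v,T}}}<\infty$ times $M^{-\outdeg{v,T}}$ when $\outdeg{v,T}<z$, and a truncated moment scaling like $M^{-z}$ when $\outdeg{v,T}>z$ (with a harmless $\log M$ in the borderline case $\outdeg{v,T}=z$). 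Multiplying over the vertices and over the finitely many orderings gives $a_n\asymp M^{-\beta}=\mu_n^{\beta}$, as claimed; note $\beta\in(0,k]$, with $\beta>0$ because $z>0$ and $k\ge1$ forces $\outdeg{v,T}\ge1$ for some $v$.

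Granting this, the conclusion follows from standard Karamata arguments. In the super-linear case $s$ is regularly varying with exponent $p>1$, so $\mu_n=\sum_{i\ge n}f(i,0)^{-1}$ is regularly varying with exponent $1-p$ and $a_n$ is regularly varying with exponent $-(p-1)\beta$; by the integral test $\sum_n a_n$ diverges when $(p-1)\beta<1$ and converges when $(p-1)\beta>1$. Hence $T$ appears infinitely often when $p<1+1/\beta=1+\bigl(k-(G_1(T,z)-zG_2(T,z))\bigr)^{-1}$, which is~\eqref{eq:Tsumcond}, and finitely often when $p>1+1/\beta$, which is~\eqref{eq:Tnonsumcond}. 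In the barely super-linear case $s$ is regularly varying with exponent $1$ with $\sum_i s(i)^{-1}<\infty$, so by Karamata's theorem $\mu_n$ is slowly varying; therefore $a_n$ is slowly varying, and the sum of any positive slowly-varying sequence diverges (since $n\mapsto na_n$ is then regularly varying with exponent $1$ and tends to $\infty$), so $\sum_n a_n=\infty$ and $T$ appears infinitely often.

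The main obstacle is the estimate $a_n\asymp\mu_n^{\beta}$: one must control the interplay between the heavy-tailed weights --- which push high-out-degree vertices toward divergent numerator moments --- and the $\mu_n^{-1}$-regularised denominators, which must absorb exactly the excess. In particular one has to argue that denominator mass shared among several simultaneously active vertices still compensates each of them separately, which is exactly where the fact that a vertex of out-degree $d$ is active for at least $d$ steps enters, and why the per-vertex exponent equals $\min(\outdeg{v,T},z)$ rather than something smaller; and one has to track the slowly-varying corrections and the $\outdeg{v,T}=z$ logarithmic factor with enough care to see that they do not affect convergence or divergence away from the critical line $p=1+1/\beta$, which the statement does not address. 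The remaining ingredients --- the hypothesis check for Theorem~\ref{thm:sub-treecount}, the Karamata computations, and assembling the two cases --- are routine.
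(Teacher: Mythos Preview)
Your proposal is correct and takes essentially the same approach as the paper: reduce to Theorem~\ref{thm:sub-treecount}, show the general term of~\eqref{eq:tree-summ-cond} varies regularly in $\mu_n$ with exponent $\sum_{v\in T}\min(\outdeg{v,T},z)=k-(G_1(T,z)-zG_2(T,z))$ via per-vertex truncated-moment contributions for a heavy-tailed $f(\cdot,W)$, and finish by Karamata. The paper's implementation differs only organizationally --- it isolates the estimate as Proposition~\ref{prop:sumnuinfmean}, decouples the vertices by cruder bounds (for the upper bound it simply discards all but one denominator term per step, so your ``shared denominator mass'' obstacle never arises; for the lower bound it conditions on $\{f^+(k,W)<\mu_n^{-1}\}$ and controls the residual error $\P{f^+(k,W)\ge\mu_n^{-1}}^k$ via the elementary inequality $kz\ge k-(G_1-zG_2)$), and invokes a truncated-moment lemma from Mikosch in place of your direct computation --- but the ideas are the same.
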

	
	\noindent The proof of Theorem~\ref{thrm:sub-treecount} appears in Section~\ref{sec:sub-treeproof}, in particular, Section~\ref{sec:proof-of-sub-treecount-superlinear}.
	
	\begin{remark}
		In fact, we have a strengthening of Theorem~\ref{thrm:sub-treecount} which applies to \emph{any} $(W,f)$-recursive tree with fitness (in particular without the assumption that $W$ is real-valued), as long as, for any $j \in \mathbb{N}_{0}$, $f(j,W)$ satisfies~\eqref{eq:weightasspowerlawub} and~\eqref{eq:weightasspowerlawlb} for some slowly-varying functions $\overline \ell_j,\underline \ell_j$, respectively, and an exponent $z>0$, and for each $k\in\N_0$ there exists $i_k\in\{0,\ldots ,k\}$ such that $\inf_{i\leq k}f(i,w)=f(i_k,w)$ for all $w\in S$. See Proposition~\ref{prop:sumnuinfmean} for more details. {\Large\ensymboldremark }
	\end{remark}
	
	\begin{remark}
		When the vertex-weight distribution satisfies $\E{W^a}<\infty$ for all $a>0$, it directly follows from the assumptions on the fitness function $f$ in Assumption~\ref{ass:f} and Corollary~\ref{cor:sumnufinmean} that the results in Theorem~\ref{thrm:sub-treecount} extend to this case, where we set $G_1(T,z)=G_2(T,z)=0$. 
		
		Furthermore, we stress that for \emph{all}~\hyperlink{barsuperlinear}{barely super-linear} cases and for certain~\hyperlink{superlinear}{super-linear} cases (see the upcoming discussion), sub-trees $T$ of \emph{arbitrary} size appear infinitely often as a sub-tree of  $\cT_\infty$, almost surely. This is markedly different when compared to the super-linear preferential attachment model $f(i,w)=(i+1)^p$ ($p>1$) studied by Oliveira and Spencer in~\cite{Oliveira-spencer}, where only sub-trees $T$ with size at most $\lceil (p-1)^{-1}\rceil$ can appear infinitely often almost surely. {\Large\ensymboldremark }
	\end{remark}
	
	\subsubsection{Discussion related to Theorem~\ref{thrm:sub-treecount}: the super-linear case}
	\label{sec:discussion-phase-diagrams}
	Let us provide some intuition for Theorem~\ref{thrm:sub-treecount} by discussing two particular examples.\\[0.02cm]
	
	\noindent \emph{Super-linear degree, mixed weights.} We let $f(i,w)=(w+1)(i+1)^p$ for some $p>1$. That is, we consider the mixed weights case for the fitness function $f$, with $g(x):=x+1,h\equiv 0$, and $\gamma=0$, as in Assumption~\ref{ass:f}, and the super-linear case for the degree function $s$, i.e.\ $s(i):=(i+1)^p$, $p>1$, as in Assumption~\ref{ass:deg}. We require that $\cT_\infty$ contains a unique vertex with infinite degree almost surely, so that, by Theorem~\ref{thrm:cmjexamples}, we assume that the vertex-weight distribution satisfies~\eqref{eq:weightasspowerlawub} and that $(p-1)(\alpha-1)>1$. Then, additionally assume that the vertex-weight distribution satisfies~\eqref{eq:weightasspowerlawlb} with the same $\alpha>1$ but potentially with a different slowly-varying function. Now, Theorem~\ref{thrm:sub-treecount} states that a tree $T$ of size $k+1$, for some $k\in\N$, appears infinitely often as a sub-tree of $\cT_\infty$ almost surely when 
	\be 
	p<1+\frac{1}{k-(G_1(T,\alpha-1)-(\alpha-1)G_2(T,\alpha-1))}. 
	\ee 
	First, noting that the sum of all degrees equals $|T|-1=k$, we observe that
	\be \label{eq:Gspos}
	G_1(T,\alpha-1)-(\alpha-1)G_2(T,\alpha-1)\in[0,k),
	\ee
	for any choice of $T$ and $\alpha$, so that the upper bound yields a restriction on $p$. We omit the arguments of $G_1$ and $G_2$ from here on out for ease of writing. Combining our two assumptions we then require that
	\be 
	\frac{1}{\alpha-1}<p-1<\frac{1}{k-(G_1-(\alpha-1)G_2)},
	\ee 
	and we can only find $p>1$ that satisfy both inequality when 
	\be \label{eq:Gineq}
	k-(G_1-(\alpha-1)G_2)<\alpha-1.
	\ee 
	Now, if $k<\alpha-1$, there is no vertex in $T$ with an out-degree larger than $\alpha-1$, so that $G_1=G_2=0$ and the inequality is satisfied.  For $k\geq \alpha-1$ we distinguish two cases. $(i)$ There is no vertex in $T$ with a degree larger than $\alpha-1$. It again follows that $G_1=G_2=0$, so that the inequality in~\eqref{eq:Gineq} is \emph{not} satisfied; $(ii)$ There exists at least one  vertex with degree larger than $\alpha-1$. Then, $k-G_1>0$ since $k$ equals the sum of all degrees, whilst $(\alpha-1)(1-G_2)\leq 0$ (since $G_2\geq 1$), so that~\eqref{eq:Gineq} is not satisfied. 
	
	We thus conclude that, when $(p-1)(\alpha-1)>1$, only trees $T$ with size $k+1$, where $k<\alpha-1$, appear infinitely often. In particular, we do not require any assumptions on the \emph{structure} of such trees $T$; only their size is relevant. The reversed inequality in Theorem~\ref{thrm:sub-treecount} can be analysed in a similar manner to derive the phase diagram in Figure~\ref{fig:mixsub-tree}.\\[0.02cm]
	
	\noindent \emph{Super-linear degree, additive weights.} We let $f(i,w)=(i+1)^p+w$ for some $p>1$. That is, we consider the additive weight case for the fitness function $f$, with $h(x)=x$, as in Assumption~\ref{ass:f}, and the super-linear case for the degree function $s$, i.e.\ $s(i)=(i+1)^p,p>1$, as in Assumption~\ref{ass:deg}. We make the same assumptions as in the first example, except that we now require $p(\alpha-1)>1$. First, when $p$ is so large that $(p-1)(\alpha-1)>1$, we can derive the same conclusions as in the first example. When $p$ is such that 
	\be \label{eq:prange}
	\frac{1}{\alpha-1}\vee 1<p<1+\frac{1}{\alpha-1},
	\ee 
	different behaviour can be observed. Here, as we illustrate with the following particular family of trees, we observe the peculiar behaviour that the \emph{structure} of a tree $T$ plays a role in terms of whether it appears infinitely or finitely often as a sub-tree in $\cT_\infty$. 
	
	Let $T$ be an $m$-ary tree of size $k+1$ for some $m,k\in\N$ such that $k=\ell m$ for some $\ell\in\N$, i.e.\ a tree where the $\ell$ internal vertices (non-leaf vertices) have out-degree $m$ (note that stars are the particular case $\ell=1$). We observe that, by distinguishing the two cases $m > \alpha -1$ and $m \leq \alpha -1$ (where in the latter case $G_1 = G_2 = 0$),
	\be 
	G_1(T,\alpha-1)-(\alpha-1)G_2(T,\alpha-1)=\max\{k-(\alpha-1)\ell,0\}.
	\ee 
	As a result, recalling $k = \ell m$, an $m$-ary tree $T$ appears infinitely often as a sub-tree of $\cT_\infty$, almost surely, when 
	\be 
	\frac{1}{\alpha-1}\vee 1<p<1+\frac{1}{\ell \min\{\alpha-1,m\}}.
	\ee 
	Observe that for $\ell=1$, i.e.\ stars of size $k+1=m+1$, these inequalities are satisfied by any $p$ that satisfies~\eqref{eq:prange}, so that a star of \emph{any} size appears infinitely often as a sub-tree of $\cT_\infty$, almost surely, when $p$ satisfies~\eqref{eq:prange}. For $\ell\geq 2$, these inequalities can be satisfied only when $\alpha\leq 2$ and $\ell<1/(2-\alpha)$ or when $\alpha>2$. 
	
	Again, the reversed inequality in Theorem~\ref{thrm:sub-treecount} can be analysed in a similar manner to derive the phase diagram in Figure~\ref{fig:addsub-tree}, where some of the phases for ternary trees ($m=3$) are shown.
	
	We thus observe that it is possible for two trees $T_1, T_2$ of sizes $k_1+1<k_2+1$, respectively, to appear finitely and infinitely often as a sub-tree of $\cT_\infty$, almost surely. This behaviour is completely opposite to the  behaviour of the first example (or when $(p-1)(\alpha-1)>1$ in this example).
	
	As a final remark, any tree of any size appears infinitely often as a sub-tree of $\cT_\infty$ in the~\hyperlink{barsuperlinear}{barely sub-linear} case discussed in Theorem~\ref{thrm:sub-treecount}. As discussed in Section~\ref{sec:techniques-discussion-1}, this behaviour  has not been observed in explosive tree models studied so far.
	
	\begin{figure}[H]
		\centering
		\captionsetup{width=0.88\textwidth}
		\includegraphics[width=0.75\textwidth]{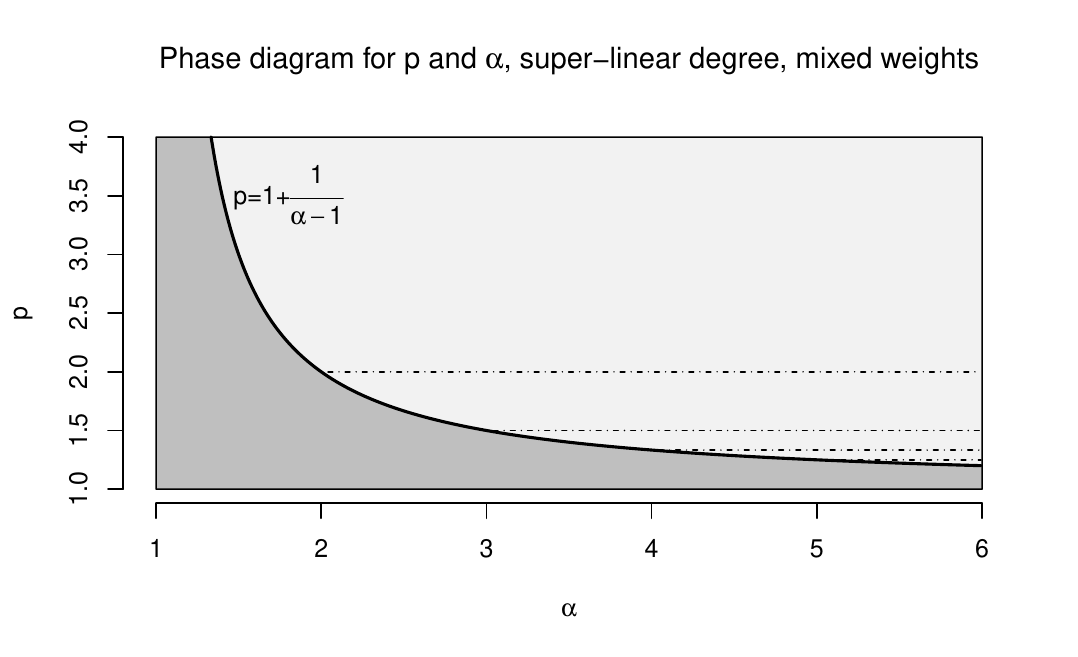}		\caption{\footnotesize The phase diagram for the case $f(i,w)=(w+1)(i+1)^p$. Below the curve $(p-1)(\alpha-1) = 1$ (the dark shaded region), the tree $\cT_\infty$ contains a unique infinite path, whereas above the curve (the light shaded region) the tree contains a unique vertex of infinite degree. Moreover, in the light shaded region, below the $k$th horizontal dotted line, corresponding to $1 + 1/k$, (with $k=1,2,3,4$ visible here), any tree of size $k+1$ appears as a sub-tree of a child of the node of infinite degree infinitely often, whereas above the $k$th horizontal line, in the light shaded region, trees of size $k+1$ appear only finitely often as a sub-tree of a child of the node of infinite degree. }
		\label{fig:mixsub-tree}
	\end{figure}
	
	\begin{figure}[H]
		\centering
		\captionsetup{width=0.88\textwidth}
		\includegraphics[width=0.75\textwidth]{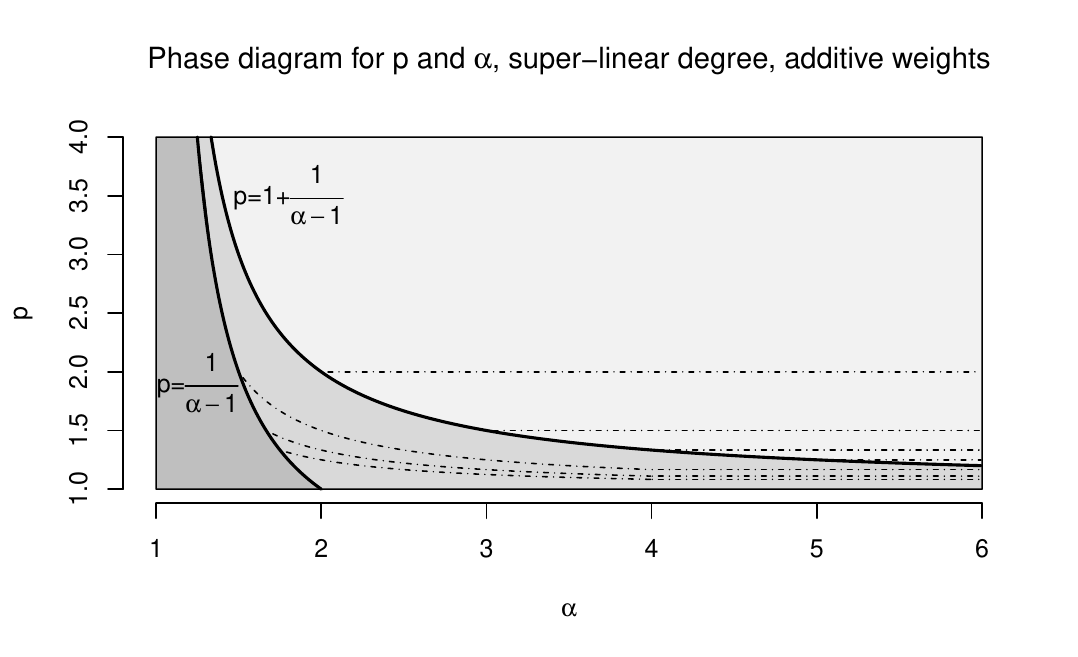}
		\caption{\footnotesize The phase diagram for the case $f(i,w)=(i+1)^p+w$. Below the curve $p(\alpha-1) = 1$ (the dark shaded region), the tree $\cT_\infty$ contains a unique infinite path, whereas above the curve (the lighter shaded regions) the tree contains a unique vertex of infinite degree.  In the area above the line $(p-1)(\alpha-1) = 1$ (lightest shaded region), below the $k$th horizontal dotted line, corresponding to $1 + 1/k$, (with $k=1,2,3,4$ visible here), any tree of size $k+1$ appears as a sub-tree of a child of the node of infinite degree infinitely often, whereas above the $k$th horizontal line, in the lightest shaded region, trees of size $k+1$ appear only finitely often as a sub-tree of a child of the node of infinite degree. In the area in between the curves $p(\alpha-1) =1$ and $(p-1)(\alpha-1)=1$ (the semi-dark region) a star of \emph{any} size appears as a sub-tree of a child of the node of infinite degree infinitely often. Additionally, below the $\ell$th curve in this region, corresponding to $(p-1)\ell\min\{\alpha-1,3\}=1$, (with $\ell=2,3,4$ visible here) ternary trees of size $3\ell+1$ appear as a sub-tree of the node of infinite degree infinitely often, whereas above the $\ell$th curve, such trees only appear finitely often. }\label{fig:addsub-tree}
	\end{figure}

	\subsection{Proof techniques} \label{sec:techniques-discussion-2}
	
	The proofs of the results in Section~\ref{sec:examples} generally apply the results of Section~\ref{sec:results}. However, we are unaware of previous proofs of~Lemma~\ref{lem:lower-bound-infinite-events}, used to prove Theorem~\ref{thm:star-path-rif}. Moreover, exploiting the memory-less property of the exponential distribution allows the derivation of a necessary and sufficient condition for the emergence of a structure infinitely often when $\cT_\infty$ contains an infinite star, as in Theorem~\ref{thm:sub-treecount}. The proof of this theorem, we believe, is more elegant than the approach used to prove~\cite[Theorem~1.2]{Oliveira-spencer} for the particular case $f(i,w)=(i+1)^p, p>1$. The assumptions made in Section~\ref{sec:gen-cmj} allow us to deduce a variety of phase-transitions in applied models (cf.\ Theorem~\ref{thrm:cmjexamples}), which we believe extend to a fairly general family of distributions (see Remark~\ref{rem:otherdistr}). 
	
	We prove the most general results, as presented in Section~\ref{sec:recursive}, in Section~\ref{sec:applications}, and prove the results for the examples presented in Section~\ref{sec:gen-cmj} in Section~\ref{sec:examplesproof}.

	\subsection{Open problem} \label{sec:open}
	
	It is unclear whether or not \emph{any} explosive $(X,W)$-CMJ process for which the vertex-weights are almost surely constant \emph{always} yields an infinite star almost surely. In other words, when assuming that the $(X(i))_{i \in \mathbb{N}}$ are mutually independent and positive, we have the following open problem. 
	\begin{open}\label{con:infstar}
		Consider an explosive $(X,W)$-CMJ process $(\mathscr T_t)_{t\geq 0}$, such that $(X(i))_{i \in \mathbb{N}}$ are mutually independent and positive. Is it the case that almost surely  $\mathcal  T_{\infty}$ contains a unique vertex with infinite degree?
	\end{open}
	
	\begin{remark}
		In the case of the recursive tree with fitness, if $X(i) \sim \Exp{g(i)}$, with $g$ unbounded, and convex, we believe that the affirmative of Open problem~\ref{con:infstar} holds, by an argument using a combination of the result of Galashin~\cite[Theorem~1]{galashin2014existence} and Proposition~\ref{prop:finite-l-moderate}. {\Large\ensymboldremark }
	\end{remark}
	
	\begin{remark}
		Note that the counter-example in Theorem~\ref{thm:counter-example} relies on the dependence of the $(X(i))_{i \in \mathbb{N}}$ on the weights. Thus, if $\mathcal{T}_{\infty}$ \emph{does} almost surely contain a unique node of infinite degree in Open problem~\ref{con:infstar}, one may interpret this, informally, as saying that `new nodes are unable to out-compete older nodes, without the influence of a random weight'. {\Large\ensymboldremark }
	\end{remark}
	
	\section{Proofs of main results}\label{sec:proofmain}
	
	This section is dedicated to proving the most general results, as presented in Section~\ref{sec:results}. We prove the existence of an infinite star (cf. Theorem~\ref{thm:star}) in Section~\ref{sec:starproof}, prove the existence of of an infinite path (cf.\ Theorem~\ref{thm:path}) and the structural result of sub-trees in the star regime (cf.\ Theorem~\ref{thm:structure}) in Section~\ref{sec:inf-path-structure-proof}, and finally prove the uniqueness properties (cf.\ Theorem~\ref{thm:uniqueness}) in Section~\ref{sec:uniqueproof}.
	
	\subsection{Sufficient criteria for a star}\label{sec:starproof}
	
	This section is dedicated to the proof of Theorem~\ref{thm:star}. We first have the following lemma:
	\begin{lemma} \label{lem:moment-prob-bounds}
		Fix $k,n\in\N$, let $(a_{1}, \ldots, a_{k}) \in \mathbb{N}^{k}$. Let $(Z_i)_{i \in [k]}$ be independent random variables, and let $(X(\ell))_{\ell\geq n+1}$ be as in~\eqref{eq:cmj-assumption}, satisfying Conditions~\ref{item:stardom} and~\ref{item:starlimsup} of Assumption~\ref{ass:star}. Then, there exists $n_{0} > 0$ 
		and $C=C(n_0)>0$ such that, for all $n \geq n_{0}$,
		\begin{equation} \label{eq:moment-prob-bound1}
			\Prob{\sum_{i=1}^{k} Z_{i} \leq \sum_{\ell = n + 1}^{\infty} X(\ell)} \leq \Prob{\sum_{i=1}^{k} Z_{i} \leq Y_{n}} \leq  
			C \prod_{j=1}^{k} \mathcal{L}_{c\mu_{n}^{-1}}(Z_i),
		\end{equation}
		where $Y_{n}$ is as in~\eqref{eq:stochastic-bound} and independent of the $(Z_i)_{i \in [k]}$.
	\end{lemma}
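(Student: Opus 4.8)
The plan is to prove the chain of inequalities in~\eqref{eq:moment-prob-bound1} from left to right. The leftmost inequality is immediate from the stochastic domination~\eqref{eq:stochastic-bound} in Condition~\ref{item:stardom} of Assumption~\ref{ass:star}: since $\sum_{\ell=n+1}^{\infty} X(\ell) \leq_{S} Y_{n}$ and $Y_n$ is independent of the $(Z_i)_{i\in[k]}$, the event $\{\sum_i Z_i \leq \sum_{\ell>n} X(\ell)\}$ has probability at most that of $\{\sum_i Z_i \leq Y_n\}$ — one conditions on the $(Z_i)_{i\in[k]}$ and applies the definition of $\leq_S$ pointwise. So the substance is the second inequality, a Chernoff-type bound.

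For that, first I would exponentiate: for any $s>0$,
\[
\Prob{\sum_{i=1}^{k} Z_i \leq Y_n} = \Prob{\mathrm e^{-s\sum_{i=1}^{k} Z_i} \geq \mathrm e^{-s Y_n}} \leq \mathrm e^{s Y_n}\ \text{(Markov, conditionally on } Y_n\text{)},
\]
more precisely $\Prob{\sum_i Z_i - Y_n \leq 0} = \Prob{\mathrm e^{-s(\sum_i Z_i - Y_n)} \geq 1} \leq \E{\mathrm e^{-s\sum_i Z_i}\mathrm e^{sY_n}}$. Using independence of $Y_n$ from the $Z_i$, and the mutual independence of the $Z_i$, this factorizes as $\E{\mathrm e^{sY_n}}\prod_{i=1}^{k}\E{\mathrm e^{-sZ_i}} = \mathcal M_s(Y_n)\prod_{i=1}^{k}\mathcal L_s(Z_i)$. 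Now I would make the choice $s = c\mu_n^{-1}$, with $c$ as in~\eqref{eq:limsup-mgf}. This turns the product into exactly $\prod_{i=1}^k \mathcal L_{c\mu_n^{-1}}(Z_i)$, and the prefactor becomes $\mathcal M_{c\mu_n^{-1}}(Y_n)$.

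The last step is to absorb the prefactor into a constant. By Condition~\ref{item:starlimsup}, $\limsup_{n\to\infty}\mathcal M_{c\mu_n^{-1}}(Y_n) < \infty$, so there exist $n_0$ and a finite bound $C_0$ with $\mathcal M_{c\mu_n^{-1}}(Y_n) \leq C_0$ for all $n \geq n_0$; set $C = C(n_0) := C_0$. This yields $\Prob{\sum_i Z_i \leq Y_n} \leq C\prod_{i=1}^k \mathcal L_{c\mu_n^{-1}}(Z_i)$ for all $n\geq n_0$, as claimed. (One should note the minor indexing discrepancy that the statement writes $\prod_{j=1}^k \mathcal L_{c\mu_n^{-1}}(Z_i)$; the intended object is $\prod_{i=1}^k \mathcal L_{c\mu_n^{-1}}(Z_i)$, which is what the argument produces.) The only real subtlety — and the one place to be slightly careful — is that the Markov/exponential argument requires the conditional expectation manipulation to be legitimate, i.e.\ that $\mathcal M_{c\mu_n^{-1}}(Y_n)$ is finite; but this is guaranteed precisely by~\eqref{eq:limsup-mgf} for $n \geq n_0$, which is why the bound is only asserted in that range. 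Note also that the hypothesis that the $(a_1,\dots,a_k)$ and the sequence $(X(\ell))_{\ell \geq n+1}$ are as in~\eqref{eq:cmj-assumption} is used only to ensure that $(X(\ell))_{\ell>n}$ (hence $Y_n$) can be taken independent of the $(Z_i)_{i\in[k]}$; the tuple $(a_1,\dots,a_k)$ itself plays no role in this lemma and is presumably carried along for use downstream.
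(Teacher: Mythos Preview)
Your proposal is correct and follows essentially the same approach as the paper: the first inequality via the stochastic domination~\eqref{eq:stochastic-bound}, then a Chernoff--Markov bound $\Prob{\sum_i Z_i \leq Y_n} \leq \mathcal{M}_{\lambda}(Y_n)\prod_i \mathcal{L}_{\lambda}(Z_i)$ with the choice $\lambda = c\mu_n^{-1}$, and finally absorbing $\mathcal{M}_{c\mu_n^{-1}}(Y_n)$ into a constant $C$ via~\eqref{eq:limsup-mgf}. Your side remarks on the unused tuple $(a_1,\dots,a_k)$ and the $j$/$i$ index typo are accurate observations not present in the paper's proof.
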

	\begin{proof}
		First note that the first inequality in~\eqref{eq:moment-prob-bound1} is an immediate consequence of~\eqref{eq:stochastic-bound}. Then, for any $\lambda > 0$, 
		\begin{linenomath}
			\begin{align*}
				\Prob{\sum_{i=1}^{k} Z_{i} \leq Y_{n}} 
				= \Prob{\exp\left(\lambda Y_{n} - \lambda\sum_{i=1}^{k} Z_i\right) \geq 1}
				\leq \mathcal{M}_{Y_{n}}(\lambda) \prod_{i=1}^{k} \mathcal{L}_{\lambda}(Z_{i}),
			\end{align*}
		\end{linenomath}
		where the last inequality uses Markov's inequality and the independence of the random variables. Next, setting $\lambda:= c\mu_{n}^{-1}$ and recalling that, by~\eqref{eq:limsup-mgf}, we can set $C:=\limsup_{n \to \infty} \mathcal{M}_{Y_{n}}(c\mu_{n}^{-1}) < \infty$, we deduce the result.   
	\end{proof}
	
	\noindent We now introduce the following terminology, used in the remainder of the section, which, although not strictly needed, we believe makes the proofs conceptually easier to understand. For $a, b \in \cU_{\infty}$ we say that 
	\[
	\text{``$a$ has at least $k$ children before $b$ explodes''}
	\]
	if $\mathcal{B}(a) + \mathcal{P}_{k}(a) < \mathcal{B}(b) + \mathcal{P}(b)$. We say that 
	\[
	\text{``$a$ explodes before all of its ancestors''}
	\]
	if, for each $\ell < |a|$, we have $\mathcal{B}(a) + \mathcal{P}(a) < \mathcal{B}(a_{|_\ell}) + \mathcal{P}(a_{|_\ell})$. Finally, for $a \in \cU_{\infty}$ with $|a| \geq 1$, we say that $a= a_{1} \cdots a_{m}$ is $a_{1}$-\emph{conservative} if, for each $j \in \left\{2, \ldots, m\right\}$, we have $a_{j} \leq a_{1}$. (Note that this implies that any $a$ such that $|a| = 1$ is $a_1$-conservative.)
	
	\begin{lemma} \label{lem:cons-bound-gen}
		Under Assumption~\ref{ass:star}, there exist $\eta < 1$ and $K = K(\eta) > 0$ such that for all $a_1 > K(\eta)$, all integers $m\in \mathbb{N}$, and some constant $C>0$, 
		\be
		\sum_{\substack{a:|a| = m\\ a\text{ is $a_1$-conservative}}}\!\!\!\!\!\!\! \Prob{a \text{ has at least $a_1$ children before } \varnothing \text{ explodes }} \leq C\eta^{m-1} \E{\cL_{c\mu_{a_1}^{-1}}(\mathcal{P}_{a_{1}};W)}.
		\ee 
	\end{lemma}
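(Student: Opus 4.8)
The plan is to sum the probability $\Prob{a \text{ has at least $a_1$ children before } \varnothing \text{ explodes }}$ over $a_1$-conservative words $a$ of length $m$ by conditioning on the weights along the ancestral line and applying Lemma~\ref{lem:moment-prob-bounds} iteratively.

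First I would unpack the event. Writing $a = a_1 a_2 \cdots a_m$, the event ``$a$ has at least $a_1$ children before $\varnothing$ explodes'' is $\{\mathcal{B}(a) + \mathcal{P}_{a_1}(a) < \mathcal{P}(\varnothing)\}$. Using the birth-time identity~\eqref{eq:birth-time-identity}, $\mathcal{B}(a) = \sum_{j=0}^{m-1} \mathcal{P}_{a_{j+1}}(a_{|_j})$, so the event reads
\be
\mathcal{P}_{a_1}(\varnothing) + \mathcal{P}_{a_2}(a_{|_1}) + \cdots + \mathcal{P}_{a_{m-1}}(a_{|_{m-2}}) + \mathcal{P}_{a_m}(a_{|_{m-1}}) + \mathcal{P}_{a_1}(a) < \mathcal{P}(\varnothing).
\ee
I now want to bound the right-hand side $\mathcal{P}(\varnothing)$ from above by a sum involving the first $a_1$ inter-birth times and a tail: since $\mathcal{P}(\varnothing) = \mathcal{P}_{a_1}(\varnothing) + \sum_{\ell = a_1 + 1}^{\infty} X(\varnothing\ell)$, after cancelling the common $\mathcal{P}_{a_1}(\varnothing)$ term (here I use Condition~\ref{item:starindep}/\eqref{eq:starindep} that inter-birth times of a given node are mutually independent, so the tail $\sum_{\ell>a_1} X(\varnothing\ell)$ is independent of $\mathcal{P}_{a_1}(\varnothing)$), the event becomes
\be
\mathcal{P}_{a_2}(a_{|_1}) + \cdots + \mathcal{P}_{a_m}(a_{|_{m-1}}) + \mathcal{P}_{a_1}(a) < \sum_{\ell = a_1+1}^{\infty} X(\varnothing\ell).
\ee
The left-hand side is a sum of $m$ independent random variables (independence across distinct nodes by~\eqref{eq:cmj-assumption}, conditionally on the weights), and the right-hand side is independent of all of them and, by~\eqref{eq:stochastic-bound}, stochastically dominated by $Y_{a_1}$. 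This is exactly the shape to which Lemma~\ref{lem:moment-prob-bounds} applies: conditioning on the weights $W_{a_{|_1}}, \ldots, W_{a_{|_{m-1}}}, W_a$, we get for $a_1$ large enough (so $a_1 \ge n_0$)
\be
\Prob{a \text{ has at least $a_1$ children before } \varnothing \text{ explodes } \,\middle|\, \text{weights}} \le C_0\, \mathcal{L}_{c\mu_{a_1}^{-1}}\!\big(\mathcal{P}_{a_1}(a); W_a\big) \prod_{j=1}^{m-1} \mathcal{L}_{c\mu_{a_1}^{-1}}\!\big(\mathcal{P}_{a_j}(a_{|_j}) \ \text{or appropriate} ; W_{a_{|_j}}\big),
\ee
where each factor is a Laplace transform at argument $c\mu_{a_1}^{-1}$ of a sum $\mathcal{P}_{a_{j+1}}(a_{|_j})$ of $a_{j+1} \le a_1$ inter-birth times starting from index $0$ up to index $a_{j+1}$. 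Taking expectations over the weights and using the i.i.d.\ structure, the $j$th factor contributes $\E{\mathcal{L}_{c\mu_{a_1}^{-1}}(\mathcal{P}_{a_{j+1}}; W)}$.

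The main obstacle — and where the $\eta^{m-1}$ gain comes from — is summing the product of Laplace-transform factors over all $a_1$-conservative words. For the fixed first coordinate $a_1$, summing over $a_2, \ldots, a_m \in [a_1]$ factorises:
\be
\sum_{\substack{a:|a|=m \\ a_1\text{-conservative}}} \prod_{j=2}^{m} \E{\mathcal{L}_{c\mu_{a_1}^{-1}}(\mathcal{P}_{a_j}; W)} \cdot \E{\mathcal{L}_{c\mu_{a_1}^{-1}}(\mathcal{P}_{a_1};W)} = \E{\mathcal{L}_{c\mu_{a_1}^{-1}}(\mathcal{P}_{a_1};W)}\left(\sum_{r=1}^{a_1} \E{\mathcal{L}_{c\mu_{a_1}^{-1}}(\mathcal{P}_{r}; W)}\right)^{m-1}.
\ee
So it suffices to show that the inner sum $S(a_1) := \sum_{r=1}^{a_1} \E{\mathcal{L}_{c\mu_{a_1}^{-1}}(\mathcal{P}_{r}; W)}$ is bounded above by some $\eta < 1$ once $a_1 > K(\eta)$. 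This is the crux: $\mathcal{P}_r = \sum_{i=1}^{r} X(i) \ge \mathcal{P}_{a_1} - \sum_{i=r+1}^{a_1}X(i)$ has mean at least of order $\mu_r - \mu_{a_1}$ roughly, and the Laplace argument $c\mu_{a_1}^{-1}$ is tuned so that $\mathcal{L}_{c\mu_{a_1}^{-1}}(\mathcal{P}_r; W)$ decays; I would bound $\mathcal{L}_{c\mu_{a_1}^{-1}}(\mathcal{P}_r;W)$ using $\mathcal{P}_r \ge_S$ an exponential-type lower bound (or just Jensen / Chebyshev-type estimates exploiting the variance control implicit in Condition~\ref{item:starlimsup}), giving $\E{\mathcal{L}_{c\mu_{a_1}^{-1}}(\mathcal{P}_r;W)} \lesssim$ a summable-in-$r$ quantity whose total over $r \le a_1$ tends to $0$ as $a_1 \to \infty$ — indeed this total is essentially the tail of the convergent series in Condition~\ref{item:starlaplace},~\eqref{eq:laplacesum}, shifted to start at a growing index, hence $\to 0$. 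Choosing $\eta \in (0,1)$ and then $K(\eta)$ so that $S(a_1) \le \eta$ for $a_1 > K(\eta)$ yields the bound $C \eta^{m-1} \E{\mathcal{L}_{c\mu_{a_1}^{-1}}(\mathcal{P}_{a_1};W)}$ with $C := C_0 \cdot (\text{constant from the single } \mathcal{P}_{a_1} \text{ factor})$, completing the proof. The one technical point to be careful about is the bookkeeping of which Laplace transforms carry the extra $\mathcal{P}_{a_1}(a)$ factor versus the ancestral factors, and ensuring the index shifts (Lemma~\ref{lem:moment-prob-bounds} uses tails $\sum_{\ell > n+1}$) line up with $n = a_1$; these are routine once the conditioning structure above is fixed.
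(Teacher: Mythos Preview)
Your overall approach matches the paper's: reduce the event to
\[
\sum_{j=1}^{m-1}\mathcal{P}_{a_{j+1}}(a_{|_j}) + \mathcal{P}_{a_1}(a) \le \sum_{\ell>a_1} X(\ell),
\]
apply Lemma~\ref{lem:moment-prob-bounds}, factorise over the $a_1$-conservative coordinates, and bound the resulting geometric factor $S(a_1)^{m-1}$ with $S(a_1)=\sum_{r=1}^{a_1}\E{\mathcal{L}_{c\mu_{a_1}^{-1}}(\mathcal{P}_r;W)}$.

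The only place your sketch is off is the justification that $S(a_1)<\eta$ for large $a_1$. This sum starts at $r=1$, not at ``a growing index'', and Condition~\ref{item:starlimsup} gives MGF control of $Y_n$, not variance information on $\mathcal{P}_r$, so the Jensen/Chebyshev suggestion is a red herring. The paper's argument is a clean two-piece split at a fixed threshold $L$. For $\ell\ge L$, use the monotonicity $c\mu_{a_1}^{-1}\ge c\mu_\ell^{-1}$ (from $\mu_n$ non-increasing, part of Condition~\ref{item:starlimsup}) to bound each term by $\E{\mathcal{L}_{c\mu_\ell^{-1}}(\mathcal{P}_\ell;W)}$, and then control $\sum_{\ell\ge L}$ by the tail of the convergent series~\eqref{eq:laplacesum}, made $<\eta/2$ by choosing $L$ large. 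For the finitely many terms with $\ell<L$, each $\E{\mathcal{L}_{c\mu_{a_1}^{-1}}(\mathcal{P}_\ell;W)}\to 0$ as $a_1\to\infty$ by bounded convergence (since $c\mu_{a_1}^{-1}\to\infty$), so this initial segment is also $<\eta/2$ once $a_1>K(\eta)$. With this split filled in, your proof coincides with the paper's.
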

	
	\begin{proof}
		Suppose that $a = a_1 \cdots a_m \in \cU_\infty$. For $a$ to have at least $a_1$ children before the explosion of $\varnothing$, in particular, each of the births corresponding to the ancestors of $a$ need to occur (leading to a term as in Equation~\eqref{eq:birth-time-identity} with $u=\varnothing$ and $v=a$). Thus, for $a_{1}$ sufficiently large, by~\eqref{eq:birth-time-identity} and Lemma~\ref{lem:moment-prob-bounds}, we have
		\be\ba  \label{eq:root-prob}
		\Prob{a \text{ has at least $a_1$ children before } \varnothing \text{ explodes }}
		&= \Prob{\mathcal{B}(a) + \mathcal{P}_{a_1}(a) <  \mathcal{P}(\varnothing)}
		\\ & = \Prob{\left(\sum_{j=1}^{m-1}\mathcal{P}_{a_{j+1}}(a_{|_j})\right) + \mathcal{P}_{a_1}(a) \leq\sum_{k= a_{1} + 1}^{\infty} X(k)} 
		\\ &
		\stackrel{\eqref{eq:moment-prob-bound1}}{\leq} C \prod_{j=1}^{m-1} \E{\mathcal{L}_{c\mu_{a_{1}}^{-1}}(\mathcal{P}_{a_{j+1}}; W)} \times \E{\mathcal{L}_{c\mu_{a_{1}}^{-1}}(\mathcal{P}_{a_{1}}; W)},
		\ea\ee
		where the last line follows from the fact that, by~\eqref{eq:cmj-assumption}, for each $u \in \cU_{\infty}$ the sequence $(\mathcal{P}_{j}(u))_{j\in \mathbb{N}}$ is independent and distributed like $(\mathcal{P}_{j}(\varnothing))_{j\in \mathbb{N}}$. When we sum over the possible conservative sequences $a$ that are $a_1$-conservative, each $a_{j}$ takes values between $1$ and $a_1$, for $j=2,\ldots, m$. Thus, 
		\be\ba \label{eq:sumbound}
		\sum_{\substack{a:|a| = m\\ a\text{ $a_1$-conservative}}}\!\!\!\!\!\!\!\!\!\!\!{}&\Prob{a \text{ has at least $a_1$ children before } \varnothing \text{ explodes }}
		\\ \leq{}& \sum_{a_{2} = 1}^{a_{1}} \sum_{a_{3} = 1}^{a_{1}} \cdots \sum_{a_{m}=1}^{a_1}  C \prod_{j=1}^{m-1} \E{\mathcal{L}_{c\mu_{a_{1}}^{-1}}(\mathcal{P}_{a_{j+1}}; W)} \times \E{\mathcal{L}_{c\mu_{a_{1}}^{-1}}(\mathcal{P}_{a_{1}}; W)}
		\\ = {}& C \left(\sum_{\ell = 1}^{a_{1}} \E{\mathcal{L}_{c\mu_{a_{1}}^{-1}}(\mathcal{P}_{\ell}; W)} \right)^{m-1}  \E{\cL_{c\mu_{a_{1}}^{-1}}(\mathcal{P}_{a_{1}}; W)}. 
		\ea\ee 
		We now need only show that for $a_1$ sufficiently large,
		\[
		\sum_{\ell=1}^{a_1}\E{\mathcal{L}_{c\mu^{-1}_{a_{1}}}(\mathcal{P}_{\ell}; W)} < \eta. 
		\]
		Indeed, since $\sum_{i=\ell}^{\infty}\E{\mathcal{L}_{c\mu^{-1}_{\ell}}(\mathcal{P}_{\ell}; W)} < \infty$ by~\eqref{eq:laplacesum} in Assumption~\ref{ass:star},  there exists $L = L(\eta) > 0$ such that, for all $a_1 > L$,
		\begin{equation} \label{eq:bound-part-1}
			\sum_{\ell = L}^{a_1} \E{\mathcal{L}_{c\mu^{-1}_{a_{1}}}(\mathcal{P}_{\ell}; W)} < \sum_{\ell = L}^{\infty} \E{\mathcal{L}_{c\mu^{-1}_{\ell}}(\mathcal{P}_{\ell}; W)} < \frac{\eta}{2},
		\end{equation}
		where the inequality uses the fact that $c\mu_{n}^{-1}$ is non-decreasing in $n$. On the other hand, since $\lim_{n \to \infty} c\mu_{n}^{-1} = \infty$, by bounded convergence (bounding the integrand by $1$) we have 
		\[
		\lim_{n \to \infty} \sum_{\ell = 1}^{L-1}\E{\mathcal{L}_{c\mu^{-1}_{n}}(\mathcal{P}_{\ell}; W)} = 0.
		\]
		As a result, for some $K \geq L$ sufficiently large and for all $a_{1} > K$, we arrive at 
		\begin{equation} \label{eq:bound-part-2}
			\sum_{\ell = 1}^{L-1} \E{\mathcal{L}_{c\mu^{-1}_{\ell}}(\mathcal{P}_{\ell}; W)} < \frac{\eta}{2}.
		\end{equation}
		Combining Equations~\eqref{eq:bound-part-1} and~\eqref{eq:bound-part-2} in~\eqref{eq:sumbound}, we conclude the proof.  
	\end{proof}
	
	\noindent The above lemma provides an upper bound for the probability of the event that a vertex $a$ explodes before the root of the tree, in the case that $a$ is $a_1$-\emph{conservative}. However, when $a$ does not satisfy this condition, we can view $a$ as a concatenation of a number of conservative sequences. That is, we write $a=\overline b_1\cdots \overline b_\ell$, where $\overline b_i=b_{i1}\ldots b_{im_i}$ for each $i\in[\ell]$ and for some $\ell\in\N, (m_i)_{i\in[\ell]}\in \N^\ell$, and $(b_{i,j})_{i\in[\ell],j\in[m_i]}$, such that $\overline b_i$ is $b_{i1}$-conservative for each $i\in[\ell]$. By the independence of birth processes of distinct individuals (or in fact, the independence of disjoint sub-trees) by Equation~\eqref{eq:starindep}, we are able to apply Lemma~\ref{lem:cons-bound-gen} to each conservative sequence in the concatenation to arrive at a bound for the expected number of individuals that explode before all its ancestors.
	
	\begin{prop} \label{prob:local-explosions}
		Under Assumption~\ref{ass:star}, there exists $K'>0$ sufficiently large, such that 
		\be 
		\E{\left| \left\{a\in \cU_\infty: a_1>K', a\text{ explodes before all its ancestors} \right\} \right|}<\infty.
		\ee 
	\end{prop}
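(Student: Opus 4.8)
The plan is to bound $\E{\bigl|\{a \in \cU_\infty : a_1 > K',\ a \text{ explodes before all its ancestors}\}\bigr|} = \sum_{a :\, a_1 > K'} \Prob{a \text{ explodes before all its ancestors}}$ by reducing each summand to a product of ``per-block'' contributions of the type controlled by Lemma~\ref{lem:cons-bound-gen}. Given $a = a_1 \cdots a_n \in \cU_\infty$, I would parse it greedily into its maximal conservative blocks $a = \overline b_1 \cdots \overline b_\ell$, where $\overline b_i = b_{i1} \cdots b_{im_i}$ is $b_{i1}$-conservative and $b_{(i+1)1}$ is the first coordinate of $a$ strictly exceeding all earlier block leaders; writing $c_i := b_{i1}$ this produces a \emph{strictly increasing} sequence $a_1 = c_1 < c_2 < \cdots < c_\ell$. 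With $u_0 := \varnothing$ and $u_i := \overline b_1 \cdots \overline b_i$, each $u_i$ is an ancestor of $a$ and $u_\ell = a$. The first step is the inclusion $\{a \text{ explodes before all its ancestors}\} \subseteq \bigcap_{i=1}^{\ell} E_i$ with $E_i := \{\mathcal{B}(u_i) + \mathcal{P}_{c_i}(u_i) < \mathcal{B}(u_{i-1}) + \mathcal{P}(u_{i-1})\}$: for $i < \ell$ the node $u_i c_{i+1} = u_i b_{(i+1)1}$ is an ancestor of $a$ and $c_i < c_{i+1}$, whence $\mathcal{B}(u_i) + \mathcal{P}_{c_i}(u_i) \le \mathcal{B}(u_i) + \mathcal{P}_{c_{i+1}}(u_i) = \mathcal{B}(u_i c_{i+1}) \le \mathcal{B}(a) + \mathcal{P}(a)$, while for $i = \ell$ one uses $\mathcal{P}_{c_\ell}(a) \le \mathcal{P}(a)$; combining with $\mathcal{B}(a) + \mathcal{P}(a) < \mathcal{B}(u_{i-1}) + \mathcal{P}(u_{i-1})$ (as $a$ explodes before its strict ancestor $u_{i-1}$) yields $E_i$.

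The crucial observation is that, after subtracting $\mathcal{B}(u_{i-1})$ and cancelling the first term $\mathcal{P}_{c_i}(u_{i-1})$ of $\mathcal{B}(u_i) - \mathcal{B}(u_{i-1})$ (recall~\eqref{eq:birth-time-identity}) against the first $c_i$ summands of $\mathcal{P}(u_{i-1})$, the event $E_i$ reads
\[
E_i = \Bigl\{\, \sum_{j=1}^{m_i - 1} \mathcal{P}_{b_{i,j+1}}\bigl(u_{i-1} b_{i1} \cdots b_{ij}\bigr) + \mathcal{P}_{c_i}(u_i) < \sum_{k > c_i} X(u_{i-1} k) \,\Bigr\},
\]
so $E_i$ is measurable with respect to the offspring process of $u_{i-1}$ restricted to indices $> c_i$, that of $u_i$ restricted to indices $\le c_i$, and those of the ancestors of $a$ strictly between $u_{i-1}$ and $u_i$. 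Since $c_{i-1} < c_i < c_{i+1}$, the events $E_1, \dots, E_\ell$ depend on pairwise disjoint families of the waiting times $(X(vj))_{v,j}$; by~\eqref{eq:cmj-assumption} and~\eqref{eq:starindep} they are therefore independent conditionally on the weights $\mathbf W := (W_v)_{v \in \cU_\infty}$, so $\Prob{\bigcap_i E_i} = \E{\prod_i \Prob{E_i \mid \mathbf W}}$. Applying the conditional form of the Markov step from Lemma~\ref{lem:moment-prob-bounds} to each $E_i$ — replacing $\sum_{k > c_i} X(u_{i-1}k) \leq_{S} Y_{c_i}$ by an independent $Y_{c_i}$ and using~\eqref{eq:limsup-mgf} to absorb $\mathcal{M}_{c\mu_{c_i}^{-1}}(Y_{c_i}) \le C$ once $c_i \ge n_0$ — gives $\Prob{E_i \mid \mathbf W} \le C\, \mathcal{L}_{c\mu_{c_i}^{-1}}(\mathcal{P}_{c_i}; W_{u_i}) \prod_{j=1}^{m_i-1} \mathcal{L}_{c\mu_{c_i}^{-1}}(\mathcal{P}_{b_{i,j+1}}; W_{u_{i-1} b_{i1} \cdots b_{ij}})$. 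Multiplying over $i$ and taking expectations, the only weights appearing are $W_{a_{|_1}}, \dots, W_{a_{|_{|a|}}}$, each exactly once, so by~\eqref{eq:cmj-assumption} the expectation factorises and (using $\E{\mathcal{L}_\lambda(Z;W)} = \mathcal{L}_\lambda(Z)$) I obtain
\[
\Prob{a \text{ explodes before all its ancestors}} \le C^\ell \prod_{i=1}^\ell \Bigl( \E{\mathcal{L}_{c\mu_{c_i}^{-1}}(\mathcal{P}_{c_i}; W)} \prod_{j=1}^{m_i-1} \E{\mathcal{L}_{c\mu_{c_i}^{-1}}(\mathcal{P}_{b_{i,j+1}}; W)} \Bigr),
\]
one factor of the shape appearing in Lemma~\ref{lem:cons-bound-gen} per conservative block.

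It then remains to sum over all $a$ with $a_1 > K'$, equivalently over $\ell \ge 1$, leaders $K' < c_1 < \cdots < c_\ell$, lengths $m_i \ge 1$, and interior coordinates $b_{ij} \in [c_i]$. Summing the interior coordinates of block $i$ collapses to $\sum_{r \ge 0}\bigl(\sum_{\ell'=1}^{c_i} \E{\mathcal{L}_{c\mu_{c_i}^{-1}}(\mathcal{P}_{\ell'}; W)}\bigr)^{r} \le (1-\eta)^{-1}$ for $c_i > K$, by the estimate already established inside the proof of Lemma~\ref{lem:cons-bound-gen}. Taking $K' := \max\{K, n_0\}$ leaves $\sum_{\ell \ge 1} (C/(1-\eta))^\ell \sum_{K' < c_1 < \cdots < c_\ell} \prod_{i=1}^\ell \E{\mathcal{L}_{c\mu_{c_i}^{-1}}(\mathcal{P}_{c_i}; W)}$, and bounding the ordered sum by $\tfrac{1}{\ell!}\bigl(\sum_{c \ge 1} \E{\mathcal{L}_{c\mu_{c}^{-1}}(\mathcal{P}_{c}; W)}\bigr)^\ell$ — finite by~\eqref{eq:laplacesum} — shows the total is at most $\exp\bigl(\tfrac{C}{1-\eta} \sum_{c\ge1}\E{\mathcal{L}_{c\mu_c^{-1}}(\mathcal{P}_c; W)}\bigr) - 1 < \infty$, which is the claim.

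The step I expect to require the most care is the disjointness assertion of the second paragraph: one must verify precisely that after the cancellation $E_i$ touches the offspring process of $u_{i-1}$ only through indices exceeding $c_i$ and that of $u_i$ only through indices at most $c_i$, so that the strict monotonicity $c_i < c_{i+1}$ of the block leaders is exactly what guarantees conditional independence of $E_1, \dots, E_\ell$ given the weights. Everything else is a routine combination of the per-block estimate of Lemma~\ref{lem:cons-bound-gen} with the summability~\eqref{eq:laplacesum} and elementary geometric/exponential series.
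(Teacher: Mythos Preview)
Your proof is correct and follows essentially the same route as the paper: decompose $a$ into maximal conservative blocks via the running maxima, include the event into an intersection of per-block events that are conditionally independent given the weights, apply the Chernoff step of Lemma~\ref{lem:moment-prob-bounds} to each block, and then sum. Your organisation of the final summation is in fact slightly cleaner than the paper's: by summing block lengths first as a geometric series and then bounding the ordered sum over leaders by $\tfrac{1}{\ell!}\bigl(\sum_c \E{\mathcal L_{c\mu_c^{-1}}(\mathcal P_c;W)}\bigr)^\ell$, you obtain an exponential series that converges for \emph{any} $\eta<1$, whereas the paper arranges the sum over total length $m$ and number of maxima $k$ via a binomial identity and must take $\eta$ small enough that $\eta(1+C/2)<1$, forcing a potentially larger $K'$.
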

	
	\begin{proof}
		As explained before the proposition statement, we think of sequences $a\in\cU_\infty$ as a concatenation of conservative sequences. Let $a=a_1\ldots a_m$ be a sequence of length $m\in\N$, and assume that there exist $k\in[m]$ and indices $I_1<I_2<\ldots<I_k$ such that $a_1=:a_{I_1}<a_{I_2}<\ldots <a_{I_k}$. That is, the $I_j$ are the indices of the running maxima of the sequence $a$. For brevity of notation, we also set $I_{k+1} := m + 1$, and set $a_{0} = \varnothing$.
		
		To show that $a$ explodes before all its ancestors, we think of $a$ as a concatenation of the conservative sequences $a_{I_j}\cdots a_{I_{j+1}-1}$, with $j\in[k]$. By the fact that, for each $u \in \cU_{\infty}$, the sequence $(\mathcal{P}_{i}(u))_{i \in \mathbb{N}}$ is independent and distributed like $(\mathcal{P}_{i}(\varnothing))_{i \in \mathbb{N}}$, each of these conservative sequences can be seen as corresponding to an $a_{I_j}$-conservative individual rooted at $a_{I_j-1}$, $j\in[k]$. We can thus apply Lemma~\ref{lem:cons-bound-gen} to all these concatenated sequences. 
		
		Since, by definition we have $a_{I_{\ell +1}} >  a_{I_{\ell}}$, applying a similar logic to~\eqref{eq:root-prob} we have the following inclusion: 
		\begin{linenomath}
			\begin{align} \label{eq:ea}
				E_a:={}&\{a\text{ explodes}\text{ before any of its ancestors explodes}\}\\
				& \bigcap_{\ell=1}^{k}\{a_1\cdots a_{I_{\ell+1}-1} \text{ gives birth to at least } a_{I_{\ell}}\text{ children before }a_1\cdots a_{I_{\ell}-1}\text{ explodes}\}
				\\  ={}& \bigcap_{\ell=1}^{k}\left\{\left(\sum_{j=I_{\ell}}^{I_{\ell+1}-2}\mathcal{P}_{a_{j+1}}(a_{|_j})\right) + \mathcal{P}_{a_{I_{\ell}}}(a_{|_{I_{\ell+1}-1}}) \leq \sum_{i= a_{I_{\ell}} + 1}^{\infty} X(a_1 \cdots a_{I_{\ell}-1} i)\right\}
				=:\bigcap_{\ell=1}^k E_{a,\ell}.
			\end{align}
		\end{linenomath}
		Now, note that the events $(E_{a, \ell}, \ell \in [k])$ are not independent, since, for a given $\ell$, the term $\mathcal{P}_{a_{I_{\ell}}}(a_{|_{I_{\ell+1}-1}})$ appearing in $E_{a, \ell}$ may be correlated with the term $\sum_{i= a_{I_{\ell+1}} + 1}^{\infty} X(a_1 \cdots a_{I_{\ell+1}-1} i)$ appearing in $E_{a, \ell+1}$. However, by the third condition of Assumption~\ref{ass:star}, these events are conditionally independent, given the weights of $a$ and all its ancestors, $W_\varnothing, W_{a_1},W_{a_1a_2},\ldots, W_a$. Thus, 
		\begin{linenomath}
			\begin{align*}
				\mathbb P{}&(E_a \, | \, W_\varnothing, W_{a_1},W_{a_1a_2},\ldots, W_a) 
				\\ & = \prod_{\ell=1}^{k} \Prob{\left(\sum_{j=I_{\ell}}^{I_{\ell+1}-2}\mathcal{P}_{a_{j+1}}(a_{|_j})\right) + \mathcal{P}_{a_{I_{\ell}}}(a_{|_{I_{\ell+1}-1}}) \leq \sum_{i= a_{I_{\ell}} + 1}^{\infty} X(a_1 \cdots a_{I_{\ell}-1} i) \, \bigg | \, W_\varnothing, W_{a_1},W_{a_1a_2},\ldots, W_a}
				\\ &  \stackrel{\eqref{eq:stochastic-bound}}{\leq} \prod_{\ell=1}^{k} \Prob{\left(\sum_{j=I_{\ell}}^{I_{\ell+1}-2}\mathcal{P}_{a_{j+1}}(a_{|_j})\right) + \mathcal{P}_{a_{I_{\ell}}}(a_{|_{I_{\ell+1}-1}}) \leq Y^{(a_1 \cdots a_{I_{\ell}-1})}_{a_{I_{\ell}}} \, \bigg | \, W_\varnothing, W_{a_1},W_{a_1a_2},\ldots, W_a} 
				\\ &  := \prod_{\ell=1}^{k} \Prob{\wt E_{a, \ell} \, \Big| \, W_\varnothing, W_{a_1},W_{a_1a_2},\ldots, W_a}, 
			\end{align*}
		\end{linenomath}
		where each $Y^{(a_1 \cdots a_{I_{\ell}-1})}_{a_{I_{\ell}}}$ is independent and distributed like $Y_{a_{I_{\ell}}}$. Now, each of the terms $\wt E_{a, \ell}$ are independent, as the depend on different weights. Hence, so are each of the terms appearing in the above product, so that
		\begin{equation} \label{eq:probsplitineq}
			\P{E_a}\leq \prod_{\ell=1}^k \P{\wt E_{a,\ell}}.    
		\end{equation}
		We now let $d_j:=I_{j+1}-I_j-1$ for $j\in[k-1]$ and $d_k:=m-I_k$ denote the number of entries between the running maxima in the sequence $a$. We can then define, for $(d_j)_{j\in[k]}\in\N_0^k$ (and with the convention that $[0]$ is the empty set),
		\be 
		\mathscr P_k(a_{I_1},a_{I_2},\ldots, a_{I_k}, d_1, \ldots, d_k):=\{a\in \cU_\infty: \text{ For all }j\in\{1,\ldots, k\}\text{ and all }i\in[d_j],\ a_{I_j+i}\in[a_{I_j}]\}
		\ee 
		as the set of all sequences $a$ with running maxima $a_1=a_{I_1},\ldots, a_{I_k}$ and $d_j$ many entries between the $j^{\text{th}}$ and $(j+1)^{\text{th}}$ maximum. For ease of writing, we omit the arguments of $\mathscr P_k$. We then write the expected value of the number of individuals $a\in\cU_\infty$ that explode before all their ancestors such that $a_1>K'$ as
		\be\ba \label{eq:splitsum1}
		\sum_{\substack{ a\in \cU_\infty \\ a_1>K'}}\P{E_a}=\sum_{m=1}^\infty \sum_{\substack{a: |a|=m\\ a_1>K'}}\P{E_a}\leq \sum_{m=1}^\infty \sum_{k=1}^m \sum_{a_{I_k}>\ldots >a_{I_1}>K'}\sum_{\substack{(d_\ell)_{\ell\in[k]}\in \N_0^k\\ \sum_{\ell=1}^k d_\ell=m-k}}\sum_{a\in \mathscr P_k}\prod_{\ell=1}^k\P{\wt E_{a,\ell}}.
		\ea\ee 
		In the first step, we introduce a sum over all sequence lengths $m$. In the second step, we furthermore sum over the number of running maxima $k$, the values of the running maxima $a_{I_1}, \ldots a_{I_k}$, the number of entries $d_\ell$ between each maxima $I_\ell$ and $I_{\ell+1}$ (or between $I_k$ and $m$ if $\ell=m$), and all sequences $a\in\mathscr P_k$ that admit such running maxima and inter-maxima lengths. Moreover, we use~\eqref{eq:probsplitineq} to bound $\P{E_a}$ from above, now that we know the number of running maxima in $a$.
		
		We can now take the sum over $a\in\mathscr P_k$ into the product, due to the fact that we can decompose each sequence  $a\in\mathscr P_k(a_{I_1},\ldots, a_{I_k},d_1,\ldots d_k)$ into a concatenation of sequences $a^{(1)}\ldots a^{(k)}$, with $a^{(\ell)}:=a_{I_\ell}\cdots a_{I_{\ell+1}-1}\in\mathscr P_1(a_{I_\ell},d_\ell)$ for each $\ell\in[k]$. This yields, for $a_{I_1},\ldots, a_{I_k}$ and $d_1, \ldots, d_k$ fixed, 
		\be 
		\sum_{a\in\mathscr P_k}\prod_{\ell=1}^k \P{\wt E_{a,\ell}}=\prod_{\ell=1}^k\!\!\!\! \sum_{\substack{a^{(\ell)}:|a^{(\ell)}|=d_\ell+1\\ \text{$a^{(\ell)}$ $a_{I_\ell}$-conservative}}}\!\!\!\!\!\!\!\!\!\!\!\P{\wt E_{a,\ell}}.
		\ee 
		We can then directly apply Lemma~\ref{lem:cons-bound-gen} to each of the sums in the product to obtain, for some $\eta<1$ and with $C$ sufficiently large, the upper bound 
		\be 
		\prod_{\ell=1}^k \!\!\!\!\sum_{\substack{a^{(\ell)}:|a^{(\ell)}|=d_\ell\\ \text{$a^{(\ell)}$ $a_{I_\ell}$-conservative}}}\!\!\!\!\!\!\!\!\!\!\!\P{\wt E_{a,\ell}}\leq \prod_{\ell=1}^k C\eta^{d_\ell} \E{\cL_{c\mu_{a_{I_\ell}}^{-1}}(\mathcal{P}_{a_{I_\ell}};W)}=C^k\eta^{\sum_{\ell=1}^k d_\ell}\prod_{\ell=1}^k \E{\cL_{c\mu_{a_{I_\ell}}^{-1}}(\mathcal{P}_{a_{I_\ell}};W)}.
		\ee 
		We substitute this in~\eqref{eq:probsplitineq} to arrive at
		\be\ba 
		\sum_{m=1}^\infty{}& \sum_{k=1}^m \sum_{a_{I_k}>\ldots >a_{I_1}>K'}\sum_{\substack{(d_j)_{j\in[k]}\in \N_0^k\\ \sum_{\ell=1}^k d_\ell=m-k}}C^k\eta^{\sum_{\ell=1}^k d_\ell}\prod_{\ell=1}^k \E{\cL_{c\mu_{a_{I_\ell}}^{-1}}(\mathcal{P}_{a_{I_\ell}};W)}\\ 
		&\leq \sum_{m=1}^\infty \sum_{k=1}^m \binom{m-1}{k-1}C^k\eta^{m-k}\sum_{a_{I_1}>K'}\cdots \sum_{a_{I_k}>K'}\prod_{\ell=1}^k \E{\cL_{c\mu_{a_{I_\ell}}^{-1}}(\mathcal{P}_{a_{I_\ell}};W)}\\
		&=\sum_{m=1}^\infty \sum_{k=1}^m \binom{m-1}{k-1}C^k\eta^{m-k}\left(\sum_{a> K'} \E{\cL_{c\mu_a^{-1}}(\mathcal{P}_a;W)}\right)^k.
		\ea\ee 
		By~\eqref{eq:bound-part-1} we can bound the innermost sum from above by $\eta/2$ when $K'$ is sufficiently large, so that we obtain the upper bound 
		\be 
		\sum_{m=1}^\infty \sum_{k=1}^m \binom{m-1}{k-1}C^k\eta^m 2^{-k}=\frac{C}{2}\sum_{m=1}^\infty  (1+C/2)^{m-1} \eta^m<\infty, 
		\ee 
		where the last step follows when $\eta<(1+C/2)^{-1}$, which holds by choosing $K'$ sufficiently large, as follows from~\eqref{eq:bound-part-1} and~\eqref{eq:bound-part-2}.
	\end{proof}
	
	\noindent The following proposition requires the following notation:
	\[
	\mathcal{U}_{L} := \left\{u \in \mathcal{U}_{\infty}: u_{i} \leq L, i \in [|u|] \right\} \cup \{\varnothing\}.
	\]
	Following~\cite{Oliveira-spencer}, we say elements of $\mathcal{U}_{L}$ are $L$-\emph{moderate}. We also let $\mathcal{U}^{c}_{L}$ denote the set complement of $\mathcal{U}_{L}$.
	
	\begin{prop} \label{prop:finite-l-moderate}
		Let $(\mathscr{T}_{t})_{t \geq 0}$ be a $(X,W)$-CMJ branching process satisfying Condition~\ref{item:starnonzero} of Assumption~\ref{ass:star}.  Then, almost surely, for all $t \in (0,\infty)$, we have
		\[
		\left|\{u \in \mathscr{T}_{t}: u \, \text{ is $L$-moderate} \}\right| < \infty.
		\]
	\end{prop}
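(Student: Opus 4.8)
The plan is to prove the statement by a direct first-moment computation, bypassing K\H{o}nig's lemma entirely. Fix $L\in\N$ and write
\[
A_t:=\{u\in\mathcal{U}_L:\mathcal{B}(u)\le t\}=\{u\in\mathscr{T}_t:u\text{ is }L\text{-moderate}\},
\]
noting that $A_t$ is non-decreasing in $t$. It suffices to show $\E{|A_t|}<\infty$ for each fixed $t\in(0,\infty)$: then $|A_t|<\infty$ almost surely for each $t$, hence — intersecting over the countably many $t\in\mathbb{Q}\cap(0,\infty)$ and using $A_t\subseteq A_{t'}$ for $t<t'$ — almost surely $|A_t|<\infty$ for \emph{all} $t\in(0,\infty)$ at once; a further intersection over $L\in\N$ gives the claim for all $L$ simultaneously.

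To bound $\E{|A_t|}=\sum_{u\in\mathcal{U}_L}\Prob{\mathcal{B}(u)\le t}$, I would group the sum by generation. There are exactly $L^m$ vertices $u=u_1\cdots u_m\in\mathcal{U}_L$ with $|u|=m$, indexed by $(u_1,\dots,u_m)\in[L]^m$, and by the birth-time identity~\eqref{eq:birth-time-identity} (with root $\varnothing$) one has $\mathcal{B}(u)=\sum_{j=0}^{m-1}\mathcal{P}_{u_{j+1}}(u_{|_j})$. Since the sequences $\big((X(wi))_{i\in\N},W_w\big)$ are i.i.d.\ over $w\in\mathcal{U}_\infty$ by~\eqref{eq:cmj-assumption}, the summands $\mathcal{P}_{u_{j+1}}(u_{|_j})$, $j=0,\dots,m-1$, are independent (they involve the distinct vertices $u_{|_0},\dots,u_{|_{m-1}}$) and each is distributed as $\mathcal{P}_{u_{j+1}}(\varnothing)=\mathcal{P}_{u_{j+1}}$. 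Hence, for any $\lambda>0$, Markov's inequality applied to $\e^{-\lambda\mathcal{B}(u)}$ gives
\[
\Prob{\mathcal{B}(u)\le t}\le \e^{\lambda t}\,\E{\e^{-\lambda\mathcal{B}(u)}}=\e^{\lambda t}\prod_{j=0}^{m-1}\mathcal{L}_\lambda(\mathcal{P}_{u_{j+1}}),
\]
and summing over $(u_1,\dots,u_m)\in[L]^m$ the product factorises, so that
\[
\E{|A_t|}\le \e^{\lambda t}\sum_{m=0}^{\infty}\Big(\sum_{\ell=1}^{L}\mathcal{L}_\lambda(\mathcal{P}_\ell)\Big)^{m}.
\]

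The crux is then to choose $\lambda$ so that the geometric ratio $q_\lambda:=\sum_{\ell=1}^{L}\mathcal{L}_\lambda(\mathcal{P}_\ell)$ is strictly less than $1$, and this is exactly where Condition~\ref{item:starnonzero} of Assumption~\ref{ass:star}, specifically~\eqref{eq:non-instantaneous-explosion}, enters. As $\lambda\to\infty$, $\e^{-\lambda\mathcal{P}_\ell}\downarrow\ind{\mathcal{P}_\ell=0}$ pointwise, so by monotone convergence $\mathcal{L}_\lambda(\mathcal{P}_\ell)\downarrow\Prob{\mathcal{P}_\ell=0}$; and since $\mathcal{P}_\ell=\sum_{i=1}^{\ell}X(i)$ is non-decreasing in $\ell$ with the $X(i)$ non-negative, $\{\mathcal{P}_\ell=0\}=\{\xi(0)\ge\ell\}$, whence
\[
\lim_{\lambda\to\infty}q_\lambda=\sum_{\ell=1}^{L}\Prob{\xi(0)\ge\ell}\le\sum_{\ell=1}^{\infty}\Prob{\xi(0)\ge\ell}=\E{\xi(0)}<1.
\]
Thus $q_\lambda<1$ for all sufficiently large $\lambda$; fixing such a $\lambda$ yields $\E{|A_t|}\le \e^{\lambda t}/(1-q_\lambda)<\infty$, which completes the argument after the monotonicity/countability passage described above. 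I expect the only genuine subtlety to be this last step: a cruder lower bound on $\mathcal{B}(u)$ (e.g.\ keeping only the first-child delay $X(u_{|_j}1)$ at each generation) produces a branching factor $L\cdot\Prob{X(1)=0}$ that need not be below $1$, so one really has to keep the full sum $\sum_{\ell=1}^{L}\mathcal{L}_\lambda(\mathcal{P}_\ell)$ and send $\lambda\to\infty$ to let the condition $\E{\xi(0)}<1$ take effect. The remaining ingredients — the additive decomposition from~\eqref{eq:birth-time-identity}, the independence of the summands from~\eqref{eq:cmj-assumption}, and the Chernoff/geometric-series bookkeeping — are routine.
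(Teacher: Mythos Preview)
Your proof is correct and takes a genuinely different route from the paper's. The paper argues by coupling: it introduces a truncated $(X^{(L)},W^{(L)})$-CMJ process in which no individual has more than $L$ children, observes that this process has the same $L$-moderate birth times as the original, and then invokes an external non-explosion criterion (Komj\'athy~\cite[Theorem~3.1(b)]{Komjathy2016ExplosiveCB}) to conclude that the truncated process is conservative, hence has only finitely many individuals at any fixed time. Your argument is instead a direct first-moment computation: the Chernoff bound and the product factorisation over generations give $\E{|A_t|}\le \e^{\lambda t}\sum_{m\ge0}\big(\sum_{\ell=1}^{L}\mathcal{L}_\lambda(\mathcal{P}_\ell)\big)^m$, and sending $\lambda\to\infty$ turns the ratio into $\sum_{\ell=1}^{L}\Prob{\xi(0)\ge\ell}\le\E{\xi(0)}<1$. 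Your route is more elementary and fully self-contained (no external citation), and it dovetails naturally with the Laplace-transform machinery already used in Lemma~\ref{lem:cons-bound-gen}; the paper's route is shorter on the page and conceptually clean, but outsources the work to the cited non-explosion theorem. Both proofs use only Equation~\eqref{eq:non-instantaneous-explosion} from Condition~\ref{item:starnonzero}.
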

	
	\begin{proof}
		First, let $(X^{(L)}, W^{(L)})$ denote the distribution of an auxiliary CMJ branching process, where, if the symbol `$\sim$' denotes equality in distribution, we have $W^{(L)} \sim W$ and 
		\begin{equation}
			(X^{(L)}(j), W^{(L)}) \sim \begin{cases}
				(X(j), W) & \text{ if $j \leq L$,} \\
				(\infty, W) & \text{otherwise.}
			\end{cases}
		\end{equation}
		In other words, a $\left(X^{(L)}, W^{(L)} \right)$-CMJ process is truncated to ensure that no node produces more than $L$ children. Moreover, if $\mathcal{B}_{L}(u)$ denotes the distribution of the random variable $\mathcal{B}(u)$ under the distribution of the $(X^{(L)}, W^{(L)})$-CMJ process, this definition ensures that if $u \in \mathcal{U}_{L}$ then $\mathcal{B}_{L}(u) \sim \mathcal{B}(u)$. Now, note that, if $(\mathscr{T}^{(L)}_{t})_{t \geq 0}$ denotes an $(X^{(L)}, W^{(L)})$-CMJ process, for each $t \in (0, \infty)$ we have $\E[(X^{(L)}, W^{(L)})]{\xi(t)} \leq L$, and, by~\eqref{eq:non-instantaneous-explosion}, also $\E[(X^{(L)}, W^{(L)})]{\xi(0)} < 1$. Therefore, by~\cite[Theorem~3.1(b)]{Komjathy2016ExplosiveCB}, $(\mathscr{T}^{(L)}_{t})_{t \geq 0}$ is \emph{conservative}, i.e.\ almost surely, for each $t \in (0, \infty)$,
		\begin{equation} \label{eq:non-exposive-L}
			\left|\mathscr{T}^{(L)}_{t}\right| < \infty.
		\end{equation}
		We now construct a coupling of a $(X,W)$-CMJ branching process $(\overline{\mathscr{T}}_{t})_{t \geq 0}$ with a $(X^{(L)}, W^{(L)})$-CMJ process $(\overline{\mathscr{T}}^{(L)}_{t})_{t \geq 0}$. Note that, by definition, for each $L$-moderate $u \in \mathcal{U}_{L}$, we have $X^{(L)}(u) \sim X(u)$. We then construct $(\overline{\mathscr{T}}_{t})_{t \geq 0}$ in the natural way from the random variables defining $(\overline{\mathscr{T}}^{(L)}_{t})_{t \geq 0}$: for each $u \in \mathcal{U}_{L}$, we set $(X(u), W_{u}) = (X^{(L)}(u), W^{(L)}_{u})$, and for each $u \in \mathcal{U}^{c}_{L}$, with $|u| = m \geq 1$, sample $X(u)$ independently, conditionally on the weight $W_{u_{|_{m-1}}}$. One readily verifies that this coupling has the correct marginal distributions. Moreover, on this coupling, almost surely, for each $t \geq 0$, we have 
		\[
		\left|\{u \in \overline{\mathscr{T}}_{t}: u \, \text{ is $L$-moderate}\} \, \right| = \left|\overline{\mathscr{T}}^{(L)}_{t}\right| \stackrel{\eqref{eq:non-exposive-L}}{<} \infty,
		\]
		as desired
	\end{proof}
	
	\noindent Recall that, for a $(X,W)$-Crump-Mode-Jagers branching process $(\mathscr{T}_{t})_{t \geq 0}$, we have  
	\[\tau_{\infty} = \lim_{k \to \infty} \tau_{k} = \inf\left\{t > 0: |\mathscr{T}_t| = \infty\right\}.\]
	Recall also that we have $\mathcal{T}_{\infty} = \bigcup_{k=1}^{\infty} \mathcal{T}_{k} = \bigcup_{k=1}^{\infty} \mathscr{T}_{\tau_{k}}$. 
	\begin{lemma} \label{lemma:tree-ident}
		Let $(\mathscr{T}_{t})_{t \geq 0}$ be  a $(X,W)$-CMJ branching process satisfying Condition~\ref{item:starnonzero} of Assumption~\ref{ass:star}. Then,
		\begin{equation} \label{eq:inclusion}
			\mathcal{T}_{\infty} = \left\{u \in \mathcal{U}_{\infty}: \mathcal{B}(u) < \tau_{\infty}\right\} \subseteq \mathscr{T}_{\tau_{\infty}}.
		\end{equation}    
	\end{lemma}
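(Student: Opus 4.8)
The plan is to reduce the identity to one structural fact — that the explosion time is not \emph{attained} at any $\tau_{k}$ with $k$ finite — and then do elementary bookkeeping with the definitions. Suppose first that extinction occurs, $|\mathcal{T}_{\infty}|<\infty$; then $\tau_{\infty}=\infty$, and by~\eqref{eq:extinction} all three of $\mathcal{T}_{\infty}$, $\{u:\mathcal{B}(u)<\tau_{\infty}\}$ and $\mathscr{T}_{\tau_{\infty}}$ coincide with $\{u:\mathcal{B}(u)<\infty\}$, so the claim is trivial. Assume henceforth survival; the key claim is that then, almost surely, $\tau_{k}<\tau_{\infty}$ for every $k$ (the assertion of Remark~\ref{rem:assumpt-motiv}). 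Granting this, $\mathcal{T}_{\infty}\subseteq\{u:\mathcal{B}(u)<\tau_{\infty}\}$ is immediate: any $u\in\mathcal{T}_{\infty}=\bigcup_{k}\mathcal{T}_{k}$ lies in some $\mathcal{T}_{k}\subseteq\mathscr{T}_{\tau_{k}}$, so $\mathcal{B}(u)\leq\tau_{k}<\tau_{\infty}$. For the reverse inclusion, if $\mathcal{B}(u)<\tau_{\infty}$ then $|\mathscr{T}_{\mathcal{B}(u)}|<\infty$ since $\tau_{\infty}=\inf\{t:|\mathscr{T}_{t}|=\infty\}$; writing $N:=|\mathscr{T}_{\mathcal{B}(u)}|$, the facts $u\in\mathscr{T}_{\mathcal{B}(u)}$ and $\tau_{N}\leq\mathcal{B}(u)$ force $N\leq|\mathscr{T}_{\tau_{N}}|\leq|\mathscr{T}_{\mathcal{B}(u)}|=N$, so $\mathscr{T}_{\tau_{N}}=\mathscr{T}_{\mathcal{B}(u)}$ has exactly $N$ elements, whence $\mathcal{T}_{N}=\mathscr{T}_{\tau_{N}}\ni u$, i.e.\ $u\in\mathcal{T}_{\infty}$. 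Finally $\{u:\mathcal{B}(u)<\tau_{\infty}\}\subseteq\{u:\mathcal{B}(u)\leq\tau_{\infty}\}=\mathscr{T}_{\tau_{\infty}}$ is obvious.

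It remains to prove the claim $\tau_{k}<\tau_{\infty}$ for every $k$, on survival; this is where Condition~\ref{item:starnonzero} enters. If $\tau_{\infty}=\infty$ there is nothing to prove, as $\tau_{k}<\infty$ on survival, so suppose $\tau_{\infty}<\infty$ and, for contradiction, that $\tau_{k}=\tau_{\infty}$ for some $k$. Then $|\mathscr{T}_{t}|<k$ for all $t<\tau_{k}=\tau_{\infty}$, so $F:=\{u:\mathcal{B}(u)<\tau_{\infty}\}$ has at most $k-1$ elements, whereas $\mathcal{T}_{\infty}\subseteq\{u:\mathcal{B}(u)\leq\tau_{\infty}\}$ is infinite; hence the tree $\{u:\mathcal{B}(u)\leq\tau_{\infty}\}$ is infinite with $F$ finite. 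Work on the almost sure event on which (i) $\sum_{i\geq n+1}X(ui)>0$ for every $u\in\mathcal{U}_{\infty}$ and $n\in\mathbb{N}$ (a countable intersection of probability-one events, by~\eqref{eq:non-instantaneous-sideways} and~\eqref{eq:cmj-assumption}), and (ii) for every $u\in\mathcal{U}_{\infty}$ the set of descendants of $u$ born at the same instant as $u$ is finite (it is the family tree of a Galton--Watson process with offspring law $\xi(0)$, subcritical by~\eqref{eq:non-instantaneous-explosion}). By K\H{o}nig's Lemma (Lemma~\ref{lemma:konig}), $\{u:\mathcal{B}(u)\leq\tau_{\infty}\}$ contains either a node $v$ of infinite degree or an infinite path $\varnothing=w_{0},w_{1},\ldots$. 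In the first case all children $v1,v2,\ldots$ of $v$ lie in the tree (their birth times being non-decreasing in the index, so that an index past any finite one forces in all earlier ones), whence $\mathcal{B}(vi)\uparrow\mathcal{B}(v)+\mathcal{P}(v)\leq\tau_{\infty}$; by (i) this convergence is strict, so $\mathcal{B}(vi)<\tau_{\infty}$ and $vi\in F$ for every $i$ — contradicting $F$ finite. In the second case $\mathcal{B}(w_{i})\uparrow L\leq\tau_{\infty}$, and by (ii) the sequence $(\mathcal{B}(w_{i}))_{i}$ is not eventually constant (otherwise $w_{N},w_{N+1},\ldots$ would be infinitely many descendants of $w_{N}$ born at the instant $\mathcal{B}(w_{N})$), so $\mathcal{B}(w_{i})<L\leq\tau_{\infty}$ for infinitely many $i$, putting infinitely many $w_{i}$ in $F$ — again a contradiction. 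This proves the claim.

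The only real obstacle is this last step: excluding an ``instantaneous blow-up'' in which the explosion time is reached by a $\tau_{k}$ with $k$ finite. Such a blow-up would require either a single node producing infinitely many children in bounded time with its tail inter-birth times summing to zero, or an infinite chain of instantaneous births; the two halves of Condition~\ref{item:starnonzero}, namely~\eqref{eq:non-instantaneous-sideways} and~\eqref{eq:non-instantaneous-explosion}, rule out these two possibilities respectively, which is exactly what the two branches of the K\H{o}nig dichotomy exploit above. Everything else — the two set inclusions and the reduction to the claim — is routine manipulation of the definitions of $\mathscr{T}_{t}$, $\tau_{k}$, $\tau_{\infty}$ and $\mathcal{T}_{k}$.
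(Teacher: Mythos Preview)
Your proof is correct and follows essentially the same approach as the paper: reduce to showing $\tau_{k}<\tau_{\infty}$ for all $k$ on survival, then rule out $\tau_{k}=\tau_{\infty}$ by using~\eqref{eq:non-instantaneous-sideways} and~\eqref{eq:non-instantaneous-explosion}. The only cosmetic difference is that you invoke K\H{o}nig's lemma on the infinite tree $\{u:\mathcal{B}(u)\leq\tau_{\infty}\}$, while the paper argues directly on the set $\Lambda=\{u:\mathcal{B}(u)=\tau_{\infty}\}$ via a pigeonhole on its minimal elements; both routes feed into the same two contradictions.
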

	\begin{proof}
		The second inclusion is clear, hence we just prove the first equality. If $|\mathcal{T}_{\infty}| < \infty$, in other words, the process becomes extinct, then $\tau_{\infty} = \infty$ and the equality is clear. Otherwise, on the event $\left\{|\mathcal{T}_{\infty}| = \infty\right\}$, since by~\eqref{eq:expl-almost-surely} we have $\tau_{\infty} < \infty$, it suffices to show that for any $k \in \mathbb{N}$, $\tau_{k} < \tau_{\infty}$. First, we order elements of $\mathcal{U}_{\infty}$, $u^{(1)}, u^{(2)} \ldots,$ according to birth time, breaking ties with the lexicographic ordering. Suppose that $\tau_{k} = \tau_{\infty} < \infty$ for some $k \in \mathbb{N}$. Then, since $|\mathcal{T}_{\infty}| = \infty$, the set
		\begin{equation} \label{eq:time-zero-indiv}
			\Lambda = \{u \in \cU_{\infty}: \mathcal{B}(u) = \tau_{\infty} = \tau_{k}\}
		\end{equation}
		is infinite and each element of $\Lambda$ is a descendant (though not necessarily a child) of one of $u^{(1)}, \ldots, u^{(k-1)}$. Now, there are two (not mutually exclusive) cases: either there exists a minimal element $u' \in \Lambda$ such that $\Lambda$ contains infinitely many elements of the form $u'v \in \Lambda$, with $v \in \cU_{\infty}$, or, one of $u^{(1)}, \ldots, u^{(k-1)}$ has infinitely many children in $\Lambda$. The latter case occurs with probability $0$, because, by Equation~\eqref{eq:non-instantaneous-sideways}, for each $j \in [k-1]$ and $n \in \mathbb{N}$, we have $\sum_{i=n+1}^{\infty} X(u^{(j)}i) > 0$ almost surely. Hence, a single individual cannot produce infinitely many children instantaneously. But now, for the prior case, the size of the collection
		\begin{equation} \label{eq:time-zero-galton-watson}
			\left\{u'v: v \in \cU_{\infty}\right\},
		\end{equation}
		is the total progeny of a Bienaym\'{e}-Galton-Watson branching process with offspring distribution $\xi^{(u')}(0) \sim \xi(0)$, and by~\eqref{eq:non-instantaneous-explosion}, this is finite almost surely. We deduce that, on $\left\{|\mathcal{T}_{\infty}| = \infty\right\}$, we have $\tau_{k} < \tau_{\infty}$ almost surely. 
	\end{proof}
	
	\begin{lemma}\label{lem:expl-min-exp-time}
		Let $(\mathscr{T}_{t})_{t \geq 0}$ be  a $(X,W)$-CMJ branching process that satisfies Assumption~\ref{ass:star}. Almost surely,
		\begin{equation} \label{eq:expl-equals-inf}
			\tau_{\infty} = \inf_{u \in \mathcal{T}_{\infty}}\left\{\mathcal{B}(u) + \mathcal{P}(u)\right\}.
		\end{equation} 
	\end{lemma}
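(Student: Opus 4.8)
The plan is to prove the two inequalities $\tau_\infty\le\inf_{u\in\mathcal{T}_\infty}\{\mathcal{B}(u)+\mathcal{P}(u)\}$ and $\tau_\infty\ge\inf_{u\in\mathcal{T}_\infty}\{\mathcal{B}(u)+\mathcal{P}(u)\}$ separately. The first is elementary: for any $u$ with $\mathcal{B}(u)+\mathcal{P}(u)<\infty$ we have, by~\eqref{eq:non-instantaneous-sideways}, $\mathcal{P}_i(u)<\mathcal{P}(u)$ for every $i$, so each child satisfies $\mathcal{B}(ui)=\mathcal{B}(u)+\mathcal{P}_i(u)<\mathcal{B}(u)+\mathcal{P}(u)$; thus infinitely many individuals are born strictly before $\mathcal{B}(u)+\mathcal{P}(u)$, whence $\tau_\infty\le\mathcal{B}(u)+\mathcal{P}(u)$ (the case $\mathcal{B}(u)+\mathcal{P}(u)=\infty$ being trivial), and taking the infimum gives the claim. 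Note also that, under Assumption~\ref{ass:star}, $|\mathcal{T}_\infty|=\infty$ a.s.\ and, applying the above to $\varnothing$, $\tau_\infty\le\mathcal{P}(\varnothing)<\infty$ a.s.; moreover $\tau_\infty>0$ a.s.\ since $\E{\xi(0)}<1$ by~\eqref{eq:non-instantaneous-explosion}.

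For the reverse inequality it suffices to show $\Prob{\inf_{u\in\mathcal{T}_\infty}\{\mathcal{B}(u)+\mathcal{P}(u)\}\ge\tau_\infty+\delta}=0$ for every rational $\delta>0$, and then take a union over $\delta$. Fix such a $\delta$ and let $A_\delta$ be the event that $\mathcal{B}(u)+\mathcal{P}(u)\ge\tau_\infty+\delta$ for all $u\in\mathcal{T}_\infty$. On $A_\delta$ the tree $\mathcal{T}_\infty$ has no node of infinite degree, since such a node $u$ would have all of its children born before $\tau_\infty$ and hence $\mathcal{B}(u)+\mathcal{P}(u)=\sup_i(\mathcal{B}(u)+\mathcal{P}_i(u))\le\tau_\infty$, contradicting $A_\delta$. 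As $|\mathcal{T}_\infty|=\infty$ a.s., Lemma~\ref{lemma:konig} then yields an infinite path $\varnothing=v_0,v_1,v_2,\dots$ in $\mathcal{T}_\infty$, with $v_j=v_{j-1}c_j$. Were $(c_j)_{j\ge1}$ bounded by some $M$, an induction along the path would give $\{v_0,v_1,\dots\}\subseteq\mathcal{U}_M$, contradicting Proposition~\ref{prop:finite-l-moderate} applied at $t=\tau_\infty\in(0,\infty)$ together with $\mathcal{T}_\infty\subseteq\mathscr{T}_{\tau_\infty}$ (Lemma~\ref{lemma:tree-ident}). Hence $(c_j)$ is unbounded; given $M\in\mathbb{N}$, pick $j$ with $c_j\ge M$, so that $v_{j-1}M\in\mathcal{T}_\infty$ and
\[
\sum_{i>M}X(v_{j-1}i)\ \ge\ \sum_{i>c_j}X(v_{j-1}i)\ =\ \bigl(\mathcal{B}(v_{j-1})+\mathcal{P}(v_{j-1})\bigr)-\mathcal{B}(v_j)\ \ge\ (\tau_\infty+\delta)-\tau_\infty\ =\ \delta,
\]
using $v_{j-1}\in\mathcal{T}_\infty$, $A_\delta$, and $\mathcal{B}(v_j)<\tau_\infty$ (Lemma~\ref{lemma:tree-ident}). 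Therefore $A_\delta\subseteq B_M$ for every $M$, where $B_M:=\{\exists\,u\in\mathcal{U}_\infty:\ uM\in\mathcal{T}_\infty\ \text{and}\ \sum_{i>M}X(ui)\ge\delta\}$.

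It remains to show $\Prob{B_M}\to0$ as $M\to\infty$. By a union bound, $\Prob{B_M}\le\sum_{u\in\mathcal{U}_\infty}\Prob{uM\in\mathcal{T}_\infty,\ \sum_{i>M}X(ui)\ge\delta}$. The key observation is that the event $\{uM\in\mathcal{T}_\infty\}=\{\mathcal{B}(uM)<\tau_\infty\}$ does not depend on $(X(ui))_{i>M}$: perturbing these variables only changes the birth times of descendants of $u(M+1),u(M+2),\dots$, all of which are $\ge\mathcal{B}(uM)$, so up to a null set this cannot move $\tau_\infty$ strictly below $\mathcal{B}(uM)$. Consequently, given $W_u$, the event $\{uM\in\mathcal{T}_\infty\}$ is independent of $\sum_{i>M}X(ui)$, and writing $g_M(w):=\Prob{\sum_{i>M}X_w(i)\ge\delta}$ we get $\Prob{uM\in\mathcal{T}_\infty,\ \sum_{i>M}X(ui)\ge\delta}\le(\sup_w g_M(w))\Prob{uM\in\mathcal{T}_\infty}$. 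By~\eqref{eq:stochastic-bound} and the Chernoff estimate underlying Lemma~\ref{lem:moment-prob-bounds} (i.e.~\eqref{eq:limsup-mgf}), $\sup_w g_M(w)\le\mathcal{M}_{c\mu_M^{-1}}(Y_M)\,\e^{-c\delta/\mu_M}\le C\e^{-c\delta/\mu_M}$ for all large $M$. Combining this with the running-maxima / conservative decomposition of $u$ used in Proposition~\ref{prob:local-explosions} and Lemma~\ref{lem:cons-bound-gen} to bound $\sum_u\Prob{uM\in\mathcal{T}_\infty}$ — the point being that $uM\in\mathcal{T}_\infty$ forces $uM$ to be born before each ancestor of $u$ explodes — one obtains $\Prob{B_M}\le C'\e^{-c\delta/\mu_M}$ with $C'$ independent of $M$. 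Since $\mu_M\to0$, letting $M\to\infty$ gives $\Prob{A_\delta}\le\Prob{B_M}\to0$, so $\Prob{A_\delta}=0$; a union over rational $\delta>0$ together with the first inequality finishes the proof.

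The main obstacle is this final first-moment estimate, where the ``uniform explosivity'' heuristic of Remark~\ref{rem:assumpt-motiv} must be made rigorous: one must simultaneously establish the independence of $\{uM\in\mathcal{T}_\infty\}$ from the ``fresh'' inter-birth times $(X(ui))_{i>M}$ (needing the careful argument that perturbing those times cannot push $\tau_\infty$ below $\mathcal{B}(uM)$, up to a null set) and a bound on $\sum_u\Prob{uM\in\mathcal{T}_\infty}$ that is uniform in $M$, obtained by re-running the conservative decomposition of Proposition~\ref{prob:local-explosions}. Everything else is bookkeeping built on K\H{o}nig's Lemma, Proposition~\ref{prop:finite-l-moderate} and Lemma~\ref{lemma:tree-ident}.
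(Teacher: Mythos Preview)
Your first inequality and the reduction $A_\delta\subseteq B_M$ via K\H{o}nig's lemma together with Proposition~\ref{prop:finite-l-moderate} and Lemma~\ref{lemma:tree-ident} are correct, and this part is essentially the same observation the paper uses: because $\mathcal{T}_\infty$ has only finitely many $L$-moderate elements, some node $u$ in $\mathcal{T}_\infty$ must have a child of index at least $M$, and on $A_\delta$ its remaining tail $\sum_{i>M}X(ui)$ is at least $\delta$.

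The gap is in your final first-moment estimate. The union bound gives
\[
\Prob{B_M}\;\le\;\bigl(\sup_w g_M(w)\bigr)\sum_{u\in\mathcal{U}_\infty}\Prob{uM\in\mathcal{T}_\infty},
\]
and you need the sum on the right to be bounded uniformly in $M$. But $\sum_u\Prob{uM\in\mathcal{T}_\infty}=\E{\bigl|\{u:\outdeg{u,\mathcal{T}_\infty}\ge M\}\bigr|}$, and under Assumption~\ref{ass:star} alone this expectation can be \emph{infinite} for every $M$: in the barely super-linear examples of Theorem~\ref{thrm:sub-treecount} (which satisfy Assumption~\ref{ass:star} via Theorem~\ref{thm:star-path-rif}), a star of any fixed size appears as a sub-tree of infinitely many children of the infinite-degree node. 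Thus the product $0\cdot\infty$ is indeterminate and the bound $\Prob{B_M}\le C'\e^{-c\delta/\mu_M}$ does not follow. Re-running the conservative decomposition of Proposition~\ref{prob:local-explosions} does not help either: that argument controls nodes that \emph{explode} before their ancestors, and the Laplace factors it produces are tied to the running maxima of the Ulam--Harris label, not to a fixed threshold $M$; replacing ``explodes'' by ``has $M$ children'' destroys the summability over $u$. Your conditional-independence claim for $\{uM\in\mathcal{T}_\infty\}$ and $(X(ui))_{i>M}$ is also delicate (the event that $u(M{+}1)\notin\mathcal{T}_\infty$ biases $X(u(M{+}1))$ upward), though this is secondary to the divergence of the sum.

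The paper sidesteps the union bound entirely. It conditions on the $\sigma$-algebra $\mathscr{W}(\mathcal{T}_\infty)$ generated by $\{(\mathcal{B}(u),W_u):u\in\mathcal{T}_\infty\}$, and on this conditioning picks a \emph{single} $u^*$ with (finite) out-degree $J_L\ge L$ in $\mathcal{T}_\infty$. Then
\[
\tau^+_\infty-\tau_\infty\;\le\;\mathcal{B}(u^*)+\mathcal{P}(u^*)-\mathcal{B}(u^*J_L)\;=\;\sum_{j>J_L}X(u^*j),
\]
and the uniform stochastic domination of Condition~\ref{item:stardom} gives $\Prob{\sum_{j>J_L}X(u^*j)>\eps\mid\mathscr{W}(\mathcal{T}_\infty)}\le\Prob{Y_L>\eps}\to 0$. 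No summation over $u$, no appeal to Proposition~\ref{prob:local-explosions}, and only Conditions~\ref{item:stardom},~\ref{item:starlimsup},~\ref{item:starnonzero} of Assumption~\ref{ass:star} are used.
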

	
	\begin{proof}
		First, as a shorthand in this proof, we define $\tau^{+}_{\infty} := \inf_{u \in \mathcal{T}_{\infty}}\left\{\mathcal{B}(u) + \mathcal{P}(u)\right\}$, so that we need only show that $\tau_{\infty} = \tau^{+}_{\infty}$ almost surely. Note that, for each $u \in \mathcal{U}_{\infty}$, we have $\tau_{\infty} \leq \mathcal{B}(u) + \mathcal{P}(u)$ and hence $\tau_{\infty} \leq \tau^{+}_{\infty}$. Moreover, Assumption~\ref{ass:star} guarantees that $\mathcal{P}(\varnothing) < \infty$ almost surely, hence, in particular, $\tau_{\infty} \leq \mathcal{P}(\varnothing) < \infty$. Thus, by Proposition~\ref{prop:finite-l-moderate} and Lemma~\ref{lemma:tree-ident}, for any $L-1 \in \mathbb{N}$, $\mathcal{T}_{\infty} \subseteq \mathscr{T}_{\tau_{\infty}}$   contains only finitely many $(L-1)$-moderate elements. Since $\mathcal{T}_{\infty}$ is almost surely infinite (because of Condition~\ref{item:stardom} of Assumption~\ref{ass:star}), by the pigeonhole principle, there must be infinitely many elements of the form $u^{*}L \in \mathcal{T}_{\infty}$, with $u^{*} \in \mathcal{U}_{\infty}$. Now, for such $u^{*}$ and for any $\eps > 0$, 
		\[
		\lim_{L \to \infty} \Prob{\mathcal{B}(u^{*}) + \mathcal{P}(u^{*}) - \mathcal{B}(u^{*}L) > \eps} = \lim_{L \to \infty}\Prob{\sum_{j=L+1}^{\infty} X(u^{*}j) > \eps} \leq \lim_{L \to \infty} \Prob{Y_{L} > \eps} = 0.
		\]
		Formally, to find these elements $u^{*}L$ we must condition on the sigma-algebra generated by the `information' in $\mathcal{T}_{\infty}$. Thus, suppose that $\mathscr{W}(\mathcal{T}_{\infty}) := \bigcup_{k \in \mathbb{N}} \mathscr{W}_{\tau_{k}}$ denotes the sigma algebra generated by the random variables $\{(\mathcal{B}(u), W_{u}): u \in \mathcal{T}_{\infty}\}$. Clearly, $\tau_{\infty}$ and $\mathcal{T}_{\infty}$ are $\mathscr{W}(\mathcal{T}_{\infty})$-measurable. Now, if (conditioning on this sigma algebra) there exists $u^{*} \in \mathcal{T}_{\infty}$ such that for each $j \in \mathbb{N}$ we have $u^{*}j \in \mathcal{T}_{\infty}$, then $\mathcal{B}(u^{*}j) \leq \tau_{\infty}$ for each $j\in\N$. Hence, $\mathcal{B}(u^{*}) + \mathcal{P}(u^{*}) = \tau_{\infty}$, and we are done. Otherwise, for any $L \in \mathbb{N}$, we can guarantee the existence of $u^{*} \in \mathcal{T}_{\infty}$ and a final $J_{L} \geq L$ such that $u^{*}J_{L} \in \mathcal{T}_{\infty}$ but $u^{*}(J_{L}+1) \notin \mathcal{T}_{\infty}$. Then, for any $\eps > 0$,
		\begin{linenomath}
			\begin{align*}
				\Prob{\tau^{+}_{\infty} - \tau_{\infty} > \eps \, | \, \mathscr{W}(\mathcal{T}_{\infty})} 
				& \leq \Prob{\mathcal{B}(u^{*}) + \mathcal{P}(u^{*}) - \tau_{\infty}>\eps \, | \, \mathscr{W}(\mathcal{T}_{\infty})} \\ 
				&\leq \Prob{\sum_{j = J_{L}+1}^{\infty} X(u^{*}j) > \eps \, \bigg | \, \mathscr{W}(\mathcal{T}_{\infty})} \leq \Prob{Y_L \geq \eps}.
			\end{align*}
		\end{linenomath}
		Taking limits $L \to \infty$ and then $\eps \to 0$, we deduce that, $\tau^{+}_{\infty} = \tau_{\infty}$ almost surely, as required. 
	\end{proof} 
	
	\subsubsection{Proof of Theorem~\ref{thm:star}}
	\begin{proof}[Proof of Theorem~\ref{thm:star}]
		Suppose that $K'$ is taken as in Proposition~\ref{prob:local-explosions}. 
		Note that we may view any $w \in \mathcal{U}_{\infty}$ as a concatenation $w = uv$, where $u \in \mathcal{U}_{K'}$ is $K'$-moderate, and $v = v_1 \cdots v_{k}$, where $v_{1} > K'$ (here we also allow $v$ to be empty, so that $K'$-moderate nodes $w$ may also be interpreted as a concatenation). Now, note that (on $\mathcal{B}(u) < \infty$) the birth times $\mathcal{B}(uv) - \mathcal{B}(u) \sim \mathcal{B}(v)$, and thus, by arguments analogous to those appearing in Proposition~\ref{prob:local-explosions}, for any $u \in \mathcal{U}_{\infty}$ (in particular for $u \in \mathcal{U}_{K'}$),
		\be \label{eq:moderate-prefix}
		\E{\left| \left\{a = a_1 \cdots a_{m} \in \cU_\infty: a_1>K', u a\text{ explodes before } ua_{|_{m-1}}, ua_{|_{m-2}}, \ldots, u\right\}\right| }<\infty.
		\ee 
		Now, since $\tau_{\infty} < \infty$ almost surely, we infer from Proposition~\ref{prop:finite-l-moderate} with $L = K'$, that $|\{u \in \mathcal{U}_{K'}: \mathcal{B}(u) \leq \tau_{\infty}\}| < \infty$ almost surely. Therefore, by~\eqref{eq:moderate-prefix}, the set 
		\be \label{eq:Sfin} 
		S:=\left\{u \in \cU_\infty: \mathcal{B}(u) \leq \tau_{\infty}, u \text{ explodes before all of its ancestors} \right\}.
		\ee
		is finite almost surely.
		By the definition of $S$ and the fact that the infimum of a finite set is attained by (at least) one of the elements, by Lemma~\ref{lem:expl-min-exp-time}, almost surely there exists $u^{*}\in S$ such that 
		\[
		\mathcal{B}(u^{*}) + \mathcal{P}(u^{*}) = \inf_{v \in S} \{\mathcal{B}(v) + \mathcal{P}(v)\} = \inf_{u \in \cU_\infty} \{\mathcal{B}(u) + \mathcal{P}(u)\} = \tau_{\infty}.
		\]
		This implies that $u^{*}$ has infinite degree in $\mathscr{T}_{\tau_{\infty}}$. Moreover, since by Condition~\ref{item:stardom} of Assumption~\ref{ass:star} we have $\sum_{i=n+1}^{\infty} X(u^{*}i) > 0$ almost surely, it follows that for each $i \in \mathbb{N}$ we have $\mathcal{B}(u^{*}i) < \tau_{\infty}$ almost surely. Therefore, by the equality in~\eqref{eq:inclusion}, $u^{*}$ has infinite degree in $\mathcal{T}_{\infty}$ as well. 
	\end{proof}
	
	\subsection{Sufficient criteria for an infinite path and structural results in the star regime} \label{sec:inf-path-structure-proof}
	
	To prove Theorem~\ref{thm:path}, we first state and prove the following lemma.
	
	\begin{lemma}\label{lemma:explodingchild}
		Let $(\mathscr{T}_{t})_{t \geq 0}$ be  a $(X,W)$-CMJ branching process. Under Assumption~\ref{ass:path},
		\begin{equation} \label{eq:child-explodes-first-forall}
			\Prob{\bigcap_{u \in \cU_{\infty}} \bigcap_{j=1}^{\infty}\bigcup_{i = j}^{\infty}\left\{\mathcal{B}(ui) + \mathcal{P}(ui) < \mathcal{B}(u) + \mathcal{P}(u)\right\}} = 1. 
		\end{equation}
	\end{lemma}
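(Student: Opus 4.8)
The plan is to prove the displayed event has probability $1$ for each fixed $u\in\cU_\infty$ separately, and then intersect over the countable set $\cU_\infty$. So fix $u$. The substance of the claim concerns the event $\{\mathcal B(u)<\infty\}\cap\{u\text{ produces infinitely many children}\}$ (which has full probability under the standing non-degeneracy hypotheses; on its complement the statement for $u$ is either vacuous or follows from the ordinary second Borel--Cantelli lemma applied to the i.i.d.\ events $\{\mathcal P(ui)<\infty\}$, since $\Prob{\mathcal P<\infty}>0$). On that event, subtracting the finite quantities $\mathcal B(u)$ and $\mathcal P_i(u)$ gives, for every $i$, the identity of events
\[
\{\mathcal B(ui)+\mathcal P(ui)<\mathcal B(u)+\mathcal P(u)\}=\{\mathcal P(ui)<T_i\},\qquad T_i:=\sum_{j>i}X(uj)=\mathcal P(u)-\mathcal P_i(u)
\]
(valid also when $\mathcal P(u)=\infty$, with $T_i=\infty$). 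Hence it suffices to show $\Prob{\mathcal P(ui)<T_i\text{ for infinitely many }i}=1$, and since that event lies in the $\sigma$-algebra generated by $W_u$, the sequence $(X(uj))_j$, and the sub-processes rooted at the children of $u$, I will establish it conditionally on $W_u=w$, for $\mu$-a.e.\ $w$, and then integrate.

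Fix such a $w$. The key idea is a second-moment argument, applied not to the events $\{\mathcal P(ui)<T_i\}$ themselves — their conditional probabilities may tend to $0$ — but to the smaller events
\[
E_i:=\{\mathcal P(ui)<\nu_i^w\}\cap\Big\{\sum_{j>i}X_w(uj)\ge\nu_i^w\Big\}\ \subseteq\ \{\mathcal P(ui)<T_i\},
\]
with $\nu_i^w$ as in Assumption~\ref{ass:path}. Set $g_i:=\Prob{\mathcal P<\nu_i^w}$ and $p_i:=\Prob{\sum_{j>i}X_w(uj)\ge\nu_i^w}$. Conditionally on $W_u=w$, the variable $\mathcal P(ui)$ is a function of the sub-process rooted at $ui$ alone, hence by~\eqref{eq:cmj-assumption} it is distributed as the generic $\mathcal P$, is i.i.d.\ across $i$, and is independent of the family $(X_w(uj))_j$. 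This yields $\Prob{E_i\mid W_u=w}=g_i p_i$ and, for $i\neq j$, the correlation bound $\Prob{E_i\cap E_j\mid W_u=w}=g_i g_j\,\Prob{\sum_{\ell>i}X_w(u\ell)\ge\nu_i^w,\ \sum_{\ell>j}X_w(u\ell)\ge\nu_j^w}\le g_i g_j$. By~\eqref{eq:div-condition} we have $\sum_i g_i=\infty$, and by~\eqref{eq:smallest-expl-prob} we have $\delta:=\liminf_i p_i>0$; hence, writing $G_n:=\sum_{i\le n}g_i$, for all large $n$ we get $\sum_{i\le n}g_i p_i\ge\tfrac{\delta}{2}\big(G_n-c_w\big)$ for a finite constant $c_w$, while $\sum_{i,j\le n}\Prob{E_i\cap E_j\mid W_u=w}\le G_n+G_n^2\le 2G_n^2$. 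The Kochen--Stone lemma (equivalently, the Chung--Erd\H{o}s inequality together with a limiting argument) then gives
\[
\Prob{E_i\text{ i.o.}\mid W_u=w}\ \ge\ \limsup_{n\to\infty}\frac{\big(\sum_{i\le n}g_i p_i\big)^2}{\sum_{i,j\le n}\Prob{E_i\cap E_j\mid W_u=w}}\ \ge\ \frac{\delta^2}{8}\ >\ 0 .
\]

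Since $\{E_i\text{ i.o.}\}\subseteq\{\mathcal P(ui)<T_i\text{ i.o.}\}$, the latter event has positive conditional probability given $W_u=w$. On the other hand, $\{\mathcal P(ui)<T_i\text{ i.o.}\}$ is a tail event of the mutually independent family formed by the coordinates $(X_w(uj))_j$ — independent by~\eqref{eq:starindep} — together with the sub-processes rooted at the children $(ui)_{i\ge1}$ — i.i.d.\ and independent of the former by~\eqref{eq:cmj-assumption}. Kolmogorov's $0$--$1$ law therefore upgrades its conditional probability to $1$ for $\mu$-a.e.\ $w$; integrating over $w$ and intersecting over $u\in\cU_\infty$ completes the proof. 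The main obstacle is precisely the choice made at the start of the second paragraph: one must pass to the auxiliary events $E_i$ so that the \emph{divergent} series $\sum_i g_i$, rather than a possibly convergent one, governs the denominator of the Kochen--Stone bound, and one must extract the clean inequality $\Prob{E_i\cap E_j}\le g_i g_j$, which hinges on the independence between the weight-dependent inter-birth times of $u$ and the explosion times of its children. A secondary point requiring care is the reduction of the first paragraph in the degenerate regimes $\mathcal P(u)=\infty$ or $u$ extinct.
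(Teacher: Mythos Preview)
Your argument is correct and takes a genuinely different route from the paper. The paper proceeds in two stages: conditionally on $W_u$, it first applies the second Borel--Cantelli lemma to the i.i.d.\ events $\{\mathcal P(ui)<\nu_i^{W_u}\}$ to extract an infinite random index set $\{i_\ell\}$; it then thins to a subsequence $(i_{\ell_n})$ chosen so that the block sums $\sum_{k=i_{\ell_n}+1}^{i_{\ell_{n+1}}} X_{W_u}(uk)$ range over disjoint index sets (hence are conditionally independent given $W_u$ and the $\mathcal P(ui)$'s), and applies Borel--Cantelli a second time to these independent blocks, obtaining probability $1$ directly without any zero--one law. Your route instead keeps the two ingredients coupled in the events $E_i$, exploits the clean off-diagonal bound $\Prob{E_i\cap E_j\mid W_u=w}\le g_ig_j$ to feed the Kochen--Stone inequality, and upgrades the resulting positive probability to $1$ via Kolmogorov's zero--one law for the conditionally independent family $\{X_w(uj)\}_j\cup\{\text{subprocess at }ui\}_i$. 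Both arguments rest on exactly the same independence structure; yours trades the paper's hands-on thinning construction for two black-box tools, which is arguably cleaner but slightly less elementary.

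One small point: your parenthetical handling of the degenerate case $\{\mathcal B(u)=\infty\}$ is not quite right---on that event both sides of the displayed inequality equal $\infty$, so the event is empty rather than vacuous, and the second Borel--Cantelli argument you sketch for $\{\mathcal P(ui)<\infty\}$ does not rescue it. The paper's own proof glosses over the same issue (it adds the possibly infinite quantity $\mathcal B(u)+\mathcal P_{i^\ast}(u)$ to both sides of a strict inequality), and in every application the standing hypotheses force $\mathcal B(u)<\infty$ almost surely, so this is harmless in context; but your parenthetical should simply invoke that rather than attempt a separate argument.
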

	
	\begin{proof}
		We first fix $u\in\cU_\infty$ and condition on the random variables $\mathcal{B}(u)$ and $W_{u}$. We then sample each of the values of $\mathcal{P}(ui)$, then the values of $X_{W_{u}}(ui)$, for $i\in\N$.  Note that for each $u \in \cU_{\infty}$, the random variables $(\mathcal{P}(ui))_{ i \in \mathbb{N}}$ are i.i.d.\ and distributed like $\mathcal{P}(i)$. Thus, by~\eqref{eq:div-condition} and the converse of the Borel-Cantelli lemma, on sampling each $\mathcal{P}(ui)$, for any $w \in S$ with probability $1$, we have $\mathcal{P}(ui) < \nu^{w}_{i}$ infinitely often. As a result, conditionally on the value of the weight $W_{u}$, with probability $1$ we have $\mathcal{P}(ui) < \nu^{W_{u}}_{i}$ for infinitely many $i\in\N$.
		Conditioning on this event, let $i_1, i_2, \ldots,$ denote indices such that, for each $\ell \in \mathbb{N}$, we have $\mathcal{P}(ui_\ell) < \nu^{W_{u}}_{i_\ell}$ almost surely. Then, by~\eqref{eq:smallest-expl-prob}, there exists $I_{0} \in \mathbb{N}$, such that, for some $\delta = \delta(I_0) > 0$ and for all $i_{\ell} \geq I_0$,
		\[
		\Prob{\sum_{k=i_\ell+1}^{\infty} X_{W_{u}}(uk) \geq \nu^{W_{u}}_{i_\ell} \, \bigg | \, W_{u}} \geq \delta.
		\]
		As a result, for some $\phi(i_\ell) > i_\ell$ sufficiently large, 
		\[
		\Prob{\sum_{k=i_\ell+1}^{\phi(i_\ell)} X_{W_{u}}(uk) \geq \nu^{W_{u}}_{i_{j}} \, \bigg | \, W_{u}} \geq \frac{\delta}{2}.  
		\]
		We now pass to a sub-sequence $(i_{\ell_n})_{n\in\N}$ of $(i_\ell)_{\ell \in \mathbb{N}}$, such that $ i_{\ell_1} > I_0$, and $i_{\ell_{n+1}} = \inf\{i_\ell: \ell\in \N, i_\ell >\phi( i_{\ell_n})\}$.
		Then, by Equation~\eqref{eq:starindep}, conditionally on $W_{u}$, the random variables 
		\[
		\left(\sum_{k=i_{\ell_n}+1}^{i_{\ell_{n+1}}} X_{W_{u}}(uk)\right)_{n \in \mathbb{N}}
		\] 
		are independent of each other. Again applying the converse of the Borel-Cantelli lemma, we obtain  
		\[
		\Prob{\bigcap_{j=1}^{\infty}\bigcup_{n = j}^{\infty}\left\{\sum_{k=i_{\ell_n}+1}^{i_{\ell_{n+1}}} X_{W_{u}}(uk) \geq  \nu^{W_{u}}_{i_{\ell_n}} \right\} \, \Bigg| \, \mathcal{B}(u), W_{u}} = 1.
		\]
		But for every index $i^{*} \in (i_{\ell_n})_{n \in \mathbb{N}}$, we have by the definition of the sequence $(i_\ell)_{\ell\in\N}$,
		\[
		\mathcal{P}(ui^{*}) <\nu^{W_{u}}_{i^{*}} 
		\leq \sum_{i=i^{*}+1}^{\infty} X_{W_{u}}(ui).
		\]
		Adding $\mathcal{B}(u) + \sum_{i=1}^{i^{*}} X_{W_{u}}(ui)$ (which equals $\cB(ui^*)$) to both sides, we deduce that, almost surely, conditionally on $\mathcal{B}(u)$ and $W_{u}$,  
		\[
		\mathcal{B}(ui^{*}) + \mathcal{P}(ui^{*}) < \mathcal{B}(u) + \mathcal{P}(u).
		\] 
		Then, by taking expectations over $\mathcal{B}(u)$ and  $W_{u}$, we have
		\begin{equation} \label{eq:child-explodes-first}
			\Prob{\bigcap_{j=1}^{\infty}\bigcup_{i = j}^{\infty}\left\{\mathcal{B}(ui) + \mathcal{P}(ui) <\mathcal{B}(u) + \mathcal{P}(u)\right\}} = 1. 
		\end{equation}
		But now, since $\mathcal{U}_{\infty}$ is countable, we deduce~\eqref{eq:child-explodes-first-forall}.
	\end{proof}
	
	\subsubsection{Proof of Theorem~\ref{thm:path}} \label{sec:inf-path-proof}
	\begin{proof}[Proof of Theorem~\ref{thm:path}]
		On $\left\{\left|\mathcal{T}_{\infty} \right| = \infty \right\}$, let us first assume that $\mathcal{T}_{\infty}$ does not contain a node of infinite degree. It then immediately follows from K{\H o}nigs Lemma (Lemma~\ref{lemma:konig}) that $\cT_\infty$ contains an infinite path. We then assume, on $\left\{\left|\mathcal{T}_{\infty} \right| = \infty \right\}$, that there exists $u^* \in \mathcal{T}_{\infty}$ such that $u^*$ has infinite degree, i.e.\ $\mathcal{B}(u^*) + \mathcal{P}(u^*) = \tau_{\infty}$. But then, by Equation~\eqref{eq:child-explodes-first-forall} from Lemma~\ref{lemma:explodingchild}, there must be a child of $u^*$, $u^*j$ say, such that $\mathcal{B}(u^*j) + \mathcal{P}(u^*j) < \mathcal{B}(u^*) + \mathcal{P}(u^*) = \tau_{\infty}$, a contradiction.  
	\end{proof}
	
	\subsubsection{Proof of Theorem~\ref{thm:structure}} \label{sec:structure-proof}
	\begin{proof}[Proof of Theorem~\ref{thm:structure}]
		For each $u \in \mathcal{U}_{\infty}$, we set \[(\mathscr{T}^{(u)}_{t})_{t \geq 0} := \left\{uv \in \cU_{\infty}: \sum_{j=0}^{\ell-1} \left(\mathcal{P}_{v_{j+1}}(uv_{|_{j}}) \leq t\right)\right\},\]
		where we recall that if $v = v_{1} \cdots v_{m}$ then $uv_{|_{j}} = uv_1\cdots v_j$. Note that, if $\mathcal{B}(u) < \infty$, by~\eqref{eq:birth-time-identity} we have 
		$\mathscr{T}^{(u)}_{t} = \left\{uv: uv \in \mathscr{T}_{t +\mathcal{B}(u)}\right\}.$
		However, the definition we use allows us to define $(\mathscr{T}^{(u)}_{t})_{t \geq 0}$ even when $\mathcal{B}(u) = \infty$.  Note that, by~\eqref{eq:cmj-assumption}, $(\mathscr{T}^{(u)}_{t})_{t \geq 0} \sim (\mathscr{T}_{t})_{t \geq 0}$, and for each $u \in \cU_{\infty}$, the $((\mathscr{T}^{(ui)}_{t})_{t \geq 0})_{i \in \mathbb{N}}$ are i.i.d. Now, upon sampling each of the random variables $(X(ui))_{i\in\N}$, (regardless of whether $\mathcal{B}(u) < \infty$ or not), recalling the notation from~\eqref{eq:P-t}, note that if we have
		\[
		\mathcal{P}_{T}(ui) < \sum_{k=i+1}^{\infty} X(uk),
		\]
		then, if $\mathcal{B}(u) + \mathcal{P}(u) < \infty$, we have $\cB(ui)+\mathcal{P}_{T}(ui) < \mathcal{B}(u) + \mathcal{P}(u)$. Now, by exploiting an almost identical argument to the proof of Lemma~\ref{lemma:explodingchild} (as in Equations~\eqref{eq:div-condition} and~\eqref{eq:smallest-expl-prob}), combining Equations~\eqref{eq:div-condition-sub-tree} and~\eqref{eq:smallest-expl-prob-sub} from Condition~\ref{item:treediv-cond} of Assumption~\ref{ass:structure} allows us to deduce that
		\begin{equation}
			\label{eq:sub-tree-first-forall}
			\Prob{\bigcap_{u \in \cU_{\infty}} \bigcap_{j=1}^{\infty}\bigcup_{i = j}^{\infty}\left\{ \mathcal{P}_{T}(ui) < \sum_{k=i+1}^{\infty} X(uk) 
				\right\}} = 1. 
		\end{equation}
		But then, this implies that if a node $u^{*} \in \mathcal{U}_{\infty}$ has infinite degree in $\mathcal{T}_{\infty}$, there exist infinitely many indices $i$ such that $\mathcal{B}(u^*i) + \mathcal{P}_{T}(u^*i) < \mathcal{B}(u^*) + \mathcal{P}(u^*)$, and hence, by Lemma~\ref{lemma:tree-ident}, that $(u^{*}i)T \subseteq \mathcal{T}_{\infty}$. This proves the first statement. 
		
		For the second statement, by Equation~\eqref{eq:sum-cond-sub-tree} we have 
		\begin{linenomath}
			\begin{align} \label{eq:union bound-exp-event}
				\sum_{i=1}^{\infty} \Prob{\mathcal{P}_{T}(ui) < \sum_{k=i+1}^{\infty} X(uk) \, \bigg | \, W_{u}} < \infty,
			\end{align}
		\end{linenomath}
		almost surely. Thus, by a conditional analogue of the Borel-Cantelli lemma, 
		\[
		\Prob{\bigcap_{j=1}^{\infty} \bigcup_{i=j}^{\infty} \left\{\mathcal{P}_{T}(ui) < \sum_{k=i+1}^{\infty} X(uk) \right\} \, \bigg | \, W_{u}} = 0,
		\]
		almost surely. Taking expectations over $W_{u}$ and a union bound over $u \in \mathcal{U}_{\infty}$, we deduce that
		\[
		\Prob{\bigcup_{u \in \cU_{\infty}} \bigcap_{j=1}^{\infty}\bigcup_{i = j}^{\infty}\left\{\mathcal{P}_{T}(ui) < \sum_{k=i+1}^{\infty} X(uk)\right\}} = 0. 
		\]
		This implies that, if $u^{*}$ is such that $\mathcal{B}(u^{*}) + \mathcal{P}(u^{*}) = \tau_{\infty}$, almost surely, there exist only finitely many indices $i$ such that $T$ appears as a sub-tree of $u^*i$ in $\mathcal{T}_{\infty}$. 
		
		The final statement in Item~\ref{item:uniquepors} follows from the proof of Theorem~\ref{thm:star}, Equation~\ref{eq:Sfin} in particular, which states that $\cT_\infty$ contains a finite number of stars $u^*$ in $\cT_\infty$ almost surely under Assumption~\ref{ass:star}. 
	\end{proof}
	
	\subsection{Uniqueness conditions related to the existence of a star or an infinite path}\label{sec:uniqueproof}
	
	In this section, it is useful to introduce some extra notation. First, recall that, given $u \in \mathcal{U}_{\infty}$, the random variable $\mathcal{B}(u) + \mathcal{P}(u)$ determines the time at which $u$ `explodes', in the sense that, if $\sigma = \mathcal{B}(u) + \mathcal{P}(u)$, the individual $u$ has infinite degree in $\mathscr{T}_{\sigma}$. We also define the random variable $\tau_{\Path}(u)$ as the amount of time taken after the `birth' of $u$, for there to be an infinite path containing $u$ in the process $(\mathscr{T}_{t})_{t \geq 0}$. To make this more precise, we extend the notation for elements $v \in \mathcal{U}_{\infty}$ to $v \in \mathbb{N}^{\mathbb{N}}$: for $v = v_1v_2\cdots \in \mathbb{N}^\infty$ and $\ell \in \mathbb{N}$, we set $v_{|_{\ell}} = v_{1} v_{2} \cdots v_{\ell} \in \mathcal{U}_{\infty}$. Then, for $u \in \mathcal{U}_{\infty}$ we set 
	\[
	\tau_{\Path}(u) := \inf_{t \geq 0} \left\{ \exists v \in \mathbb{N}^\infty:   \sum_{j=0}^{i-1} \left(\mathcal{P}_{v_{j+1}}(uv_{|_{j}})\right) \leq t\text{ for all }i\in\N \right\}.
	\footnote{Note that, since the definition of $\tau_{\Path}(u)$ includes the uncountable set $\mathbb{N}^{\infty}$, it is not immediately clear that this is measurable. This is the reason that we assume that $(\Omega, \overline{\Sigma}, \mathbb{P})$ and $\mathscr{F}_{t}$ are complete: complete sigma algebras with respect to probability measures are closed under the \emph{Souslin operation}~\cite[Theorem~1.10.5, page 38]{bogachev}, which makes a number of uncountable unions measurable. In particular, for each $t \geq 0$, we can write 
		$
		\left\{\tau_{\Path}(u) \leq t\right\} = \bigcup_{v \in \mathbb{N}^{\infty}} \bigcap_{i \in \mathbb{N}}\left\{ \sum_{j=0}^{i-1} \left(\mathcal{P}_{v_{j+1}}(uv_{|_{j}})\right) \leq t\right\} \in \overline{\Sigma}, 
		$
		and similarly, we know for each $t \geq 0$ that $\left\{\tau_{\Path}(\varnothing) \leq t\right\} \in \mathcal{F}_{t}$. 
	}
	\]
	Thus, by this definition, if $\sigma = \mathcal{B}(u) + \tau_{\Path}(u)$ then $\mathscr{T}_{\sigma}$ contains an infinite path passing through $u$.
	
	The approach we use in this section is surprisingly simple, and reminiscent of the approach used, for example, in~\cite{Oliveira-spencer} to show a unique node has infinite degree: we show that, for each $u\in \mathcal{U}_{\infty}$, the random variables $\tau_{\Path}(u)$ or $\mathcal{P}(u)$ have distributions that contain no atoms on $[0, \infty)$. We use this to show that for any pair $u, v \in \mathcal{U}_{\infty}$, (which have `independent' sub-trees), the probability that both have infinite degrees, or lie on infinite paths simultaneously is $0$. As $\mathcal{U}_{\infty}$ is countable, we can readily take a union bound over all these pairs, and deduce the result. Condition~\ref{item:uniquenoexplatom} of Assumption~\ref{ass:uniqueness} (and~\eqref{eq:cmj-assumption}) already provides this property to $\mathcal{P}(u)$. For $\tau_{\Path}(u)$ we use the following result.
	
	\begin{lemma} \label{lem:no-atom}
		Let $(\mathscr{T}_{t})_{t \geq 0}$ be an explosive $(X,W)$-CMJ process. Under Condition~\ref{item:uniquenobirthatom} of Assumption~\ref{ass:uniqueness}, the distribution of $\tau_{\Path}(\varnothing)$ contains no atom on $[0, \infty)$. 
	\end{lemma}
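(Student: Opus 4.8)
The plan is to show that $\tau_{\Path}(\varnothing)$ has no atom on $[0,\infty)$ by a self-similarity / recursive argument combined with the atomlessness of $\mathcal{B}(i)$ near $0$ (Condition~\ref{item:uniquenobirthatom}). The starting observation is the recursive identity
\[
\tau_{\Path}(\varnothing) = \inf_{i \in \mathbb{N}} \left\{\mathcal{B}(i) + \tau_{\Path}(i)\right\},
\]
since an infinite path through $\varnothing$ must pass through one of its children $i$, and conditionally on the birth time $\mathcal{B}(i)$, the time for an infinite path to emerge below $i$ is $\tau_{\Path}(i)$, which (by~\eqref{eq:cmj-assumption}) is distributed like $\tau_{\Path}(\varnothing)$ and is independent of $\mathcal{B}(i)$ (more precisely, the whole subtree-data below $i$ is independent of $(\mathcal{B}(j))_{j \in \mathbb{N}}$). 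First I would fix a candidate atom location $a \in [0,\infty)$ and let $q := \Prob{\tau_{\Path}(\varnothing) = a}$, aiming to show $q = 0$.

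Suppose $q > 0$. From the recursive identity, the event $\{\tau_{\Path}(\varnothing) = a\}$ forces $\mathcal{B}(i) + \tau_{\Path}(i) \geq a$ for all $i$, with equality for at least one $i$. Fix the $\eps > 0$ from Condition~\ref{item:uniquenobirthatom}. The key step is the following: because $\mathcal{B}(i)$ has no atom on $[0,\eps)$, for the atom of $\tau_{\Path}(\varnothing)$ at $a$ to be "inherited" from child $i$ we would need $\tau_{\Path}(i)$ to have an atom precisely at $a - \mathcal{B}(i)$, but $\mathcal{B}(i)$ is spread atomlessly over $(0,\eps)$; so the contribution to the atom at $a$ from the event $\{\mathcal{B}(i) \in (0,\eps),\ \mathcal{B}(i) + \tau_{\Path}(i) = a\}$ vanishes after conditioning on $\mathcal{B}(i)$ and integrating (by independence, $\Prob{\tau_{\Path}(i) = a - \mathcal{B}(i) \mid \mathcal{B}(i)} $ is the mass of the law of $\tau_{\Path}$ at a point, and integrating a function that is nonzero on an at most countable set against an atomless measure gives $0$). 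The remaining possibility is that the infimum is attained only by children $i$ with $\mathcal{B}(i) \geq \eps$ (equivalently, $\mathcal{B}(i) \notin (0,\eps)$; note $\mathcal{B}(i) = 0$ with positive probability is excluded when $\xi(0) = 0$ a.s., which we may invoke, or handle as a degenerate subcase). But then I would iterate: on the event $\{\tau_{\Path}(\varnothing) = a\}$, pick the minimizing child $i^{(1)}$, then its minimizing child $i^{(2)}$, etc., generating an infinite ray $i^{(1)} i^{(2)} \cdots$ along which $\sum_{k \geq 1}(\mathcal{B}(i^{(k)} \text{ rel.\ parent})) = a$ and each increment is $\geq \eps$ — which is impossible for $a < \infty$ once more than $a/\eps$ steps have been taken. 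Hence the "attained only with $\mathcal{B}(i) \geq \eps$" branch contributes $0$ as well, so $q = 0$.

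To make the iteration rigorous I would set it up as: for $n \in \mathbb{N}$, define $q_n := \sup_{a \in [0,\infty)} \Prob{\tau_{\Path}(\varnothing) = a \text{ and the minimizing ray uses } \geq n \text{ increments of size } \geq \eps \text{ before any increment of size } < \eps}$, show $q_n \leq C \cdot (\sup_a \Prob{\tau_{\Path}(\varnothing)=a})^{?}$ shrinks, and combine with the atomless-$\mathcal{B}(i)$-on-$(0,\eps)$ argument for the complementary part; alternatively, and more cleanly, argue directly that $\{\tau_{\Path}(\varnothing) = a\}$ up to a null set is contained in the event that the minimizing ray uses at least one increment lying in $(0,\eps)$, and then that event has probability $0$ by the integration-against-atomless-measure argument applied to the first such increment. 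The main obstacle I anticipate is the measurability and bookkeeping for "the minimizing child/ray": since $\tau_{\Path}$ is defined via an uncountable union (cf.\ the footnote invoking the Souslin operation), one must be careful that expressions like "the first child realizing the infimum" are well-defined and measurable, or reformulate the whole argument using only countable operations — e.g.\ replacing "$\mathcal{B}(i) + \tau_{\Path}(i) = a$ for some $i$" by the countable statement and pushing the atomlessness through a sum over $i$ of conditional probabilities. Once that is handled, the rest is a short computation exploiting independence of disjoint subtrees and the atomless hypothesis.
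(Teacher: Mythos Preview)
Your core ingredients are correct: the recursive identity $\tau_{\Path}(\varnothing)=\inf_i\{\mathcal B(i)+\tau_{\Path}(i)\}$, the observation that the $\mathcal B(i)\in[0,\eps)$ contribution dies because you integrate an at-most-countable set of candidate atom locations against an atomless law, and the pigeonhole remark that a ray summing to $a$ can carry at most $\lfloor a/\eps\rfloor$ increments of size $\geq\eps$. Your iteration can be made to work: after $n$ applications of the recursion (each time discarding the $<\eps$ branch), one obtains
\[
\Prob{\tau_{\Path}(\varnothing)=a}\ \leq\ \sum_{u:|u|=n}\Prob{\text{all }n\text{ increments along }u\text{ are }\geq\eps,\ \tau_{\Path}(u)=a-\mathcal B(u)},
\]
and for $n>\lfloor a/\eps\rfloor$ every summand vanishes, so the countable sum is $0$. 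This sidesteps the ``minimizing ray'' measurability issue you flagged; no $q_n$-style contraction is needed.

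The paper takes a different, shorter route that avoids iteration entirely. Instead of fixing an arbitrary atom $a$, it introduces
\[
\widetilde a:=\inf\{t\in[0,\infty):\Prob{\tau_{\Path}(\varnothing)=t}>0\}
\]
and then works with an atom $a$ satisfying $a-\widetilde a<\eps$. With this choice the branch $\mathcal B(i)\geq\eps$ is killed in \emph{one} step: it would force $\tau_{\Path}(i)$ (which has the same law as $\tau_{\Path}(\varnothing)$) to have an atom at $a-\mathcal B(i)<\widetilde a$, contradicting the definition of $\widetilde a$. Only the $\mathcal B(i)<\eps$ branch remains, and there your atomless-integration argument finishes the job. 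So the paper trades your finite recursion for the single observation ``work near the smallest atom''; this buys a cleaner proof with no bookkeeping over depth-$n$ paths and no need to discuss minimising rays at all. Your approach, on the other hand, is more self-contained in that it handles every candidate $a$ directly without first locating $\widetilde a$.
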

	
	\begin{proof}
		We argue by contradiction and suppose that $\tau_{\Path}(\varnothing)$ contains an atom. Set
		\begin{equation}
			\widetilde{a} := \inf\left\{t \in [0, \infty): \Prob{\tau_{\Path}(\varnothing) = t} > 0\right\}.
		\end{equation}
		Now, exploiting Condition~\ref{item:uniquenobirthatom} of Assumption~\ref{ass:uniqueness} and the definition of $\widetilde{a}$, let $a$ be such that $\Prob{\tau_{\Path}(\varnothing) = a} > 0$, and $a - \widetilde{a} < \eps$ (with $\eps$ as in Condition~\ref{item:uniquenobirthatom}). Let $\mathcal{G}_{1}$ denote the sigma algebra generated by $(\mathcal{B}(i))_{i\in\N}$. The ancestral node $\varnothing$ contains an infinite path precisely when one of its children lies on an infinite path. Thus, 
		\begin{linenomath}
			\begin{align} \label{eq:child-path}
				\Prob{\tau_{\Path}(\varnothing) = a \, \big | \,  \mathcal{G}_{1}} & = \Prob{\bigcup_{i=1}^{\infty} \left\{\tau_{\Path}(i) = a - \mathcal{B}(i)\right\} \, \bigg | \, \mathcal{G}_{1}}
				\\ \hspace{1cm} & \leq \sum_{i=1}^{\infty} \Prob{\tau_{\Path}(i) = a - \mathcal{B}(i) \, \bigg| \,  \mathcal{G}_{1}}. 
			\end{align}
		\end{linenomath}
		Now, as the event $\{\tau_{\Path}(i) = a - \mathcal{B}(i)\}$ depends on $\mathcal{G}_{1}$ only via the sigma algebra generated by $\mathcal{B}(i)$, we may re-write the summands on the right hand side as
		\begin{linenomath}
			\begin{align*}
				\Prob{\tau_{\Path}(i) = a - \mathcal{B}(i) \, \bigg| \,  \mathcal{B}(i)} &= \Prob{\tau_{\Path}(i) = a - \mathcal{B}(i) \, \bigg| \,  \mathcal{B}(i)} \mathbf{1}_{\left\{\mathcal{B}(i) < \eps \right\}} \\ & \hspace{1cm} + \Prob{\tau_{\Path}(i) = a - \mathcal{B}(i) \, \bigg| \,  \mathcal{B}(i)}\mathbf{1}_{\left\{\mathcal{B}(i) \geq \eps \right\}}.
			\end{align*}
		\end{linenomath}
		Now, note that (by~\eqref{eq:cmj-assumption}), as $\tau_{\Path}(i)$ is identically distributed to $\tau_{\Path}(\varnothing)$, the distribution of $\tau_{\Path}(i)$ has no atom smaller than $\widetilde{a} > a - \eps$. As a result, on the event $\mathcal{B}(i) \geq \eps$, \[\Prob{\tau_{\Path}(i) = a - \mathcal{B}(i) \, \bigg| \,  \mathcal{B}(i)} = 0, \quad \text{ almost surely. }\] 
		Combining this with~\eqref{eq:child-path}, this leads to the inequality
		\[
		\Prob{\tau_{\Path}(\varnothing) = a \, \big | \,  \mathcal{G}_{1}} \leq \sum_{i=1}^{\infty} \Prob{\tau_{\Path}(i) = a - \mathcal{B}(i) \, \bigg| \,  \mathcal{B}(i)} \mathbf{1}_{\left\{\mathcal{B}(i) < \eps \right\}}, 
		\]
		and since $\Prob{\tau_{\Path}(\varnothing) = a} > 0$, it must be the case that for some $i$, \[
		0 < \Prob{\tau_{\Path}(i) = a - \mathcal{B}(i), \mathcal{B}(i) < \eps} = \E{\Prob{\tau_{\Path}(i) = a - \mathcal{B}(i) \, \bigg| \,  \mathcal{B}(i)} \mathbf{1}_{\left\{\mathcal{B}(i) < \eps \right\}}}.\] 
		But now, when we integrate over the possible values of $\mathcal{B}(i)$ on the right-hand side, this implies that 
		\[
		\left\{ \mathcal{B}(i) \in [0, \eps): \Prob{\tau_{\Path}(i) = a - \mathcal{B}(i) \, \bigg| \,  \mathcal{B}(i)} > 0 \right\}
		\] 
		is a set of positive measure with respect to $\mathbb{P}$. But then this implies that, with respect to the distribution induced by $\mathcal{B}(i)$, the set 
		\begin{equation} \label{eq:null-set}
			\left\{x \in [0, \eps): \Prob{\tau_{\Path}(i) = a - x} > 0 \right\},
		\end{equation}
		has positive measure. But now, as the set of atoms of the distribution of $\tau_{\Path}(i)$ (as with any finite measure) must always be countable,~\eqref{eq:null-set} must be countable. As the distribution of $\mathcal{B}(i)$ contains no atoms on $[0, \eps)$ by Condition~\ref{item:uniquenobirthatom} in Assumption~\ref{ass:uniqueness}, countable subsets of $[0, \eps)$ are null sets  with respect to the distribution induced by $\mathcal{B}(i)$; this implies that~\eqref{eq:null-set} must be a null set, a contradiction. 
	\end{proof}
	
	\subsubsection{Proof of Theorem~\ref{thm:uniqueness}}
	\begin{proof}[Proof of Theorem~\ref{thm:uniqueness}]
		The first and second statements of the theorem are clearly satisfied on $\left\{|\mathcal{T}_{\infty}| < \infty\right\}$. Now note that, for $\mathcal{T}_{\infty}$ to have two nodes of infinite degree, there must be two elements $u, v \in \mathcal{U}_{\infty}$ such that 
		\[
		\mathcal{B}(u) + \mathcal{P}(u) = \mathcal{B}(v) + \mathcal{P}(v) = \tau_{\infty}. 
		\]
		By Equation~\eqref{eq:expl-almost-surely}, $\tau_{\infty} < \infty$ almost surely on $\left\{|\mathcal{T}_{\infty}| = \infty\right\}$, hence the left-hand side must also be finite. 
		But now, note that, as long as $v$ is not an ancestor of $u$, (which is the case if $v >_{L} u$ with respect to the lexicographical ordering), then $\mathcal{P}(v)$ is independent of $\mathcal{B}(u), \mathcal{B}(v)$, and $\mathcal{P}(u)$. Moreover, by Condition~\ref{item:uniquenoexplatom} of Assumption~\ref{ass:uniqueness}, $\cP(v)$ has no atom on $[0, \infty)$. Therefore, for $v >_{L} u$ we have,
		\[
		\Prob{\mathcal{P}(v) = \mathcal{P}(u) + \mathcal{B}(v) - \mathcal{B}(u) \, \bigg | \, \mathcal{B}(u), \mathcal{B}(v), \mathcal{P}(v)} \mathbf{1}_{\{\mathcal{B}(u), \mathcal{B}(v), \mathcal{P}(u) < \infty\}} = 0 \quad \text{almost surely, }
		\]
		and hence, for any $u, v \in \mathcal{U}_{\infty}$ with $u \neq v$,  $\Prob{\mathcal{B}(u) + \mathcal{P}(u) = \mathcal{B}(v) + \mathcal{P}(v), \mathcal{B}(u) + \mathcal{P}(u) < \infty} = 0$. But then, since $\mathcal{U}_{\infty}$ is countable, taking a union bound, we deduce
		\be\ba\label{eq:union-1-star}
		&\Prob{\text{$\mathcal{T}_{\infty}$ has two nodes of infinite degree}} \\ & \hspace{1cm}  
		\leq \Prob{\bigcup_{u, v \in \mathcal{U}_{\infty}, u \neq v} \big\{\mathcal{B}(u) + \mathcal{P}(u) = \mathcal{B}(v) + \mathcal{P}(v), \mathcal{B}(u) + \mathcal{P}(u) < \infty\big\}}
		\\ & \hspace{1cm} \leq \sum_{u, v \in \mathcal{U}_{\infty}, u \neq v} \mathbb{P}\big(\mathcal{B}(u) + \mathcal{P}(u) = \mathcal{B}(v) + \mathcal{P}(v), \mathcal{B}(u) + \mathcal{P}(u) < \infty\big) = 0. 
		\ea\ee
		This proves the first statement of the theorem. In a similar manner, using Condition~\ref{item:uniquenobirthatom} of Assumption~\ref{ass:uniqueness} and Lemma~\ref{lem:no-atom}, we have 
		\be\ba \label{eq:union-1-path}
		&\Prob{\text{$\mathcal{T}_{\infty}$ has two infinite paths}} \\ & \hspace{1cm}  
		\leq \Prob{\bigcup_{u, v \in \mathcal{U}_{\infty}, u \neq v} \big\{\mathcal{B}(u) + \tau_{\Path}(u) = \mathcal{B}(v) + \tau_{\Path}(v), \mathcal{B}(u) + \tau_{\Path}(u) < \infty\big\}}
		\\ & \hspace{1cm} \leq \sum_{u, v \in \mathcal{U}_{\infty}, u \neq v} \mathbb{P}\big(\mathcal{B}(u) + \tau_{\Path}(u) = \mathcal{B}(v) + \tau_{\Path}(v), \mathcal{B}(u) + \tau_{\Path}(u) < \infty\big) = 0, 
		\ea\ee
		proving the second statement. Finally, for the third statement we need only prove that a node of infinite degree and an infinite path cannot co-exist. Note that there may be $u$ such that $\mathcal{P}(u)$ and $\tau_{\Path}(\varnothing)$ are correlated, for example, if $\varnothing$ is a parent of $u$. But, noting $\mathbb{N}^{k} = \left\{u \in \mathcal{U}_{\infty} : |u| = k\right\}$,
		\begin{linenomath}
			\begin{align*}
				& \left\{ \tau_{\Path}(\varnothing) = \mathcal{B}(u) + \mathcal{P}(u), \tau_{\Path}(\varnothing) < \infty\right\} \subseteq \bigcup_{v \in \mathbb{N}^{|u|+1}} \left\{ \mathcal{B}(v) + \tau_{\Path}(v) = \mathcal{B}(u) + \mathcal{P}(u), \mathcal{B}(v) + \tau_{\Path}(v) < \infty\right\}.
			\end{align*}
		\end{linenomath}
		Now, for $v \in \mathbb{N}^{|u|+1}$, $\tau_{\Path}(v)$ \emph{is} independent of $u$, and therefore, for each $v \in \mathbb{N}^{|u|+1}$ we have
		\begin{linenomath}
			\begin{align*}
				& \Prob{\mathcal{B}(v) + \tau_{\Path}(v) = \mathcal{B}(u) + \mathcal{P}(u) \, \big | \, \mathcal{B}(v), \mathcal{B}(u), \mathcal{P}(u)} \mathbf{1}_{\{\mathcal{B}(v), \mathcal{B}(u), \mathcal{P}(u) < \infty\}} \\ & \hspace{2cm} = \Prob{ \tau_{\Path}(v) = \mathcal{B}(u) + \mathcal{P}(u) - \mathcal{B}(v) \, \big | \, \mathcal{B}(v), \mathcal{B}(u), \mathcal{P}(u)} \mathbf{1}_{\{\mathcal{B}(v), \mathcal{B}(u), \mathcal{P}(u) < \infty\}} = 0,
			\end{align*}
		\end{linenomath}
		almost surely. Therefore, again using a union bound, we have 
		\be\ba\label{eq:xor-1-path-1-star}
		&\Prob{\text{$\mathcal{T}_{\infty}$ has an infinite path and a node of infinite degree, } |\mathcal{T}_{\infty}| = \infty} 
		\\ & \hspace{1cm}  
		\leq \Prob{\bigcup_{u \in \mathcal{U}_{\infty}} \big\{\mathcal{B}(u) + \mathcal{P}(u) = \tau_{\Path}(\varnothing), \mathcal{B}(u) + \tau_{\Path}(u) < \infty\big\}}
		\\ & \hspace{1cm} \leq \Prob{\bigcup_{u \in \mathcal{U}_{\infty}} \bigcup_{v \in \mathbb{N}^{|u|+1}} \big\{\mathcal{B}(u) + \mathcal{P}(u) = \mathcal{B}(v) + \tau_{\Path}(v), \mathcal{B}(u) + \tau_{\Path}(u) < \infty\big\}}
		\\ & \hspace{1cm} \leq \sum_{{u\in \mathcal{U}_{\infty}}} \sum_{{v \in \mathbb{N}^{|u|+1}}} \mathbb{P}\big(\mathcal{B}(u) + \mathcal{P}(u) = \mathcal{B}(v) + \tau_{\Path}(v), \mathcal{B}(u) + \tau_{\Path}(u) < \infty\big) = 0. 
		\ea\ee
		We deduce the final statement by using a union bound, and combining Equations~\eqref{eq:union-1-star},~\eqref{eq:union-1-path}, and~\eqref{eq:xor-1-path-1-star} with K{\H o}nig's lemma (Lemma~\ref{lemma:konig}).
	\end{proof}
	
	\section{Proofs for applications of main results} \label{sec:applications}
	
	In this section we use the general results of Section~\ref{sec:results} to prove the main theorems in Section~\ref{sec:recursive}. In particular, we show that the conditions stated in Theorem~\ref{thm:star-path-rif} are sufficient and that the condition in Theorem~\ref{thm:sub-treecount} is necessary and sufficient to apply the results of Section~\ref{sec:results} to exponentially distributed inter-birth times.  
	
	\subsection{Preliminary results and tools}
	We first collect some useful lemmas that we use throughout the proofs in this section. 
	
	\begin{lemma} \label{lem:lower-bound-infinite-events}
		Let $(X(i))_{i\in\N}$ be independent random variables and fix $\eps > 0$ and $(b_{n})_{n \in \mathbb{N}} \in (0, \infty)^{\mathbb{N}}$. Then,
		\[
		\Prob{\sum_{i=1}^{\infty} X_{i} < b_{n}} \geq \prod_{i=1}^{\infty} \cL_{(1+ \eps) \log{(n)} b_{n}^{-1}}(X_i) - \frac{1}{n^{1+\eps}}.
		\]
	\end{lemma}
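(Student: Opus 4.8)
The plan is to run an exponential (Chernoff-type) estimate in the ``reverse'' direction, bounding $\Prob{\sum_i X_i < b_n}$ from below rather than bounding a tail from above. Fix $n \geq 2$ (for $n = 1$ the right-hand side is non-positive, so there is nothing to prove) and set $\lambda := (1+\eps)\log(n)\, b_n^{-1} > 0$. Since the $X_i$ are non-negative and independent, the partial products $\prod_{i=1}^{N} \e^{-\lambda X_i} = \e^{-\lambda \sum_{i=1}^{N} X_i}$ are bounded by $1$ and decrease to $\e^{-\lambda \sum_{i=1}^{\infty} X_i}$ as $N \to \infty$; hence, by monotone convergence and independence,
\[
\prod_{i=1}^{\infty} \cL_\lambda(X_i) = \lim_{N\to\infty} \prod_{i=1}^{N} \E{\e^{-\lambda X_i}} = \E{\e^{-\lambda \sum_{i=1}^{\infty} X_i}},
\]
where on the event $\{\sum_i X_i = \infty\}$ the integrand is interpreted as $0$.

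Next I would split this expectation over the complementary events $\{\sum_i X_i < b_n\}$ and $\{\sum_i X_i \geq b_n\}$:
\[
\E{\e^{-\lambda \sum_i X_i}} = \E{\e^{-\lambda \sum_i X_i}\ind{\sum_i X_i < b_n}} + \E{\e^{-\lambda \sum_i X_i}\ind{\sum_i X_i \geq b_n}}.
\]
On the first event the integrand is at most $1$ (as $\sum_i X_i \geq 0$), so the first term is at most $\Prob{\sum_i X_i < b_n}$. On the second event, positivity of $\lambda$ gives $\e^{-\lambda\sum_i X_i} \leq \e^{-\lambda b_n} = \e^{-(1+\eps)\log n} = n^{-(1+\eps)}$, so the second term is at most $n^{-(1+\eps)}\Prob{\sum_i X_i \geq b_n} \leq n^{-(1+\eps)}$. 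Combining the two displays yields $\prod_{i=1}^{\infty}\cL_\lambda(X_i) \leq \Prob{\sum_i X_i < b_n} + n^{-(1+\eps)}$, which is the claimed inequality after rearranging.

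There is essentially no serious obstacle here: the only step needing a word of justification is the identity $\prod_i \cL_\lambda(X_i) = \E{\e^{-\lambda\sum_i X_i}}$, which must accommodate the possibility that $\sum_i X_i = +\infty$ with positive probability, and this is dispatched by monotone convergence applied to the decreasing sequence of (uniformly bounded) partial products. Everything else is an elementary split-and-bound; non-negativity of the $X_i$ --- which holds throughout our applications, where the $X_i$ are inter-birth times --- is what guarantees $\e^{-\lambda\sum_i X_i} \le 1$ on the event $\{\sum_i X_i < b_n\}$.
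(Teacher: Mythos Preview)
Your proof is correct but takes a different route from the paper's. The paper introduces an auxiliary independent random variable $Y \sim \Exp{(1+\eps)\log(n)\,b_n^{-1}}$, observes that
\[
\Prob{\sum_i X_i < b_n} \geq \Prob{\sum_i X_i < Y,\, Y \leq b_n} \geq \Prob{\sum_i X_i < Y} - \Prob{Y \geq b_n},
\]
and then uses the explicit law of $Y$ to identify $\Prob{\sum_i X_i < Y} = \E{\e^{-\lambda\sum_i X_i}}$ and $\Prob{Y \geq b_n} = n^{-(1+\eps)}$. You instead split $\E{\e^{-\lambda\sum_i X_i}}$ directly over $\{\sum_i X_i < b_n\}$ and its complement and bound each piece, which is arguably the more transparent Chernoff-in-reverse argument and avoids introducing the auxiliary exponential altogether. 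Both arguments are equally short and rely on non-negativity of the $X_i$ (implicit in the paper via the standing convention that $\cL_\lambda$ is defined only for non-negative variables; you make this explicit). Neither approach has any real advantage over the other beyond taste: the paper's version packages the two bounds into a single probabilistic inclusion, while yours makes the analytic mechanism (bounding $\e^{-\lambda S}$ by $1$ or by $\e^{-\lambda b_n}$) visible at the outset.
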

	\begin{proof}
		Let $Y \sim \Exp{b_{n}^{-1} (1+ \eps) \log{n}}$ be given, independent of each of the $X_{i}$. Then, 
		\be
		\Prob{\sum_{i=1}^{\infty} X_{i} < b_{n}} \geq \Prob{\sum_{i=1}^{\infty} X_{i} < Y, Y \leq b_{n}}
		\geq \Prob{\sum_{i=1}^{\infty} X_{i} < Y} - \Prob{Y \geq b_{n}}.
		\ee 
		Evaluating both probabilities by using that $Y$ is an exponential random variable, we obtain 
		\be 
		\E{\e^{-(1+ \eps) \log (n) \,  b_{n}^{-1}\sum_{i=1}^{\infty} X_i}} - \e^{-(1+ \eps) \log{n}} = \prod_{i=1}^{\infty} \cL_{(1+ \eps) \log{(n)} b_{n}^{-1}}(X_i) - \frac{1}{n^{1+\eps}},
		\ee
		as desired.
	\end{proof}
	
	\noindent We believe that the following lemmas are more well-known, but we provide a proof of the first for completeness. Note, as is clear from the proof, that~\eqref{eq:exp-conc-2} is a special case of the more general Equation~\eqref{eq:moment-prob-bound1} from Lemma~\ref{lem:moment-prob-bounds}.
	
	\begin{lemma} \label{lem:exponential-concentration}
		Let $Z_{0}, \ldots, Z_{k}$ be independent exponential random variables with parameters $r_0, \ldots, r_{k-1}$, respectively, and fix $\lambda > 0$.  Then, 
		\begin{equation} \label{eq:exp-conc}
			\Prob{\sum_{j=0}^{k} Z_{j}\leq \lambda} \geq \prod_{j=0}^{k} \frac{r_{j}}{r_j + (k+1) \lambda^{-1}}.  
		\end{equation} 
		Moreover, let $(X_{w^{*}}(j))_{j \in \mathbb{N}}$ be independent, exponential  random variables with rates $(r_j)_{j\in\N}$, satisfying ~$\mu_i:=\sum_{j=i+1}^{\infty} \E{X_{w^{*}}(j)}  < \infty$ for $i \in \mathbb{N}$. For any $c < 1$ and for any $i \in \mathbb{N}$,
		\begin{equation} \label{eq:exp-conc-2}
			\Prob{\sum_{j=0}^{k} Z_{j}\leq \sum_{j=i+1}^{\infty} X_{w^{*}}(j)} \leq \frac{1}{1-c} \prod_{j=0}^{k} \frac{r_{j}}{r_j + c\mu^{-1}_{i}}.
		\end{equation}
	\end{lemma}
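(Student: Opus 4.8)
The plan is to prove the two displayed inequalities separately, both via the standard Chernoff/Markov exponential-moment device, but in opposite directions.

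For~\eqref{eq:exp-conc}, I would first recall that if $Z$ is an exponential random variable with rate $r$ and $s < r$, then $\E{\e^{sZ}} = \tfrac{r}{r-s}$. Since the $Z_j$ are independent, $\E{\e^{s\sum_{j=0}^k Z_j}} = \prod_{j=0}^k \tfrac{r_j}{r_j - s}$ for $s < \min_j r_j$. The key observation is a lower bound: for any $\lambda > 0$ and any $s>0$ with $s<\min_j r_j$,
\begin{equation*}
\Prob{\sum_{j=0}^k Z_j \leq \lambda} \geq \Prob{\sum_{j=0}^k Z_j \leq Y}
\end{equation*}
is \emph{not} quite what I want here; instead I would use the elementary bound that for a non-negative random variable $V = \sum_j Z_j$ and any $s>0$,
\begin{equation*}
\Prob{V \leq \lambda} \geq 1 - \Prob{V > \lambda} \geq 1 - \e^{-s\lambda}\E{\e^{sV}},
\end{equation*}
but this does not directly give the product form. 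The cleaner route, which matches the right-hand side, is to pick $s = (k+1)\lambda^{-1}$ heuristically and instead argue via a direct comparison: introduce an independent $Y \sim \Exp{(k+1)\lambda^{-1}}$ and note $\Prob{Y \le \lambda} = 1 - \e^{-(k+1)} $ is not what appears either. The form $\prod_j \tfrac{r_j}{r_j + (k+1)\lambda^{-1}}$ strongly suggests: $\Prob{\sum_{j=0}^k Z_j \le \lambda} \ge \Prob{\sum_{j=0}^k Z_j \le (k+1)E}$ where $E$ is a rate-$\lambda^{-1}$ exponential, which would fail, so the actual argument must be $\Prob{Z_j \le \tfrac{\lambda}{k+1} \text{ for all } j}$ after rescaling — no. Rather, I expect the intended proof uses $\Prob{\sum_{j=0}^k Z_j \le \lambda} \ge \Prob{\sum_{j=0}^k Z_j \le Y}$ with $Y\sim\Exp{(k+1)\lambda^{-1}}$ \emph{minus} $\Prob{Y > \lambda}$, but since one wants a clean lower bound without the subtraction, the trick is that by a union-type bound one has $\Prob{\sum Z_j \le Y} = \prod_j \tfrac{r_j}{r_j + (k+1)\lambda^{-1}}$ exactly (the Laplace transform evaluated at the exponential rate), and then one shows $\Prob{\sum Z_j \le \lambda} \ge \Prob{\sum Z_j \le Y}$ by a coupling or by Jensen — here I would simply observe $\Prob{\sum Z_j \le \lambda} \ge \E{\e^{s(\lambda - \sum Z_j)}\indi{\sum Z_j \le \lambda}}$ is awkward; the honest statement is that $\Prob{V\le\lambda}\ge \E{\big(1-V/\lambda\big)_+} \ge 1 - \E{V}/\lambda$ is too weak. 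I would therefore present the argument as: let $Y$ be $\Exp{(k+1)\lambda^{-1}}$ independent of the $Z_j$; then $\Prob{\sum_j Z_j \le Y} = \prod_j \cL_{(k+1)\lambda^{-1}}(Z_j)^{-1}$—and reconcile the direction by noting that what is really being claimed coincides (after this reduction) with the \emph{upper} bound~\eqref{eq:exp-conc-2} read in reverse; the first main step is thus to get the exact identity $\Prob{\sum_{j=0}^k Z_j < Y} = \prod_{j=0}^k \tfrac{r_j}{r_j+(k+1)\lambda^{-1}}$ and the inequality $\Prob{\sum Z_j \le \lambda} \ge \Prob{\sum Z_j < Y, Y \le \lambda} \ge \Prob{\sum Z_j < Y} - \Prob{Y > \lambda}$, then check $\Prob{Y>\lambda} = \e^{-(k+1)}$ is absorbed — which it is not, so in fact~\eqref{eq:exp-conc} must be proved by the direct Markov bound on the complement with the \emph{sharp} choice making the geometric-mean/AM–GM step work. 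Given the uncertainty, the main obstacle I foresee is pinning down exactly which elementary comparison yields the clean product without an error term; I would resolve this by testing it on $k=0$ (where it reads $\Prob{Z_0\le\lambda} = 1-\e^{-r_0\lambda} \ge \tfrac{r_0}{r_0+\lambda^{-1}} = \tfrac{r_0\lambda}{r_0\lambda+1}$, i.e.\ $1-\e^{-x}\ge \tfrac{x}{x+1}$, which is true) and then inducting on $k$ using independence and the one-variable inequality $1-\e^{-x} \ge \tfrac{x}{x+1}$ combined with convexity.

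For~\eqref{eq:exp-conc-2}, this is the genuinely clean direction and I would follow the template of Lemma~\ref{lem:moment-prob-bounds} exactly. Writing $S := \sum_{j=0}^k Z_j$ and $T := \sum_{j=i+1}^\infty X_{w^*}(j)$, and choosing $\lambda := c\mu_i^{-1}$, I would bound
\begin{equation*}
\Prob{S \le T} = \Prob{\e^{\lambda(T - S)} \ge 1} \le \E{\e^{\lambda T}}\prod_{j=0}^k \E{\e^{-\lambda Z_j}} = \cM_\lambda(T)\prod_{j=0}^k \frac{r_j}{r_j + c\mu_i^{-1}},
\end{equation*}
using Markov and independence of $T$ from $(Z_j)$. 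It then remains to bound $\cM_{c\mu_i^{-1}}(T)$: since $T = \sum_{j>i} X_{w^*}(j)$ is a sum of independent exponentials with $\E{T} = \mu_i$, its moment generating function at $s = c/\mu_i$ is $\prod_{j>i}\tfrac{r_j}{r_j - c\mu_i^{-1}} = \prod_{j>i}\big(1 - \tfrac{c}{\mu_i r_j}\big)^{-1}$; taking $\log$ and using $-\log(1-x) \le \tfrac{x}{1-x}$ for $x\in(0,1)$ gives $\log\cM_\lambda(T) \le \tfrac{c/\mu_i}{1-c}\sum_{j>i}r_j^{-1} = \tfrac{c/\mu_i}{1-c}\cdot\mu_i = \tfrac{c}{1-c}$, which is not quite $\tfrac{1}{1-c}$ but close; a slightly more careful estimate — or simply using $\e^{c/(1-c)} \le \tfrac{1}{1-c}$, which holds for all $c\in(0,1)$ — then yields $\cM_{c\mu_i^{-1}}(T) \le \tfrac{1}{1-c}$ and hence~\eqref{eq:exp-conc-2}. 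The main obstacle for this part is purely the final scalar inequality $\e^{c/(1-c)} \le (1-c)^{-1}$, i.e.\ $\tfrac{c}{1-c} \le -\log(1-c)$, which is immediate from $-\log(1-c) = \sum_{m\ge 1} c^m/m \ge \sum_{m\ge1} c^m = \tfrac{c}{1-c}$; so in fact there is no real obstacle here, and the bulk of the care goes into the first inequality~\eqref{eq:exp-conc}.
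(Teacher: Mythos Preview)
Your proposal has two concrete problems.

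For~\eqref{eq:exp-conc}, you actually wrote down the paper's argument and then dismissed it. The bound
\[
\Prob{\sum_{j=0}^{k} Z_{j}\leq \lambda} \;\geq\; \Prob{Z_{j}\leq \tfrac{\lambda}{k+1}\ \text{for all } j} \;=\; \prod_{j=0}^{k}\bigl(1-\e^{-r_{j}\lambda/(k+1)}\bigr)
\]
works exactly, and combining it with the one-variable inequality $1-\e^{-x}\geq \tfrac{x}{1+x}$ (which you also identified) at $x=r_{j}\lambda/(k+1)$ gives precisely $\prod_{j}\tfrac{r_{j}}{r_{j}+(k+1)\lambda^{-1}}$. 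There is no need for induction or convexity, and the paper does nothing more than this. Your ``no'' after this line was the mistake.

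For~\eqref{eq:exp-conc-2}, the Chernoff set-up is right and matches the paper, but your final scalar step is wrong. You claim $\e^{c/(1-c)}\leq \tfrac{1}{1-c}$, justified by $-\log(1-c)=\sum_{m\geq 1}c^{m}/m \geq \sum_{m\geq 1}c^{m}$. The inequality goes the other way: $c^{m}/m \leq c^{m}$, so $-\log(1-c)\leq \tfrac{c}{1-c}$, hence $\e^{c/(1-c)}\geq \tfrac{1}{1-c}$ (e.g.\ at $c=1/2$ one has $\e>2$). Your route through $-\log(1-x)\leq \tfrac{x}{1-x}$ therefore only yields $\mathcal{M}_{c\mu_{i}^{-1}}(T)\leq \e^{c/(1-c)}$, which is weaker than the stated $\tfrac{1}{1-c}$. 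The paper avoids this by bounding the product directly with the Weierstrass inequality $\prod_{j}(1-p_{j})\geq 1-\sum_{j}p_{j}$: since $\sum_{j>i}\tfrac{c\mu_{i}^{-1}}{r_{j}}=c$, one gets $\prod_{j>i}\bigl(1-\tfrac{c\mu_{i}^{-1}}{r_{j}}\bigr)\geq 1-c$ and hence $\mathcal{M}_{c\mu_{i}^{-1}}(T)\leq (1-c)^{-1}$ immediately.
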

	\begin{proof} 
		For the first inequality, we may, without loss of generality take $\lambda = 1$, since for an exponential random variable $X \sim \Exp{r}$, $X/\lambda$ is again exponentially distributed with parameter $\lambda r$. Then,
		\be
		\Prob{\sum_{j=0}^{k} Z_{j}\leq 1}\geq \Prob{Z_{j}\leq 1/(k+1) \text{ for all }j\in\{0,\ldots, k\}} = \prod_{j=0}^{k} \left(1 - \e^{-r_{j}/(k+1)}\right) \geq \prod_{j=0}^{k} \frac{r_{j}}{r_{j} + (k+1)},
		\ee
		where the right-hand side uses the inequality $1- \e^{-x} \geq \frac{x}{1+x}$, for all $x \geq 0$. 
		Meanwhile, the upper bound in~\eqref{eq:exp-conc-2} uses a standard Chernoff bound. First, note that for $c < 1$,
		\be \label{eq:mux}
		c\mu_{i}^{-1} = c\left(\sum_{j=i+1}^{\infty} \frac{1}{r_{j}}\right)^{-1} < r_{j},\qquad \text{for all }j\geq i+1.
		\ee 
		Furthermore, for each $i \in \mathbb{N}$,
		\be \label{eq:mgf-exp-inverse-bound}
		\prod_{j=i+1}^{\infty} \frac{r_{j} - c\mu_{i}^{-1}}{r_{j}} = \prod_{j=i+1}^{\infty} \left(1 - \frac{c\mu_{i}^{-1}}{r_{j}}\right) > 1 - c \mu_{i}^{-1} \sum_{j=i+1}^{\infty} \frac{1}{r_{j}} = 1-c,
		\ee
		where the last step uses the inequality $\prod_{i} (1-p_{i}) \geq 1 - \sum_{i} p_{i}$, for $p_i\in[0,1]$.
		By the moment generating function of exponential random variables, we have 
		\be \label{eq:mgf-exp-bound}
		\E{\e^{c\mu_{i}^{-1} \sum_{j=i+1}^{\infty} X_{w^*}(j)}} = \prod_{j=i+1}^{\infty} \frac{r_{j}}{r_{j} - c\mu_{i}^{-1}} \stackrel{\eqref{eq:mgf-exp-inverse-bound}}{\leq} \frac{1}{1-c},
		\ee
		where the exponential moments exist by~\eqref{eq:mux}. Finally, this yields 
		\[
		\Prob{\sum_{j=0}^{k} Z_{j} \leq \sum_{j=i+1}^{\infty} X_{w^{*}}(j)} \leq \Prob{\e^{c\mu_{i}^{-1}\left(\sum_{j=i+1}^{\infty} X_{w^{*}}(j) - \sum_{j=0}^{k} Z_{j}\right)} \geq 1} \leq \frac{1}{1-c} \prod_{j=0}^{k} \frac{r_{j}}{r_{j} + c\mu_{i}^{-1}},
		\]
		as desired.
	\end{proof}
	
	\begin{lemma}[Paley-Zygmund Inequality] \label{lem:paley-zygmund}
		Let $Z$ be a non-negative random variable with finite variance, and let $d\in(0,1)$.  Then, 
		\begin{equation} \label{eq:paley-zygmund}
			\Prob{Z \geq d \E{Z}} \geq (1-d)^2 \frac{\E{Z}^2}{\Var{(Z)} + \E{Z}^2}.
		\end{equation} 
	\end{lemma}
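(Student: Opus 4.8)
The plan is to run the classical truncation argument combined with the Cauchy--Schwarz inequality. First, dispose of the degenerate case: if $\E{Z}=0$ then $Z=0$ almost surely, so $\Var{(Z)}=0$ and the inequality holds trivially (interpreting the right-hand side as $0$); hence we may assume $\E{Z}>0$, which in turn forces $\E{Z^2}=\Var{(Z)}+\E{Z}^2>0$, so that the fraction on the right-hand side is well defined.

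Next, decompose the mean according to whether $Z$ falls below or above the threshold $d\E{Z}$:
\[
\E{Z} = \E{Z\,\ind{\{Z< d\E{Z}\}}} + \E{Z\,\ind{\{Z\geq d\E{Z}\}}}.
\]
The first summand is at most $d\E{Z}$, directly from the indicator. For the second summand, apply the Cauchy--Schwarz inequality, pairing $Z$ with $\ind{\{Z\geq d\E{Z}\}}$, to obtain
\[
\E{Z\,\ind{\{Z\geq d\E{Z}\}}} \leq \sqrt{\E{Z^2}}\,\sqrt{\Prob{Z\geq d\E{Z}}}.
\]
Combining the two bounds yields $(1-d)\E{Z}\leq \sqrt{\E{Z^2}}\,\sqrt{\Prob{Z\geq d\E{Z}}}$.

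Finally, since $d\in(0,1)$ and $\E{Z}>0$, both sides of this last inequality are non-negative, so squaring is legitimate; rearranging and substituting $\E{Z^2}=\Var{(Z)}+\E{Z}^2$ gives
\[
\Prob{Z\geq d\E{Z}} \geq (1-d)^2\,\frac{\E{Z}^2}{\E{Z^2}} = (1-d)^2\,\frac{\E{Z}^2}{\Var{(Z)}+\E{Z}^2},
\]
which is the claim. There is essentially no obstacle here: the only points requiring (minor) care are the degenerate case $\E{Z}=0$ and checking $\E{Z^2}>0$ so that the division is legitimate, both handled in the first step; the substantive content is the single application of Cauchy--Schwarz.
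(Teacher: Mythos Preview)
Your proof is correct and is the standard argument for the Paley--Zygmund inequality. The paper does not actually supply a proof of this lemma; it is stated there as a well-known result without proof, so there is nothing to compare against.
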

	\subsection{Structure theorems related to explosive recursive trees}

	\subsubsection{Proof of Theorem~\ref{thm:star-path-rif}} \label{proof:star-path-rif}
	
	\begin{proof}[Proof of Theorem~\ref{thm:star-path-rif}]
		Recall that for fixed $u, W_{u}$, the random variables $(X_{W_{u}}(ui))_{i \in \mathbb{N}}$ are independent, with each $X_{W_{u}}(ui) \sim \Exp{f(i-1, W_{u})}$, conditionally on $W_u$. Since the exponential distribution is a smooth distribution, one readily verifies that the conditions of Assumption~\ref{ass:uniqueness} are met; hence the associated tree $\mathcal{T}_{\infty}$ contains either a unique node of infinite degree, or a unique infinite path. Now, to prove Item~\ref{item:star-explosive-rif}, note that Conditions~\ref{item:starindep} and~\ref{item:starnonzero} of Assumption~\ref{ass:star} are immediately satisfied. Moreover, since $f$ satisfies~\eqref{eq:minfass}, Equation~\eqref{eq:stochastic-bound} and thus Condition~\ref{item:stardom} is satisfied by setting 
		\be \label{eq:ynexplicit}
		Y_{n} := \sum_{i=n+1}^{\infty} \widetilde{X}_{w^{*}}(i),
		\ee 
		where each $\widetilde{X}_{w^{*}}(i) \sim \Exp{f(i-1, w^{*})}$. By choosing $c < 1$ according to Equation~\eqref{eq:star-explosive-rif} and by following the calculations in Equations~\eqref{eq:mgf-exp-inverse-bound} and~\eqref{eq:mgf-exp-bound} (with $r_j:=f(j-1,w^*)$), we see that Condition~\ref{item:starlimsup} is satisfied. Finally, note that, uniformly in $w$, we have 
		\be 
		\prod_{i=n}^{\infty} \frac{f(i,w)}{f(i,w) + c\mu_{n}^{-1}} > 0. 
		\ee 
		Indeed, taking logarithms and using the inequality 
		\be
		-\log{(1-x)} \leq x + \frac{x^{2}}{2(1-x)}, \quad 0 < x < 1,
		\ee
		we obtain 
		\begin{linenomath}
			\begin{align}
				-\log{\left(\prod_{i=n}^{\infty} \frac{f(i,w)}{f(i,w) + c\mu_{n}^{-1}} \right)} & =    -\sum_{i=n}^{\infty} \log{\left(1 - \frac{c\mu_{n}^{-1}}{f(i,w) + c\mu_{n}^{-1}} \right)} \\ & \leq \sum_{i=n}^{\infty} \left(\frac{c\mu_{n}^{-1}}{f(i,w^{*}) + c\mu_{n}^{-1}} + \frac{c^{2}\mu_{n}^{-2}}{2f(i,w^{*})(f(i,w^{*}) + c\mu_{n}^{-1})}\right) < \frac{3}{2}, 
			\end{align}
		\end{linenomath}
		which implies that 
		\be \label{eq:prodfromn}
		\prod_{i=n}^{\infty} \frac{f(i,w)}{f(i,w) + c\mu_{n}^{-1}} > \e^{-3/2}.
		\ee 
		Thus, by~Equation~\eqref{eq:star-explosive-rif},
		\be
		\sum_{n=1}^{\infty}\E{\mathcal{L}_{c\mu^{-1}_{n}}(\mathcal{P}_{n}(\varnothing); W)}=\sum_{n=1}^{\infty} \E{\prod_{i=0}^{n-1}\frac{f(i,W)}{f(i,W) + c\mu_{n}^{-1}}} < \e^{3/2} \sum_{n=1}^{\infty} \E{\prod_{i=0}^{\infty}\frac{f(i,W)}{f(i,W) + c\mu_{n}^{-1}}} < \infty, 
		\ee 
		so that Condition~\ref{item:starlaplace} is satisfied, where the final step uses the assumption in Item~\ref{item:star-explosive-rif}. We can now apply Theorems~\ref{thm:star} and~\ref{thm:uniqueness} to obtain the desired result.
		
		For Item~\ref{item:path-explosive-rif}, if there exists $c > 1$ satisfying Equation~\eqref{eq:path-explosive-rif}, we write $c = d^{-1}(1+\eps)$, for some $0 < d < 1$ and $\eps > 0$. Otherwise, assume $\eqref{eq:div-condition}$ is satisfied with $\nu^{w}_{n} = d \mu^{w}_{n} = d\E{\sum_{i=n+1}^{\infty} X_{w}(i)}$ for some $d\in(0,1)$. First note that, since the $X_w(i)$ are mutually independent exponential  random variables, 
		\be \label{eq:exp-variance-bound}
		\Var{\left(\sum_{i=n+1}^{\infty} X_{w}(i)\right)} = \sum_{i=n}^{\infty} \frac{1}{f(i,w)^2} \leq (\mu^{w}_{n})^{2}. 
		\ee
		By the Paley-Zygmund inequality, it thus follows that 
		\be \label{eq:tail-lower-bound-prob}
		\Prob{\sum_{i=n+1}^{\infty} X_{w}(i) \geq d \mu^{w}_{n}} \geq (1-d)^{2} \frac{(\mu^{w}_{n})^{2}}{(\mu^{w}_{n})^{2} + \sum_{i=n}^{\infty} f(i,w)^{-2}} \stackrel{\eqref{eq:exp-variance-bound}}{\geq} \frac{(1-d)^{2}}{2},  
		\ee
		which implies~\eqref{eq:smallest-expl-prob} in either case. We now need only show that Equation~\eqref{eq:path-explosive-rif} implies that~\eqref{eq:div-condition} is satisfied with this choice of $(\nu^{w}_{n})_{n \in \mathbb{N}}$. We apply Lemma~\ref{lem:lower-bound-infinite-events} with $b_{n} := d \mu_{n}^w$ and $\eps > 0$ so that $c = d^{-1}(1+ \eps)$, and by conditioning on the vertex-weight $W$, to obtain
		\[
		\Prob{\mathcal{P}< d \mu^{w}_{n}} \geq \E{\prod_{i=0}^{\infty}\frac{f(i,W)}{f(i,W) + c (\mu^{w}_{n})^{-1} \log{n}}} - \frac{1}{n^{1+ \eps}}.
		\]
		We thus obtain that~\eqref{eq:div-condition} is satisfied when Equation~\eqref{eq:path-explosive-rif} holds. 
	\end{proof}
	
	\subsubsection{Proof of Theorem~\ref{thm:sub-treecount}} \label{sec:sub-tree-count-general-rif}
	\begin{proof}[Proof of Theorem~\ref{thm:sub-treecount}]
		In the proof we seek to apply Theorem~\ref{thm:structure}. We first apply Item~\ref{item:structure-1} of Theorem~\ref{thm:structure} to show that, if~\eqref{eq:tree-summ-cond} is satisfied, $T$ appears as a sub-tree of $\mathcal{T}_{\infty}$ infinitely often. That is, we show that Condition~\ref{item:treediv-cond} of Assumption~\ref{ass:structure} is satisfied when assuming Equation~\eqref{eq:tree-summ-cond} holds. As we assume that Equation~\ref{eq:starindep} holds as well, this implies Item~\ref{item:structure-1} of Theorem~\ref{thm:structure}.
		
		In a similar manner to the proof of Theorem~\ref{thm:star-path-rif} above, we  choose $\nu^{w}_{n} = d \mu^{w}_{n}$, for $d < 1$. This way, Equation~\eqref{eq:smallest-expl-prob-sub} follows from Equation~\eqref{eq:tail-lower-bound-prob}, using the Paley-Zygmund inequality. To show~\eqref{eq:div-condition-sub-tree}, we first condition on an ordering $O \in \mathcal{O}$ in which the vertices of $T$ appear. In particular, given such an ordering, by extending the definitions of the stopping times associated with the $(X,W)$-CMJ process, we can define the stopping times
		\[
		\tau_{O_{|j}} := \inf{\left\{t \geq 0: O_{|j} \subseteq \mathscr{T}_{t} \right\}}.  
		\]
		Now, with $\cO(T)$ the set of possible orderings of the vertices of $T$, we wish to show that 
		\be \label{eq:req-tree-div}
		\sum_{n=1}^{\infty} \Prob{T \subseteq \mathscr{T}_{d \mu^{w}_{n}}} = \sum_{n=1}^{\infty} \sum_{O \in \mathcal{O}(T)} \Prob{0 = \tau_{O_{|_0}} \leq \tau_{O_{|_1}} \cdots \leq \tau_{O_{|_k}} \leq d \mu^{w}_{n}} = \infty. 
		\ee
		We make the following observations:
		\begin{enumerate}[label = (\Roman*)]
			\item \label{item:roman-1} The probability of seeing an ordering $O_{|j}, j = 0, \ldots, k$, depends on the minima of the exponential random variables associated with the next vertex in the tree to appear. Suppose that the vertices of $T$ are $\left\{v_{0}, \ldots, v_{k}\right\}$, labelled by order of appearance, and that $v^{+}_{j} \in \{v_0, \ldots, v_{j-1}\}$ is the \emph{parent} of $v_{j}$ for each $j\in[k]$. Then, conditionally on the weights of the nodes in $T$, $W_{v_0}, \ldots, W_{v_k}$, the ordering $(O_{|j})_{j = 0, \ldots, k}$ occurs with probability
			\[
			\Prob{O \, \bigg  | \, W_{v_0}, \ldots, W_{v_{k}}}  =  \prod_{j=0}^{k} \frac{f(\outdeg{v^{+}_{j}}, W_{v^{+}_{j}})}{\sum_{i=0}^{j}f(\outdeg{v_i, O_{|_j}},W_{v_i}) \mathbf{1}_{\outdeg{v_i, O_{|_j}} < \outdeg{v_i, T}}}.   
			\]
			The $j$th product in the above equation denotes the probability that the next exponential clock associated with the process, amongst those yet to ring, corresponds to $v^{+}_{j}$, and hence respects the ordering $O$. 
			\item \label{item:roman-2} We may write
			\be 
			\prod_{j=0}^{k} f(\outdeg{v^{+}_{j},O_{|_j}}, W_{v^{+}_{j}})=\prod_{j=0}^k\prod_{\ell=0}^{\outdeg{v_j, T} - 1} f(\ell, W_{v_j}). 
			\ee
			\item \label{item:roman-3} By properties of the exponential distribution, the distribution of the minimum of a collection of independent exponential random variables is independent of the index that attains that minimum. Thus, conditionally on seeing the ordering $O$, the waiting times $(\tau_{O_{|_{j+1}}} - \tau_{O_{|_{j}}})_{j=0,\ldots, k}$ are independent, with 
			\[
			\tau_{O_{|_{j+1}}} - \tau_{O_{|_{j+1}}} \sim \Exp{\sum_{i=0}^{j}f(\outdeg{i, O_{|_j}},W_i) \mathbf{1}_{\outdeg{i, O_{|_j}} < \outdeg{i, T}}},\qquad j=0,\ldots, k. 
			\]
			Thus, by Lemma~\ref{lem:exponential-concentration}, we have 
			\begin{linenomath}
				\begin{align*}
					& \Prob{0 = \tau_{O_{|_0}} \leq \tau_{O_{|_1}} \leq \cdots \leq \tau_{O_{|_k}} \leq d \mu^{w}_{n} \, \bigg | \, W_{v_0}, \ldots, W_{v_{k}}, O} \\ & \hspace{2cm}  \geq \Big(\frac{d}{k+1}\Big)^{k+1}\prod_{j=0}^{k} \frac{\sum_{i=0}^{j}f(\outdeg{i, O_{|_j}},W_i) \mathbf{1}_{\outdeg{i, O_{|_j}} < \outdeg{i, T}}}{\sum_{i=0}^{j}f(\outdeg{i, O_{|_j}},W_i) \mathbf{1}_{\outdeg{i, O_{|_j}} < \outdeg{i, T}} + (\mu^{w}_{n})^{-1}}
					\\ & \hspace{2cm}  \geq \Big(\frac{d c_{1}(w)}{k+1}\Big)^{k+1}\prod_{j=0}^{k} \frac{\sum_{i=0}^{j}f(\outdeg{i, O_{|_j}},W_i) \mathbf{1}_{\outdeg{i, O_{|_j}} < \outdeg{i, T}}}{\sum_{i=0}^{j}f(\outdeg{i, O_{|_j}},W_i) \mathbf{1}_{\outdeg{i, O_{|_j}} < \outdeg{i, T}} + \mu_{n}^{-1}}, 
				\end{align*}
			\end{linenomath}
			where the last line follows from our assumption that $\mu^{w}_{n} \geq c_{1}(w) \mu_{n}$, with $c_{1}(w) < 1$.
		\end{enumerate}
		
		\noindent Combining Items~\ref{item:roman-1}-\ref{item:roman-3} and taking expectations over the weights $W_{v_{0}}, \ldots, W_{v_{k}}$, we deduce that Equation~\eqref{eq:req-tree-div} holds if Equation~\eqref{eq:tree-summ-cond} is satisfied. 
		
		For the converse statement, if Equation~\eqref{eq:tree-summ-cond} is not satisfied and the sum instead converges, we  now apply Item~\ref{item:structure-2} of Theorem~\ref{thm:structure} in a similar manner. In particular, we now wish to show that, for any $w \in S$,
		\be \label{eq:req-tree-conv}
		\sum_{i=1}^{\infty} \Prob{T \subseteq \mathscr{T}_{\sum_{j=i+1}^{\infty} \widetilde{X}_{w}(j)}} = \sum_{i=1}^{\infty} \sum_{O \in \mathcal{O}(T)} \Prob{0 = \tau_{O_{|_0}} \leq \tau_{O_{|_1}} \cdots \leq \tau_{O_{|_k}} \leq \sum_{j=i+1}^{\infty} \widetilde{X}_{w}(j)} < \infty. 
		\ee
		But now, for each $j$ we have $\widetilde{X}_{w}(j) \leq_{S} \widetilde{X}_{w^{*}}(j)$, where $\widetilde{X}_{w^{*}}(j) \sim X_{w^{*}}(j)$, since we assume that Equation~\eqref{eq:minfass} holds. Thus, using the same observations as above but instead looking for an upper bound in Item~\ref{item:roman-3}, we apply Equation~\eqref{eq:exp-conc-2} in Lemma~\ref{lem:exponential-concentration} with $c < 1$ fixed, to see that
		\begin{linenomath}
			\begin{align} \label{eq:comp-waiting-conv}
				& \Prob{0 = \tau_{O_{|_0}} \leq \tau_{O_{|_1}} \cdots \leq \tau_{O_{|_k}} \leq \sum_{j=i+1}^{\infty} \widetilde{X}_{w}(j) \, \bigg | \, W_{v_0}, \ldots, W_{v_{k}}, O} \\ & \hspace{2cm}  \leq \Prob{0 = \tau_{O_{|_0}} \leq \tau_{O_{|_1}} \cdots \leq \tau_{O_{|_k}} \leq \sum_{j=i+1}^{\infty} \widetilde{X}_{w^{*}}(j) \, \bigg | \, W_{v_0}, \ldots, W_{v_{k}}, O} 
				\\ & \hspace{2cm} \stackrel{\eqref{eq:exp-conc-2}}{\leq} \frac{1}{1-c} \prod_{j=0}^{k} \frac{\sum_{i=0}^{j}f(\outdeg{i, O_{|_j}},W_i) \mathbf{1}_{\outdeg{i, O_{|_j}} < \outdeg{i, T}}}{\sum_{i=0}^{j}f(\outdeg{i, O_{|_j}},W_i) \mathbf{1}_{\outdeg{i, O_{|_j}} < \outdeg{i, T}} + c\mu_{i}^{-1}}
				\\ \\ & \hspace{2cm} \leq \frac{1}{(1-c)c^{k+1}} \prod_{j=0}^{k} \frac{\sum_{i=0}^{j}f(\outdeg{i, O_{|_j}},W_i) \mathbf{1}_{\outdeg{i, O_{|_j}} < \outdeg{i, T}}}{\sum_{i=0}^{j}f(\outdeg{i, O_{|_j}},W_i) \mathbf{1}_{\outdeg{i, O_{|_j}} < \outdeg{i, T}} + \mu_{i}^{-1}}.
			\end{align}
		\end{linenomath}
		Again, combining Equation~\eqref{eq:comp-waiting-conv} with Items~\ref{item:roman-1} and~\ref{item:roman-2} and taking expectations over the weights $W_{v_{0}}, \ldots, W_{v_{k}}$, we deduce that Equation~\eqref{eq:req-tree-conv} holds if Equation~\eqref{eq:tree-summ-cond} is not satisfied. Hence, Condition~\ref{item:treedomsum} of Assumption~\ref{ass:structure} holds, so that Item~\ref{item:structure-2} in Theorem~\ref{thm:structure} yields the desired result.
	\end{proof}
	
	\subsection{Possibility for both a star and infinite path to occur with positive probability} \label{sec:counter}
	The goal of this section is to produce the counter-example in Theorem~\ref{thm:counter-example}. We first show, in a similar spirit to \cite[Claim~2.3]{Komjathy2016ExplosiveCB}, that infinite paths, when they appear in finite time, can appear arbitrarily fast. 
	
	\begin{lemma} \label{lem:arbitrarily-fast-path}
		Let $(\mathscr{T}_{t})_{t \geq 0}$ be an explosive $(X,W)$-CMJ process, and suppose that $\mathcal{B}(1) > 0$ almost surely. It is either the case that $\Prob{\tau_{\Path}(\varnothing) < \infty} = 0$, or, for any $\eps > 0$,
		$\Prob{\tau_{\Path}(\varnothing) < \eps} > 0$.
	\end{lemma}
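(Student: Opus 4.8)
The plan is to prove the dichotomy in the contrapositive-flavoured form: \emph{assuming} $\Prob{\tau_{\Path}(\varnothing)<\infty}>0$, deduce that $\Prob{\tau_{\Path}(\varnothing)<\eps}>0$ for every $\eps>0$. The whole argument rests on two ingredients: the self-similarity of the construction — by~\eqref{eq:cmj-assumption}, for every $u\in\mathcal{U}_\infty$ the family $\{((X(uwj))_{j\in\N},W_{uw}):w\in\mathcal{U}_\infty\}$ governing the sub-tree rooted at $u$ is an i.i.d.\ copy of $\{((X(wj))_{j\in\N},W_w):w\in\mathcal{U}_\infty\}$, so $\tau_{\Path}(u)$ has the same law as $\tau_{\Path}(\varnothing)$ — together with the elementary fact that the tails of a convergent non-negative series vanish. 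First I would record the identity $\tau_{\Path}(u)=\inf_{v\in\N^\infty}S^{(u)}(v)$, where $S^{(u)}(v):=\sum_{j=0}^\infty\mathcal{P}_{v_{j+1}}(uv_{|_j})\in[0,\infty]$ is the limiting birth time along the ray $v$ inside the sub-tree rooted at $u$; this is immediate from the definition of $\tau_{\Path}$ since the partial sums $\sum_{j=0}^{i-1}\mathcal{P}_{v_{j+1}}(uv_{|_j})$ are non-decreasing in $i$.

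Next I would argue as follows. On the event $\{\tau_{\Path}(\varnothing)<\infty\}$, since an infimum of a non-empty family of elements of $[0,\infty]$ is finite only if at least one member is finite, there is (for each such $\omega$) a ray $v$ with $S^{(\varnothing)}(v)<\infty$. Because this non-negative series converges, its tails vanish: $\sum_{j\ge N}\mathcal{P}_{v_{j+1}}(v_{|_j})\to 0$ as $N\to\infty$. After the re-indexing $j=N+j'$ one checks that $\sum_{j\ge N}\mathcal{P}_{v_{j+1}}(v_{|_j})=S^{(v_{|_N})}(w)$ with $w:=v_{N+1}v_{N+2}\cdots$, and hence $\tau_{\Path}(v_{|_N})\le S^{(v_{|_N})}(w)<\eps$ once $N$ is large enough. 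This shows the pointwise inclusion $\{\tau_{\Path}(\varnothing)<\infty\}\subseteq\bigcup_{u\in\mathcal{U}_\infty}\{\tau_{\Path}(u)<\eps\}$, so the right-hand event has positive probability. Since $\mathcal{U}_\infty$ is countable, a union bound then gives $0<\Prob{\bigcup_u\{\tau_{\Path}(u)<\eps\}}\le\sum_{u\in\mathcal{U}_\infty}\Prob{\tau_{\Path}(u)<\eps}=\sum_{u\in\mathcal{U}_\infty}\Prob{\tau_{\Path}(\varnothing)<\eps}$, using the distributional identity above; a countable sum of zeros being zero, this forces $\Prob{\tau_{\Path}(\varnothing)<\eps}>0$, as required. (The hypothesis $\mathcal{B}(1)>0$ a.s., i.e.\ that no child is born at time $0$, is not needed for this dichotomy itself; it is simply the setting in which the lemma is later applied.)

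The step I expect to require the most care is not analytic but measure-theoretic: one must make sure that $\{\tau_{\Path}(u)<\eps\}$ is a genuine event for every $u$ (so that monotonicity of $\mathbb{P}$ and the union bound apply) and that the displayed pointwise inclusion really is measurable-set-valued. Both are handled exactly as in the footnote accompanying the definition of $\tau_{\Path}$: completeness of $(\Omega,\overline{\Sigma},\mathbb{P})$ and of the filtrations makes the uncountable union $\{\tau_{\Path}(u)\le t\}=\bigcup_{v\in\N^\infty}\bigcap_{i\in\N}\{\sum_{j=0}^{i-1}\mathcal{P}_{v_{j+1}}(uv_{|_j})\le t\}$ measurable via the Souslin operation, and then $\{\tau_{\Path}(u)<\eps\}=\bigcup_{m\in\N}\{\tau_{\Path}(u)\le\eps-1/m\}$. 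The distributional identity $\tau_{\Path}(u)\EgalLoi\tau_{\Path}(\varnothing)$ should be justified by observing that $\tau_{\Path}(u)$ is the \emph{same} measurable functional of the sub-tree data at $u$ that $\tau_{\Path}(\varnothing)$ is of the whole data, and invoking~\eqref{eq:cmj-assumption}. Everything else — the reduction to the case $\Prob{\tau_{\Path}(\varnothing)<\infty}>0$, the identity $\tau_{\Path}(u)=\inf_vS^{(u)}(v)$, the vanishing-tail step, and the union bound — is routine.
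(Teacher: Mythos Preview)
Your proof is correct and takes a genuinely different route from the paper's. The paper argues by contradiction via the threshold $\eps^*:=\inf\{\eps\geq 0:\Prob{\tau_{\Path}(\varnothing)<\eps}>0\}$: assuming $\eps^*>0$, it invokes the first-generation decomposition $\{\tau_{\Path}(\varnothing)<\eps^*\}=\bigcup_{i\in\N}\{\mathcal{B}(i)+\tau_{\Path}(i)<\eps^*\}$, the independence of $\mathcal{B}(i)$ from $\tau_{\Path}(i)\EgalLoi\tau_{\Path}(\varnothing)$, and the hypothesis $\mathcal{B}(1)>0$ a.s.\ to try to produce an $x\in(0,\eps^*)$ with $\Prob{\tau_{\Path}(\varnothing)<\eps^*-x}>0$, contradicting minimality of $\eps^*$. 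Your argument sidesteps this machinery entirely: on $\{\tau_{\Path}(\varnothing)<\infty\}$ there is a ray with convergent birth-time series, and its vanishing tails give $\tau_{\Path}(v_{|_N})<\eps$ for $N$ large; the pointwise inclusion $\{\tau_{\Path}(\varnothing)<\infty\}\subseteq\bigcup_{u\in\mathcal{U}_\infty}\{\tau_{\Path}(u)<\eps\}$ combined with the distributional identity $\tau_{\Path}(u)\EgalLoi\tau_{\Path}(\varnothing)$ and countable subadditivity then forces $\Prob{\tau_{\Path}(\varnothing)<\eps}>0$. Your route is more elementary (no threshold variable, no conditioning on the first generation) and, as you correctly observe, never invokes $\mathcal{B}(1)>0$ a.s., so it in fact establishes the dichotomy in slightly greater generality than stated.
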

	
	\begin{proof}
		Suppose that $\Prob{\tau_{\Path}(\varnothing) < \infty} > 0$. Then, we define 
		\[
		\eps^{*} := \inf \left\{\eps \geq 0: \Prob{\tau_{\Path}(\varnothing) < \eps} > 0 \right\}.
		\]
		By assumption, we have $\eps^{*} < \infty$, and moreover, $\Prob{\tau_{\Path}(\varnothing) < \eps^{*}} = 0$  (by monotone convergence). Suppose that $\eps^{*} > 0$. As an infinite path from $\varnothing$ must pass through the first generation, we have   
		\begin{equation} \label{eq:expl-first-gen}
			\Prob{\tau_{\Path}(\varnothing) < \eps^{*}} = \Prob{\bigcup_{i \in \mathbb{N}} \left\{\mathcal{B}(i) + \tau_{\Path}(i) < \eps^{*}\right\}}.
		\end{equation}
		Since for each $i$ we have $\mathcal{B}(i) \geq \mathcal{B}(1)$, we must have $\Prob{\mathcal{B}_{1} + \tau_{\Path} < \eps^{*}} > 0$; where $\mathcal{B}_{1} \sim \mathcal{B}(1) \sim X(1)$ and $\tau_{\Path} \sim \tau_{\Path}(\varnothing)$, and $\mathcal{B}_1, \tau_{\Path}$ are independent. Indeed, otherwise a union bound would show that the right-hand side of~\eqref{eq:expl-first-gen} equals zero. But then, it must be the case that
		\begin{equation}
			\left\{\mathcal{B}_{1} \in (0, \eps^{*}): \Prob{\tau_{\Path} < \eps^{*} - \mathcal{B}_1 \, \bigg| \,  \mathcal{B}_1} > 0\right\}
		\end{equation}
		must be a set of positive $\mathbb{P}$-measure. Again, as in (the proof of) Lemma~\ref{lem:no-atom}, this implies that with respect to the distribution induced by $\mathcal{B}_{1}$, the set 
		\begin{equation} 
			\left\{x \in (0, \eps^{*}): \Prob{\tau_{\Path} <  \eps^{*} - x} > 0 \right\}
		\end{equation}
		has positive measure. In particular, there exists an $x \in (0, \eps^{*})$ such that $\Prob{\tau_{\Path} <  \eps^{*} - x} > 0$, a contradiction. We deduce that $\eps^{*} = 0$.
	\end{proof}
	
	\subsubsection{Proof of Theorem~\ref{thm:counter-example}}
	
	\begin{proof}[Proof of Theorem~\ref{thm:counter-example}]
		We define an explosive $(X,W)$-CMJ process  with weights $W_{u} = (R_{u}, I_{u}) \in (0, \infty) \times \{0,1\}$ for $u\in\cU_\infty$. Here $R_{u}$ and $I_{u}$ are independent and distributed such that $\Prob{I_{u} = 1} = \Prob{I_{u}=0} = 1/2$, and $R_{u}$ has a smooth distribution such that, for some constant $\alpha > 1$, for all $x \in [1, \infty)$,
		\be \label{eq:power-law}
		\Prob{R_u > x} \geq x^{-(\alpha -1)}. 
		\ee 
		We also fix $p > 1$ such that $(\alpha-1)(p-1) < 1$. We can think of the variable $I_{u}$ as an indication of the \emph{type} of a node $u$,  effecting the distribution of $(X_{W_{u}}(uj))_{j \in \mathbb{N}}$.  
		We then define the $(X,W)$-CMJ process $(\mathscr{T}_{t})_{t \geq 0}$ so that, conditionally on $W_{u}$, the random variables $(X_{W_{u}}(ui))_{ i \in \mathbb{N}}$ are mutually independent, and for each $u\in\cU_\infty,i \in \mathbb{N}$,
		\[
		X_{W_u}(ui) \sim \begin{cases}
			\Exp{R_{u}i^{p}}, &\mbox{if } W_{u} = (R_u, 0); \\
			\Exp{1}, &\mbox{if } W_{u} = (R_u, 1), i=1; \\
			s_{i}, &\mbox{if } W_{u} = (R_u, 1), i > 1; 
		\end{cases}
		\]
		where the $(s_{i})_{i \geq 2}$ are constants with each $s_{i} > 0$ defined to satisfy the following: if for each $k \geq 1$ we set
		\begin{equation} \label{eq:det-exp-bound}
			\varsigma_{k} := \sum_{j=k+1}^{\infty} s_{j} < \infty, \quad \text{ then we have } \quad  \E{ \e^{-(R_{k} \vee 1) \varsigma_{k}}} >1- 2^{-k};
		\end{equation}
		where $R_{k} \sim R_{\varnothing}$ is the the first element of the weight associated with the individual $k \in \mathcal{U}_{\infty}$. We stress that this condition is satisfied when  we choose the constants $s_i$ sufficiently small. Note that under this construction, the conditions of Assumption~\ref{ass:uniqueness} are satisfied: clearly, in general $\mathcal{B}(1) \sim  X(1)$ contains no atom on $[0,\infty)$, so Condition~3 is satisfied. Moreover, for any $z \geq 0$, by the conditional independence of $(X_{W_{u}}(ui))_{i \in \mathbb{N}}$,
		\[
		\Prob{\sum_{i=1}^{\infty} X_{W_{u}}(ui) = z\, \bigg | \, W_{u}} =  \Prob{X_{W_u}(u1) = z - \sum_{i=2}^{\infty} X_{W_u}(ui)\, \bigg | \, W_{u}}= 0,
		\]
		since $X_{W_{u}}(u1)$ always has an exponential (hence smooth) distribution. Note also, that $\Prob{\tau_{\infty} < \infty} = 1$, and $\Prob{|\mathcal{T}_{\infty}| = \infty} = 1$. Now, we have the following claim:
		
		\begin{clm} \label{clm:child-explodes-first-on-root}
			For all $u \in \mathcal{U}_{\infty}$, 
			\begin{equation} \label{eq:child-explodes-first-on-root}
				\Prob{\bigcap_{j=1}^{\infty}\bigcup_{i = j}^{\infty}\left\{\mathcal{B}(ui) + \mathcal{P}(ui) < \mathcal{B}(u) + \mathcal{P}(u), I_{ui} = 0\right\} \, \bigg | \, I_{u} = 0} = 1. 
			\end{equation}  
		\end{clm}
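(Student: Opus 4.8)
The plan is to adapt the argument of Lemma~\ref{lemma:explodingchild}, the only new feature being that we must produce infinitely many children $ui$ that explode before $u$ \emph{and} carry type $I_{ui}=0$; since the type variables $(I_{uj})_{j\in\N}$ are i.i.d., independent of all the birth data, and appear (via the weights) only in disjoint sub-trees, they do not disrupt the independence structure exploited there. Throughout I would condition on $W_u=(R_u,0)$, recalling that $R_u\in(0,\infty)$ almost surely, and prove~\eqref{eq:child-explodes-first-on-root} conditionally on $W_u$, integrating over $R_u$ (given $I_u=0$) at the end. Write $M_i:=\sum_{j\geq i+1}X_{W_u}(uj)$ for the time left, after the birth of the $i$th child of $u$, before $u$ itself explodes; then $\mu_i:=\E{M_i\,|\,W_u}=\sum_{j\geq i+1}(R_uj^p)^{-1}$ is finite since $p>1$, with $\mu_i\sim (R_u(p-1))^{-1}i^{-(p-1)}$ and $\mu_{i+1}/\mu_i\to1$, while $\Var(M_i\,|\,W_u)=\sum_{j\geq i+1}(R_uj^p)^{-2}\leq\mu_i^2$.

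The first step is a divergence estimate in the spirit of~\eqref{eq:div-condition}. For $i\in\N$ set $A_i:=\{I_{ui}=0\}\cap\{\mathcal{P}(ui)<\tfrac14\mu_i\}$; conditionally on $W_u$ these are independent across $i$ (disjoint sub-trees plus independent types). Conditionally on $I_{ui}=0$, the variable $\mathcal{P}(ui)$ is distributed as $\sum_k\Exp{R'k^p}$ with $R'\sim R_\varnothing$, of conditional mean $\zeta(p)/R'$; restricting to $\{R'\geq 8\zeta(p)/\mu_i\}$ and applying Markov's inequality gives $\Prob{\mathcal{P}(ui)<\tfrac14\mu_i\,|\,I_{ui}=0}\geq\tfrac12\Prob{R'\geq 8\zeta(p)/\mu_i}$, hence $\Prob{A_i\,|\,W_u}\geq\tfrac14\Prob{R'\geq 8\zeta(p)/\mu_i}$. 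Using $\mu_i\sim (R_u(p-1))^{-1}i^{-(p-1)}$ together with the tail bound~\eqref{eq:power-law}, for all $i$ large enough (depending on $R_u$) this is at least a constant times $i^{-(p-1)(\alpha-1)}$, and since $(p-1)(\alpha-1)<1$ by construction, $\sum_i\Prob{A_i\,|\,W_u}=\infty$; the second Borel--Cantelli lemma then gives $\Prob{A_i\text{ i.o.}\,|\,W_u}=1$.

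The second step upgrades $\{\mathcal{P}(ui)<\tfrac14\mu_i\}$ to $\{\mathcal{P}(ui)<M_i\}$ along infinitely many indices of the (a.s.\ infinite, random) set $\mathcal{I}:=\{i:A_i\text{ holds}\}$. Paley--Zygmund (Lemma~\ref{lem:paley-zygmund}) with $\Var(M_i\,|\,W_u)\leq\mu_i^2$ gives $\Prob{M_i\geq\tfrac14\mu_i\,|\,W_u}\geq c_0$ for a fixed $c_0>0$; since the truncations $M_i^{(m)}:=\sum_{i<j\leq m}X_{W_u}(uj)$ increase strictly to $M_i$ and $M_i$ is atomless given $W_u$ (a convergent sum of independent exponentials), there is a finite $\phi(i)>i$ with $\Prob{M_i^{(\phi(i))}\geq\tfrac14\mu_i\,|\,W_u}\geq c_0/2$. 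Enumerate $\mathcal{I}$ with large gaps, $i_1:=\min\mathcal{I}$ past the relevant threshold and $i_{n+1}:=\min\{i\in\mathcal{I}:i>\phi(i_n)\}$. The key observation, exactly as in Lemma~\ref{lemma:explodingchild}, is that $\mathcal{I}$ is measurable with respect to the sub-trees of the children of $u$ and the types, hence — conditionally on $W_u$ — independent of the offspring process $(X_{W_u}(uj))_j$ of $u$; consequently, conditionally on $\mathcal{I}$ and $W_u$, the events $B_n:=\{M_{i_n}^{(\phi(i_n))}\geq\tfrac14\mu_{i_n}\}$ depend on disjoint blocks $(i_n,\phi(i_n)]$, are independent, and each have probability at least $c_0/2$. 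A final application of the second Borel--Cantelli lemma gives $\Prob{B_n\text{ i.o.}\,|\,W_u}=1$, and on $\{B_n\text{ i.o.}\}$ there are infinitely many $n$ with $I_{ui_n}=0$ and $\mathcal{P}(ui_n)<\tfrac14\mu_{i_n}\leq M_{i_n}^{(\phi(i_n))}\leq M_{i_n}$; adding $\mathcal{B}(ui_n)=\mathcal{B}(u)+\sum_{j\leq i_n}X_{W_u}(uj)$ to both sides and using $M_{i_n}=\sum_{j>i_n}X_{W_u}(uj)$ yields $\mathcal{B}(ui_n)+\mathcal{P}(ui_n)<\mathcal{B}(u)+\mathcal{P}(u)$, which is the event in~\eqref{eq:child-explodes-first-on-root}; integrating over $W_u$ given $I_u=0$ finishes it. I expect the main obstacle to be precisely this last step: the tail sums $(M_i)_i$ are strongly correlated, so one cannot conclude ``infinitely often'' from a uniform lower bound on $\Prob{M_i\geq\tfrac14\mu_i}$ alone, and the passage to a gapped sub-sequence of the random set $\mathcal{I}$ — checking that conditioning on $\mathcal{I}$ leaves the law of $u$'s offspring process unchanged — is the delicate point, mirroring the corresponding manoeuvre in Lemma~\ref{lemma:explodingchild}.
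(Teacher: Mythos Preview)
Your proof is correct and follows essentially the same strategy as the paper: verify the two conditions~\eqref{eq:div-condition-counterexample} and~\eqref{eq:smallest-expl-prob-counterexample} (with $\nu_i^r=d\mu_i$) and then invoke the gapped-subsequence Borel--Cantelli argument of Lemma~\ref{lemma:explodingchild}, which you spell out explicitly. The only substantive difference is in the divergence step: the paper lower-bounds $\Prob{\mathcal{P}<\nu_n^r,\,I_n=0}$ via Lemma~\ref{lem:lower-bound-infinite-events} (introducing an auxiliary exponential and reducing to the non-summability of the Laplace-transform product~\eqref{eq:counter-divergence-cond}), whereas you proceed more directly by conditioning on $\{R'\geq 8\zeta(p)/\mu_i\}$ and applying Markov's inequality. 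Your route is slightly more elementary and avoids the Laplace-transform machinery, at the cost of being specific to this construction; the paper's route is uniform with its general treatment in Theorem~\ref{thm:star-path-rif}. Both arrive at the same $i^{-(p-1)(\alpha-1)}$ lower bound and both use Paley--Zygmund for~\eqref{eq:smallest-expl-prob-counterexample}, so the remaining argument is identical.
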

		
		\noindent Given Claim~\ref{clm:child-explodes-first-on-root}, in a similar manner to the proof of Theorem~\ref{thm:path} (which uses~\eqref{eq:child-explodes-first-forall}), we can exploit Equation~\eqref{eq:child-explodes-first-on-root} to deduce that $\Prob{\tau_{\Path}(\varnothing) < \infty \,| \, I_{\varnothing} = 0} = 1$. In particular, we can ensure that $\tau_{\Path}(\varnothing) \leq \mathcal{P}(\varnothing)$. Indeed, Equation~\eqref{eq:child-explodes-first-on-root} guarantees that there exists a child $i$ such that $\mathcal{B}(i) + \mathcal{P}(i) \leq \mathcal{P}(\varnothing)$; this child, in turn, by~\eqref{eq:child-explodes-first-on-root} has a child of infinite degree by time $\mathcal{P}(\varnothing)$. Iterating this argument, we have an infinite path by time $\mathcal{P}(\varnothing)$. 
		
		Now, it cannot be the case that there exists $u \in \mathcal{U}_{\infty}$ such that $I_{u} = 0$ and $\tau_{\infty} = \mathcal{B}(u) + \mathcal{P}(u)$. Indeed, assume such a $u$ does exist. Then, by the above argument it would follow that $\tau_{\Path}(u) = \mathcal{B}(u) + \mathcal{P}(u) = \tau_{\infty}$. Hence $u$ is contained in an infinite path \emph{and} has infinite degree in $\mathcal{T}_{\infty}$, contradicting Theorem~\ref{thm:uniqueness}. On the other hand, for any $u$ such that $I_{u} = 1$, by construction, we have $\mathcal{P}(u) \geq \varsigma_{1}$. However, by Lemma~\ref{lem:arbitrarily-fast-path} we deduce that, since $\Prob{\tau_{\Path}(\varnothing) < \infty} > 0$ by the above argument, we have $\Prob{\tau_{\Path}(\varnothing) < \varsigma_{1}} > 0$. It thus follows that, on $\{\tau_{\Path}(\varnothing) < \varsigma_{1}\}$ the tree $\mathcal{T}_{\infty}$ contains an infinite path almost surely. 
		
		Now, suppose that $I_{\varnothing} = 1$, an event that also occurs with probability $1/2$. Then, for each child $k \in \mathbb{N}$ (in the first generation), 
		\begin{linenomath}
			\begin{align} \label{eq:prob-child-born}
				\Prob{\mathcal{B}(k1) < \mathcal{P}(\varnothing) \, | \, I_{\varnothing} = 1} & = \Prob{X(k1) < \!\!\!\sum_{j=k+1}^{\infty} \!\! X(j) \, \bigg | \, I_{\varnothing} = 1} = \Prob{X(k1) < \varsigma_{k}} \leq 1-\E{ \e^{-(R_{k} \vee 1) \varsigma_{k}}},
			\end{align}
		\end{linenomath}
		where the last inequality follows from the fact that $X(k1)$ is exponentially distributed, either with parameter $1$ or $R_{k}$, and the probability is maximised if we choose the maximum of the two. But then, by~\eqref{eq:det-exp-bound} and a union bound,
		\[
		\Prob{\bigcup_{k \in \mathbb{N}} \mathcal{B}(k1) < \mathcal{P}(\varnothing) \, \bigg | \, I_{\varnothing} = 1} < \sum_{k=1}^{\infty} 2^{-k} = 1. 
		\]
		It follows that, with positive probability, $\mathcal{T}_{\infty}$ consists of a single infinite star and hence contains no infinite path. 
	\end{proof}
	
	\noindent  We conclude with the proof of Claim~\ref{clm:child-explodes-first-on-root}.
	
	\begin{proof}[Proof of Claim~\ref{clm:child-explodes-first-on-root}]
		By using a similar Borel-Cantelli argument as in the proof of Equation~\eqref{eq:child-explodes-first} of Lemma~\ref{lemma:explodingchild} (as in Equations~\eqref{eq:div-condition} and~\eqref{eq:smallest-expl-prob}), it suffices to prove the following:
		there exists a collection of numbers $\left\{\nu^{r}_{n} \in (0, \infty): r \in (0, \infty), n \in \mathbb{N}\right\}$, such that for any $r \in (0, \infty)$,
		\begin{align} \label{eq:div-condition-counterexample}
			\sum_{n=1}^{\infty} \Prob{\mathcal{P} < \nu^{r}_{n}, I_{n} = 0} &= \infty,
			\shortintertext{and} 
			\hspace{-1cm}\liminf_{n \to \infty} \Prob{\sum_{i=n+1}^{\infty} X_{(r,0)}(i) \geq \nu^{r}_{n}}& > 0.\label{eq:smallest-expl-prob-counterexample}
		\end{align}
		Here, we set 
		\be 
		\nu^{r}_{n} := d \E{\sum_{i=n+1}^{\infty} X_{(r,0)}(i)} = d \sum_{i=n+1}^{\infty} \frac{1}{r i^{p}}, 
		\ee 
		for some $d < 1$. Then, note that we may deduce Equation~\eqref{eq:smallest-expl-prob-counterexample} by a similar application of the Paley-Zygmund inequality as in Equation~\eqref{eq:tail-lower-bound-prob}. Moreover, applying Lemma~\ref{lem:lower-bound-infinite-events} with $b_{n} = \nu^{r}_{n}$ and $\eps > 0$ fixed  and using the independence of $R_n$ and $ I_n$, for $n \in \mathbb{N}$, we have 
		\begin{linenomath}
			\begin{align*}
				\Prob{\mathcal{P} <\nu^{r}_{n}, I_{n} = 0} = \frac{1}{2} \Prob{\sum_{j=1}^{\infty} X_{(R_{n}, 0)}(j) <\nu^{r}_{n}} \geq \frac{1}{2} \left(\E{\prod_{j=1}^{\infty} \frac{R_{n} j^{p}}{R_{n} j^{p} + (1+\eps) (\nu^{r}_{n})^{-1} \log(n)}} - \frac{1}{n^{1+\eps}} \right).
			\end{align*}
		\end{linenomath}
		It thus suffices to show that 
		\be \label{eq:counter-divergence-cond}
		\sum_{n=1}^{\infty} \E{\prod_{j=1}^{\infty} \frac{R_{n} j^{p}}{R_{n} j^{p} + (1+\eps) (\nu^{r}_{n})^{-1} \log(n)}} = \infty.
		\ee
		To this end, we note that we may bound 
		\be \label{eq:lower-bound-denom-counter}
		(1+ \eps) (\nu^{r}_{n})^{-1} \log(i) = \frac{ (1+\eps) r}{d} \log(n) \left(\sum_{j=n+1}^{\infty} \frac{1}{j^{p}}\right)^{-1} <C_1\log(n) n^{p-1},
		\ee
		where we bound the sum from below by an integral and where $C_1>0$ is a constant. For $n$ sufficiently large such that $(1+\eps)dr  \log(n) n^{p-1} > 1$, we now bound 
		\begin{linenomath}
			\begin{align}  \label{eq:exp-lower-bound-counter}
				\E{\prod_{j=1}^{\infty} \frac{R_{n} j^{p}}{R_{n} j^{p} + (1+\eps) (\nu^{r}_{n})^{-1} \log(n)}} & \geq \E{\prod_{j=1}^{\infty} \frac{R_{n} j^{p}}{R_{n} j^{p} + C_1  \log(n) n^{(p-1)}} \mathbf{1}_{R_{n} \geq C_1 \log(n) n^{(p-1)}}} 
				\\ & \geq \left(\prod_{j=1}^{\infty} \frac{j^{p}}{j^{p} + 1} \right) \Prob{R_{n} \geq C_1  \log(n) n^{p-1}}
				\\ & \geq C_2\left(\prod_{j=1}^{\infty} \frac{j^{p}}{j^{p} + 1} \right) 
				\big(\log(n) n^{p-1}\big)^{-(\alpha-1)},
			\end{align}
		\end{linenomath}
		for some constant $C_2>0$ and where we use~\eqref{eq:power-law} in the final step. Since $p>1$, $\sum_{j=1}^{\infty} j^{-p} < \infty$, which implies that the infinite product on the right-hand side is strictly larger than zero. Since $(\alpha-1)(p-1) < 1$, the lower bound on the right-hand side is not summable in $i$, so that we obtain~\eqref{eq:counter-divergence-cond}, which concludes the proof.
	\end{proof}

	\section{Examples of phase transitions in specific models of recursive trees with fitness}\label{sec:examplesproof}
	
	The aim of this section is to prove Theorems~\ref{thrm:cmjexamples} and~\ref{thrm:sub-treecount}. The previous section provided conditions for the emergence of a unique vertex with infinite degree or a unique infinite path to appear almost surely in the case the inter-birth times are exponential random variables whose rate depends on some fitness function $f$. In this section we turn these conditions into 
	phase transitions for three specific examples. We start by proving Theorem~\ref{thrm:cmjexamples} in Section~\ref{sec:thm-star-phase} and then prove Theorem~\ref{thrm:sub-treecount} in Section~\ref{sec:sub-treeproof}.
	
	We also note that, in this section, we use the commonly applied `big O', `little o' and `big theta' notation: we say $f(x) = \mathcal{O}(g(x))$ if there exists positive constants $M,x_0$ such that, for all $x \geq x_{0}$, we have $|f(x)| \leq M g(x)$; we say $f(x) = o(g(x))$ if $\lim_{x\to\infty} |f(x)/g(x)|=0$; and finally, we say $f(x) = \Theta(g(x))$ if $f(x) = \mathcal{O}(g(x))$ and $g(x)=\cO(f(x))$.
	
	\subsection{Phase transitions for the emergence of an infinite star or path: proof of Theorem~\ref{thrm:cmjexamples}} \label{sec:thm-star-phase}
	
	To prove Theorem~\ref{thrm:cmjexamples}, it suffices to check the conditions subject to which Theorem~\ref{thm:star-path-rif} holds. In the remainder of this section, it is be useful to have the following preliminary results from~\cite{BinGolTeu89} that we use throughout. 
	
	\begin{lemma}[{\cite[Propositions 1.3.6, 1.5.7, 1.5.9a, and 1.5.10]{BinGolTeu89}}]\label{lemma:regvar}
		Let $\ell$ be a slowly-varying function and $r_1,r_2$ be regularly-varying functions with exponents $\alpha_1,\alpha_2\in\R$, respectively, and fix $a\in \R$. Then, 
		\begin{enumerate}[label=\normalfont(\roman*)]
			\item \label{item:slow-neg} For $a>0$, it holds that $\lim_{x\to\infty} \ell(x)x^a=\infty$ and $\lim_{x\to\infty}\ell(x)x^{-a}=0$. 
			\item The function $r_1^a:=(r_1(\cdot))^a$ is regularly varying with exponent $\alpha_1a$.
			\item If $\alpha_2>0$, then $r_1(r_2(\cdot))$ is regularly varying with exponent $\alpha_1\alpha_2$. 
			\item The function $r_1+r_2$ is regularly varying with exponent $\max\{\alpha_1,\alpha_2\}$. 
			\item The function $r_1r_2$ is regularly varying with exponent $\alpha_1+\alpha_2$.
			\item\label{item:reg-var-int}  If $\alpha_1<-1$, then 
			\be 
			\int_x^\infty r_1(t)\, \dd t=(-\alpha_1-1+o(1))xr_1(x), \qquad \text{as }x\to\infty.
			\ee 
			When $\alpha_1=-1$, the result remains true in the sense that the integral is slowly varying in $x$, and that it is $o(xr_1(x))$.
		\end{enumerate}
	\end{lemma}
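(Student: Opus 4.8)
As Lemma~\ref{lemma:regvar} merely records standard facts from~\cite{BinGolTeu89}, strictly speaking no proof is needed; for completeness the plan is to indicate how each item reduces to one of the two pillars of the theory of regular variation. These are the \emph{uniform convergence theorem} (for a slowly varying $\ell$ one has $\ell(\lambda x)/\ell(x)\to1$ uniformly for $\lambda$ in any compact subset of $(0,\infty)$) and Karamata's \emph{representation theorem} ($\ell(x)=c(x)\exp\!\big(\int_1^x\varepsilon(t)t^{-1}\,\dd t\big)$ with $c(x)\to c\in(0,\infty)$ and $\varepsilon(t)\to0$), the latter yielding \emph{Potter's bounds}: for every $\delta>0$ there is $C_\delta>0$ with $\ell(\lambda x)/\ell(x)\le C_\delta\max\{\lambda^\delta,\lambda^{-\delta}\}$ for all sufficiently large $x$ and $\lambda x$. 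Throughout I would write each regularly varying $r_i$ as $r_i(x)=x^{\alpha_i}\ell_i(x)$ with $\ell_i$ slowly varying.

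Item (i) follows from the representation theorem: $\log\!\big(\ell(x)x^{-a}\big)=\log c(x)+\int_1^x\varepsilon(t)t^{-1}\,\dd t-a\log x$, and choosing $T$ with $|\varepsilon(t)|\le a/2$ for $t\ge T$ bounds the right-hand side above by $O(1)-(a/2)\log x\to-\infty$; the statement for $\ell(x)x^a$ is symmetric. Items (ii), (iv), (v) are then elementary once one records that finite sums and products of slowly varying functions, and real powers $\ell^a$, are again slowly varying --- for the sum this follows by setting $\lambda(x):=\ell_1(x)/(\ell_1(x)+\ell_2(x))\in[0,1]$, which gives $(\ell_1(tx)+\ell_2(tx))/(\ell_1(x)+\ell_2(x))=\lambda(x)\,\ell_1(tx)/\ell_1(x)+(1-\lambda(x))\,\ell_2(tx)/\ell_2(x)\to1$. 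Then (ii) is $r_1^a=x^{\alpha_1a}\ell_1^a$, (v) is $r_1r_2=x^{\alpha_1+\alpha_2}\ell_1\ell_2$, and for (iv) one distinguishes the case $\alpha_1>\alpha_2$, where $r_2(x)/r_1(x)=x^{\alpha_2-\alpha_1}\ell_2(x)/\ell_1(x)\to0$ by (i) so that $r_1+r_2\sim r_1$, from the case $\alpha_1=\alpha_2=:\alpha$, where $r_1+r_2=x^\alpha(\ell_1+\ell_2)$.

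Item (iii) uses the uniform convergence theorem: since $\alpha_2>0$ we have $r_2(x)\to\infty$ and $r_2(tx)/r_2(x)\to t^{\alpha_2}$, so for large $x$ these ratios lie in a fixed compact neighbourhood of $t^{\alpha_2}$; writing $r_1(r_2(tx))/r_1(r_2(x))=\big(r_2(tx)/r_2(x)\big)^{\alpha_1}\cdot\ell_1(r_2(tx))/\ell_1(r_2(x))$, the second factor tends to $1$ upon applying uniform convergence to $\ell_1$ along the diverging arguments $y=r_2(x)$. The one item demanding genuine work --- and what I expect to be the main obstacle --- is (vi), which is Karamata's theorem. Here the substitution $t=xu$ in $\int_x^\infty r_1(t)\,\dd t=\int_x^\infty t^{\alpha_1}\ell(t)\,\dd t$ gives $\int_x^\infty r_1(t)\,\dd t=xr_1(x)\int_1^\infty u^{\alpha_1}\big(\ell(xu)/\ell(x)\big)\,\dd u$, and since $\ell(xu)/\ell(x)\to1$ pointwise while being dominated, by Potter's bounds, by $C\,u^{\delta}$ with $\delta$ chosen so small that $\alpha_1+\delta<-1$, dominated convergence lets one pass to the limit inside the integral, reducing the leading-order coefficient to the elementary integral $\int_1^\infty u^{\alpha_1}\,\dd u$ and recovering the asymptotics asserted in~(vi). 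The endpoint case $\alpha_1=-1$, where $r_1(t)=t^{-1}\ell(t)$ so that $xr_1(x)=\ell(x)$, is the corresponding boundary statement of Karamata's theorem on the slow variation of $x\mapsto\int_x^\infty t^{-1}\ell(t)\,\dd t$ (finite by hypothesis); for it I would simply cite the relevant proposition of~\cite{BinGolTeu89} rather than reproduce the argument.
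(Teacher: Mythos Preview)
The paper provides no proof of this lemma at all; it is recorded purely as a citation from \cite{BinGolTeu89}, exactly as you anticipated in your opening sentence. Your sketches for items (i)--(vi) are correct and go well beyond what the paper supplies.

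One incidental observation your computation in (vi) actually exposes: the substitution $t=xu$ yields the leading coefficient $\int_1^\infty u^{\alpha_1}\,\dd u = 1/(-\alpha_1-1)$, which is the standard Karamata constant. The coefficient $(-\alpha_1-1)$ printed in the lemma statement is a typo in the paper (it should be its reciprocal), as one checks already on $r_1(t)=t^{\alpha_1}$. This does not affect any of the paper's subsequent arguments, which only use that $\int_x^\infty r_1(t)\,\dd t$ is regularly varying of the right exponent.
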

	
	\noindent We generally use Lemma~\ref{lemma:regvar} without always referring to it explicitly in the remainder of this section. We mainly use it in the following manner. First, Lemma~\ref{lemma:regvar} shows that regularly-varying functions are closed under elementary operations. Next, Item~\ref{item:reg-var-int} shows that, if $s(n)$ is regularly varying with exponent $p > 0$, then $\sum_{i=n}^{\infty} s(i)^{-1}$ is regularly varying with exponent $p-1$. In addition, Item~\ref{item:slow-neg} shows that slowly-varying functions are often "negligible" in the sense that, for a regularly-varying function $f$ with exponent $\gamma\neq0$, we can write $f(x)=x^{\gamma+o(1)}$. Moreover, it shows that for any regularly-varying function $s$ with exponent $p > 1$ that $\sum_{i=1}^{\infty} s(i)^{-1} < \infty$, whilst for any regularly-varying function $s$ with exponent $p < 1$ we have $\sum_{i=1}^{\infty} s(i)^{-1} = \infty$. 
	
	The following approximations for $\mu_n^w$, as defined in~\eqref{eq:mu-def}, are also used heavily.
	
	\begin{lemma}\label{lemma:mun}
		Consider the two choices for the fitness function $f$ $($mixed or additive weights$)$, as in Assumption~\ref{ass:f}, as well as  the two choices for the degree function $s$ (super-linear and barely super-linear), as in Assumption~\ref{ass:deg}. For the \hyperlink{superlinear}{super-linear} case and for each fixed $w\in[0,\infty)$,
		\begin{align}
			\mu_n^w &=\frac{1+o(1)}{g(w)(p-1)}ns(n)^{-1}.
			\shortintertext{For the \hyperlink{barsuperlinear}{barely super-linear} case there exists a slowly-varying function $L$ such that for each $w\in[0,\infty)$,}
			\mu_n^w&= \frac{1+o(1)}{g(w)}L(n).
			\shortintertext{In particular, for the barely super-linear \hyperlink{logstrechted}{log-stretched} case, it follows that}
			L(n)&=\beta^{-1}(\log n)^{1-\beta}\e^{-(\log n)^\beta}.
		\end{align}
	\end{lemma}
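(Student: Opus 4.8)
The plan is to fix $w\in[0,\infty)$ throughout and, in each of the three cases, first reduce the computation of $\mu_n^w=\sum_{i=n}^\infty 1/f(i,w)$ to that of the pure degree-function tail $\sum_{i=n}^\infty 1/s(i)$, and then apply the Karamata-type estimates collected in Lemma~\ref{lemma:regvar}. Writing $f(i,w)=g(w)s(i)+h(w)$ as in~\eqref{eq:f}, one has the elementary identity
\[
0\le \frac1{g(w)}\sum_{i=n}^\infty \frac1{s(i)}-\mu_n^w
=\sum_{i=n}^\infty \frac{h(w)}{g(w)\,s(i)\big(g(w)s(i)+h(w)\big)}
\le \frac{h(w)}{g(w)^2}\sum_{i=n}^\infty \frac1{s(i)^2}.
\]
Since $s$ is regularly varying with exponent $p\ge 1$, the functions $1/s$ and $1/s^2$ are regularly varying with exponents $-p\le -1$ and $-2p<-1$, so (by Lemma~\ref{lemma:regvar}\ref{item:reg-var-int}, including its borderline case, together with the standard comparison of sums with integrals for regularly varying summands) the tails $\sum_{i\ge n}1/s(i)$ and $\sum_{i\ge n}1/s(i)^2$ are finite — the first by the summability hypothesis in Assumption~\ref{ass:deg} in the barely super-linear case and because $p>1$ in the super-linear case — and their regular-variation exponents differ by $-p<0$, whence $\sum_{i\ge n}1/s(i)^2=o\big(\sum_{i\ge n}1/s(i)\big)$. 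Consequently
\[
\mu_n^w=\frac{1+o(1)}{g(w)}\sum_{i=n}^\infty \frac1{s(i)}\qquad(n\to\infty)
\]
in all cases, and it remains to analyse $\sum_{i\ge n}1/s(i)$.

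For the super-linear case $1/s$ is regularly varying with exponent $-p<-1$, so Karamata's theorem (Lemma~\ref{lemma:regvar}\ref{item:reg-var-int}) gives $\int_n^\infty \dd t/s(t)=(1+o(1))\,\tfrac1{p-1}\,n/s(n)$; comparison with the integral yields $\sum_{i\ge n}1/s(i)=(1+o(1))\,n/\big((p-1)s(n)\big)$, and substituting into the previous display produces the claimed formula $\mu_n^w=\tfrac{1+o(1)}{g(w)(p-1)}\,n\,s(n)^{-1}$. For the general barely super-linear case, set $L(n):=\sum_{i=n}^\infty 1/s(i)$ (finite by assumption, independent of $w$). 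Now $1/s$ is regularly varying with exponent exactly $-1$, so the borderline part of Lemma~\ref{lemma:regvar}\ref{item:reg-var-int} shows that $\int_n^\infty \dd t/s(t)$, and hence $L(n)$, is slowly varying; combined with the reduction above this gives $\mu_n^w=\tfrac{1+o(1)}{g(w)}L(n)$ with $L$ slowly varying, as asserted.

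For the explicit log-stretched choice $s(i)=(i+1)\exp\big((\log(i+1))^\beta\big)$ from~\eqref{eq:slogstretched}, the substitution $u=\log(t+1)$ turns $\int_n^\infty \dd t/s(t)$ into $\int_{\log(n+1)}^\infty \e^{-u^\beta}\,\dd u$, and a further substitution $v=u^\beta$ identifies this with $\beta^{-1}\Gamma\big(\beta^{-1},(\log(n+1))^\beta\big)$, an upper incomplete Gamma function. Its classical asymptotics $\Gamma(a,x)=(1+o(1))\,x^{a-1}\e^{-x}$ as $x\to\infty$, applied with $a=\beta^{-1}$ and $x=(\log(n+1))^\beta$ and using $\log(n+1)=\log n+O(1/n)$ to replace $\log(n+1)$ by $\log n$, give $\int_{\log(n+1)}^\infty \e^{-u^\beta}\,\dd u=(1+o(1))\,\beta^{-1}(\log n)^{1-\beta}\e^{-(\log n)^\beta}$. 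Since $1/s$ is eventually decreasing and $1/s(n)$ is of strictly lower order than this integral, $L(n)$ has the same asymptotics, which is exactly the asserted form of $L$; one checks directly that $\beta^{-1}(\log x)^{1-\beta}\e^{-(\log x)^\beta}$ is slowly varying precisely because $\beta<1$.

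I expect the regular-variation bookkeeping and the sum–integral comparisons of the first two paragraphs to be routine. The one genuinely delicate point is the log-stretched case: there the relevant tail sits at the critical exponent $-1$, so Lemma~\ref{lemma:regvar}\ref{item:reg-var-int} delivers only ``slowly varying'' rather than the precise constant $\beta^{-1}$, and one must instead carry out the explicit Laplace/incomplete-Gamma estimate above, keeping careful track that the errors coming from the $h(w)$-correction, from replacing $\log(n+1)$ by $\log n$, and from passing between the integral and the sum are all $o\big((\log n)^{1-\beta}\e^{-(\log n)^\beta}\big)$, i.e.\ negligible against the main term.
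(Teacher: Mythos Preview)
Your proof is correct and follows essentially the same approach as the paper: reduce $\mu_n^w$ to $\tfrac{1}{g(w)}\sum_{i\ge n}1/s(i)$ by absorbing the $h(w)$-term, apply Lemma~\ref{lemma:regvar}\ref{item:reg-var-int} (including the borderline case), and for the log-stretched example compute the tail integral via the substitution leading to the incomplete Gamma function and its standard asymptotics. The only cosmetic difference is that you make the $h(w)$-removal step quantitative via the $\sum 1/s(i)^2$ bound, whereas the paper simply asserts it costs a $1+o(1)$ factor.
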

	
	\begin{proof}
		We fix $w\in[0,\infty)$ throughout and generally use that 
		\be \label{eq:genmu}
		\mu_n^w=\sum_{i=n}^\infty \frac{1}{f(i,w)}=\sum_{i=n}^\infty \frac{1}{g(w)s(i)+h(w)}=\frac{1+o(1)}{g(w)}\sum_{i=n}^\infty \frac{1}{s(i)}, 
		\ee 
		where we recall that $g\equiv 1$ in the additive case, and we can omit the term $h(w)$ in the fraction at the cost of a $o(1)$ term. Taking $s$ as in the~\hyperlink{superlinear}{super-linear} case, since $s$ is assumed to be regularly varying with exponent $p > 1$, by the integral test and using Lemma~\ref{lemma:regvar}, we thus find that
		\be
		\mu_n^w=\frac{1+o(1)}{g(w)(p-1)}ns(n)^{-1}. 
		\ee 
		With $s$ as in the barely super-linear case, it follows from $(vi)$ in Lemma~\ref{lemma:regvar} and an integral test that 
		\be 
		L(n):=\int_n^\infty \frac{1}{s(x)}\,\dd x.
		\ee 
		For the~\hyperlink{log-stretched}{log-stretched case}, we have with $s$ as in~\eqref{eq:slogstretched},
		\be \label{eq:munlogstretchedexpup}
		L(n)=\int_{n+1}^\infty x^{-1}\exp(-(\log x)^\beta)\,\dd x.
		\ee 
		Using a change of variables $y=(\log x)^\beta$, we obtain, with $\Gamma(s,x),s,x>0,$ the upper incomplete gamma function,\footnote{The upper incomplete gamma function is defined, for $s,x>0$ by $\Gamma(s,x):=\int_{x}^{\infty} t^{s-1}\e^{-t} \dd t$.}
		\be 
		L(n)=\beta^{-1}\Gamma(1/\beta,(\log(n+1))^\beta)=(\beta^{-1}+o(1))(\log n)^{1-\beta}\e^{-(\log n)^\beta},
		\ee 
		which concludes the proof.
	\end{proof}
	
	\subsubsection{Conditions for an infinite star: Item~\ref{item:star-explosive-rif} of Theorem~\ref{thm:star-path-rif}.} 
	
	To provide conditions for an infinite star to appear almost surely, we apply Theorem~\ref{thm:star-path-rif} by proving the condition in Item~\ref{item:star-explosive-rif} is satisfied in certain cases. To this end, we state the following lemma. 
	\begin{lemma}\label{lemma:checklaplace}
		Equation~\eqref{eq:star-explosive-rif} in Item~\ref{item:star-explosive-rif} of Theorem~\ref{thm:star-path-rif} is satisfied when the following conditions are met, based on the assumptions for the fitness type, degree function $s$, and vertex-weight distribution:
		\begin{table}[H]
			
			\centering
			\footnotesize
			\captionsetup{width=0.89\textwidth}
			
			\begin{tabular}{|c|c|l|}
				\hline
				\textbf{Weight} & \textbf{Degree} &  \textbf{Star}  \\  
				\hline
				\hyperlink{mixed}{Mixed} & \hyperlink{superlinear}{Super-linear} & \eqref{eq:weightasspowerlawub} \& $(p-1)(\alpha-1)>\big(\gamma-\tfrac{\gamma-1}{p}\big)\vee 1$  \\ 
				\hline 
				\hyperlink{additive}{Additive} & \hyperlink{superlinear}{Super-linear} & \eqref{eq:weightasspowerlawub} \& $p(\alpha-1)>1$\\
				\hline
				\hyperlink{mixed}{Mixed} & \hyperlink{log-stretched}{Log-stretched} & \eqref{eq:weightasslogstrechtedub} \& $\beta\nu>1$ \\    
				\hline 
			\end{tabular}
			\caption{\footnotesize The first column represents the form of the fitness function, as in Assumption~\ref{ass:f}; the second represents the form of the degree function, as in Assumption~\ref{ass:deg}. The third column lists the required assumptions on the vertex-weight distribution, as in Assumption~\ref{ass:weights}, together with the choices of the parameters that lead to a unique node of infinite degree.}
		\end{table}
	\end{lemma}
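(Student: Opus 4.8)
The plan is to fix a convenient $c<1$ (say $c=\tfrac12$) and prove that $\sum_{n\ge 1}\E{P_n(W)}<\infty$, where $P_n(w):=\prod_{i=0}^{\infty}\tfrac{f(i,w)}{f(i,w)+c\mu_n^{-1}}$. The first step is a reduction to a counting problem. Since $\log(1+x)\ge x/(1+x)$, one has $-\log P_n(w)\ge \sum_{i\ge 0}\tfrac{c\mu_n^{-1}}{f(i,w)+c\mu_n^{-1}}\ge \tfrac12 N_n(w)$, where $N_n(w):=\#\{i\ge 0:f(i,w)\le c\mu_n^{-1}\}$, so $P_n(w)\le e^{-N_n(w)/2}$; and on the complementary event $N_n(w)=0$ the same inequality, using $f(i,w)+c\mu_n^{-1}\le \tfrac32 f(i,w)$, gives the cruder bound $P_n(w)\le \exp(-\tfrac14 c\mu_n^{-1}\mu_0^w)$ with $\mu_0^w:=\sum_{i\ge 0}f(i,w)^{-1}$. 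It then suffices to control $\E{e^{-N_n(W)/2}}$ after splitting the expectation over the weight space according to the sizes of $g(W)$ and $h(W)$ relative to powers of $\mu_n^{-1}$.

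The second step collects the relevant asymptotics from regular variation. As $s$ has a positive exponent it is asymptotically increasing; writing $s^{-1}$ for an asymptotic inverse (regularly varying of exponent $1/p$ in the super-linear case, of exponent $1$ in the barely super-linear case), one gets $N_n(w)=s^{-1}((c\mu_n^{-1}-h(w))_+/g(w))+O(1)$ when $h(w)<c\mu_n^{-1}$, and $N_n(w)=0$ otherwise; and splitting the sum defining $\mu_0^w$ at the index where $g(w)s(i)\asymp h(w)$ yields $\mu_0^w\asymp h(w)^{1/p-1}g(w)^{-1/p}$ (up to slowly varying) once $h(w)$ dominates $g(w)s(0)$, while $\mu_0^w\asymp g(w)^{-1}$ once $g(w)s(0)$ dominates $h(w)$. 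By Lemma~\ref{lemma:mun}, $\mu_n^{-1}=n^{p-1+o(1)}$ in the super-linear case and $\mu_n^{-1}=\exp((\log n)^{\beta}(1+o(1)))$ in the log-stretched case. Finally, the vertex-weight tail assumptions together with Lemma~\ref{lemma:regvar} give that $g(W),h(W)$ are regularly varying (resp.\ have log-stretched-exponential tails) with the exponents inherited from those of $g,h$ and satisfy $h(W)=g(W)^{\gamma+o(1)}$; concretely, $\Prob{g(W)\ge y}\le y^{-(\alpha-1)+o(1)}$ and $\Prob{h(W)\ge y}\le y^{-(\alpha-1)/\gamma+o(1)}$ in the power-law case, and $\Prob{g(W)\ge y}\le e^{-\overline c(\log y)^\nu(1+o(1))}$, $\Prob{h(W)\ge y}\le e^{-\overline c\gamma^{-\nu}(\log y)^\nu(1+o(1))}$ in the log-stretched-exponential case.

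The third step assembles the estimate. On the \emph{bulk} and all transition/boundary regions --- i.e.\ whenever $f(0,W)\lesssim \mu_n^{-1}$, and also in the layer $f(0,W)\asymp\mu_n^{-1}$ --- the count of indices $i$ with $f(i,W)$ comparable to or below $c\mu_n^{-1}$ is still a positive power of $n$ (resp.\ $\exp$ of a positive power of $\log n$), so $e^{-N_n(W)/2}$ is summable in $n$ with room to spare, uniformly over the region. On the \emph{large}-$g$ region $g(W)\gtrsim\mu_n^{-1}$ with $h(W)\lesssim\mu_n^{-1}$ (nonempty only when $\gamma\le 1$ in the mixed case, empty in the additive case since $g\equiv 1$) we have $\mu_0^W\asymp g(W)^{-1}$ so $P_n(W)$ is only bounded by $1$; this costs $\Prob{g(W)\gtrsim\mu_n^{-1}}=n^{-(p-1)(\alpha-1)+o(1)}$. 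On the \emph{large}-$h$ region, $\mu_0^W\asymp h(W)^{1/p-1}g(W)^{-1/p}$ and $P_n(W)$ fails to be negligible only once $\mu_n^{-1}h(W)^{1/p-1}g(W)^{-1/p}\lesssim 1$: in the additive case this reads $h(W)\gtrsim \mu_n^{-p/(p-1)}$, costing $\Prob{h(W)\gtrsim\mu_n^{-p/(p-1)}}=n^{-p(\alpha-1)+o(1)}$; in the mixed case, substituting $h(W)=g(W)^{\gamma+o(1)}$, the exponents combine to the constraint $g(W)\gtrsim(\mu_n^{-1})^{1/(\gamma-(\gamma-1)/p)}$, costing $n^{-(p-1)(\alpha-1)/(\gamma-(\gamma-1)/p)+o(1)}$. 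Summing these contributions, the condition listed in the table makes every exponent of $n$ strictly exceed $1$ in the super-linear/power-law rows, and makes $\beta\nu>1$ in the log-stretched row (where all the tail bounds decay like $e^{-\mathrm{const}\cdot(\log n)^{\beta\nu}}$, which is summable precisely when $\beta\nu>1$); hence $\sum_n\E{P_n(W)}<\infty$ and Item~\ref{item:star-explosive-rif} of Theorem~\ref{thm:star-path-rif} applies in each case.

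The main obstacle is the mixed-case \emph{large}-$h$ (coupled) region: one must track the joint scaling $h(W)=g(W)^{\gamma+o(1)}$ and decide which of the two competing constraints --- merely entering the region, $g(W)\gtrsim(\mu_n^{-1})^{1/\gamma}$, versus $P_n(W)$ failing to be small, $g(W)\gtrsim(\mu_n^{-1})^{1/(\gamma-(\gamma-1)/p)}$ --- is binding; the first wins for $\gamma\le 1$ (subsumed into the large-$g$ region, giving $(p-1)(\alpha-1)>1$) and the second for $\gamma>1$ (giving $(p-1)(\alpha-1)>\gamma-(\gamma-1)/p$), which is exactly how the exponent $(\gamma-\tfrac{\gamma-1}{p})\vee 1$ arises. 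The rest is routine regular-variation bookkeeping, and the argument tolerates the accumulated $o(1)$'s and polylogarithmic corrections because only strict inequalities between exponents are needed.
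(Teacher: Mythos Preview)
Your approach is correct in outline and genuinely different from the paper's. The paper fixes a single threshold $k_n$ on the weight and, after replacing $g,h$ by their monotone envelopes, bounds $\E{P_n(W)}\le P_n(k_n)+\Prob{W\ge k_n}$; the work then reduces to estimating the single deterministic sum $\sum_j (f(j,k_n)+c\mu_n^{-1})^{-1}$ for a well-chosen $k_n$. You instead bound $P_n(w)$ pointwise via the level-set count $N_n(w)$ together with the tail estimate $\exp(-\tfrac{c}{2}\mu_n^{-1}\mu_0^w)$ on the complementary region, and then partition the weight space according to the sizes of $g(W)$ and $h(W)$. Your route makes the emergence of the exponent $(\gamma-\tfrac{\gamma-1}{p})\vee 1$ more transparent (it is exactly the exponent at which $\mu_n^{-1}\mu_0^W$ crosses $O(1)$), at the cost of having to compute $\mu_0^w$ in several regimes; the paper's single-threshold method is shorter and avoids that computation entirely.

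Two small points to tighten. First, the factor $\tfrac32$ is not justified: on $\{N_n(w)=0\}$ you only know $c\mu_n^{-1}<f(i,w)$, hence $f(i,w)+c\mu_n^{-1}\le 2f(i,w)$, giving the constant $\tfrac12$ rather than $\tfrac14$ (harmless). Second, and more importantly, your two bounds as stated leave a gap on the thin layer where $N_n(W)$ is small but positive (e.g.\ $h(W)$ just below $c\mu_n^{-1}$ in the additive case): neither $e^{-N_n(W)/2}$ nor the $N_n(W)=0$ bound applies there, and merely bounding $P_n\le 1$ on that layer would cost $n^{-(p-1)(\alpha-1)+o(1)}$, which need not be summable when only $p(\alpha-1)>1$. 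Your phrase ``comparable to or below $c\mu_n^{-1}$'' suggests the right fix: work with $\widetilde N_n(w):=\#\{i:f(i,w)\le Kc\mu_n^{-1}\}$ for a fixed $K>1$ (each such $i$ still contributes a uniform constant to $-\log P_n(w)$), or equivalently use the combined inequality $-\log P_n(w)\ge \tfrac12 N_n(w)+\tfrac{c}{2}\mu_n^{-1}\sum_{i\ge N_n(w)}f(i,w)^{-1}$, which is valid everywhere and immediately dispatches the transition layer. With that adjustment your sketch goes through; the log-stretched row then also follows once you note that, since $p=1$ there, the only nontrivial tail region is $g(W)\gtrsim\mu_n^{-1}$ (the $\mu_0^w$ formula $h^{1/p-1}g^{-1/p}$ you quote is specific to $p>1$ and should not be invoked in that case).
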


	\noindent We split the proof of Lemma~\ref{lemma:checklaplace} based on the different choices for the fitness function $f$ and degree function $s$, as in Assumptions~\ref{ass:f} and~\ref{ass:deg}, respectively. As a general approach, we bound, for some sequence $(k_n)_{n\in\N} \in [0, \infty)^{\mathbb{N}}$, 
	\be 
	\E{\prod_{j=0}^\infty\frac{f(j,W)}{f(j,W)+c\mu_n^{-1}}} \leq \prod_{j=0}^\infty \frac{\sup_{w\leq k_n}f(j,w)}{\sup_{w\leq k_n}f(j,w)+c\mu_n^{-1}}+\P{W\geq k_n}.
	\ee 
	We now use that, by Assumption~\ref{ass:f}, for $x\geq 0$ and $j\in\N$,
	\be \label{eq:fsupub}
	\sup_{w\leq x} f(j-1,w)\leq\big(\sup_{w\leq x} g(w)\big)s(j-1)+\sup_{w\leq x}h(w)=: \overline g(x) s(j-1)+\overline  h(x),
	\ee 
	where the suprema are well-defined as we assume $g$ and $h$ to be continuous. Since $g$ and $h$ are regularly varying with exponents $1$ and $\gamma$ in the~\hyperlink{mixed}{mixed} case and $g\equiv 1$ and $h$ is regularly varying with exponent $1$ in the~\hyperlink{additive}{additive} case, it follows from~\cite[Theorem $1.5.3$]{BinGolTeu89} that $\overline g(x)=(1+o(1))g(x)$ and $\overline h=(1+o(1))h(x)$; in particular, they are regularly varying with the same exponent (and $\overline g=g\equiv 1$ in the additive case). As such, using the weight functions $\overline g$ and $\overline h$ falls in the same family of weight functions as $g$ and $h$. The advantage of using $\overline g $ and $\overline h$ is that these are monotone increasing. However, by this argument we can thus use $g$ and $h$ instead and assume, without loss of generality, that they are monotone increasing. This yields 
	\be \label{eq:splitbound}
	\E{\prod_{j=0}^\infty\frac{f(j,W)}{f(j,W)+c\mu_n^{-1}}} \leq \exp\bigg(-c\mu_n^{-1}\sum_{j=0}^\infty \frac{1}{f(j,k_n)+c\mu_n^{-1}}\bigg)+\P{W\geq k_n}.
	\ee
	Using Assumption~\ref{ass:weights}, one can choose $k_n$ to grow with $n$ sufficiently fast so that the second term on the right-hand side is summable in $n$. It then remains to show that the exponential term is sufficiently small for this choice of $k_n$ as well, to obtain that the left hand side is summable in $n$. 
	
	\subsubsection{Conditions for an infinite path, Assumption~\ref{ass:path}.} 
	
	To prove the appearance of an infinite path, we use Item~\ref{item:path-explosive-rif} of Theorem~\ref{thm:star-path-rif}. In particular we verify Condition~\eqref{eq:div-condition} with $\nu_n^w:=d\mu_n^w$ for some constant $d\in(0,1)$ and $\mu_n^w$ as in~\eqref{eq:mu-def}. Notably, the approach we use to do so works for \emph{any} inter-birth distribution satisfying the variance condition of Remark~\ref{rem:more-general}; it is not tailored to the exponential distribution.
	
	We thus have the following lemma.
	
	\begin{lemma}\label{lemma:pathcond}
		The condition in~\eqref{eq:div-condition} satisfied when the following conditions are met, based on the assumptions for the fitness type, degree function $s$, and vertex-weight distribution: 
		\begin{table}[H]
			
			\centering
			\footnotesize
			\captionsetup{width=0.89\textwidth}
			
			\begin{tabular}{|c|c|l|}
				\hline
				\textbf{Weight} & \textbf{Degree} & \textbf{Path} \\  
				\hline
				\hyperlink{mixed}{Mixed} & \hyperlink{superlinear}{Super-linear} & \eqref{eq:weightasspowerlawlb} \& $(p-1)(\alpha-1)<\big(\gamma-\tfrac{\gamma-1}{p}\big)\vee 1$ \\ 
				\hline 
				\hyperlink{additive}{Additive} & \hyperlink{superlinear}{Super-linear}  & \eqref{eq:weightasspowerlawlb} \& $p(\alpha-1)<1$ \\
				\hline
				\hyperlink{mixed}{Mixed} & \hyperlink{log-stretched}{Log-stretched} & \eqref{eq:weightasslogstrechtedlb} \& $\beta \nu < 1$ \\
				\hline 
			\end{tabular}
			\caption{\footnotesize The first column represents the form of the fitness function, as in Assumption~\ref{ass:f}; the second represents the form of the degree function, as in Assumption~\ref{ass:deg}. The third column lists the required assumptions on the vertex-weight distribution, as in Assumption~\ref{ass:weights}, together with the choices of the parameters that lead to a unique infinite path.}
		\end{table}
	\end{lemma}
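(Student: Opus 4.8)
The plan is to verify the single remaining ingredient of Assumption~\ref{ass:path}, namely Condition~\eqref{eq:div-condition} with the choice $\nu^{w}_{n} := d\mu^{w}_{n}$ for a fixed $d \in (0,1)$; the companion bound~\eqref{eq:smallest-expl-prob} is insensitive to the particular example and was already established by the Paley--Zygmund estimate~\eqref{eq:tail-lower-bound-prob} (using the variance bound~\eqref{eq:exp-variance-bound}) in the proof of Theorem~\ref{thm:star-path-rif}, and that argument applies verbatim here. Conditioning on the weight of the root and applying Lemma~\ref{lem:lower-bound-infinite-events} with $b_{n} = d\mu^{w}_{n}$, writing $c := (1+\eps)/d > 1$, one gets
\[
\Prob{\mathcal{P} < d\mu^{w}_{n}} \geq \E{\prod_{i=0}^{\infty} \frac{f(i,W)}{f(i,W) + c(\mu^{w}_{n})^{-1}\log n}} - \frac{1}{n^{1+\eps}},
\]
so it suffices to produce a threshold sequence $(k_{n})_{n\in\mathbb{N}}$ along which the expectation is bounded below by a constant multiple of $\Prob{W \geq k_{n}}$, with $\sum_{n}\Prob{W \geq k_{n}} = \infty$.

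First I would restrict the expectation to $\{W \geq k_{n}\}$. As noted around~\eqref{eq:fsupub}--\eqref{eq:splitbound}, the weight functions $g,h$ in~\eqref{eq:f}, hence $f(i,\cdot)$, may be taken non-decreasing without loss of generality, so that on $\{W \geq k_{n}\}$ one has $f(i,W)\geq f(i,k_{n})$ and therefore
\[
\E{\prod_{i=0}^{\infty} \frac{f(i,W)}{f(i,W) + \lambda_{n}}} \geq \Prob{W \geq k_{n}}\prod_{i=0}^{\infty}\frac{f(i,k_{n})}{f(i,k_{n}) + \lambda_{n}}, \qquad \lambda_{n} := c(\mu^{w}_{n})^{-1}\log n.
\]
The elementary inequality $-\log(1-x)\leq x + \tfrac{x^{2}}{2(1-x)}$ for $x\in(0,1)$, already used to obtain~\eqref{eq:prodfromn}, gives $\prod_{i\geq 0}\tfrac{f(i,k_{n})}{f(i,k_{n})+\lambda_{n}} \geq \e^{-C\lambda_{n}\mu^{k_{n}}_{0}}$ for an absolute constant $C$, valid once $\lambda_{n}\leq f(0,k_{n})$, where $\mu^{k_{n}}_{0} = \sum_{i\geq 0}f(i,k_{n})^{-1}$ as in~\eqref{eq:mu-def}. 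So I need $k_{n}$ chosen so that simultaneously (i) $\lambda_{n}\leq f(0,k_{n})$ for all large $n$, (ii) $\lambda_{n}\mu^{k_{n}}_{0}\to 0$, so that the product factor tends to $1$, and (iii) by~\eqref{eq:weightasspowerlawlb} (resp.~\eqref{eq:weightasslogstrechtedlb}) the tail $\Prob{W\geq k_{n}}$ decays slowly enough that $\sum_{n}\Prob{W \geq k_{n}}=\infty$.

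The bulk of the work is the asymptotic analysis feeding into (i)--(iii). From Lemma~\ref{lemma:mun} one has $\lambda_{n}\asymp(\mu^{w}_{n})^{-1}\log n$ with $(\mu^{w}_{n})^{-1}\asymp s(n)/n$ in the super-linear cases and $(\mu^{w}_{n})^{-1}\asymp L(n)^{-1}$ in the log-stretched case, up to slowly-varying factors controlled via Lemma~\ref{lemma:regvar}; and $\mu^{k_{n}}_{0}=\sum_{i\geq 0}(g(k_{n})s(i)+h(k_{n}))^{-1}$ is estimated by splitting the sum at the crossover index $i_{0}$ where $g(k_{n})s(i_{0})\asymp h(k_{n})$ and applying Karamata's theorem to each piece. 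This yields $\mu^{k_{n}}_{0}\asymp k_{n}^{-z_{0}}$ (up to slowly-varying factors), where $z_{0}=\big(\gamma-\tfrac{\gamma-1}{p}\big)\vee 1$ in the mixed super-linear case, $z_{0}=(p-1)/p$ in the additive super-linear case, and $z_{0}=1$ in the log-stretched case. To balance (ii) I would then take $k_{n}:=n^{(p-1)/z_{0}+\delta}$ for a small $\delta>0$ in the super-linear cases and $k_{n}:=\exp\big((\log n)^{\beta'}\big)$ with $\beta<\beta'<1$ in the log-stretched case; a short computation shows $\lambda_{n}\mu^{k_{n}}_{0}\to 0$, and that (i) holds for large $n$ since $z_{0}\leq\max\{1,\gamma\}$ forces $f(0,k_{n})\gtrsim k_{n}^{\max\{1,\gamma\}}\gg\lambda_{n}$. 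Meanwhile $\Prob{W\geq k_{n}}$ is of order $k_{n}^{-(\alpha-1)}=n^{-(\alpha-1)((p-1)/z_{0}+\delta)}$ (resp.\ at least $\e^{-\underline{c}(\log n)^{\beta'\nu}}$), up to slowly-varying factors, and its sum over $n$ diverges precisely when $(p-1)(\alpha-1)/z_{0}<1$ (resp.\ $\beta'\nu<1$); letting $\delta\downarrow 0$ and $\beta'\downarrow\beta$, this is exactly $(p-1)(\alpha-1)<\big(\gamma-\tfrac{\gamma-1}{p}\big)\vee 1$, $p(\alpha-1)<1$, and $\beta\nu<1$, i.e.\ the three conditions in the statement. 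Plugging back, the $n$-th term of the bound from Lemma~\ref{lem:lower-bound-infinite-events} is $\geq\tfrac12\Prob{W\geq k_{n}}-n^{-(1+\eps)}\geq\tfrac14\Prob{W\geq k_{n}}$ for all large $n$, so~\eqref{eq:div-condition} holds, and Item~\ref{item:path-explosive-rif} of Theorem~\ref{thm:star-path-rif} then yields the unique infinite path.

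The main obstacle is precisely this asymptotic bookkeeping: one must check that the band of admissible thresholds $k_{n}$ — large enough that $\lambda_{n}\mu^{k_{n}}_{0}\to 0$, yet small enough that $\sum_{n}\Prob{W\geq k_{n}}=\infty$ — is non-empty, and that it is non-empty exactly under the stated parameter inequalities. The crossover between the regime where $h(k_{n})$ dominates and the regime where $g(k_{n})s(\cdot)$ dominates (governed by the sign of $\gamma-1$) is what produces the exponent $\big(\gamma-\tfrac{\gamma-1}{p}\big)\vee 1$ in the mixed case, and the small margins $\delta$ and $\beta'$ are introduced solely to dominate the $n^{o(1)}$ slowly-varying corrections that would otherwise obstruct (ii). As noted in Remark~\ref{rem:more-general}, the whole argument uses only the variance bound $\Var\big(\sum_{i>n}X_{w}(i)\big)\lesssim(\mu^{w}_{n})^{2}$, with the expectation above replaced by the Laplace transform $\E{\mathcal{L}_{c(\mu^{w}_{n})^{-1}\log n}(\mathcal{P}(\varnothing);W)}$, so the same proof covers the non-exponential inter-birth distributions of Remark~\ref{rem:otherdistr}.
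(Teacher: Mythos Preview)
Your approach is correct but differs from the paper's in one essential step. Both you and the paper take $\nu^{w}_{n}=d\mu^{w}_{n}$, handle~\eqref{eq:smallest-expl-prob} via Paley--Zygmund, and restrict to $\{W\ge k_n\}$ so as to replace $f(i,W)$ by $f(i,k_n)$. The divergence is in how~\eqref{eq:div-condition} is then verified. The paper bypasses the Laplace transform entirely: on $\{W\ge k_n\}$ it stochastically dominates $\mathcal{P}$ by $\widetilde{\mathcal{P}}=\sum_j\widetilde X_{k_n}(j)$ and applies \emph{Markov's inequality} directly,
\[
\Prob{\widetilde{\mathcal P}<d\mu^{w}_{n}}\ \ge\ 1-\frac{1}{d\mu^{w}_{n}}\sum_{j\ge 0}\frac{1}{f(j,k_n)}\ =\ 1-\frac{\mu^{k_n}_{0}}{d\mu^{w}_{n}},
\]
so one only needs the limsup condition $\limsup_{n}\mu^{k_n}_{0}/\mu^{w}_{n}<1$ (Equation~\eqref{eq:limsupbound}). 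You instead go through Lemma~\ref{lem:lower-bound-infinite-events} and~\eqref{eq:path-explosive-rif}, which forces you to control $\lambda_n\mu^{k_n}_{0}$ with $\lambda_n\asymp(\mu^{w}_{n})^{-1}\log n$, i.e.\ the stronger requirement $\mu^{k_n}_{0}=o(\mu^{w}_{n}/\log n)$. This is harmless because the extra $\log n$ is absorbed by your margin $\delta$ (resp.\ $\beta'-\beta$), but it makes the bookkeeping heavier, and your condition~(i) ($\lambda_n\le f(0,k_n)$) is in fact unnecessary: the simple bound $\log(1+u)\le u$ already gives $\prod_i f(i,k_n)/(f(i,k_n)+\lambda_n)\ge \e^{-\lambda_n\mu^{k_n}_{0}}$ with no side condition. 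A further cosmetic difference is the parametrization of $k_n$: the paper fixes $k_n=n^{(1-\eps)/(\alpha-1)}$ (resp.\ $\exp((\log n)^{(1-\eps)/\nu})$), pinning the tail exponent of $\Prob{W\ge k_n}$ just below $1$ and then checking $\mu^{k_n}_{0}/\mu^{w}_{n}\to 0$, whereas you pin $k_n$ so that $\lambda_n\mu^{k_n}_{0}\to 0$ and then check the tail is non-summable; these are dual choices and both land on the same exponent $z_0=\big(\gamma-\tfrac{\gamma-1}{p}\big)\vee 1$ (and its additive and log-stretched analogues). The paper's Markov route is shorter and avoids Lemma~\ref{lem:lower-bound-infinite-events} altogether; your Laplace route is precisely the sufficient condition~\eqref{eq:path-explosive-rif} advertised in Theorem~\ref{thm:star-path-rif}, which the paper records but does not use in the proof of this lemma.
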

	
	\noindent We observe that combining Lemmas~\ref{lemma:checklaplace} and~\ref{lemma:pathcond} proves Theorem~\ref{thrm:cmjexamples}.\\[0.05cm]
	
	As a general approach to proving Lemma~\ref{lemma:pathcond}, we start with a general lower bound for each of the terms in the sum in~\eqref{eq:div-condition}, and then show that this lower bound is not summable for each of the specific cases dealt with in Lemma~\ref{lemma:pathcond}. We first introduce the event $\{W\geq k_n\}$ for some sequence $(k_n)_{n\in\N}$ such that the tail probability $\P{W\geq k_n}$ is not summable. By a similar argument as in~\eqref{eq:fsupub}, but applied to $\inf_{w\geq x}f(j-1,w)$ (which is monotone increasing in $x$), we may assume, without loss of generality, that $f$ is monotone increasing in its second argument. 
	
	Then, on the event $\{W\geq k_n\}$, we stochastically dominate each inter-birth time $X_{W}(j)$ by $\wt X_{k_n}(j)$, which is distributed as an exponential random variable with rate $f(j-1,k_n)$, and is independent of both $W$ and $X_{W}(j)$, for each $j\in\N$. It follows that $\cP$ stochastically dominates $\wt \cP=\sum_{j=1}^\infty \wt X_{k_n}(j)$, which again is independent of $W$ and $\cP$. We thus obtain the lower bound
	\be 
	\P{\cP < \nu_n^w}\geq \P{\{\cP< \nu_n^w\}\cap\{W\geq k_n\}}\geq \P{\wt\cP< \nu_n^w}\P{W\geq k_n}.
	\ee 
	Since we choose $k_n$ such that the second probability on the right-hand side is not summable, it suffices to show that the first probability is uniformly bounded from below in $n$. By Markov's inequality and the choice of $\nu_n^w$, we obtain the lower bound
	\be\ba\label{eq:tildepnlb}
	\P{\wt \cP< \nu_n^w}&\geq \Big(1-\frac{1}{d\mu_n^w}\E{\wt\cP}\Big)\P{W\geq k_n}\\
	&=\bigg(1-\frac{1}{d\mu_n^w}\sum_{j=0}^\infty \frac{1}{f(j,w)}\bigg)\P{W\geq k_n}. 
	\ea \ee 
	With $k_n$ such that the probability on the right-hand side is not summable and since $d\in(0,1)$ is arbitrary, it thus suffices to prove that 
	\be \label{eq:limsupbound}
	\limsup_{n\to\infty}\frac{1}{\mu_n^w}\sum_{j=0}^\infty \frac{1}{f(j,k_n)}<1. 
	\ee 
	If this holds, it follows that the condition in~\eqref{eq:div-condition} is satisfied. We thus prove~\eqref{eq:limsupbound} for the cases in Lemma~\ref{lemma:pathcond}.
	
	\subsubsection{Proofs of Lemmas~\ref{lemma:checklaplace} and~\ref{lemma:pathcond}}
	
	We now proceed to state the proofs of Lemmas~\ref{lemma:checklaplace} and~\ref{lemma:pathcond}. We do this case by case, and we combine the proofs of the related statements in both lemmas. We use the approaches outlined after the statements of both lemmas.  
	
	\begin{proof}[Proof of Lemmas~\ref{lemma:checklaplace} and~\ref{lemma:pathcond}, $s$ super-linear, mixed weights]
		We first prove the claim in Lemma~\ref{lemma:checklaplace}. We distinguish between $\gamma\leq 1$ and $\gamma>1$, and we start with the former case. Observe that, when $\gamma\leq 1$, it follows that $\max\{\gamma-(\gamma-1)/p,1\}=1$. Hence, we can take $\eps>0$ small enough such that $(1-\eps)(p-1)(\alpha-1)>1 = \max\{\gamma-(\gamma-1)/p,1\}$. We set $\beta:=(1-\eps)(p-1)$ and $k_n:=n^\beta$. We use~\eqref{eq:weightasspowerlawub} to deduce that $\P{W\geq k_n}\leq \overline \ell(k_n)n^{-(\alpha-1)\beta}$, which, as $\beta(\alpha-1) > 1$, is summable. For the first term on the right-hand side of~\eqref{eq:splitbound}, we write
		\be\label{eq:prodbound}
		\exp\Bigg(-\sum_{j=0}^\infty\frac{c\mu_n^{-1}}{f(j,k_n)+c\mu_n^{-1}}\Bigg)=\exp\Bigg(-\frac{c \mu_n^{-1}}{g(n^\beta)}\sum_{j=0}^\infty\frac{1}{s(j)+h(n^\beta)/g(n^\beta)+c\mu_n^{-1}/ g(n^\beta)}\Bigg).
		\ee 
		As $\gamma\leq 1$, it follows that $\beta \gamma < p-1$. Recalling that, by Lemma~\ref{lemma:mun}, $\mu_{n}^{-1}$ is regularly varying with exponent $p-1$, it follows that
		\be \label{eq:a-n-def}
		a_n:=\frac{h(n^\beta)+c\mu_n^{-1}}{g(n^\beta)}
		\ee 
		is regularly varying with exponent $\eps(p-1)$. 
		We now write $s(x)=\ell(x)x^p$ for some slowly-varying function $\ell$. Moreover, since $p>1$, we have $j^p+a_n\leq (j+a_n^{1/p})^p$. As a result, for any $\eta > 0$ there exists $J=J(\eta)\in \N$ such that for all $j\geq J$, 
		\be \label{eq:potterbound}
		s(j)+a_n=\ell(j)j^p+a_n\leq j^\eta (j^p+a_n)\leq (j+a_n^{1/p})^{p+\eta}.
		\ee 
		We can thus bound the sum in~\eqref{eq:prodbound} from below by 
		\be \label{eq:potinsum}
		\sum_{j=J}^\infty\frac{1}{s(j)+a_n}\geq \sum_{j=J}^\infty(j+a_n^{1/p})^{-(p+\eta)} \geq \!\!\!\sum_{j=J+\lceil a_n^{1/p}\rceil}^\infty\!\!\! j^{-(p+\eta)}=\Theta\big(a_n^{-(p+\eta-1)/p}\big), 
		\ee 
		where the final step uses an integral test. Using this in~\eqref{eq:prodbound}, we thus obtain, for some constants $C,C'>0$, the upper bound
		\be 
		\exp\bigg(-C\frac{\mu_n^{-1}a_n^{-(p+\eta-1)/p}}{g(n^\beta)}\bigg)=\exp\big(-C'a_n^{(1-\eta)/p}\big).
		\ee 
		As the term in the exponential varies regularly with exponent $\eps(p-1)(1-\eta)/p>0$, the exponential term is summable. We thus conclude that~\eqref{eq:star-explosive-rif} is satisfied, which concludes the proof for $\gamma\leq 1$.
		
		We then consider the case $\gamma>1$. We first use our assumption, and fix $\eps>0$ small such that 
		\be \label{eq:bineq}
		\frac{(1-\eps)(p-1)(\alpha-1)}{\gamma-(\gamma-1)/p}>1,
		\ee 
		which is possible since $\max\{\gamma-(\gamma-1)/p,1\}=\gamma-(\gamma-1)/p$ when $\gamma, p>1$. 
		We then set $\beta:=(1-\eps)(p-1)/(\gamma-(\gamma-1)/p)$. By the same argument as before, since $\beta(\alpha-1) > 1$, the second term on the right-hand side of~\eqref{eq:splitbound} is summable. To bound the first term, we again write the upper bound 
		\be \label{eq:expplusprob}
		\exp\bigg(-c\mu_n^{-1}\sum_{j=0}^\infty\frac{1}{f(j,n^{\beta})+c\mu_n^{-1}}\bigg)=\exp\bigg(-\frac{c\mu_n^{-1}}{g(n^\beta)}\sum_{j=0}^\infty\frac{1}{s(j)+(h(n^{\beta})+c\mu_n^{-1})/g(n^{\beta})}\bigg). 
		\ee 
		Note that the exponent associated with the regularly-varying function $h(n^{\beta})$ is $\beta \gamma = (1- \eps)(p-1) \gamma/(\gamma - (\gamma - 1)/p) > p-1$ for $\eps$ sufficiently small (since we assume that $\gamma>1$). Thus, making $\eps$ smaller if necessary (which means that~\eqref{eq:bineq} still holds), recalling that, by Lemma~\ref{lemma:mun}, $\mu_{n}$ is regularly varying with exponent $-(p-1)$ in the super-linear case, it follows that, if $a_n:=(h(n^\beta)+c\mu_n^{-1})/g(n^\beta)$, $a_{n}^{1/p}$ is regularly varying with exponent
		\[
		\beta(\gamma -1)/p = \frac{(1-\eps)(p-1)(\gamma-1)}{\gamma(p-1) + 1}.
		\] 
		As a result, using the same approach as in~\eqref{eq:potinsum}, we can bound the sum appearing in the exponent in the right-hand side of~\eqref{eq:expplusprob} from below by
		\be 
		\sum_{j=J}^\infty\frac{1}{s(j)+a_n}\geq  \sum_{j=J}^\infty(j+a_n^{1/p})^{-(p+\eta)} \geq \sum_{j=J+\lceil a_{n}^{1/p} \rceil}^\infty\!\!\! j^{-(p+\eta)} = \Theta(a_n^{-(p+\eta-1)/p}).
		\ee  
		Combining this lower bound with~\eqref{eq:expplusprob}, for some constant $C''>0$, we obtain the upper bound
		\be 
		\exp\Big(-C''\frac{\mu_n^{-1}a_n^{-(p+\eta-1)}}{g(n^{\beta})}\Big).
		\ee 
		Since we can choose $\eta$ arbitrarily small, balancing the exponent of $\mu_{n}^{-1}$ (which is $p-1$) with the exponents of $a_{n}^{-(p-1)/p}$ and $g(n^{\beta})$ it follows that the fraction is regularly varying in $n$ with an exponent that is positive if
		\be 
		p-1> (1- \eps) \left(\frac{(p-1)^2(\gamma-1)}{\gamma(p-1) + 1} + \frac{p(p-1)}{\gamma(p-1) + 1} \right) = (1-\eps)(p-1) \frac{(p-1)(\gamma -1) + p}{\gamma(p-1) + 1} = (1-\eps)(p-1).
		\ee 
		As this inequality is clearly always satisfied for any $\eps > 0$ we obtain that the left-hand side of~\eqref{eq:splitbound} is summable (which implies~\eqref{eq:star-explosive-rif}), as desired.
		
		We then prove the claim in Lemma~\ref{lemma:pathcond}. Again, we distinguish between the cases $\gamma\leq 1$ and $\gamma>1$. Let $\eps>0$ be sufficiently small such that 
		\begin{equation}\label{eq:par}
			\begin{cases}
				(p-1)(\alpha-1)<1-\eps & \text{if $\gamma \leq 1$, }\\
				(p-1)(\alpha-1)/(\gamma-(\gamma-1)/p)<1-\eps & \text{if $\gamma > 1$. }
			\end{cases}
		\end{equation}
		In either case, we set $k_{n} = n^{(1-\eps)/(\alpha-1)}$. Then $\Prob{W \geq k_{n}} \geq \underline{\ell}(k_{n}) n^{-(1- \eps)}$ by~\eqref{eq:weightasspowerlawlb}, which is not summable in $n$. Moreover, we have $\mu_n(w)=\ell(n) n^{-(p-1)}$ for $\ell$ some slowly-varying function (which may depend on $w$ up to constants only) by Lemma~\ref{lemma:mun}. Now, when $\gamma \leq 1$, we have the upper bound
		\be 
		\sum_{j=0}^\infty \frac{1}{f(j,k_n)}=\sum_{j=0}^\infty \frac{1}{g(k_n)s(j)+h(k_n)}\leq \frac{1}{g(k_n)}\sum_{j=0}^\infty \frac{1}{s(j)}.
		\ee 
		Since $s$ varies regularly with exponent $p>1$, it follows that the sum is finite. Since $g$ is regularly varying with exponent $1$ we can write $g(x)=x^{1+o(1)}$ sp that we obtain the upper bound $n^{-(1-\eps+o(1))/(\alpha-1)}$. As $(1-\eps)/(\alpha-1) > p-1$, it follows that~\eqref{eq:limsupbound} is satisfied.
		
		Otherwise, for $\gamma>1$, by similar computations as in~\eqref{eq:potterbound} and~\eqref{eq:potinsum} and using that $j^p+x\geq \eta (j+x^{1/p})^p$ for $j,x$, sufficiently large and $\eta$ small, we obtain, for $\eta>0$ sufficiently small,
		\begin{linenomath}
			\begin{align*}
				\sum_{j=0}^\infty \frac{1}{f(j,k_n)}=\frac{1}{g(k_n)}\sum_{j=0}^\infty \frac{1}{s(j)+h(k_n)/g(k_n)} & \leq \frac{1}{\eta g(k_n)}\sum_{j=\lfloor (h(k_n)/g(k_n))^{1/p}\rfloor}^\infty\!\!\!\!\!\!\!\!\!\!\!\!\!\!\! j^{-(p-\eta)} = \cO\Big(\frac{g(k_n)^{-(1+\eta)/p}}{h(k_n)^{(p-1-\eta)/p}}\Big).
			\end{align*}
		\end{linenomath}
		By Lemma~\ref{lemma:regvar}, the term $g(k_n)^{-(1-\eta)/p}/ h(k_n)^{(p-1-\eta)/p}$ is regularly varying, with exponent
		\be
		-\frac{1-\eps}{\alpha-1}\Big(\frac{1+\eta}{p}+\gamma\frac{p-1-\eta}{p}\Big)=-\frac{1-\eps}{\alpha-1}\Big(\gamma-\frac{\gamma-1}{p}-\frac{(\gamma-1)\eta}{p}\Big)< -(p-1),
		\ee
		for $\eta>0$ sufficiently small, where the last step uses the second inequality in~\eqref{eq:par}. Consequentially, it follows that $g(k_n)^{-(1-\eta)/p}/ h(k_n)^{(p-1-\eta)/p} = o(\mu^{w}_{n})$ and we obtain~\eqref{eq:limsupbound}.
	\end{proof}
	
	\begin{proof}[Proof of Lemmas~\ref{lemma:checklaplace} and~\ref{lemma:pathcond}, $s$ super-linear, additive weights]
		We first prove the claim in Lemma~\ref{lemma:checklaplace}. We fix $\eps\in(0,1/p)$ small such that $p(\alpha-1)(1-\eps)>1$. Then, with $k_n:=n^{(1-\eps)p}$ and using~\eqref{eq:weightasspowerlawub}, $\Prob{W \geq k_{n}} \leq \overline{\ell}(k_{n}) n^{-(\alpha-1)p(1-\eps)}$, and since $(\alpha-1)p(1-\eps) > 1$ we can deduce that the second term on the right-hand side of~\eqref{eq:splitbound} is summable. For the first term, we bound $f(j,k_n)$ from above by $\sup_{j\leq n^{1-\eps}}f(j,k_n)$ for all $j\leq n^{1-\eps}$. With a similar argument as in~\eqref{eq:fsupub}, we may assume that $f$ is monotone increasing in its first argument, since $s$ is regularly-varying with exponent $p>1$. This thus yields the upper bound
		\be
		\exp\bigg(-\sum_{j=0}^\infty\frac{c\mu_n^{-1}}{f(j,k_n)+c\mu_n^{-1}}\bigg)\leq \exp\Big(-\frac{c\mu_n^{-1}n^{1-\eps}}{f(n^{1-\eps},k_n)+c \mu_n^{-1}}\Big).
		\ee 
		As $(1-\eps)p>p-1$  using Lemma~\ref{lemma:mun}, we see that 
		$f(n^{1-\eps},k_n)+c\mu_n^{-1}=s(n^{1-\eps})+h(n^{(1-\eps)p})+ c\mu_n^{-1}$ is regularly varying with exponent $(1-\eps)p$. It follows that the fraction is a regularly-varying function in $n$, with exponent $(p-1)\eps>0$. So, for some slowly-varying function $L$, we can write the exponential term as $\exp(-L(n)n^{(p-1)\eps})$. We thus arrive at the desired conclusion that the exponential term is summable in $n$. This shows that the left-hand side of~\eqref{eq:splitbound} is summable and hence that~\eqref{eq:star-explosive-rif} is satisfied.
		
		We then prove the claim in Lemma~\ref{lemma:pathcond}. We fix $\eps>0$ sufficiently small such that $p(\alpha-1)<1-\eps$ and let $k_n:=n^{(1-\eps)/(\alpha-1)}$. We then have $\Prob{W \geq k_{n}} \geq \underline{\ell}(k_{n}) n^{-(1- \eps)}$ by~\eqref{eq:weightasspowerlawlb}, which is not summable in $n$. Then, for $w$ fixed, $\mu_n^w=\ell(n) n^{-(p-1)}$, for some slowly-varying function $\ell$ (which may depend on $w$ up to constants only), by Lemma~\ref{lemma:mun}. By calculations similar to those in~\eqref{eq:potterbound} and~\eqref{eq:potinsum}, we have for any $\eta>0$ and some large constant $C>0$, for all $n$ sufficiently large, 
		\be \label{eq:addcomp}
		\sum_{j=0}^\infty \frac{1}{f(j,k_n)}=\sum_{j=0}^\infty \frac{1}{s(j)+h(k_n)}\leq C\!\!\!\!\!\!\sum_{j=\lfloor h(k_n)^{1/p}\rfloor}^\infty \!\!\!\!\!\!j^{-(p-\eta)}=\cO(h(k_n)^{-(p-1-\eta)/p}).
		\ee 
		Since $h$ varies regularly with exponent $1$, the last term is $\cO(n^{-(p-1-\eta)/ (\alpha - 1)p}) = \cO(n^{-(p-1-\eta)/ (1-\eps)})$ since $p(\alpha - 1) < 1 -\eps$. As we can choose $\eta$ arbitrarily small, by the choice of $k_n$, it follows that~\eqref{eq:limsupbound} holds.
	\end{proof}
	
	\begin{proof}[Proof of Lemmas~\ref{lemma:checklaplace} and~\ref{lemma:pathcond}, $s$ barely super-linear~\hyperlink{log-stretched}{log-stretched case}, mixed weights] 
		We first prove the claim in Lemma~\ref{lemma:checklaplace}. We assume that $\beta\nu>1$ and recall that 
		\be 
		f(i,w)=g(w)(i+1)\e^{(\log(i+1))^\beta}+h(w), 
		\ee 
		where $g$ and $h$ are regularly-varying functions with exponents $1$ and $\gamma\geq 0$, respectively. Fix $\eps>0$ small so that $\beta\nu>1+\eps$. We apply~\eqref{eq:splitbound} with 
		\be \label{eq:knmun1}
		k_n:=\exp((\log n)^{(1+\eps)/\nu}), \quad \text{and} \quad \mu_n=\frac{1+o(1)}{g(0)\beta}(\log n)^{1-\beta}\exp(-(\log n)^\beta),
		\ee 
		where the latter follows from Lemma~\ref{lemma:mun}. Now, using~\eqref{eq:weightasslogstrechtedub} we obtain 
		\be  \label{eq:log-stretched-summ}
		\P{W\geq k_n}\leq \exp\big(-\overline c(\log k_n)^\nu\big)=\exp\big(-\overline c(\log n)^{1+\eps}\big) \leq n^{-(1+ \eps)}, 
		\ee 
		for $n$ sufficiently large, which implies that the second term on the right-hand side of~\eqref{eq:splitbound} is summable. For the first term, we define 
		\be \label{eq:inf1mix}
		I_n:=\exp\big((\log n)^\beta\big).
		\ee 
		Since $g$ is regularly varying with exponent one, we can write $g(x)=\ell(x)x=x^{1+o(1)}$ for some slowly-varying function $\ell$. As a result, using~\eqref{eq:knmun1},
		\be \label{eq:mu-n-over-g-k-n}
		\mu_n^{-1}/g(k_n) =(g(w)\beta+o(1))(\log n)^{\beta-1}\exp\big((\log n)^\beta - (\log n)^{(1+\eps)/\nu}(1+o(1))\big).
		\ee
		We thus obtain that for all $j\geq I_n$, when $n$ is large, 
		\be 
		(j+1)\exp((\log(j+1))^\beta)\geq I_n\exp((\log I_n)^\beta)=\exp((\log n)^\beta+(\log n)^{\beta^2})\geq \mu_n^{-1}/g(k_n).
		\ee  
		Moreover, since $\beta\nu>1+\eps$, it holds that $h(k_n)=o(\mu_n^{-1})$, irrespective of the regularly-varying exponent $\gamma\geq 0$ of $h$ (since we  can again write $h(x)=x^{\gamma+o(1)}$). As a result, we obtain the lower bound
		\be \ba 
		\sum_{j=0}^\infty \frac{1}{g(k_n)(j+1)\exp((\log( j+1))^\beta)+h(k_n)+\mu_n^{-1}}&\geq  \frac{1}{g(k_n)}\sum_{j=I_n}^\infty\frac{1}{3(j+1)\exp((\log(j+1))^\beta)}\\ 
		&=\frac{\mu_{I_n}(1+o(1))}{3g(k_n)/g(0)} \geq \frac{g(0)\mu_{I_n}}{4g(k_n)},
		\ea \ee 
		for $n$ sufficiently large, where we recall the definition of $\mu_n$ from~\eqref{eq:mu-def} (with $w^*=0$) and use (the proof of) Lemma~\ref{lemma:mun} in the final step. Substituting this bound into the sum in the exponent of the first term on the right-hand side of~\eqref{eq:splitbound}, we obtain the upper bound $\exp(-\frac14 cg(0)\mu_{I_n}\mu_n^{-1}/g(k_n))$. By the choice of $I_n$ in~\eqref{eq:inf1mix} and $\mu_n$ as in~\eqref{eq:knmun1},
		\be
		\mu_{I_n}=\frac{1+o(1)}{g(0)\beta }(\log I_n)^{1-\beta}\exp(-(\log I_n)^\beta)=\exp(-(\log n)^{\beta^2}(1-o(1))).
		\ee 
		By~\eqref{eq:mu-n-over-g-k-n}, we deduce that that the dominant term in $\exp(-\frac{c}{4C}\mu_{I_n}\mu_n^{-1}/g(k_n))$ is $\exp(- \exp(C' (\log{n})^{\beta}))$ for some $C' > 0$, since $\beta\nu>1+\eps$. By a similar argument as in~\eqref{eq:log-stretched-summ}, this upper bound is summable in $n$, as desired.
		
		We then prove the claim in Lemma~\ref{lemma:pathcond}. We fix $\eps>0$ sufficiently small such that $\beta\nu<1-\eps$ and set $k_n:=\exp((\log n)^{(1-\eps)/\nu})$. It follows from~\eqref{eq:weightasslogstrechtedlb} that, for $n$ sufficiently large
		\[
		\P{W\geq k_n} \geq \e^{-\underline{c} (\log{n})^{1- \eps}} \geq n^{-1},
		\] 
		which is not summable. We then have upper bound
		\begin{linenomath}
			\begin{align} \label{eq:similar-comp}
				\sum_{i=0}^\infty \frac{1}{f(i,k_n)}& =\frac{1}{g(k_n)}\sum_{i=0}^\infty \frac{1}{(i+1)\exp((\log (i+1))^\beta)+h(k_n)/g(k_n)} \\ & \leq \frac{1}{g(k_n)} \sum_{i=0}^{\infty} \frac{1}{(i+1)\exp((\log (i+1))^\beta)} \leq\frac{C_\beta}{g(k_n)}, 
			\end{align} 
		\end{linenomath}
		for some constant $C_\beta>0$, which follows since for any $\eps>0$ and $i$ sufficiently large, 
		\be 
		\frac{1}{(i+1)\exp((\log (i+1))^\beta)}\leq \frac{1}{(i+1)(\log (i+1))^{1+\eps}},
		\ee 
		so that the sum is indeed finite. Hence, since $g$ is regularly varying with exponent one and we can thus write $g(x)=x^{1+o(1)}$, we obtain 
		\be 
		\sum_{i=0}^\infty \frac{1}{f(i,k_n)}\leq C_\beta \exp(-(1+o(1))(\log n)^{(1-\eps)/\nu}).
		\ee 
		Since $\beta\nu<1-\eps$, the $-(\log n)^{(1-\eps)/\nu}$ term in the exponential function dominates the $(\log n)^\beta $ term in the exponential function in $(\mu^{w}_{n})^{-1}$; it thus follows that~\eqref{eq:limsupbound} is satisfied. 
	\end{proof}

	\subsection{Sub-tree counts in specific models of recursive trees with fitness}\label{sec:sub-treeproof}
	
	The aim of this subsection is to turn the conditions of Theorem~\ref{thm:sub-treecount} into more concrete conditions, which we then proceed to check for the different examples discussed in Section~\ref{sec:examples} to prove Theorem~\ref{thrm:sub-treecount}. We start by bounding the summands of~\eqref{eq:tree-summ-cond} from below and above. 
	
	\paragraph{Lower bound.} For a lower bound, we use  any one fixed ordering $O\in\cO(T)$, drop the indicators from the denominator, and introduce the indicator of the event $\{\sup_{i\leq k,j\leq k-1}f(i,W_j)<\mu_n^{-1}\}$. By bounding all terms $f(\outdeg{v_i,O_{|_j}},W_{v_i})$ (except for $f(\ell,W_{v_j}))$ in the denominator from above by $\mu_n^{-1}$, we obtain the lower bound  
	\be \ba \label{eq:first-lower-tree-finite-moment}
	\mathbb E\bigg[{}&\ind{\sup_{i\leq k,j\leq k-1}f(i,W_{v_j})<\mu_n^{-1}}\prod_{j=0}^{k-1}\prod_{\ell=0}^{\outdeg{v_j,T}-1}\frac{f(\ell,W_{v_j})}{\sum_{i=0}^j f(\outdeg{v_i,O_{|_j}},W_{v_i})+\mu_n^{-1}}\bigg]\\ 
	&\geq \mathbb E\bigg[\ind{\sup_{i\leq k,j\leq k-1}f(i,W_{v_j})<\mu_n^{-1}}\prod_{j=0}^{k-1}\prod_{\ell=0}^{\outdeg{v_j,T}-1}\frac{f(\ell,W_{v_j})}{f(\ell,W_{v_j})+k\mu_n^{-1}}\bigg],
	\ea \ee 
	where we use that the sum in the denominator has at most $k-1$ terms besides $f(\ell,W_{v_j})$. By the independence of the vertex-weights, setting 
	\be \label{eq:+-}
	f^+(k,W):=\sup_{i\leq k}f(i,W), \quad\text{and}\quad f^-(k,W):=\inf_{i\leq k}f(i,W),
	\ee 
	and using that the mapping $x\mapsto x+y$ is increasing in $x$, we obtain the lower bound
	\be \label{eq:Tproblb}
	\prod_{j=0}^{k-1}\mathbb E\bigg[\ind{f^+(k,W)<\mu_n^{-1}}\Big(\frac{f^-(k,W)}{f^-(k,W)+k\mu_n^{-1}}\Big)^{\outdeg{v_j,T}}\bigg],
	\ee 
	where we use that $\outdeg{v_j,T}\leq k$ for any vertex $v_j$ and any tree $T$ of size $k+1$.
	
	\paragraph{Upper bound.} For an upper bound we use a similar approach. First, irrespective of the ordering $O\in\cO(T)$, we can drop all terms from the denominator except $f(\ell,W_{v_j})$ and $\mu_n^{-1}$ and use independence of the vertex weights to obtain the upper bound 
	\be\ba \label{eq:first-upper-tree-finite-moment}
	\mathbb E\Bigg[{}&\prod_{j=0}^{k-1}\frac{\prod_{\ell=0}^{\outdeg{v_j,T}-1}f(\ell,W_{v_j})}{\sum_{i=0}^j f(\outdeg{v_i,O_{|_j}},W_{v_i})\ind{ \outdeg{v_i, O_{|_j}} < \outdeg{v_i, T}}+\mu_n^{-1}}\Bigg]\\ 
	&\leq \prod_{j=0}^{k-1} \E{\prod_{\ell=0}^{\outdeg{v_j,T}-1} \frac{f(\ell, W_{v_j})}{f(\ell, W_{v_j}) + \mu_{n}^{-1}}} \leq 
	\prod_{j=0}^{k-1}\E{\Big(\frac{f^+(k,W)}{f^+(k,W)+\mu_n^{-1}}\Big)^{\outdeg{v_j,T}}},
	\ea \ee
	where now $f^+(k,w):=\sup_{i\leq k}f(i,w)$ and we again use that the out-degrees are at most $k$. As the terms in the expected value are at most one, we condition on the size of $f^+(k,W)$ to obtain the upper bound
	\be 
	\prod_{j=0}^{k-1}\bigg(\E{\ind{f^+(k,W)<\mu_n^{-1}}\Big(\frac{f^+(k,W)}{f^+(k,W)+\mu_n^{-1}}\Big)^{\outdeg{v_j,T}}}+\P{f^+(k,W)\geq \mu_n^{-1}}\bigg).
	\ee 
	As a  result, since $|\cO(T)|< (k+1)!$ for any tree of size $k+1$, we can use the above in~\eqref{eq:tree-summ-cond} to obtain that, for some constant $D = D(k) >0$, the inner sum in~\eqref{eq:tree-summ-cond} is at most 
	\be \label{eq:Tprobub}
	D\prod_{j=0}^{k-1}\bigg(\E{\ind{f^+(k,W)<\mu_n^{-1}}\Big(\frac{f^+(k,W)}{f^+(k,W)+\mu_n^{-1}}\Big)^{\outdeg{v_j,T}}}+\P{f^+(k,W)\geq \mu_n^{-1}}\bigg).
	\ee 
	We now use the lower and upper bound to determine, based on certain conditions on the vertex-weights, whether the double sum in Theorem~\ref{thm:sub-treecount} is finite or infinite. In particular, we are now in a position to prove Corollary~\ref{cor:sumnufinmean}. 
	
	\subsubsection{Proof of Corollary~\ref{cor:sumnufinmean}} \label{sec:cor-sumnifinmean-proof}
	
	\begin{proof}[Proof of Corollary~\ref{cor:sumnufinmean}]
		We start by using the lower bound in~\eqref{eq:Tproblb}. For each expected value in the product, we have, since $f^-(k,W)\leq f^+(k,W)$, the lower bound
		\be 
		\E{\ind{f^+(i,W)<\mu_n^{-1}}\Big(\frac{f^-(k,W)}{(k+1)\mu_n^{-1}}\Big)^{\outdeg{v_j,T}}}\geq (k+1)^{-k}\mu_n^{\outdeg{v_j,T}}\E{\ind{f^+(i,W)<\mu_n^{-1}}f^-(k,W)^{\outdeg{v_j,T}}}.
		\ee 
		As $\mu_n^{-1}$ diverges with $n$ and by the moment condition, by monotone convergence we find that the expected value on the right-hand side equals $\E{f^-(k,W)^{\outdeg{v_j,T}}}(1-o(1))$, which is finite. As such, substituting this in~\eqref{eq:Tproblb}, we arrive at the lower bound
		\be 
		C\prod_{j=0}^{k-1} \mu_n^{\outdeg{v_j,T}}=C\mu_n^k,
		\ee 
		since the sum of the out-degrees equals $|T|-1=k$, and where $C>0$ is a constant. 
		
		For an upper bound we use~\eqref{eq:first-upper-tree-finite-moment}: omitting the terms $f^{*}(k,W)$ from the denominator, yields the upper bound 
		\be 
		D\prod_{j=0}^{k-1}\Big(\mu_n^{\outdeg{v_j,T}}\E{f^+(k,W)^{\outdeg{v_j,T}}}\Big).
		\ee   
		By the moment condition we again have that, irrespective of the choice of $k\in\N$ and the tree $T$, that expected values are finite. As such, we obtain an upper bound $D'\mu_n^k$ for some constant $D'$. Applying Theorem~\ref{thm:sub-treecount} then yields the desired result.
	\end{proof}
	
	\subsubsection{Proof of Theorem~\ref{thrm:sub-treecount}}
	\label{sec:proof-of-sub-treecount-superlinear}
	
	Recall the quantities $G_1$ and $G_2$ from~\eqref{eq:nus}. We first have the following result, which deals with the case that $f(k,W)$ is a regularly varying random variable.
	
	\begin{prop}\label{prop:sumnuinfmean}
		Assume that $f(k,W)$ satisfies~\eqref{eq:weightasspowerlawub} and~\eqref{eq:weightasspowerlawlb} for some slowly-varying functions $\overline \ell_k,\underline \ell_k$, respectively, and an exponent $z>0$, for any $k\in\N_0$ (where $z$ is independent of $k$), and assume that for each $k\in\N_0$ there exists $i_k\in\{0,\ldots ,k\}$ such that $\inf_{i\leq k}f(i,w)=f(i_k,w)$ for all $w\in S$. If there exists $\eta_0>0$ such that   
		\be \label{eq:tree-finite-often-summ-cond}
		\sum_{n=1}^\infty \mu_n^{k-( G_1(T,z) -zG_2(T,z))-\eta} <\infty\qquad \text{for all }\eta\in(0,\eta_0), 
		\ee 
		then the tree $\cT_\infty$ contains $T$ as a sub-tree finitely often, almost surely. If there exists $\eta_0>0$ such that 
		\be \label{eq:tree-infinite-often-div-cond}
		\sum_{n=1}^\infty \mu_n^{k-(G_1(T,z)-z G_2(T,z))+\eta}=\infty, \qquad \text{for all }\eta\in(0,\eta_0),
		\ee 
		then the tree $\cT_\infty$ contains $T$ infinitely often, almost surely.
	\end{prop}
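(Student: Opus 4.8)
The plan is to derive Proposition~\ref{prop:sumnuinfmean} from Theorem~\ref{thm:sub-treecount} by estimating the summands of the divergence condition~\eqref{eq:tree-summ-cond} through the two-sided bounds~\eqref{eq:Tproblb} and~\eqref{eq:Tprobub}. The first point is to identify the exponent governing these bounds. Since the last vertex $v_k$ in any ordering $O\in\cO(T)$ is lexicographically maximal, hence a leaf with $\outdeg{v_k,T}=0$, the multiset $\{\outdeg{v_j,T}:0\le j\le k-1\}$ equals $\{\outdeg{v,T}:v\in T\}$ minus one zero; so $\sum_{j=0}^{k-1}\outdeg{v_j,T}=k$, and, using $\min(d,z)=d-(d-z)\ind{d>z}$,
\[
\sum_{j=0}^{k-1}\min(\outdeg{v_j,T},z)=\sum_{v\in T}\min(\outdeg{v,T},z)=k-\big(G_1(T,z)-zG_2(T,z)\big).
\]
This is the exponent that will appear for $\mu_n$.

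The technical heart is a truncated-moment estimate for a non-negative random variable $Z$ whose tail $\P{Z\geq x}$ is regularly varying with exponent $-z$: for every integer $m\geq 0$ and every $\eta>0$ there is $t_0=t_0(\eta)$ so that for all $t\geq t_0$,
\[
t^{-\min(m,z)-\eta}\ \leq\ \E{\ind{Z<t}\Big(\tfrac{Z}{Z+t}\Big)^{m}}+\P{Z\geq t}\ \leq\ t^{-\min(m,z)+\eta}.
\]
To prove this, note that on $\{Z<t\}$ one has $\tfrac{Z}{2t}\leq\tfrac{Z}{Z+t}\leq\tfrac{Z}{t}$, so the expectation agrees with $t^{-m}\E{Z^m\ind{Z<t}}$ up to the constant $2^{-m}$. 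If $m<z$ then $\E{Z^m}<\infty$ and $\P{Z\geq t}=o(t^{-m})$, giving order $t^{-m}$. If $m>z$, Karamata's theorem gives $\E{Z^m\ind{Z<t}}\sim\tfrac{m}{m-z}t^m\P{Z\geq t}$, so the whole quantity is of order $\P{Z\geq t}=t^{-z+o(1)}$; the critical case $m=z$ (only relevant when $z\in\N$) is the same with $\E{Z^z\ind{Z<t}}$ merely slowly varying. For the lower bound when $m\geq z$, restrict the expectation to $\{t^{1-\delta}\leq Z<t\}$: there $(Z/(Z+t))^m\geq c\,t^{-m\delta}$ while $\P{t^{1-\delta}\leq Z<t}=t^{-z(1-\delta)+o(1)}$, so the expectation is $\geq t^{-z+\delta(m-z)+o(1)}$, and $\delta>0$ is arbitrary.

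Now I apply this with $t=\mu_n^{-1}$ (which tends to $\infty$ by~\eqref{eq:minfass}) and $m=\outdeg{v_j,T}$ to each of the $k$ factors in~\eqref{eq:Tproblb} and~\eqref{eq:Tprobub}; here $f^{+}(k,W)$ and $f^{-}(k,W)=f(i_k,W)$ both have tails regularly varying with exponent $-z$, since each $f(i,W)$ with $i\leq k$ satisfies~\eqref{eq:weightasspowerlawub} and~\eqref{eq:weightasspowerlawlb} with exponent $z$ and a maximum of finitely many such variables has the same tail exponent. For the lower bound~\eqref{eq:Tproblb} the extra constraint $\ind{f^{+}(k,W)<\mu_n^{-1}}$ does no harm: restricting further to $\{\mu_n^{-(1-\delta)}\leq f^{-}(k,W)\}$, one has for large $n$ that $\P{\mu_n^{-(1-\delta)}\leq f^{-}(k,W),\,f^{+}(k,W)<\mu_n^{-1}}\geq\P{f^{-}(k,W)\geq\mu_n^{-(1-\delta)}}-\P{f^{+}(k,W)\geq\mu_n^{-1}}=\mu_n^{z(1-\delta)+o(1)}$ because $z(1-\delta)<z$. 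Multiplying the $k$ factors, summing over the finitely many orderings ($|\cO(T)|<(k+1)!$), and choosing the auxiliary $\eta$ as $\eta'/k$, one gets that the $n$-th summand of~\eqref{eq:tree-summ-cond} is at most $C\,\mu_n^{\,k-(G_1(T,z)-zG_2(T,z))-\eta'}$ and at least $c_{\eta'}\,\mu_n^{\,k-(G_1(T,z)-zG_2(T,z))+\eta'}$ for all large $n$, for every $\eta'>0$. Hence~\eqref{eq:tree-finite-often-summ-cond} forces the sum in~\eqref{eq:tree-summ-cond} to converge, and Theorem~\ref{thm:sub-treecount} yields that $T$ appears finitely often almost surely; likewise~\eqref{eq:tree-infinite-often-div-cond} forces it to diverge, so $T$ appears infinitely often almost surely. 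The main obstacle is the regularly-varying truncated-moment estimate—the bookkeeping of slowly-varying corrections, the boundary exponent $m=z$, and especially arranging the $m\geq z$ lower bound so that it survives intersection with the event $\{f^{+}(k,W)<\mu_n^{-1}\}$ built into~\eqref{eq:Tproblb}—but all of this is robust precisely because the hypotheses~\eqref{eq:tree-finite-often-summ-cond} and~\eqref{eq:tree-infinite-often-div-cond} carry an $\eta$-slack that absorbs every $o(1)$ in the exponents.
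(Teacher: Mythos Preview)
Your proposal is correct and follows essentially the same route as the paper: both reduce to Theorem~\ref{thm:sub-treecount} via the two-sided bounds~\eqref{eq:Tproblb} and~\eqref{eq:Tprobub}, estimate each factor through a truncated-moment computation for a random variable with regularly-varying tail (the paper cites \cite[Proposition~1.3.2]{Mik99}, you redo it via Karamata), and absorb all slowly-varying corrections into the $\eta$-slack built into~\eqref{eq:tree-finite-often-summ-cond} and~\eqref{eq:tree-infinite-often-div-cond}.

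The one place where the arguments differ in detail is the lower bound, specifically how the indicator $\ind{f^{+}(k,W)<\mu_n^{-1}}$ in~\eqref{eq:Tproblb} is reconciled with the fact that the moment estimate is carried out for $f^{-}(k,W)$. The paper replaces $\ind{f^{+}<\mu_n^{-1}}$ by $\ind{f^{-}<\mu_n^{-1}}$ at the cost of a subtracted error term $\P{f^{-}<\mu_n^{-1},\,f^{+}\geq\mu_n^{-1}}^{k}$, and then checks the combinatorial inequality $k-(G_1-zG_2)\leq kz$ to show this error is of higher order. You instead restrict directly to the window $\{\mu_n^{-(1-\delta)}\leq f^{-}\}\cap\{f^{+}<\mu_n^{-1}\}$ and use that its probability is $\mu_n^{z(1-\delta)+o(1)}$. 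Your route is slightly more elementary, avoiding the case analysis on $z\lessgtr 1$ that the paper performs; the paper's route makes the comparison of exponents $kz$ versus $k-(G_1-zG_2)$ explicit, which is perhaps more informative. Either way the arguments are close variants of one another.
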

	
	\noindent We finally state the following technical lemma, whose proof we defer to Section~\ref{sec:regapp} in the Appendix.
	
	\begin{lemma}\label{lemma:regexp}
		Let $r$ be a  regularly-varying function with exponent $\rho>0$ and let $W$ be a  random variable with a regularly-varying tail distribution with exponent $-(\zeta-1)<0$. Then, $r(W)$ is has a regularly-varying distribution with exponent $-(\zeta-1)/\rho$. 
	\end{lemma}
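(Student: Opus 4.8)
The plan is to reduce to the case of a monotone $r$ and then invert. Since $r$ is regularly varying with exponent $\rho>0$, Item~\ref{item:slow-neg} of Lemma~\ref{lemma:regvar} shows $r(x)\to\infty$ as $x\to\infty$, and by standard results on regularly varying functions (see~\cite[Section~1.5]{BinGolTeu89}) there is a continuous, strictly increasing function $q\colon[0,\infty)\to[0,\infty)$ with $q(x)\to\infty$, itself regularly varying with exponent $\rho$, such that $r(x)=(1+o(1))q(x)$ as $x\to\infty$ and whose (genuine) inverse $q^{-1}$ is regularly varying with exponent $1/\rho$. Write $G(x):=\Prob{W\geq x}$, which is regularly varying with exponent $-(\zeta-1)$ by hypothesis, and note $(\zeta-1)/\rho>0$ since $\zeta>1$ and $\rho>0$. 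By part~(iii) of Lemma~\ref{lemma:regvar}, the composition $G\circ q^{-1}$ is regularly varying with exponent $-(\zeta-1)/\rho$; since asymptotic equivalence preserves regular variation, it therefore suffices to prove
\[
\Prob{r(W)\geq x}=(1+o(1))\,G\big(q^{-1}(x)\big),\qquad x\to\infty.
\]

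The key step is to compare the tail of $r(W)$ with that of $q(W)$. Fix $\varepsilon\in(0,1)$. By the asymptotic equivalence $r\sim q$ there is $w_0$ with $(1-\varepsilon)q(w)\leq r(w)\leq(1+\varepsilon)q(w)$ for all $w\geq w_0$, and, being regularly varying, $r$ is locally bounded, so $M:=\sup_{0\leq w\leq w_0}r(w)<\infty$. Then for every $x>M$ the event $\{r(W)\geq x\}$ forces $W\geq w_0$; combining this with the two-sided bound on $r$, and using that $q$ is an increasing bijection with $q(w)\to\infty$ to discard the auxiliary constraint $\{W\geq w_0\}$ once $x$ is large, one obtains
\[
\Prob{q(W)\geq x/(1-\varepsilon)}\leq\Prob{r(W)\geq x}\leq\Prob{q(W)\geq x/(1+\varepsilon)}
\]
for all $x$ sufficiently large. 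Since $q$ is a continuous increasing bijection onto its range, $\Prob{q(W)\geq y}=G\big(q^{-1}(y)\big)$ for all large $y$, so the outer probabilities in the display are $(1+o(1))(1\pm\varepsilon)^{(\zeta-1)/\rho}\,G\big(q^{-1}(x)\big)$ by regular variation of $G\circ q^{-1}$ (of exponent $-(\zeta-1)/\rho$).

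It then remains to let $\varepsilon\downarrow0$. Dividing the sandwich by $G(q^{-1}(x))$ and taking limits gives
\[
(1-\varepsilon)^{(\zeta-1)/\rho}\leq\liminf_{x\to\infty}\frac{\Prob{r(W)\geq x}}{G(q^{-1}(x))}\leq\limsup_{x\to\infty}\frac{\Prob{r(W)\geq x}}{G(q^{-1}(x))}\leq(1+\varepsilon)^{(\zeta-1)/\rho},
\]
and since $\varepsilon$ is arbitrary the two outer bounds tend to $1$; hence $\Prob{r(W)\geq x}\sim G(q^{-1}(x))$, which is regularly varying with exponent $-(\zeta-1)/\rho$, as claimed. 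I expect the only genuinely delicate point to be the comparison step: passing from the possibly non-monotone $r$ to the monotone representative $q$ while controlling the contribution of small values of $W$ (which is precisely why local boundedness of $r$ and $q(w)\to\infty$ enter); everything afterwards is a routine application of the uniform convergence / Potter-type properties of regularly varying functions already recorded in Lemma~\ref{lemma:regvar}.
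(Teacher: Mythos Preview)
Your proof is correct and reaches the same conclusion as the paper's, but via a slightly different technical route. The paper works directly with the generalised inverse $r^{\leftarrow}(x):=\inf\{y\geq 0: r(y)\geq x\}$: the upper bound $\Prob{r(W)\geq x}\leq\Prob{W\geq r^{\leftarrow}(x)}$ is immediate from the definition, and for the lower bound the paper uses the asymptotic relation $r^{\leftarrow}(r(x))\sim x$ (from~\cite[Proposition~1.5.12]{BinGolTeu89}) to obtain $\Prob{r(W)\geq x}\geq\Prob{W>r^{\leftarrow}(x)/(1-\varepsilon)}$, and then concludes by regular variation of $r^{\leftarrow}$ (exponent $1/\rho$) and of the tail of $W$. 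You instead first replace $r$ by an asymptotically equivalent strictly increasing $q$ and sandwich $\Prob{r(W)\geq x}$ between $\Prob{q(W)\geq x/(1\pm\varepsilon)}$, which lets you work with the genuine inverse $q^{-1}$ throughout. Both approaches are standard; yours avoids the generalised inverse at the cost of invoking the existence of a monotone representative, while the paper's avoids that step but must appeal to the identity $r^{\leftarrow}\circ r\sim\mathrm{id}$. One small caveat: your assertion that regular variation alone forces $\sup_{[0,w_0]}r<\infty$ is not quite right in general, since regular variation is an asymptotic condition and says nothing about behaviour near $0$; in the paper's applications, however, $r$ is one of $g$, $h$, or $f(i,\cdot)$, all assumed continuous in Assumption~\ref{ass:f}, so local boundedness holds there for free and your argument goes through unchanged.
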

	
	\noindent With this proposition and lemma at hand, we can prove the results in Theorem~\ref{thrm:sub-treecount}.
	
	\begin{proof}[Proof of Theorem~\ref{thrm:sub-treecount}]
		We fix $k\in\N$ and let $T$ be a sibling-closed tree of size $k+1$. In all of the cases we discuss, we use the approximations of $\mu_n$ for the two main choices of the degree function $s$ (\hyperlink{superlinear}{super-linear} and \hyperlink{barsuperlinear}{barely super-linear}), as in Lemma~\ref{lemma:mun}, and distinguish the proof between these two main cases.
		
		\emph{Super-linear case.} For the super-linear case with additive weights, we have that $f(k,x)$ is regularly varying in $x$ with exponent $1$; for the super-linear case with mixed weights, $f(k,x)$ is regularly varying in $x$ with exponent $\gamma\vee 1$, for any $k\in\N_0$. Applying (the proof of) Lemma~\ref{lemma:regexp} and assuming that the vertex-weight distribution satisfies~\eqref{eq:weightasspowerlawub} and~\eqref{eq:weightasspowerlawlb} with exponent $\alpha-1$, we obtain that $f(k,W)$ satisfies these tail distribution inequalities with exponent $z=\alpha-1$ in the additive weight case and $z=(\alpha-1)/(\gamma\vee 1)$ in the mixed weight case for any $k\in\N_0$ (where the slowly-varying functions may depend on $k$). In both cases, for any $k\in\N_0$ and $w\geq 0$,
		\be 
		\inf_{i\leq k}f(i,w)=g(w)(\inf_{i\leq k}s(i))+h(w)=g(w)s(i_k)+h(w), 
		\ee 
		where $i_k=\argmin_{i\leq k}s(i)$. As $i_k$ does not depend on $w\geq 0$, the condition on $\inf_{i\leq k}f(i,w)$ in Proposition~\ref{prop:sumnuinfmean} is satisfied. Moreover, in both cases, $\mu_n$ is regularly varying with exponent $-(p-1)$. Hence, the inequalities in~\eqref{eq:Tsumcond} and~\eqref{eq:Tnonsumcond} follow by applying~\eqref{eq:tree-finite-often-summ-cond} and~\eqref{eq:tree-infinite-often-div-cond}, respectively.
		
		\emph{Barely super-linear case.} The proof follows directly from the fact that $\mu_n^a$ is not summable for any $a\geq 0$, as follows from Lemma~\ref{lemma:mun}, and that 
		\be 
		k-(G_1(T,z)-zG_2(T,z))=\sum_{v\in T}\outdeg{v,T}-\sum_{v\in T} (\outdeg{v,T}-z )\ind{\outdeg{v,T} >z}\geq 0,
		\ee 
		which concludes the proof.
	\end{proof}
	
	\noindent It remains to prove Proposition~\ref{prop:sumnuinfmean}.
	
	\begin{proof}[Proof of Proposition~\ref{prop:sumnuinfmean}]
		We first recall that we assume that the random variables $(f(k,W))_{k\in\N_0}$ satisfy~\eqref{eq:weightasspowerlawub} and~\eqref{eq:weightasspowerlawlb} for some slowly-varying functions $\overline \ell_k,\underline \ell_k$, respectively, and some exponent $z>0$ independent of $k$. By the definition of $f^+$ and $f^-$ in~\eqref{eq:+-} and the assumption that for each $k\in\N_0$ there exists $i_k\in\{0,\ldots, k\}$ such that $f^-(k,w):=\inf_{i\leq k}f(i,w)=f(i_k,w)$ for all $w\in S$, we have 
		\be \label{eq:+-bound}
		\P{f(i_k,W)\geq x}=\P{f^-(k,W)\geq x}\leq \P{f^+(k,W)\geq x}\leq \sum_{i=0}^k \P{f(i,W)\geq x}.
		\ee 
		By the assumptions on the tail distribution of $f(i,W)$, and by Lemma~\ref{lemma:regvar}, we thus find that we obtain an upper and lower bound for the tail probabilities of $f^-(k,W)$ and $f^+(k,W)$ that are regularly-varying with exponent $-z$.  We now apply~\cite[Proposition $1.3.2$]{Mik99} to $f^+(k,W)$, using the regularly-varying upper bound on its tail distribution (with $X=f^+(k,W)$, $\beta=\outdeg{v,T}$, and regularly-varying exponent $-z$) to determine that, for some slowly-varying function $L_1$, 
		\be \label{eq:mikres}
		\mathbb E\Big[\ind{f^+(k,W)<\mu_n^{-1}} f^+(k,W)^{\outdeg{v,T}}\Big]\leq   \begin{cases}
			\E{f^+(k,W)^{\outdeg{v,T}}}(1-o(1)) &\mbox{if } \outdeg{v,T}<z,\\
			L_1(\mu_n^{-1}) &\mbox{if }\outdeg{v,T}=z,\\
			\frac{z+o(1)}{\outdeg{v,T}-z} L_1(\mu_n^{-1})\mu_n^{z-\outdeg{v,T}} &\mbox{if } \outdeg{v,T}>z.
		\end{cases}
		\ee 
		We intend to use this in each of the product terms in the upper bound in~\eqref{eq:Tprobub} by omitting the term $f^+(k,W)$ in the denominator (which yields a further upper bound) and taking out a factor $\mu_n^{\outdeg{v_j,T}}$ for each $j\leq k-1$. We thus have a sum of two quantities in each product term, namely 
		\be \label{eq:suminprod}
		\mu_n^{\outdeg{v_j,T}}\mathbb E\Big[\ind{f^+(k,W)<\mu_n^{-1}} f^+(k,W)^{\outdeg{v,T}}\Big]+\P{f^+(k,W)\geq \mu_n^{-1}}. 
		\ee 
		When $\outdeg{v_j,T}<z$, the case distinction in~\eqref{eq:mikres} tells us that the first term is regularly varying in $\mu_n$ with exponent $\outdeg{v_j,T}$. Since, by~\eqref{eq:+-bound} $f^+(k,W)$ satisfies~\eqref{eq:weightasspowerlawub} (for some slowly-varying function $\overline \ell_k$, constant $ \overline x_k>0$ and exponent $z>0$), the second term is regularly varying in $\mu_n$ with exponent $z>\outdeg{v_j,T}$. Since $\mu_n$ tends to zero, it follows that, in this case, the first term in~\eqref{eq:suminprod} is dominant.
		
		When $\outdeg{v_j,T}=z$, the case distinction tells us that the first term is regularly varying in $\mu_n$ with exponent $\outdeg{v_j,T}=z$, and by again applying~\eqref{eq:weightasspowerlawub}, so is $\P{f^+(k,W)\geq \mu_n^{-1}}$. Finally, when $\outdeg{v_j,T}>z$, it is readily verified that both terms are equal, up to a multiplicative constant. Hence, we can bound the entire term in~\eqref{eq:suminprod} from above by $L_2(\mu_n^{-1})\mu_n^{\min\{z,\outdeg{v_j,T}\}}$ for some slowly-varying function $L_2$. Using this in each of the product terms in~\eqref{eq:Tprobub}, we obtain the upper bound
		\be \ba 
		D\prod_{j=0}^{k-1}{}&\bigg(\mathbb E\bigg[\ind{f^+(k,W)<\mu_n^{-1}}\Big(\frac{f^+(k,W)}{f^+(k,W)+\mu_n^{-1}}\Big)^{\outdeg{v_j,T}}\bigg]+\P{f^+(k,W)\geq \mu_n^{-1}}\bigg)\\
		&\leq D\mu_n^k \prod_{j=0}^{k-1}\big(L_2(\mu_n^{-1})\mu_n^{\min\{z-\outdeg{v_j,T},0\}}\big),
		\ea \ee 
		where we have taken the terms $\mu_n^{\outdeg{v_j,T}}$ outside of the product to obtain a term $\mu_n^k$. We now recall that $G_2(T,z)$ counts the number of vertices $v_j$ in $T$ such that $\outdeg{v_j,T}>z$, and $G_1(T,z)$ counts the sum of all degrees of such vertices $v_j$ (see~\eqref{eq:nus}). As a result, for some slowly-varying function $L_3$, using that products of slowly-varying functions are still slowly varying by Lemma~\ref{lemma:regvar}, we obtain the upper bound
		\be 
		L_3(\mu_n^{-1}) \mu_n^{k-G_1(T,z)} \P{f^+(k,W)\geq \mu_n^{-1}}^{G_2(T,z)},  
		\ee 
		Now, we use that $f^+(k,W)$ satisfies~\eqref{eq:weightasspowerlawub}  and apply~\ref{lemma:regvar} to arrive at 
		\be \label{eq:finubTprob}
		L_4(\mu_n^{-1})\mu_n^{k-(G_1(T,z)-zG_2(T,z))}, 
		\ee 
		for some slowly-varying function $L_4$. Since we can bound $L_4(\mu_n^{-1})\leq \mu_n^{-\eta}$ for any $\eta>0$ and $n$ sufficiently large, the desired result in~\eqref{eq:tree-finite-often-summ-cond} is obtained by applying Theorem~\ref{thm:sub-treecount}.
		
		To prove~\eqref{eq:tree-infinite-often-div-cond}, we start from the lower bound in~\eqref{eq:Tproblb}. Noting that the integrand is bounded from above by one, we have  
		\be \ba \label{eq:lbpluserror}
		\mathbb E&{}\bigg[\ind{f^+(k,W)<\mu_n^{-1}}\prod_{j=0}^{k-1}\prod_{\ell=0}^{\outdeg{v_j,T}-1}\frac{f(\ell,W_{v_j})}{f(\ell,W_{v_j})+k\mu_n^{-1}}\bigg] \\  
		&\geq \prod_{j=0}^{k-1}\mathbb E\bigg[\ind{f^-(k,W)<\mu_n^{-1}}\Big(\frac{f^-(k,W)}{f^-(k,W)+k\mu_n^{-1}}\Big)^{\outdeg{v_j,T}}\bigg]-\P{f^-(k,W)<\mu_n^{-1},f^+(k,W)\geq \mu_n^{-1}}^k.
		\ea \ee 
		We first tend to the product on the right-hand side. By bounding the term $f^-(k,W)$ in the denominator from above by $\mu_n^{-1}$, we bound it from below by 
		\be 
		(k+1)^{-k}\mu_n^k \prod_{j=0}^k \E{\ind{f^-(k,W)<\mu_n^{-1}}f^-(k,W)^{\outdeg{v_j,T}}}
		\ee 
		We now use the regularly-varying lower bound on the tail distribution of $f^-(k,W)$, as derived from~\eqref{eq:+-bound}. Again applying~\cite[Proposition $1.3.2$]{Mik99}, we obtain a lower bound analogous to~\eqref{eq:mikres} (and by possibly changing the slowly-varying function). The steps in in~\eqref{eq:suminprod} through~\eqref{eq:finubTprob} are then identical, so that we obtain, for some slowly-varying function $L_5$, the lower bound
		\be 
		(k+1)^{-k}L_5(\mu_n^{-1})\mu_n^{k-(G_1(T,z)-zG_2(T,z))}.
		\ee 
		We now substitute this in~\eqref{eq:lbpluserror} with $C=(k+1)^{-k}$ and bound the probability from above by omitting the first event, to arrive at 
		\be\ba \label{eq:difflb}  
		\prod_{j=0}^{k-1}\mathbb E\bigg[\ind{f^+(k,W)<\mu_n^{-1}}\Big(\frac{f^-(k,W)}{f^-(k,W)+k\mu_n^{-1}}\Big)^{\outdeg{v_j,T}}\bigg]\geq{}& CL_5(\mu_n^{-1})\mu_n^{k-(G_1(T,z)-zG_2(T,z))}\\ 
		&-\P{f^+(k,W)\geq \mu_n^{-1}}^k.
		\ea \ee 
		As follows from~\eqref{eq:+-bound}, the probability $\P{f^+(k,W)\geq \mu_n^{-1}}^k$ is bounded from above by a  regularly-varying term (in $\mu_n$) with exponent $kz$. By Lemma~\ref{lemma:regvar}, the right-hand side is regularly-varying with exponent $\min\{k-(G_1(T,z)-zG_2(T,z)),kz\}$. In particular, it is positive for all $n$ large if the first argument attains the minimum. To determine that this is the case, we observe, with a similar argument as in from~\eqref{eq:Gspos}, that the first argument of the minimum is at most $k$. Hence, when $z\geq 1$, it is immediate. When $z<1$, we instead use that $G_1(T,z)=k$ (since now $G_1$ simply sums all degrees) and $G_2(T,z)=k+1-N$, where $N\geq 1$ is the number of leaves (i.e.\ nodes with out-degree zero) in $T$. We thus have
		\be 
		k(G_1(T,z)-zG_2(T,z))\leq kz \quad \Leftrightarrow\quad z(k+1-N)\leq kz, 
		\ee 
		which holds since $N\geq 1$. As a result, we can bound the right-hand side of~\eqref{eq:difflb} from below, for some slowly-varying function $L_6$, by 
		\be 
		L_6(\mu_n^{-1})\mu_n^{k-(G_1(T,z)-(\alpha-1)G_2(T,z)/z)}.
		\ee 
		We finally use Lemma~\ref{lemma:regvar} to bound $L_6(\mu_n^{-1})\geq \mu_n^\eta$ for any $\eta>0$ and $n$ sufficiently large and apply Theorem~\ref{thm:sub-treecount} to finally arrive at~\eqref{eq:tree-infinite-often-div-cond}, which concludes the proof.
	\end{proof}

	\section*{Acknowledgements}
	
	The research of TI has been funded by Deutsche Forschungsgemeinschaft (DFG) through DFG Project no. $443759178$. TI also acknowledges funding from the 2023 WIAS Young Scientist grant, as well as previous funding from the LMS Early Career Fellowship~ECF-1920-55, which helped support this research. 
	
	\noindent BL has received funding from the European Union’s Horizon 2022 research and innovation programme under the Marie Sk\l{}odowska-Curie grant agreement no.\ $101108569$ and has been supported by the grant GrHyDy ANR-20-CE40-0002. 
	
	\bibliographystyle{abbrv}
	\bibliography{refs}
	
	\appendix
	
	\section{Additional results for other fitness functions}\label{sec:appadd}
	
	In Theorem~\ref{thrm:cmjexamples} we discussed three particular choices of the fitness function: a super-linear degree function $s$ with either mixed or additive weight functions, and a log-stretched super-linear degree function $s$ with mixed weight functions. For these choices, we were able to prove a complete phase diagram for the emergence of a unique vertex with infinite degree or a unique infinite path, and for sub-tree counts in the former case. 
	
	In this section, we discuss three additional choices of the fitness function: barely super-linear log-stretched degree function $s$ with \emph{additive} weight functions, and \emph{poly-log} degree function $s$ with either \emph{mixed} or \emph{additive} weight functions. We present these cases here, since we are unable to prove a complete phase diagram; for each case there is a gap in the parameter choices where we cannot prove the emergence of a unique vertex with infinite degree nor a unique infinite path. 
	
	We summarise these choices in the following assumptions.
	
	\begin{assumption}\label{ass:fapp}
		Let $f(i,w)=g(w)s(i)+h(w), i\in\N_0,w\geq 0$ satisfy~\eqref{eq:minfass} and one of the following three cases.
		\begin{itemize}
			\item \textbf{\hyperlink{logstretched}{Barely super-linear log-stretched with additive weights}:} $s$ is as in~\eqref{eq:slogstretched}, $g\equiv 1$, and $h(x)=\ell(x)x$ for some slowly-varying function $\ell:[0,\infty) \to [0,\infty)$, such that either
			\begin{align} \label{eq:elllim} 
				\exists \underline a\in[0,1):       \lim_{x\to\infty}\frac{\log\log(1/\ell(x))}{\log\log x}=\underline a,
				\shortintertext{or} 
				\hspace{-1cm}\exists \overline a\in[0,1):\lim_{x\to\infty}\frac{\log\log(\ell(x))}{\log\log x}=\overline a. \label{eq:elllim2}
			\end{align}  
			\item \textbf{\hypertarget{poly-log}{Barely super-linear poly-log}:} For some $\sigma>1$, 
			\be 
			s(i)=(i+2)(\log(i+2))^\sigma, \qquad i\in\N_0, 
			\ee 
			and $g,h$ are as in the additive or mixed weight case of Assumption~\ref{ass:f}. 
		\end{itemize}
	\end{assumption}
	
	\noindent We then also introduce the following additional assumptions for the vertex-weight distribution. 
	
	\begin{assumption}\label{ass:weightsapp}
		The vertex-weights $(W_i)_{i\in\N}$ are i.i.d.\ and their tail distribution satisfies one of the following cases.
		\begin{itemize}
			\item \textbf{Power law with lower-order term.} Let  $\tau\in(0,1)$. We have the following two conditions. 
			\begin{enumerate}
				\item There exist $\overline c,\overline x>0$ and $\tau\in(0,1)$ such that 
				\be\label{eq:weightasspowerlawplusub}
				\P{W\geq x}\leq x^{-1}\e^{-\overline c(\log x)^\tau}, \qquad x\geq \overline x.
				\ee 
				\item There exist $\underline c,\underline x>0$ and $\tau'\in(0,1)$ such that
				\be \label{eq:weightasspowerlawpluslb}
				\P{W\geq x }\geq x^{-1}\e^{\underline c(\log x)^{\tau'}}, \qquad x\geq \underline x.
				\ee             
			\end{enumerate}
			\item \textbf{Stretched exponential.} Let $\kappa>0$. We have the following two conditions.
			\begin{enumerate}
				\item There exist $\overline c,\overline x>0$ such that
				\be \label{eq:weightassstrechtedub}
				\P{W\geq x}\leq \e^{-\overline cx^\kappa}, \qquad x\geq \overline x. 
				\ee 
				\item There exists $\underline c,\underline x>0$ such that 
				\be \label{eq:weightassstrechtedlb}
				\P{W\geq x}\geq \e^{-\underline cx^\kappa}, \qquad x\geq \underline x. 
				\ee 
			\end{enumerate}
		\end{itemize}
		
	\end{assumption}
	
	\noindent We then have the following theorem, which is an equivalent result to Theorem~\ref{thrm:cmjexamples}.
	
	\begin{thm}\label{thrm:examplesadd}
		Let $(\mathcal{T}_{i})_{i \in \mathbb{N}}$ be a $(W, f)$-recursive tree with fitness, where the fitness function $f$ and degree function $s$ satisfy one of the cases in Assumptions~\ref{ass:f} and~\ref{ass:deg}, respectively, and the vertex-weight distribution satisfies one of the cases in Assumption~\ref{ass:weights}. The tree $\mathcal T_\infty$ either contains a unique vertex with infinite degree and no infinite path almost surely, or contains a unique infinite path and no vertex with infinite degree almost surely, when the following conditions are met, based on the \emph{fitness function}, \emph{degree function}, and \emph{vertex-weight} assumptions:
		\begin{table}[H]
			\footnotesize
			\captionsetup{width=0.89\textwidth}
			\centering
			\begin{tabular}{|c|c|l|l|}
				\hline
				\textbf{Weight} & \textbf{Degree} & \textbf{Star} & \textbf{Path}\\
				\hline 
				\hyperlink{additive}{Additive} & \hyperlink{log-stretched}{Log-Stretched} & \eqref{eq:elllim},\eqref{eq:weightasspowerlawplusub} \& $(\tau\vee \beta)>(1-\beta)\vee \underline a$ & \eqref{eq:elllim2},\eqref{eq:weightasspowerlawpluslb} \& $\tau'> \beta\vee (1-\beta)$, $(\tau'\vee \beta) >\overline a$ \\
				\hline 
				\hyperlink{mixed}{Mixed} & \hyperlink{poly-log}{Poly-log} & \eqref{eq:weightassstrechtedub} \& $(\sigma-1)\kappa>1+ \kappa$ & \eqref{eq:weightassstrechtedlb} \& $(\sigma-1)\kappa < 1$ \\
				\hline
				\hyperlink{additive}{Additive} & \hyperlink{poly-log}{Poly-log} & \eqref{eq:weightasslogstrechtedub} \& $(\sigma-1)(1-1/\nu)>1$ & \eqref{eq:weightasspowerlawlb} \& $\alpha < 2$ \\
				\hline   
			\end{tabular}
			\caption{\footnotesize The first column represents the form of the fitness function, as in Assumption~\ref{ass:f}; the second represents the form of the degree function, as in Assumption~\ref{ass:fapp}. The third and fourth column, respectively, list the required assumptions on the vertex-weight distribution, as in Assumptions~\ref{ass:weights} and~\ref{ass:weightsapp}, together with the choices of the parameters that lead to either a unique node of infinite degree or a unique infinite path.}
		\end{table}
	\end{thm}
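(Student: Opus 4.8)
The plan is to follow the same route used to establish Theorem~\ref{thrm:cmjexamples}: for each row of the table it suffices to verify the hypotheses of Theorem~\ref{thm:star-path-rif}. Concretely, for the entries in the ``Star'' column I would check Equation~\eqref{eq:star-explosive-rif} of Item~\ref{item:star-explosive-rif}, and for the entries in the ``Path'' column I would check that Equation~\eqref{eq:div-condition} holds with $\nu_n^w := d\mu_n^w$ for some $d\in(0,1)$, as in Item~\ref{item:path-explosive-rif}. Since the exponential inter-birth times automatically make Assumption~\ref{ass:uniqueness} and Conditions~\ref{item:starindep} and~\ref{item:starnonzero} of Assumption~\ref{ass:star} hold, and Condition~\ref{item:stardom} holds with $Y_n$ as in~\eqref{eq:ynexplicit} exactly as in the proof of Theorem~\ref{thm:star-path-rif}, the only real content is the two analytic estimates, in direct analogy with Lemmas~\ref{lemma:checklaplace} and~\ref{lemma:pathcond}.

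The first preliminary step is to extend Lemma~\ref{lemma:mun} to the two new degree functions. For the barely super-linear log-stretched $s$ of~\eqref{eq:slogstretched} with additive weights the additive term $h(w)$ only contributes a factor $1+o(1)$, so $\mu_n^w = (1+o(1))\beta^{-1}(\log n)^{1-\beta}\e^{-(\log n)^\beta}$; for the poly-log degree function $s(i) = (i+2)(\log(i+2))^\sigma$ an integral test gives $\mu_n^w = (1+o(1))(g(w)(\sigma-1))^{-1}(\log n)^{-(\sigma-1)}$, so $\mu_n^w$ is slowly varying in $n$. For the star estimate I would then run the split bound~\eqref{eq:splitbound}: choose $k_n$ increasing fast enough that $\Prob{W\geq k_n}$ is summable, using~\eqref{eq:weightasspowerlawplusub},~\eqref{eq:weightassstrechtedub}, or~\eqref{eq:weightasslogstrechtedub} according to the row, and then bound $\exp\big(-c\mu_n^{-1}\sum_j (f(j,k_n)+c\mu_n^{-1})^{-1}\big)$ from above by restricting the sum to a range $j\leq I_n$ on which $f(j,k_n)$ is comparable to or smaller than $\mu_n^{-1}$ --- precisely the device used in Section~\ref{sec:thm-star-phase} for the log-stretched mixed-weights case. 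The parameter inequalities in the table (for instance $(\tau\vee\beta)>(1-\beta)\vee\underline a$, or $(\sigma-1)\kappa>1+\kappa$) are exactly what is needed for the tail term and the exponential term to be summable simultaneously.

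For the path estimate I would follow the reduction laid out after Lemma~\ref{lemma:pathcond}: choose $k_n$ so that $\Prob{W\geq k_n}$ is \emph{not} summable, via the lower tail bounds~\eqref{eq:weightasspowerlawpluslb},~\eqref{eq:weightassstrechtedlb}, or~\eqref{eq:weightasspowerlawlb}; on the event $\{W\geq k_n\}$ stochastically dominate the inter-birth times by exponentials with rates $f(j-1,k_n)$; and then use Markov's inequality together with the Paley--Zygmund inequality (Lemma~\ref{lem:paley-zygmund}) to reduce the verification of~\eqref{eq:div-condition} to the limsup condition~\eqref{eq:limsupbound}, namely $\limsup_n \mu_n^{-1}\sum_j f(j,k_n)^{-1} < 1$. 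Estimating $\sum_j f(j,k_n)^{-1}$ with the integral-test asymptotics above and comparing its (regularly or slowly varying) rate of decay with that of $\mu_n^w$ then yields the claim under the ``Path'' restrictions.

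I expect the main obstacle to be the additive barely super-linear log-stretched case, where $h(x)=\ell(x)x$ carries an unknown slowly-varying factor $\ell$. There the relevant comparison is between iterated logarithms --- which is exactly why the hypotheses~\eqref{eq:elllim} and~\eqref{eq:elllim2} are phrased in terms of $\log\log(1/\ell(x))/\log\log x$ and $\log\log\ell(x)/\log\log x$ --- and one has to track carefully how $\ell(k_n)$ interacts with $\mu_n$ and with the threshold $I_n$ when $k_n$ itself is of the form $\exp((\log n)^{\rho})$ for some $\rho\in(0,1)$. This is also precisely where the phase diagram does not close: the conditions on $\underline a,\overline a,\tau,\tau',\beta$ leave a genuine parameter gap, so the estimates cannot be sharpened to match and one must settle for sufficient conditions on both sides. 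The poly-log rows should be more routine, being essentially a re-run of the super-linear and log-stretched computations of Section~\ref{sec:thm-star-phase} with $\mu_n^w$ now slowly varying in $n$.
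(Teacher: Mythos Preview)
Your proposal is correct and follows essentially the same approach as the paper: the theorem is deduced from two auxiliary lemmas (Lemmas~\ref{lemma:checklaplaceapp} and~\ref{lemma:pathcondapp}), directly analogous to Lemmas~\ref{lemma:checklaplace} and~\ref{lemma:pathcond}, together with the poly-log analogue of Lemma~\ref{lemma:mun} (Lemma~\ref{lemma:munapp} in the paper). Your description of the star side via the split bound~\eqref{eq:splitbound} and the path side via the limsup criterion~\eqref{eq:limsupbound} matches the paper precisely, as does your identification of the additive log-stretched case as the most delicate.

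Two small corrections to your sketch. First, in all three appendix cases the threshold $I_n$ is used to restrict the sum in the exponential to $j\geq I_n$, not $j\leq I_n$: on that range $g(k_n)s(j)$ dominates both $h(k_n)$ and $\mu_n^{-1}$, so the tail sum is $(1+o(1))\mu_{I_n}/g(k_n)$ (or just $\mu_{I_n}$ in the additive cases). Your stated direction $j\leq I_n$ would actually fail in the additive poly-log case, since there $h(k_n)\gg\mu_n^{-1}$ and no small-$j$ range has $f(j,k_n)$ comparable to $\mu_n^{-1}$. Second, for the additive log-stretched row the correct choice of $k_n$ is $n$ times a sub-polynomial correction, namely $k_n=n\exp(\pm c(\log n)^\tau)$, rather than $\exp((\log n)^\rho)$; this is what forces the iterated-logarithm bookkeeping via Taylor expansions of $(\log k_n)^\beta$ and is exactly where the hypotheses~\eqref{eq:elllim},~\eqref{eq:elllim2} on $\ell$ enter.
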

	
	\noindent The proof of Theorem~\ref{thrm:examplesadd} follows from two lemmas, equivalent to Lemmas~\ref{lemma:checklaplace} and~\ref{lemma:pathcond}, which we state now.

	\begin{lemma}\label{lemma:checklaplaceapp}
		Equation~\eqref{eq:star-explosive-rif} in Item~\ref{item:star-explosive-rif} of Theorem~\ref{thm:star-path-rif} is satisfied when the following conditions are met, based on the assumptions for the fitness type, degree function $s$, and vertex-weight distribution:
		\begin{table}[H]
			
			\centering
			\footnotesize
			\captionsetup{width=0.89\textwidth}
			
			\begin{tabular}{|c|c|l|}
				\hline
				\textbf{Weight} & \textbf{Degree} &  \textbf{Star}  \\  
				\hline
				\hyperlink{additive}{Additive} & \hyperlink{log-stretched}{Log-Stretched} & \eqref{eq:elllim},\eqref{eq:weightasspowerlawplusub} \& $(\tau\vee \beta)>(1-\beta)\vee \underline a$  \\
				\hline 
				\hyperlink{mixed}{Mixed} & \hyperlink{poly-log}{Poly-log} & \eqref{eq:weightassstrechtedub} \& $(\sigma-1)\kappa>1+ \kappa$ \\
				\hline
				\hyperlink{additive}{Additive} & \hyperlink{poly-log}{Poly-log} & \eqref{eq:weightasslogstrechtedub} \& $(\sigma-1)(1-1/\nu)>1$\\
				\hline    
			\end{tabular}
			\caption{\footnotesize The first column represents the form of the fitness function, as in Assumption~\ref{ass:f}; the second represents the form of the degree function, as in Assumption~\ref{ass:fapp}. The third column lists the required assumptions on the vertex-weight distribution, as in Assumption~\ref{ass:weightsapp}, together with the choices of the parameters that lead to a unique node of infinite degree.}
		\end{table}
	\end{lemma}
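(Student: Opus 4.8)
The plan is to prove Lemma~\ref{lemma:checklaplaceapp} case by case, following exactly the template laid out after the statement of Lemma~\ref{lemma:checklaplace}: for each case, choose a sequence $(k_n)_{n\in\N}$ growing fast enough that the tail term $\Prob{W\geq k_n}$ in~\eqref{eq:splitbound} is summable, then verify that the exponential term $\exp(-c\mu_n^{-1}\sum_{j=0}^\infty (f(j,k_n)+c\mu_n^{-1})^{-1})$ is also summable for that same choice. Throughout I would use (without always citing explicitly) the regular variation calculus of Lemma~\ref{lemma:regvar}, the asymptotics for $\mu_n$ and $L(n)$ from Lemma~\ref{lemma:mun}, and the elementary bound~\eqref{eq:potterbound}--\eqref{eq:potinsum} for lower-bounding tail sums of regularly-varying reciprocals by a shifted power sum.

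For the~\hyperlink{poly-log}{poly-log} cases the argument is closest to the barely super-linear log-stretched case already treated: by Lemma~\ref{lemma:mun}, $\mu_n = (1+o(1)) g(0)^{-1} L(n)$ with $L(n)$ slowly varying (here $L(n) = \int_n^\infty (x(\log x)^\sigma)^{-1}\dd x = \Theta((\log n)^{1-\sigma})$), so $\mu_n^{-1}$ is slowly varying and behaves like $(\log n)^{\sigma-1}$. For the \emph{mixed}/poly-log case I would take $k_n = \exp(\overline{c}^{-1/\kappa}(1+\eps)^{1/\kappa}(\log n)^{1/\kappa})$ — wait, one must be careful: with a stretched-exponential tail~\eqref{eq:weightassstrechtedub} one needs $\overline{c}k_n^\kappa \geq (1+\eps)\log n$, i.e.\ $k_n = \Theta((\log n)^{1/\kappa})$. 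Then $g(k_n) = \Theta(k_n) = \Theta((\log n)^{1/\kappa})$ and $h(k_n) = \Theta((\log n)^{\gamma/\kappa})$, and one splits the sum $\sum_j (f(j,k_n)+c\mu_n^{-1})^{-1}$ at an index $I_n$ where $(j+2)(\log(j+2))^\sigma$ overtakes $(h(k_n)+c\mu_n^{-1})/g(k_n)$, obtaining a lower bound $\Theta(\mu_{I_n}/g(k_n))$ exactly as in the log-stretched proof; one then checks that $c\mu_n^{-1}\mu_{I_n}/g(k_n)$ diverges fast enough (like $(\log n)^{(\sigma-1)\kappa - 1 - \kappa + o(1)}$ — hence the condition $(\sigma-1)\kappa > 1+\kappa$) that the exponential term is summable. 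The \emph{additive}/poly-log case with a log-stretched-exponential tail~\eqref{eq:weightasslogstrechtedub} is analogous with $k_n = \exp((\log n)^{(1+\eps)/\nu})$, leading to the condition $(\sigma-1)(1-1/\nu) > 1$.

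For the~\hyperlink{log-stretched}{barely super-linear log-stretched with additive weights} case the degree function is $s(i) = (i+1)\exp((\log(i+1))^\beta)$, so by Lemma~\ref{lemma:mun}, $\mu_n = \Theta((\log n)^{1-\beta}\exp(-(\log n)^\beta))$, and now $\mu_n^{-1}$ grows like $\exp((\log n)^{\beta+o(1)})$. Since $g\equiv 1$ and $h(x) = \ell(x)x$ with $\ell$ slowly varying of the constrained type~\eqref{eq:elllim}, one has $h(k_n) = k_n\ell(k_n)$. With a tail $\Prob{W\geq x}\leq x^{-1}\exp(-\overline c(\log x)^\tau)$ one needs $k_n^{-1}\exp(-\overline c(\log k_n)^\tau) \cdot (\text{something}) $ summable; taking $\log k_n = \Theta((\log n)^{1/\tau})$ makes the tail term summable, and then $h(k_n) = k_n\ell(k_n) = \exp((\log n)^{1/\tau}(1+o(1)))\cdot\ell(k_n)$. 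The point of hypothesis~\eqref{eq:elllim} (resp.~\eqref{eq:elllim2}) is precisely to control $\log\log(1/\ell(k_n))$ (resp.\ $\log\log\ell(k_n)$) against $\log\log k_n = \Theta(\log\log n)$, so that one can tell whether the $h(k_n)$ term or the $c\mu_n^{-1}$ term dominates the denominator. Splitting the sum at the appropriate threshold $I_n$ and using~\eqref{eq:potinsum}-type estimates, one reduces to checking that a quantity of order $\exp((\log n)^{(\tau\vee\beta)+o(1)})$ beats $\exp((\log n)^{((1-\beta)\vee\underline a)+o(1)})$, which is exactly the condition $(\tau\vee\beta) > (1-\beta)\vee\underline a$.

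The main obstacle, I expect, is not any single estimate but the bookkeeping in the log-stretched/additive case: one must track three competing scales inside the denominator — the degree growth $s(j)$, the weight scale $h(k_n)$, and the explosion scale $c\mu_n^{-1}$ — all of which live at doubly-logarithmic precision, and the hypotheses~\eqref{eq:elllim}/\eqref{eq:elllim2} are tailored to exactly the regime where the slowly-varying factor $\ell$ is \emph{not} negligible and therefore cannot be swept into an $o(1)$. Getting the case split between~\eqref{eq:elllim} and~\eqref{eq:elllim2} right (the former when $\ell\to 0$, i.e.\ $h$ is ``barely'' linear from below; the latter when $\ell\to\infty$), and confirming that in each case the chosen $k_n$ simultaneously kills the tail term and leaves the exponential term summable, is where the care is needed. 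The poly-log cases, by contrast, should be essentially a transcription of the already-written log-stretched proof with $(\log n)^{1-\beta}e^{-(\log n)^\beta}$ replaced by $(\log n)^{1-\sigma}$, and I would present them more briefly.
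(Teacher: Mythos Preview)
Your outline for the two poly-log cases is essentially the paper's approach: the choices $k_n=\Theta((\log n)^{1/\kappa})$ (mixed) and $k_n=\exp((\log n)^{(1+\eps)/\nu})$ (additive) are exactly what the paper uses, the cut-off $I_n$ is the same, and the summability conditions fall out as you describe. (Your stated exponent ``$(\sigma-1)\kappa-1-\kappa$'' for the mixed case is not the actual exponent of $\log n$ in the argument of the exponential; the paper obtains $(\sigma-1)-(1+\eps)/\kappa$ there, and summability of $\exp(-(\log n)^a)$ needs $a>1$, not $a>0$. This is a bookkeeping slip, not a structural one.)

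The additive/log-stretched case, however, has a genuine error at the very first step. You propose $\log k_n=\Theta((\log n)^{1/\tau})$. Since $\tau\in(0,1)$ this makes $k_n$ super-polynomially large, hence $h(k_n)\sim k_n=\exp((\log n)^{1/\tau+o(1)})$. Splitting the sum at the point $I_n$ where $s(I_n)\approx h(k_n)$ then gives $\log I_n\sim(\log n)^{1/\tau}$, so $\mu_{I_n}\approx\exp(-(\log I_n)^\beta)\approx\exp(-(\log n)^{\beta/\tau})$ with $\beta/\tau>\beta$. Thus the key quantity $\mu_n^{-1}\mu_{I_n}\approx\exp\big((\log n)^\beta-(\log n)^{\beta/\tau}\big)\to 0$, and the exponential term in~\eqref{eq:splitbound} tends to~$1$ --- not summable. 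The point you missed is that the tail bound~\eqref{eq:weightasspowerlawplusub} already carries a factor $x^{-1}$; so $\Prob{W\ge k_n}$ is summable as soon as $k_n$ is of order $n$, and you must (not merely may) keep $k_n$ that small. The paper takes $k_n=n\exp(-(1-\eps)\overline c(\log n)^\tau)$, so $\log k_n=\log n-O((\log n)^\tau)$, and the whole analysis then lives at the \emph{second-order} scale: one Taylor-expands $(\log h(k_n))^\beta$ and $(\log I_n)^\beta$ around $(\log n)^\beta$, producing correction terms of size $(\log n)^{\beta+\tau-1}$, $(\log n)^{2\beta-1}$, and $(\log n)^{\beta+\underline a-1}$ (the last coming from $\log\ell(k_n)$ via hypothesis~\eqref{eq:elllim}). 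The condition $(\tau\vee\beta)>(1-\beta)\vee\underline a$ is precisely what makes the dominant correction positive, so that $\mu_{I_n}\mu_n^{-1}=\exp\big((C'+o(1))(\log n)^{\beta+(\tau\vee\beta)-1}\big)\to\infty$. Your heuristic ``$\exp((\log n)^{\tau\vee\beta})$ beats $\exp((\log n)^{(1-\beta)\vee\underline a})$'' does not describe this mechanism; the $\tau$, $\beta$, and $\underline a$ enter only through these second-order exponents $\beta+\tau-1$, $2\beta-1$, $\beta+\underline a-1$. Also note that only~\eqref{eq:elllim} (with $\underline a$) is assumed here; there is no case split with~\eqref{eq:elllim2} in this lemma.
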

	
	\begin{lemma}\label{lemma:pathcondapp}
		The condition in~\eqref{eq:div-condition} satisfied when the following conditions are met, based on the assumptions for the fitness type, degree function $s$, and vertex-weight distribution: 
		\begin{table}[H]
			
			\centering
			\footnotesize
			\captionsetup{width=0.89\textwidth}
			
			\begin{tabular}{|c|c|l|}
				\hline
				\textbf{Weight} & \textbf{Degree} & \textbf{Path} \\  
				\hline
				\hyperlink{additive}{Additive} & \hyperlink{log-stretched}{Log-Stretched} & \eqref{eq:elllim2},\eqref{eq:weightasspowerlawpluslb} \& $\tau'> \beta\vee (1-\beta)$, $(\tau'\vee \beta )>\overline a$\\
				\hline 
				\hyperlink{mixed}{Mixed} & \hyperlink{poly-log}{Poly-log} & \eqref{eq:weightassstrechtedlb} \& $(\sigma-1)\kappa < 1$ \\
				\hline
				\hyperlink{additive}{Additive} & \hyperlink{poly-log}{Poly-log}  & \eqref{eq:weightasspowerlawlb} \& $\alpha < 2$ \\
				\hline    
			\end{tabular}
			\caption{\footnotesize The first column represents the form of the fitness function, as in Assumption~\ref{ass:f}; the second represents the form of the degree function, as in Assumption~\ref{ass:fapp}. The third column lists the required assumptions on the vertex-weight distribution, as in Assumptions~\ref{ass:weights} and~\ref{ass:weightsapp}, together with the choices of the parameters that lead to a unique infinite path.}
		\end{table}
	\end{lemma}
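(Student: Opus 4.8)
The plan is to prove Lemma~\ref{lemma:pathcondapp} (and its star counterpart Lemma~\ref{lemma:checklaplaceapp}, hence Theorem~\ref{thrm:examplesadd}) by the same scheme already used for the three main examples in Section~\ref{sec:examplesproof}. For the path direction one verifies the divergence condition~\eqref{eq:div-condition} of Item~\ref{item:path-explosive-rif} of Theorem~\ref{thm:star-path-rif} with $\nu_n^w:=d\mu_n^w$, $d\in(0,1)$; the generic reduction leading to Equation~\eqref{eq:limsupbound} (stochastic domination of the inter-birth times by exponentials on $\{W\ge k_n\}$, Markov's inequality, Paley--Zygmund for the other half of the condition) already shows that it suffices to exhibit, for each row of the table, a sequence $(k_n)_{n\in\N}$ with $\sum_n\P{W\ge k_n}=\infty$ for which
\[
\limsup_{n\to\infty}\frac{1}{\mu_n^w}\sum_{j=0}^\infty\frac{1}{f(j,k_n)}<1 ,
\]
i.e.\ Equation~\eqref{eq:limsupbound}. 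The star direction is the mirror image: choose $k_n$ so that instead $\sum_n\P{W\ge k_n}<\infty$ and the exponential term on the right of~\eqref{eq:splitbound} is summable; this uses no ideas beyond those in the proofs of Lemmas~\ref{lemma:checklaplace} and~\ref{lemma:pathcond}, so I concentrate on~\eqref{eq:limsupbound}.

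The first step is to extend Lemma~\ref{lemma:mun} to the degree functions of Assumption~\ref{ass:fapp}: for the \hyperlink{poly-log}{poly-log} case $s(i)=(i+2)(\log(i+2))^\sigma$, an integral test and Lemma~\ref{lemma:regvar}, Item~\ref{item:reg-var-int} give $\mu_n^w=\frac{1+o(1)}{g(w)(\sigma-1)}(\log n)^{-(\sigma-1)}$, while for the log-stretched $s$ of~\eqref{eq:slogstretched} the expansion $\mu_n^w=\frac{1+o(1)}{g(w)}\beta^{-1}(\log n)^{1-\beta}\e^{-(\log n)^\beta}$ of Lemma~\ref{lemma:mun} is unchanged. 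I then take the three rows in turn. For \emph{mixed weights, poly-log degree}, set $k_n:=(C\log n)^{1/\kappa}$, so that~\eqref{eq:weightassstrechtedlb} makes $\P{W\ge k_n}$ of order $n^{-(1-\eps)}$; writing $\sum_j f(j,k_n)^{-1}=g(k_n)^{-1}\sum_j(s(j)+h(k_n)/g(k_n))^{-1}$ and comparing the inner sum with $\int x^{-1}(\log x)^{-\sigma}\,\dd x$ (splitting at the crossover $s(j)\asymp h(k_n)/g(k_n)$) makes it of order $(\log\log n)^{-(\sigma-1)}$ or $\Theta(1)$, so the ratio in~\eqref{eq:limsupbound} is $(\log n)^{(\sigma-1)-1/\kappa}$ up to slowly-varying factors and tends to $0$ exactly when $(\sigma-1)\kappa<1$. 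For \emph{additive weights, poly-log degree}, set $k_n:=n^{(1-\eps)/(\alpha-1)}$, so~\eqref{eq:weightasspowerlawlb} makes $\P{W\ge k_n}$ of order $n^{-(1-\eps)}$; now $h(k_n)$ is of order $k_n$ and the same comparison gives $\sum_j(s(j)+h(k_n))^{-1}=\frac{1+o(1)}{\sigma-1}\big(\tfrac{\alpha-1}{1-\eps}\big)^{\sigma-1}(\log n)^{-(\sigma-1)}$, so the ratio in~\eqref{eq:limsupbound} converges to $\big(\tfrac{\alpha-1}{1-\eps}\big)^{\sigma-1}$, which is $<1$ for $\eps$ small exactly when $\alpha<2$. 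For \emph{additive weights, log-stretched degree}, set $\log k_n:=\log n+(1-\eps)\underline c(\log n)^{\tau'}$ --- the largest value for which~\eqref{eq:weightasspowerlawpluslb} still forces $\sum_n\P{W\ge k_n}=\infty$ --- and use~\eqref{eq:elllim2} in the form $\log\ell(x)=(\log x)^{\overline a+o(1)}$ to get $\log h(k_n)=\log n+(\log n)^{\tau'\vee\overline a}(1+o(1))$; defining $I_n$ by $s(I_n)\asymp h(k_n)$, the inner sum is $(1+o(1))L(I_n)$ (tail-dominated), so the ratio in~\eqref{eq:limsupbound} equals $(1+o(1))L(I_n)/L(n)=\exp\!\big(-\big[(\log I_n)^\beta-(\log n)^\beta\big](1+o(1))\big)$.

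The main obstacle is this last row. Here $h$ carries a genuinely slowly-varying factor $\ell$, controlled only through the double-logarithmic asymptotics~\eqref{eq:elllim}, \eqref{eq:elllim2}, and the admissible $k_n$ is squeezed from two sides: it must be large enough that $\log h(k_n)-\log n$ grows, yet small enough that the lower bound~\eqref{eq:weightasspowerlawpluslb} keeps $\sum_n\P{W\ge k_n}$ divergent --- together these pin $\log k_n$ to $\log n+\Theta((\log n)^{\tau'})$. One then has to weigh the positive contributions $(\log n)^{\tau'}$ and $(\log n)^{\overline a}$ against the negative contribution $(\log n)^\beta$ inside $\log I_n-\log n$, and a careful expansion of $(\log I_n)^\beta-(\log n)^\beta$ shows that it diverges to $+\infty$ --- so that the ratio in~\eqref{eq:limsupbound} tends to $0$ --- precisely under the conditions the table records as $\tau'>\beta\vee(1-\beta)$ and $(\tau'\vee\beta)>\overline a$; the parallel analysis with~\eqref{eq:elllim} in place of~\eqref{eq:elllim2} yields the matching star row of Lemma~\ref{lemma:checklaplaceapp}. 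Combining Lemmas~\ref{lemma:checklaplaceapp} and~\ref{lemma:pathcondapp} with Theorem~\ref{thm:star-path-rif} then gives Theorem~\ref{thrm:examplesadd}, just as Lemmas~\ref{lemma:checklaplace} and~\ref{lemma:pathcond} give Theorem~\ref{thrm:cmjexamples}.
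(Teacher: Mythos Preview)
Your approach is essentially the same as the paper's: for each row you reduce to verifying~\eqref{eq:limsupbound} with the same choice of $k_n$ (up to harmless constants: $(\log n)^{(1-\eps)/\kappa}$, $n^{(1-\eps)/(\alpha-1)}$, and $n\exp(\underline c(\log n)^{\tau'})$ respectively), and carry out the same splitting-at-crossover and integral-test analysis. The only cosmetic difference is in the additive/log-stretched case, where the paper defines $I_n$ explicitly as $h(k_n)\exp\bigl(-(\log h(k_n))^\beta-(\log h(k_n))^{2\beta-1}\bigr)$ and bounds the head term $I_n/h(k_n)$ and the tail term $\mu_{I_n}$ against $\mu_n^w$ separately, rather than first arguing tail-dominance; the Taylor expansions and the way the conditions $\tau'>\beta\vee(1-\beta)$ and $(\tau'\vee\beta)>\overline a$ enter are identical.
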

	
	\noindent It is clear that Theorem~\ref{thrm:examplesadd} follows from Lemmas~\ref{lemma:checklaplaceapp} and~\ref{lemma:pathcondapp}. Before we prove these two lemmas, we state the following result, which is an analogoue of Lemma~\ref{lemma:mun}. 
	
	\begin{lemma}\label{lemma:munapp}
		Let $f$ satisfy the~\hyperlink{poly-log}{barely super-linear poly-log} case, as in Assumption~\ref{ass:fapp}. Then, for any $w\geq 0$,
		\be 
		\mu_n^w=\frac{1+o(1)}{g(w)(\sigma-1)}(\log n)^{-(\sigma-1)}.
		\ee 
	\end{lemma}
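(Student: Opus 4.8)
The plan is to prove Lemma~\ref{lemma:munapp} by a direct integral comparison, following exactly the same blueprint used for the super-linear and log-stretched cases in the proof of Lemma~\ref{lemma:mun}. First I would start from the identity~\eqref{eq:genmu}, namely
\[
\mu_n^w=\sum_{i=n}^\infty \frac{1}{f(i,w)}=\sum_{i=n}^\infty\frac{1}{g(w)s(i)+h(w)}=\frac{1+o(1)}{g(w)}\sum_{i=n}^\infty\frac{1}{s(i)},
\]
where the last step uses that $s(i)=(i+2)(\log(i+2))^\sigma$ is regularly varying with exponent $1$ while $h(w)$ is a fixed constant (regularly varying of exponent $\gamma$ in $w$ but constant in $i$), so $h(w)/s(i)\to 0$ and the $h(w)$ term can be absorbed into the $1+o(1)$ factor. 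This reduction is identical to the one already carried out in Lemma~\ref{lemma:mun}, so only the tail sum $\sum_{i=n}^\infty s(i)^{-1}$ remains to be estimated, and this is exactly where the concrete form of $s$ enters.

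Next I would apply the integral test: since $x\mapsto \bigl((x+2)(\log(x+2))^\sigma\bigr)^{-1}$ is eventually monotone decreasing, $\sum_{i=n}^\infty s(i)^{-1}=(1+o(1))\int_n^\infty \frac{\dd x}{(x+2)(\log(x+2))^\sigma}$. The substitution $y=\log(x+2)$, $\dd y=\dd x/(x+2)$, turns this into $\int_{\log(n+2)}^\infty y^{-\sigma}\,\dd y=\frac{1}{\sigma-1}(\log(n+2))^{-(\sigma-1)}$, which converges precisely because $\sigma>1$ (this also re-confirms $\sum_i s(i)^{-1}<\infty$, i.e.\ that the poly-log case genuinely falls in the barely super-linear class, cf.\ Lemma~\ref{lemma:regvar}\ref{item:reg-var-int}). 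Since $\log(n+2)=(1+o(1))\log n$ and slowly varying perturbations are harmless here, we get $\sum_{i=n}^\infty s(i)^{-1}=\frac{1+o(1)}{\sigma-1}(\log n)^{-(\sigma-1)}$. Substituting back gives
\[
\mu_n^w=\frac{1+o(1)}{g(w)(\sigma-1)}(\log n)^{-(\sigma-1)},
\]
as claimed.

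There is essentially no hard part: the statement is a one-line asymptotic once one performs the $y=\log(x+2)$ substitution, and the only things to be slightly careful about are (i) justifying the passage from sum to integral (standard, via monotonicity of the summand for $i$ large, with the finitely many small terms contributing an $O(1)$ that is itself $o((\log n)^{-(\sigma-1)})$ only after one checks—wait, actually the finite head is a constant, not $o$; but it is \emph{not} part of the tail sum $\sum_{i=n}^\infty$, which starts at $i=n\to\infty$, so this is a non-issue), and (ii) confirming that replacing $\log(n+2)$ by $\log n$ and dropping $+2$ in $x+2$ only affects the $1+o(1)$ factor. I would simply remark that the argument is analogous to the log-stretched computation~\eqref{eq:munlogstretchedexpup}–\eqref{eq:munlogstretchedexpup} in the proof of Lemma~\ref{lemma:mun}, with the incomplete-gamma-function step replaced by the elementary integral $\int y^{-\sigma}\dd y$, and leave the routine details to the reader. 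The only genuine ``obstacle'' is bookkeeping the dependence on $w$: since $g(w)$ and $h(w)$ are both fixed once $w$ is fixed, and $g(w)>0$ by Assumption~\ref{ass:fapp}, the factor $1/g(w)$ simply comes along for the ride, exactly as in the mixed-weight case of Lemma~\ref{lemma:mun}.
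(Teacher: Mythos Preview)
Your proof is correct and follows essentially the same approach as the paper: both reduce via~\eqref{eq:genmu} (equivalently, Lemma~\ref{lemma:mun}) to evaluating $\int_n^\infty x^{-1}(\log x)^{-\sigma}\,\dd x$ and then compute it by the substitution $y=\log x$. The only difference is cosmetic—the paper invokes Lemma~\ref{lemma:mun} directly for the reduction while you redo the integral-test step explicitly.
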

	
	\begin{proof}
		We use Lemma~\ref{lemma:mun}, which tells us that $\mu_n^w=(1+o(1))g(w)^{-1}L(n)$, where 
		\be 
		L(n):=\int_{n+2}^\infty x^{-1}(\log x)^{-\sigma}\,\dd x.
		\ee 
		Using a variable substitution $y=\log x$ and determining the integral yields the desired result.
	\end{proof}
	
	\noindent The proofs of Lemmas~\ref{lemma:checklaplaceapp} and~\ref{lemma:pathcondapp} follow the same approach as those of Lemmas~\ref{lemma:checklaplace} and~\ref{lemma:pathcond}. 
	
	\begin{proof}[Proof of Lemmas~\ref{lemma:checklaplaceapp} and~\ref{lemma:pathcondapp}, $s$ barely super-linear \hyperlink{log-stretched}{log-stretched case}, additive weights]
		To start, we prove the claim in Lemma~\ref{lemma:checklaplaceapp}. We assume that $\beta\tau>1$ and recall that 
		\be 
		f(i,w)=(i+1)\e^{(\log(i+1))^\beta}+h(w), 
		\ee 
		where $h$ is a regularly-varying function with exponent $1$. Fix $\eps>0$ sufficiently small so that $\beta\tau>1+\eps$. We apply~\eqref{eq:splitbound} with 
		\be \label{eq:knmun2}
		k_n:=n\exp(-(1-\eps)\overline c(\log n)^\tau), \quad\text{and}\quad \mu_n=\frac{1+o(1)}{\beta }(\log n)^{1-\beta}\exp(-(\log n)^\beta), 
		\ee 
		where the latter follows from Lemma~\ref{lemma:mun}. Since $\tau \in (0, 1)$ and by~\eqref{eq:weightasspowerlawplusub}, we have 
		\be \ba \label{eq:log-stretched-summable}
		\Prob{W > k_{n}} &\leq k_n^{-1}\e^{-\overline c(\log k_n)^\tau} \\ 
		&=\frac{1}{n}\exp\big((1-\eps)\overline c(\log n)^\tau-\overline c(\log n-(1-\eps)\overline c(\log n)^\tau)^\tau\big)\\ 
		&=\frac1n \exp\big(-\eps\overline c(\log n)^\tau(1+o(1))\big),
		\ea\ee 
		for $n$ sufficiently large, so that the second term on the right-hand side of~\eqref{eq:splitbound} is summable. Next, we define 
		\be \label{eq:inf1add}
		I_n:=h(k_n)\exp\big(-(\log h(k_n))^\beta +\beta (\log h(k_n))^{2\beta-1}\big).
		\ee 
		By Taylor's theorem, for $|x| < 1$, $(1+ x)^{\beta} = 1 + \beta x + \frac{\beta(\beta-1)}{2} x^{2} + o(x^3)$. Factoring out the term $\log h(k_n)$ we can thus write
		\be\ba \label{eq:taylorin}
		(\log I_n)^\beta ={}& \log h(k_n)^{\beta}\big(1-(\log h(k_n))^{\beta-1}+\beta(\log h(k_n))^{2\beta-2}\big)^\beta \\
		={}& (\log h(k_n))^\beta-\beta(\log h(k_n))^{2\beta-1}+\frac{3}{2}\beta\big(\beta-\tfrac13\big)(\log h(k_n))^{3\beta-2}\\
		&+o\big((\log h(k_n))^{3\beta-2}\big).
		\ea\ee 
		It thus follows that for $j\geq I_n$, 
		\be \ba 
		(j+1)\exp\big((\log(j+1))^\beta\big)&\geq I_n\exp\big((\log I_n)^\beta\big)\\
		&=h(k_n)\exp\Big(\frac{3\beta}{2}\big(\beta-\tfrac13\big)(\log h(k_n))^{3\beta-2}+o((\log h(k_n))^{3\beta-2})\Big).
		\ea \ee 
		We claim that the exponential term is at least $1/2$ for $n$ large. Indeed, for $\beta>2/3$ the exponential term diverges with $n$, whilst for $\beta \in (0, 2/3]$ is converges to a constant that is at least $1$. So, for all $j\geq I_n$ and $n$ sufficiently large, 
		\be 
		h(k_n)\leq 2(j+1)\exp((\log(j+1))^\beta).
		\ee 
		Moreover, as $\mu_n$ is slowly varying and $k_n$ and $h$ are both regularly varying with exponents $1$, we have $\mu_n^{-1}=o(h(k_n))$. We thus  obtain the lower bound
		\be \ba 
		\sum_{j=0}^\infty \frac{1}{(j+1)\exp(c(\log (j+1))^\beta)+h(k_n)+\mu_n^{-1}}&\geq  \sum_{j=I_n}^\infty \frac{1}{5(j+1)\exp((\log(j+1))^\beta)}&=\frac{\mu_{I_n}(1+o(1))}{5},
		\ea \ee 
		where we recall the definition of $\mu_n$ from~\eqref{eq:mu-def} (with $w^*=0$) and use (the proof of) Lemma~\ref{lemma:mun} in the final step.
		Using this in the first term on the right-hand side of~\eqref{eq:splitbound}, we obtain the upper bound 
		\be\label{eq:splitub}
		\exp\big(- \tfrac{c+o(1)}{5}\mu_{I_n}\mu_n^{-1}\big). 
		\ee 
		With $I_n$ and $\mu_n$ as in~\eqref{eq:knmun2} and using~\eqref{eq:taylorin}, we find 
		\be \ba\label{eq:muin}
		\mu_{I_n}&=\frac{1+o(1)}{\beta }(\log I_n)^{1-\beta}\exp\big(-(\log I_n)^\beta\big)\\ 
		&=\frac{1+o(1)}{\beta }(\log n)^{1-\beta}\exp\big(-(\log h(k_n))^\beta+\beta(\log h(k_n))^{2\beta-1}(1+o(1))\big).
		\ea\ee 
		We now use that $h(x)=\ell(x)x$ for some slowly-varying function $\ell$ which satisfies~\eqref{eq:elllim} for some $a\in[0,\tau\vee\beta)$. We can then write, using that $h(x)=x^{1+o(1)}$ and $k_n=n^{1+o(1)}$, and for $|x| < 1$, that  $(1+ x)^{\beta} = 1 + \beta x + o(x^2)$, 
		\be\ba\label{eq:taylorh}
		-(\log{}& h(k_n))^\beta+\beta(\log h(k_n))^{2\beta-1}(1+o(1))\\ 
		={}&-(\log k_n)^{\beta}(1+\log(\ell(k_n)) (\log{k_n})^{-1})^\beta +\beta(\log n)^{2\beta-1}(1+o(1))\\ 
		={}&-(\log k_n)^\beta -\beta (\log k_n)^{\beta-1}\log(\ell(k_n))(1+o(1))+\beta(\log n)^{2\beta-1}(1+o(1))
		\ea\ee
		Now, again using the expression for $(1+x)^\beta$, observe that \[(\log k_n)^\beta = (\log n)^\beta + (1-\eps)\overline c \beta (\log n)^{\beta+\tau-1} (1+o(1)).\] Hence we may write~\eqref{eq:taylorh} as 
		\be\label{eq:fullapprox}
		-(\log n)^\beta +\big[(1-\eps)\overline c \beta (\log n)^{\beta+\tau-1}+\beta(\log n)^{2\beta-1}-\beta(\log k_n)^{\beta-1}\log(\ell(k_n))\big](1+o(1)).
		\ee
		Recall that we assume that $\ell$ is such that $\lim_{x\to\infty} \log\log(\ell(x))/\log\log x=\overline a$ for some $\overline a\in[0,\tau\vee\beta)$ (as we assume that $\tau\vee\beta>\overline a$), that is, when $\log (\ell(x))=(\log x)^{\overline a+o(1)}$, we can write 
		\be 
		(\log k_n)^{\beta-1}\log(\ell(k_n))=(\log k_n)^{\beta+\overline a-1+o(1)}. 
		\ee 
		Since $\overline a<\tau\vee\beta$, this term is negligible compared to the other two terms in the square brackets in~\eqref{eq:fullapprox}, so that it can be included in the $o(1)$ term. We thus obtain, combining~\eqref{eq:muin} with~\eqref{eq:fullapprox},
		\be \ba
		\mu_{I_n}&=\frac{1+o(1)}{\beta }(\log n)^{1-\beta}\exp\big(-(\log n)^\beta +\big[(1-\eps)\overline c \beta (\log n)^{\beta+\tau-1}+\beta(\log n)^{2\beta-1}\big](1+o(1))\big)\\ 
		&=\mu_n \exp\big((C'+o(1))(\log n)^{\beta+(\tau\vee\beta)-1}\big).
		\ea\ee 
		We can thus, finally, bound the first term on the right-hand side of~\eqref{eq:splitbound} from above by substituting this in~\eqref{eq:splitub}, which yields 
		\be 
		\exp\Big(-\frac{c+o(1)}{5}\exp\big((C'+o(1))(\log n)^{\beta+(\tau\vee\beta)-1}\big)\Big),
		\ee 
		which is summable since $\tau\vee \beta>1-\beta$ (and using an argument similar to that in~\eqref{eq:log-stretched-summable}), as desired.
		
		We then prove the claim in Lemma~\ref{lemma:pathcondapp}. We set $k_n:=n\exp(\underline c(\log n)^{\tau'})$. It follows from~\eqref{eq:weightasspowerlawpluslb} that 
		\[
		\P{W\geq k_n} \geq n^{-1} \exp(- \underline{c}(\log n)^{{\tau'}}) \exp(\underline c(\log n + \underline{c} (\log{n})^{{\tau'}})^{{\tau'}}) \geq n^{-1}, 
		\] 
		which is not summable. Then, we define 
		\be \label{eq:inlogadd}
		I_n:=h(k_n)\exp(-(\log h(k_n))^\beta-(\log h(k_n))^{2\beta-1}), 
		\ee 
		and write
		\be 
		\sum_{i=0}^\infty \frac{1}{f(i,k_n)}=\sum_{i=0}^\infty \frac{1}{(i+1)\exp((\log (i+1))^\beta)+h(k_n)}\leq \sum_{i=0}^{I_n}\frac{1}{h(k_n)}+\!\!\!\sum_{i=I_n+1}^\infty \frac{1}{(i+1)\exp((\log (i+1))^\beta)}.
		\ee 
		We recall the definition of $\mu_n$ from~\eqref{eq:mu-def} (with $w^*=0$) and use (the proof of) Lemma~\ref{lemma:mun} to deduce that the above equals 
		\be \label{eq:goalll}
		\exp(-(\log h(k_n))^\beta-(\log h(k_n))^{2\beta-1})+(1+o(1))\mu_{I_n}. 
		\ee
		We now need only show that this expression is $o(\mu_n^w)$, which clearly implies~\eqref{eq:limsupbound}. 
		
		Let us start with the first term. By a similar sequence of computations as in~\eqref{eq:taylorh} through~\eqref{eq:fullapprox}, and using the fact that, for $|x| < 1$ we have $(1 +x)^{\beta} = 1 + \beta x + o(x^2)$, we obtain
		\be \ba\label{eq:taylorh2}
		\exp{}&(-(\log h(k_n))^\beta-(\log h(k_n))^{2\beta-1})\\
		={}&\exp(-(\log k_n)^\beta -\beta (\log k_n)^{\beta-1}\log(\ell(k_n))(1+o(1))-(\log n)^{2\beta-1}(1+o(1)))\\ 
		={}&\exp\big(-(\log n)^\beta -\big[\underline c\beta (\log n)^{\beta+\tau'-1} +(\log n)^{2\beta-1}+\beta (\log n)^{\beta-1}\log(\ell(k_n))\big](1+o(1))\big).
		\ea \ee  
		We recall that $h(x) = \ell(x) x$, where $\ell$ is slowly varying, such that 
		\be 
		\lim_{x\to\infty}\log\log (1/\ell(x))/\log\log x=\underline a,
		\ee 
		for some $\underline a\in[0,\tau'\vee \beta)$ (as by our assumption). That is, $\log(\ell(x))=-(\log x)^{\underline a+o(1)}$. We thus have 
		\be \label{eq:ellass}
		(\log k_n)^{\beta-1}\log(\ell(k_n))=-(\log k_n)^{\beta+\underline a-1+o(1)}.
		\ee 
		Since $\underline a<\tau'\vee \beta$, it follows that this term can be included in the $o(1)$ term in~\eqref{eq:taylorh2}. We thus arrive at
		\begin{linenomath*}
			\begin{align*}
				\exp(-(\log h(k_n))^\beta) &=\exp\big(-(\log n)^\beta-\big[\underline c \beta(\log n)^{\beta+\tau'-1}+(\log n)^{2\beta-1}\big](1+o(1))\big) \\
				& = \mu_n^ \beta(\log n)^{\beta -1}\exp\big(-\big[\underline c \beta(\log n)^{\beta+\tau'-1}+(\log n)^{2\beta-1}\big](1+o(1))\big).
			\end{align*}
		\end{linenomath*}
		Since we assume that $\tau'>\beta\vee(1-\beta)$, we directly obtain that this is $o(\mu_n^w)$.
		
		Recalling~\eqref{eq:goalll}, it thus remains to prove that $\mu_{I_n}=o(\mu_n^w)$. By~\eqref{eq:taylorh2}, \eqref{eq:inlogadd}, and~\eqref{eq:knmun2}, the desired result follows by showing that $\exp(-(\log I_n)^\beta)=o(\exp(-(\log n)^\beta))$ or, equivalently, $(\log I_n)^\beta - (\log n)^\beta$ diverges with $n$. First, we note that $h(x)=x^{1+o(1)}$ (as $h$ is regularly varying with exponent $1$) and $k_n=n^{1+o(1)}$. Again, using the approximation to $(1+x)^{\beta}$ in a similar manner as in~\eqref{eq:taylorh},
		\be \ba 
		(\log I_n)^\beta&=\big(\log h(k_n)-(\log h(k_n))^\beta-(\log h(k_n))^{2\beta-1}\big)^\beta \\
		&=(\log h(k_n))^\beta -\beta (\log h(k_n))^{2\beta-1}(1+o(1))\\
		&=(\log k_n)^\beta +\big[\beta(\log k_n)^{\beta-1}\log(\ell(k_n))-\beta (\log n)^{2\beta-1}\big](1+o(1))\\
		&=(\log n)^\beta+ \big[\underline c \beta (\log n)^{\beta+\tau'-1}+\beta(\log k_n)^{\beta-1}\log(\ell(k_n))-\beta (\log n)^{2\beta-1}\big](1+o(1)).
		\ea \ee 
		By the same argument that leads to~\eqref{eq:ellass}, we argue that we can include the second term in the square brackets within the $o(1)$ term as it is of lower order compared to either $(\log n)^{\beta+\tau'-1}$ or $(\log n)^{2\beta-1}$. The desired result thus follows since $\tau'>\beta\vee(1-\beta)$. 
	\end{proof}
	
	\begin{proof}[Proof of Lemmas~\ref{lemma:checklaplaceapp} and~\ref{lemma:pathcondapp}, $s$ barely super-linear~\hyperlink{poly-log}{poly-log case}, mixed weights]
		We first prove the claim in Lemma~\ref{lemma:checklaplaceapp}. We assume that $(\sigma-1)\kappa>1+ \kappa$ and recall that 
		\be 
		f(i,w)=g(w)(i+2)(\log(i+2))^\sigma+h(w), 
		\ee 
		where $g$ and $h$ are regularly-varying functions with exponents $1$ and $\gamma\geq 0$, respectively. Fix $\eps>0$ sufficiently small so that $(\sigma-1)\kappa>1+\eps$. We apply~\eqref{eq:splitbound} with 
		\be \label{eq:knmun3}
		k_n:=(\log n)^{(1+\eps)/\kappa},\quad\text{and}\quad\mu_n=\frac{1+o(1)}{g(0)(\sigma-1) }(\log n)^{-(\sigma-1)},
		\ee 
		where the latter follows from Lemma~\ref{lemma:mun}. Now, using~\eqref{eq:weightassstrechtedub}, we obtain 
		\be 
		\P{W\geq k_n}\leq \e^{-\overline c k_n^\kappa}=\e^{-\overline c(\log n)^{1+\eps}} \leq n^{-(1+\eps)}, 
		\ee 
		for $n$ sufficiently large, so that the second term on the right-hand side of~\eqref{eq:splitbound} is summable. Then, we define 
		\be \label{eq:inf2mix}
		I_n:=(h(k_n)+\mu_n^{-1})/g(k_n).
		\ee 
		We write $g(x)=\ell_1(x)x$ and $h(x)=\ell_2(x)x^\gamma$ for some slowly-varying functions $\ell_1,\ell_2$, and note that $g(x)=x^{1+o(1)}$ and $h(x)=x^{\gamma+o(1)}$. With $k_n$ and $\mu_n$ as in~\eqref{eq:knmun3}, we thus have 
		\be 
		I_n=(\log n)^{\max\{(\sigma-1)\kappa,(1+\eps)\gamma\}/\kappa-(1+\eps)/\kappa+o(1)}.
		\ee   
		We note that $I_n$ diverges with $n$ since $(\sigma-1)\kappa>1+\eps$. It then directly follows for all $n$ sufficiently large and all $j\geq I_n$, that
		\be 
		(j+2)(\log(j+2))^\sigma\geq j\geq I_n=(h(k_n)+\mu_n^{-1})/g(k_n).
		\ee 
		As a result, for all $n$ large we obtain the lower bound
		\be
		\sum_{j=0}^\infty \frac{1}{g(k_n)(j+2)(\log( j+2))^\sigma+h(k_n)+\mu_n^{-1}}\geq  \frac{1}{g(k_n)}\sum_{j=I_n}^\infty \frac{1}{3(j+2)(\log(j+2))^\sigma}=\frac{\mu_{I_n}(1+o(1))}{3g(k_n)/g(0)},
		\ee 
		where we recall the definition of $\mu_n$ from~\eqref{eq:mu-def} (with $w^*=0$) and use (the proof of) Lemma~\ref{lemma:mun} in the final step. Substituting this bound into the first term on the right-hand side of~\eqref{eq:splitbound}, and using~\eqref{eq:knmun3}, we obtain the upper bound 
		\be 
		\exp\Big(-\frac{cg(0)+o(1)}{3}
		\frac{\mu_{I_n}\mu_n^{-1}}{g(k_n)}\Big)=\exp\big(-(\log\log n)^{-(\sigma-1)}(\log n)^{(\sigma-1)-(1+\eps)/\kappa+o(1)}\big),
		\ee
		(where we incorporate the constants into the $o(1)$ in the exponent of the $\log{n}$ term). This is summable when $(\sigma-1)-(1+\eps)/\kappa>1$  (again using an argument similar to that in~\eqref{eq:log-stretched-summable}). Since $\eps$ is arbitrary, the desired result follows since $(\sigma-1)\kappa>1+\kappa$.
		
		We then prove the claim in Lemma~\ref{lemma:pathcondapp}. We fix $\eps>0$ sufficiently small such that $(\sigma-1)\kappa<1-\eps$ and set $k_n:=(\log n)^{(1-\eps)/\kappa}$. It follows from~\eqref{eq:weightassstrechtedlb} that 
		\[
		\P{W\geq k_n} \geq \e^{-\underline{c} (\log{n})^{1- \eps}} \geq n^{-1},
		\] 
		which is not summable in $n$. By a similar computation as in~\eqref{eq:similar-comp}, 
		\be 
		\sum_{i=0}^\infty \frac{1}{f(i,k_n)}=\frac{1}{g(k_n)}\sum_{i=0}^\infty \frac{1}{(i+2)(\log (i+2))^\sigma+h(k_n)/g(k_n)}\leq \frac{C_\sigma}{g(k_n)}, 
		\ee 
		for some constant $C_\sigma>0$. Since $g$ varies regularly with exponent $1$, we can write $g(x)=x^{1+o(1)}$, so that 
		\be 
		\sum_{i=0}^\infty \frac{1}{f(i,k_n)}\leq (\log n)^{-(1-\eps)/\kappa+o(1)}.
		\ee 
		As a result, since $(\sigma-1)\kappa<1-\eps$, this sum is $o(\mu_n^w)$. It follows that~\eqref{eq:limsupbound} is satisfied, which concludes the proof.
	\end{proof}

	\begin{proof}[Proof of Lemmas~\ref{lemma:checklaplaceapp} and~\ref{lemma:pathcondapp}, $s$ barely super-linear~\hyperlink{poly-log}{poly-log case}, additive weights]
		We first prove the claim in Lemma~\ref{lemma:checklaplaceapp}. We assume that $(\sigma-1)(1-1/\nu)>1$, and recall that 
		\be 
		f(i,w)=(i+2)(\log(i+2))^\sigma+h(w), 
		\ee 
		where $h$ is a regularly-varying function with exponent $1$. Fix $\eps>0$ sufficiently small so that $\sigma(1-1/\nu)>1+\eps$. We apply~\eqref{eq:splitbound} with 
		\be \label{eq:knmun4}
		k_n:=\exp((\log n)^{(1+\eps)/\nu}),\quad\text{and}\quad \mu_n=\frac{1+o(1)}{\sigma-1 }(\log n)^{-(\sigma-1)},
		\ee 
		where the latter follows from Lemma~\ref{lemma:munapp}. Now, using~\eqref{eq:weightasslogstrechtedub} we obtain
		\be 
		\P{W\geq k_n}\leq \e^{-\overline c (\log k_n)^\nu}=\e^{-\overline c (\log n)^{1+\eps}} < n^{-(1+\eps)},
		\ee 
		for $n$ sufficiently large, so that the second term on the right-hand side of~\eqref{eq:splitbound} is summable. Then, we define 
		\be \label{eq:inf2add}
		I_n:=\frac{h(k_n)}{(\log h(k_n))^\sigma}.
		\ee 
		It then follows that for all $j\geq I_n$ and $n$ sufficiently large, 
		\be 
		(j+2)(\log(j+2))^\sigma\geq I_n(\log I_n)^\sigma =h(k_n)(1+o(1))>h(k_n)/2.
		\ee
		Moreover, since $\mu_n^{-1}=o(h(k_n))$, irrespective of the values of $\sigma, \nu>1$ and since $h(x)=x^{1+o(1)}$, we obtain for all $n$ sufficiently large the lower bound 
		\be \label{eq:sumlb}
		\sum_{j=0}^\infty \frac{1}{(j+1)(\log (j+1))^\sigma+h(k_n)+\mu_n^{-1}}\geq  \sum_{j=I_n}^\infty \frac{1}{5(j+1)(\log(j+1))^\sigma}=\frac{\mu_{I_n}(1+o(1))}{5}\geq \frac{\mu_{I_n}}{6},
		\ee 
		where we recall the definition of $\mu_n$ from~\eqref{eq:mu-def} (with $w^*=0$) and use~\eqref{eq:knmun4} in the final step. We again use that $h(x)=x^{1+o(1)}$, which yields 
		\be 
		\mu_{I_n}=\frac{1+o(1)}{(\sigma-1) }(\log I_n)^{-(\sigma-1)}=\frac{1+o(1)}{(\sigma-1) }(\log k_n)^{-(\sigma-1)}=\frac{1+o(1)}{(\sigma-1) }(\log n)^{-(1+\eps)(\sigma-1)/\nu}.
		\ee 
		Substituting this into the lower bound in~\eqref{eq:sumlb}, we may bound the sum in the exponent in the first term on the right-hand side of~\eqref{eq:splitbound} to obtain the upper bound
		\be \label{eq:inmunub}
		\exp\big(-\tfrac{c}{6} \mu_{I_n}\mu_n^{-1}\big)\leq \exp\big(-(\log n)^{(\sigma-1)(1-(1+\eps)/\nu)+o(1)}\big).
		\ee 
		By choosing $\eps$ sufficiently small, this upper bound is summable since $(\sigma-1)(1-1/\nu)>1$, which yields the desired result.
		
		We then prove the claim in Lemma~\ref{lemma:pathcondapp}. We fix $\alpha\in(1,2)$, take $\eps>0$ sufficiently small such that $1-\eps>\alpha-1$, and set $k_n:=n^{(1-\eps)/(\alpha-1)}$. We then have $\Prob{W \geq k_{n}} \geq \underline{\ell}(k_{n}) n^{-(1- \eps)}$ by~\eqref{eq:weightasspowerlawlb}, which is not summable in $n$.  We define $I_n:=k_n/(\log k_n)^\sigma$ and bound
		\be 
		\sum_{i=0}^\infty \frac{1}{f(i,k_n)}\leq \sum_{i=0}^{I_n-1}\frac{1}{k_n}+\sum_{i=I_n}^\infty \frac{1}{(j+2)(\log (j+2))^\sigma} = \frac{1}{(\log k_n)^\sigma}+(1+o(1))\mu_{I_n},
		\ee 
		where we recall the definition of $\mu_n$ from~\eqref{eq:mu-def} (with $w^*=0$) and use (the proof of) Lemma~\ref{lemma:mun}. By the choice of $k_n$ and $I_{n}$ and with $\mu_n$ as in~\eqref{eq:knmun4}, we have
		\be 
		\frac{1}{\mu_n^w}\Big(\frac{1}{(\log k_n)^\sigma}+(1+o(1))\mu_{I_n}\Big)=\cO\Big(\frac{1}{\log n}\Big)+(1+o(1))\Big(\frac{\log I_n}{\log n}\Big)^{-(\sigma-1)}\to \Big(\frac{\alpha-1}{1-\eps}\Big)^{\sigma-1}.
		\ee 
		As a result, by choosing $\eps$ sufficiently small, since $\alpha -1 < 1$, the right-hand side of the above is strictly smaller than one; it follows that~\eqref{eq:limsupbound} is satisfied. 
	\end{proof}

	\section{Verifying conditions for other inter-birth time distributions}\label{sec:app}
	
	In this section we consider the other birth-time distributions listed in Remark~\ref{rem:otherdistr}, and check the conditions of Assumptions~\ref{ass:star} and~\ref{ass:path} (it is clear that all distributions are continuous and thus satisfy Assumption~\ref{ass:uniqueness}). We do not repeat all the detailed calculations in the previous subsection. Rather, we show where calculations differ, and where similar or analogous arguments yield the desired results. 
	
	\subsection{Conditions for an infinite star, Assumption~\ref{ass:star}.} We need to verify~\eqref{eq:limsup-mgf} and~\eqref{eq:laplacesum}. We start with the former, and split between the different inter-birth distributions listed in Remark~\ref{rem:otherdistr}. 
	
	\paragraph{Gamma.} 
	
	Let the inter-birth times be gamma random variables, i.e.\ for $i\in \N, k>0$, and $w\geq 0$, let $X_w(i)\sim \text{Gamma}(k,kf(i-1,w))$. Recall $Y_n$ from~\eqref{eq:ynexplicit} (with $w^*=0$). Then, 
	\be 
	\cM_\lambda(Y_n)=\prod_{\ell=n}^\infty \Big(\frac{kf(\ell,0)}{kf(\ell,0)-\lambda}\Big)^k=\bigg(\prod_{\ell=n}^\infty \frac{kf(\ell,0)}{kf(\ell,0)-\lambda}\bigg)^k =\cM_\lambda(\wt Y_n)^k,
	\ee 
	where 
	\be 
	\wt Y_n\overset d= \sum_{j=n+1}^\infty \mathrm{Exp}(kf(j-1,0)). 
	\ee 
	We can, by possibly changing the constant $c$ so that $c<\min\{1,1/k\}$, use the same computations as in~\eqref{eq:mux} through~\eqref{eq:mgf-exp-bound} to derive the upper bound $\cM_{c\mu_n^{-1}}(Y_n)\leq (1-c)^{-k}$ to conclude that Condition~\eqref{eq:limsup-mgf} of Assumption~\ref{ass:star} holds. 
	
	For the next two examples, we use that 
	\be 
	\sum_{\ell=n}^\infty \frac{1}{f(\ell,0)^2}\leq \bigg(\sum_{\ell=n}^\infty \frac{1}{f(\ell,0)}\bigg)^2=\mu_n^{-2}.
	\ee 
	
	\paragraph{Beta.} 
	
	Let the inter-birth times be distributed as follows. For any $i\in\N$ and $w\geq 0$, let 
	\be \label{eq:Xbeta}
	X_w(i)\overset d= \frac{\alpha+\beta}{\alpha}\frac{1}{ f(i-1,w)}B_i, 
	\ee 
	where $(B_i)_{i\in\N}$ is a sequence of i.i.d.\ copies of a $\mathrm{Beta}(\alpha,\beta)$ random variables, for some $\alpha\geq 1,\beta\in(0,1]$. Recall $Y_n$ from~\eqref{eq:ynexplicit}. Then, for $\lambda>0$, 
	\be 
	\E{\e^{\lambda X_0(j)}}=\sum_{k=0}^\infty \prod_{\ell=0}^{k-1}\frac{\alpha+\ell}{\alpha+\beta+\ell}\Big(\frac{\alpha+\beta}{\alpha}\frac{\lambda}{f(j-1,0)}\Big)^k \frac{1}{k!}, 
	\ee 
	where we set the empty product $\prod_{\ell=0}^{-1}$ equal to one. As the terms in the product are at most one, we directly obtain the upper bound 
	\be 
	\cM_{\mu_n^{-1}}(Y_n)=\prod_{j=n+1}^\infty \E{\e^{\mu_n^{-1}X_0(j)}}\leq \prod_{j=n+1}^\infty \exp\Big(\frac{\alpha+\beta}{\alpha}\frac{\mu_n^{-1}}{f(j-1,0)}\Big)=\exp\bigg(\frac{\alpha+\beta}{\alpha}\mu_n^{-1}\sum_{j=n}^\infty \frac{1}{f(j,0)}\bigg). 
	\ee 
	By the definition of $\mu_n^{-1}$, it follows that this upper bound equals $\e^{(\alpha+\beta)/\alpha}$, so that~\eqref{eq:limsup-mgf} is satisfied.

	\paragraph{Rayleigh.}
	
	Let the inter-birth times be Rayleigh random variables, i.e.\ for any $i\in\N$ and $w\geq 0$, let $X_w(i)\sim \text{Rayleigh}(\sqrt{2/\pi}/f(i-1,w))$. Recall $Y_n$ from~\eqref{eq:ynexplicit} (with $w^*=0$). Then, 
	\be 
	\cM_\lambda(Y_n)=\prod_{\ell=n}^\infty \bigg(1+ \frac{\lambda}{f(\ell,0)} \e^{\lambda^2/(\pi f(\ell,0)^2)}\Big(1+\mathrm{erf}\Big(\frac{\lambda}{\sqrt\pi f(\ell,0)}\Big)\Big)\bigg),
	\ee 
	where $\mathrm{erf}(x):=(2/\sqrt\pi)\int_0^x \e^{-t^2}\,\dd t$ denotes the error function. Since the error function is bounded from above by one, we immediately arrive at the upper bound 
	\be 
	\cM_\lambda(Y_n)\leq\exp\bigg(2\lambda \sum_{\ell=n}^\infty \frac{1}{f(\ell,0)} \e^{\lambda^2/(\pi f(\ell,0)^2)}\bigg)\leq \exp\bigg(2\lambda \sum_{\ell=n}^\infty \frac{1}{f(\ell,0)}\exp\bigg(\frac{\lambda^2}{\pi}\sum_{\ell=n}^\infty \frac{1}{f(\ell,0)^2}\bigg)\bigg). 
	\ee 
	Now, with $\lambda=c\mu_n^{-1}$, we finally arrive at the upper bound $\exp(2\exp(1/\pi))$, as desired.
	
	We now verify Condition~\ref{item:starlaplace} of Assumption~\ref{ass:star}, i.e.\ Equation~\eqref{eq:laplacesum}, which is summarised in the following lemma. 
	
	\begin{lemma}\label{lemma:checklaplacegen}
		Assume the inter-birth time distributions satisfy any of the choices in Remark~\ref{rem:otherdistr}. With the same conditions for the fitness type, degree function $s$, and the vertex-weight distribution, as in Lemmas~\ref{lemma:checklaplace} and~\ref{lemma:checklaplaceapp}, Condition~\ref{item:starlaplace} of Assumption~\ref{ass:star} is satisfied.
	\end{lemma}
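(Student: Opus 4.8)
The plan is to reduce the statement, uniformly over the three inter-birth time families, to the corresponding estimate for exponentially distributed inter-birth times, which is exactly Equation~\eqref{eq:star-explosive-rif} and hence already established (via Equation~\eqref{eq:splitbound}) in the proofs of Lemmas~\ref{lemma:checklaplace} and~\ref{lemma:checklaplaceapp}. Write $X_w(j)=Y_j/f(j-1,w)$, where $(Y_j)_{j\in\N}$ are i.i.d.\ copies of a fixed non-negative random variable $Y$ with no atom at $0$ and $\E{Y}=1$ (namely $Y\sim\mathrm{Gamma}(k,k)$, $Y=\tfrac{\alpha+\beta}{\alpha}B$ with $B\sim\mathrm{Beta}(\alpha,\beta)$, or $Y\sim\mathrm{Rayleigh}(\sqrt{2/\pi})$, respectively). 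Set $\phi(s):=\E{\e^{-sY}}$, so that, conditionally on $W$,
\[
\mathcal{L}_\lambda(\mathcal{P}_n(\varnothing);W)=\prod_{j=0}^{n-1}\phi\!\left(\frac{\lambda}{f(j,W)}\right).
\]
Possibly shrinking $c$ we may assume $c<1$ (this only makes Equation~\eqref{eq:limsup-mgf} easier). As in the proofs of Lemmas~\ref{lemma:checklaplace} and~\ref{lemma:checklaplaceapp} we may assume $f$ is non-decreasing in its second argument, and we reuse, for each fitness-type/degree/weight combination, the sequence $(k_n)_{n\in\N}$ chosen there; recall that then $\sum_n\Prob{W\geq k_n}<\infty$ and, moreover, the exponent $E_n:=c\mu_n^{-1}\sum_{j=0}^{\infty}(f(j,k_n)+c\mu_n^{-1})^{-1}$ satisfies $E_n/\log n\to\infty$ in every case treated there (in each proof $E_n$ is bounded below by the quantity whose negative exponential is shown to be summable, and that quantity grows faster than $\log n$).

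The key analytic input is that there is a constant $c_0=c_0(Y)>0$ with $1-\phi(s)\geq c_0\,\tfrac{s}{1+s}$ for all $s\geq0$: indeed $s\mapsto(1+s)\big(1-\phi(s)\big)/s$ is continuous and strictly positive on $(0,\infty)$ and, using $\phi'(0)=-1$ and $\phi(s)\to0$, tends to $1$ both as $s\to0^+$ and as $s\to\infty$, so it attains a positive minimum. Consequently $\log\phi(s)\leq\phi(s)-1\leq-c_0\tfrac{s}{1+s}$, and also $\phi(s)\geq1-s$ for $s<1$. One should check these (routine) facts about $\phi$ in each of the three cases.

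Now split $\E{\mathcal{L}_{c\mu_n^{-1}}(\mathcal{P}_n(\varnothing);W)}\leq\E{\mathcal{L}_{c\mu_n^{-1}}(\mathcal{P}_n(\varnothing);W)\mathbf 1_{\{W<k_n\}}}+\Prob{W\geq k_n}$. On $\{W<k_n\}$, monotonicity of $f$ and of $\phi$ give $\mathcal{L}_{c\mu_n^{-1}}(\mathcal{P}_n(\varnothing);W)\leq\prod_{j=0}^{n-1}\phi\big(c\mu_n^{-1}/f(j,k_n)\big)$. For $j\geq n$ one has $c\mu_n^{-1}/f(j,k_n)\leq c\mu_n^{-1}/f(j,w^*)\leq c<1$, since $1/f(j,w^*)$ is one of the summands of $\mu_n=\sum_{\ell\geq n}1/f(\ell,w^*)$; hence, using $\phi(s)\geq1-s$, $\log(1-s)\geq-s/(1-c)$ for $s\leq c$, and $\sum_{j\geq n}1/f(j,w^*)=\mu_n$, the tail product obeys $\prod_{j\geq n}\phi\big(c\mu_n^{-1}/f(j,k_n)\big)\geq\e^{-c/(1-c)}$. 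Passing to the full product and applying $\log\phi(s)\leq-c_0 s/(1+s)$ yields
\[
\mathcal{L}_{c\mu_n^{-1}}(\mathcal{P}_n(\varnothing);W)\mathbf 1_{\{W<k_n\}}\leq\e^{c/(1-c)}\prod_{j=0}^{\infty}\phi\!\left(\frac{c\mu_n^{-1}}{f(j,k_n)}\right)\leq\e^{c/(1-c)}\exp\!\big(-c_0 E_n\big).
\]
Since $E_n/\log n\to\infty$ we get $\sum_n\exp(-c_0E_n)<\infty$, and combined with $\sum_n\Prob{W\geq k_n}<\infty$ this gives $\sum_n\E{\mathcal{L}_{c\mu_n^{-1}}(\mathcal{P}_n(\varnothing);W)}<\infty$, i.e.\ Equation~\eqref{eq:laplacesum}.

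The step I expect to be the main obstacle is precisely the passage from the explicit, nicely factorising exponential Laplace transform to these general ones without picking up a multiplicative constant per factor: a naive bound of the form $\phi(s)\leq C\,(1+c's)^{-1}$ would produce a factor $C^n$ in $\mathcal{L}_{c\mu_n^{-1}}(\mathcal{P}_n(\varnothing);W)$, which is fatal because the summable competitor $\Prob{W\geq k_n}$ can decay only polynomially. The estimate $\log\phi(s)\leq-c_0 s/(1+s)$ avoids this, with $c_0$ surfacing only inside the exponent $E_n$ (harmless, as $E_n/\log n\to\infty$), and it is uniform enough to cover $\mathrm{Gamma}(k,k)$ for every shape $k>0$ — including $k<1$, where $\mathrm{Gamma}(k,k)$ has strictly more mass near $0$ than $\Exp{1}$ and no scaled-exponential stochastic comparison is available, so a cruder reduction would fail.
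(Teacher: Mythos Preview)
Your argument is correct and takes a genuinely different, more unified route than the paper. The paper treats the three distributions separately: for the Gamma case it observes that the Laplace transform is the $k$th power of an exponential one and reduces directly; for the Beta case it invokes the Kummer transform of the confluent hypergeometric function to obtain a bound of the form $\phi(s)\leq\exp(-Cs+C's^2)$, and then introduces a cutoff index $J_n$ (depending on the fitness/degree case) below which terms are discarded so that the quadratic term $C's^2$ is dominated; the Rayleigh case proceeds analogously via an error-function inequality and the same $J_n$-device. Your approach instead exploits only the scale structure $X_w(j)=Y_j/f(j-1,w)$ together with two elementary facts about $\phi(s)=\E{\e^{-sY}}$: the Jensen bound $\phi(s)\geq 1-s$, and the observation that $(1+s)(1-\phi(s))/s$ is continuous, positive on $(0,\infty)$, and tends to $1$ at both ends, hence has a positive infimum $c_0$. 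This yields $\log\phi(s)\leq -c_0\,s/(1+s)$, which slots into the product exactly like the exponential Laplace factor $f/(f+\lambda)$, with $c_0$ surfacing only as a multiplier of the exponent $E_n$. Your closing remark identifies precisely why this is the right estimate: any bound of the shape $\phi(s)\leq C(1+c's)^{-1}$ with $C>1$ would produce an uncontrollable $C^n$.

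What your approach buys is generality and brevity: it covers any scale family with $Y>0$ a.s.\ and $\E{Y}=1$, without special-function identities or the auxiliary $J_n$, and it makes transparent that the only model-dependent input is the growth rate of $E_n$. What the paper's approach buys is that it never needs the explicit observation $E_n/\log n\to\infty$; it simply re-runs the exponential-case estimates with modified constants. Your claim that $E_n/\log n\to\infty$ in every case is correct (in the super-linear cases $E_n$ grows like a positive power of $n$, in the poly-log cases like $(\log n)^{1+\delta}$, and in the log-stretched cases like $\exp(C'(\log n)^\delta)$), but since your whole argument hinges on this, it would be worth stating these growth rates explicitly rather than appealing to the phrase ``shown to be summable''.
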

	
	\noindent We split the proof of Lemma~\ref{lemma:checklaplacegen} into the different choices for the inter-birth distributions. We observe that, irrespective of the inter-birth distribution, we assume the mean of $X_W(j)$, conditionally on $W$, is always $1/f(j-1,W)$ for any $j\in\N$, so that the definition (and asymptotic behaviour) of $\mu_n$ remains unchanged and only depends on the fitness function $f$.
	
	\begin{proof}[Proof of Lemma~\ref{lemma:checklaplacegen}, Gamma case]
		For some sequence $(\ell_n)_{n\in\N}$, we have 
		\be \ba
		\E{ \cL_{c\mu_n^{-1}}(\cP_n;W)}&=\E{\bigg(\prod_{j=0}^{n-1} \frac{kf(j,W)}{kf(j,W)+c\mu_n^{-1}}\bigg)^k}\leq  \exp\bigg(-c\mu_n^{-1}\sum_{j=0}^{n-1} \frac{1}{kf(j,\ell_n)+c\mu_n^{-1}}\bigg)+\P{W\geq \ell_n}. 
		\ea \ee 
		This expression is, up to a constant in the exponential term, equivalent to the upper bound in~\eqref{eq:splitbound}. We can thus follow the proof of Lemma~\ref{lemma:checklaplace} to obtain the desired result for the gamma case in general.
	\end{proof}

	\begin{proof}[Proof of Lemma~\ref{lemma:checklaplacegen}, Beta case]
		Recall $X_w(j)$ from~\eqref{eq:Xbeta}, with $\alpha\geq 1, \beta\in(0,1]$. We have that 
		\be 
		\E{\cL_\lambda (X_w(j))}={}_1F_1\Big(\alpha;\alpha+\beta; -\frac{\alpha+\beta}{\alpha}\frac{\lambda}{f(j-1,w)}\Big),
		\ee 
		where ${}_1F_1(a;b;z)$  denotes the confluent hypergeometric function, defined as 
		\be 
		{}_1F_1(a;b;z):=\sum_{k=0}^\infty \frac{a^{(k)}z^k}{b^{(k)}k!},\qquad a,b>0, z\in\R,
		\ee 
		where $a^{(k)}:=\Gamma(a+k)/\Gamma(a)$, with $\Gamma$ the gamma function. We now use the Kummer transform ${}_1F_1(a;b;z)=\e^z {}_1F_1(b-a;b;-z)$, to obtain 
		\be \ba 
		\E{\cL_\lambda (X_w(j))}&=\exp\Big(-\frac{\alpha+\beta}{\alpha}\frac{\lambda}{f(j-1,w)}\Big){}_1F_1\Big(\beta; \alpha+\beta; \frac{\alpha+\beta}{\alpha}\frac{\lambda}{f(j-1,w)}\Big)\\ 
		&=\exp\Big(-\frac{\alpha+\beta}{\alpha}\frac{\lambda}{f(j-1,w)}\Big)\sum_{k=0}^\infty \frac{\Gamma(\alpha+\beta)\Gamma(\beta+k)}{\Gamma(\beta)\Gamma(\alpha+\beta+k)}\frac{1}{k!}\Big(\frac{\alpha+\beta}{\alpha}\frac{\lambda}{f(j-1,w)}\Big)^k.
		\ea\ee 
		We then use that, since $\beta\leq 1$ and thus $\lceil \beta\rceil =1$, 
		\be 
		\frac{\Gamma(\alpha+\beta)\Gamma(\beta+k)}{\Gamma(\beta)\Gamma(\alpha+\beta+k)}=\prod_{\ell=0}^{k-1}\frac{\beta+\ell}{\alpha+\beta+\ell}\leq \prod_{\ell=0}^{k-1}\frac{1 +\ell}{\lfloor \alpha\rfloor+1 +\ell}=\prod_{\ell=1}^{\lfloor \alpha \rfloor} \frac{\ell}{\ell+k}= \frac{k!(\lfloor \alpha\rfloor)!}{(k+\lfloor \alpha\rfloor)!}.
		\ee 
		This yields the upper bound 
		\be \label{eq:app1}
		\E{\cL_\lambda (X_w(j))}\leq \exp\Big(-\frac{\alpha+\beta}{\alpha}\frac{\lambda}{f(j-1,w)}\Big)\sum_{k=0}^\infty \frac{(\lfloor \alpha\rfloor)!}{(k+\lfloor\alpha\rfloor)!}\Big(\frac{\alpha+\beta}{\alpha}\frac{\lambda}{f(j-1,w)}\Big)^k.
		\ee 
		We then observe that, for any $z>0$ we have 
		\[
		\frac1z (\e^z-1)\leq \e^{z/2+z^2}. 
		\]
		Indeed, this is easy to check for $z \geq 1$, whereas for $z < 1$ we use the inequalities $\log{x} \leq x-1$ and $\e^{z} - z- 1 \leq \frac{z^2}{2} + z^{3}$ (the latter using $\sum_{j=3}^{\infty} \frac{1}{j!} \leq 1$). Thus, for $z > 0$ and $\alpha\geq 1$, 
		\be \label{eq:app2}
		\sum_{k=0}^\infty \frac{1}{(k+\lfloor \alpha\rfloor)!}z^k \leq \frac{1}{(\lfloor \alpha\rfloor)!}\sum_{k=0}^\infty \frac{1}{(k+1)!}z^k=\frac{1}{(\lfloor \alpha\rfloor)!}\frac1z (\e^z-1)\leq \frac{1}{(\lfloor \alpha\rfloor)!}\e^{z/2+z^2}.
		\ee 
		Combining~\eqref{eq:app1} and~\eqref{eq:app2}, we thus arrive at 
		\be \label{eq:laplacebeta}
		\E{\cL_\lambda (X_w(j))}\leq \exp\Big(-\frac12\frac{\alpha+\beta}{\alpha}\frac{\lambda}{f(j-1,w)}+\Big(\frac{\alpha+\beta}{\alpha}\frac{\lambda}{f(j-1,w)}\Big)^2\Big).
		\ee 
		We then take some sequence $(J_n)_{n\in\N}$ with $J_n\leq n$ to obtain 
		\be \ba \label{eq:laplaceunif}
		\mathbb E[\cL_{c\mu_n^{-1}}(\cP_n;W)]&\leq \E{\cL_{c\mu_n^{-1}}(\cP_n-\cP_{J_n}; W)}\\
		&\leq\exp\bigg(-c\mu_n^{-1}\frac{\alpha+\beta}{2\alpha}\sum_{j=J_n}^{n-1} \frac{1}{f(j,k_n)}+\Big(\frac{\alpha+\beta}{\alpha}\Big)^2c^2\mu_n^{-2}\sum_{j=J_n}^{n-1} \frac{1}{f(j,k_n)^2}\bigg)+\P{W\geq k_n}\\
		&=\exp\bigg(- c\mu_n^{-1}\frac{\alpha+\beta}{2\alpha}\sum_{j=J_n}^{n-1} \frac{1}{f(j,k_n)}\Big(1-\frac{2(\alpha+\beta)}{\alpha}\frac{c}{\mu_nf(j,k_n)}\Big)\bigg)+\P{W\geq k_n}.
		\ea\ee 
		We now choose $J_n$ for the different cases of the degree function $s$ (according to Assumption~\ref{ass:deg}) such that for all $j\geq J_n$, we have $\mu_nf(j,k_n)\geq 4c(\alpha+\beta)/\alpha$. This yields the upper bound
		\be 
		\exp\bigg(- c\frac{\alpha+\beta}{4\alpha}\mu_n^{-1}\sum_{j=J_n}^{n-1} \frac{1}{f(j,k_n)}\bigg)+\P{W\geq k_n}\leq \exp\bigg(- c\frac{\alpha+\beta}{4\alpha} \mu_n^{-1}\sum_{j=J_n}^{n-1} \frac{1}{f(j,k_n)+\mu_n^{-1}}\bigg)+\P{W\geq k_n}.
		\ee 
		With this upper bound at hand, we also claim that we can use the proof of Lemma~\ref{lemma:checklaplace} to obtain the desired upper bounds, despite the fact that the sum in the exponential term starts from $J_n$ rather than from $0$ (which is the case in Lemma~\ref{lemma:checklaplace}).
		
		For all additive cases we have $J_n=0$, so that we can directly use the proof Lemma~\ref{lemma:checklaplace}.  This is due to the fact that $\mu_nf(j,k_n)\geq \mu_n f(0,k_n)$ diverges with $n$, so that it is at least $4(\alpha+\beta)/\alpha$ for all $n$ sufficiently large. Indeed, for all additive weights types, for some small $\eps>0$ and some constant $C>0$,
		
		\begin{table}[H]
			\centering
			\small
			\begin{tabular}{lll}
				\textbf{\hyperlink{superlinear}{Super-linear}}:  &  $\mu_n=(C+o(1))ns(n)^{-1}$,  & $k_n:=n^{(1-\eps)p}$.\\
				\textbf{\hyperlink{log-stretched}{Log-stretched}}:  &$\mu_n=(C+o(1))(\log n)^{1-\beta}\exp(-(\log n)^\beta)$, & $k_n:=n\exp(-(1-\eps)\overline c(\log n)^\tau)$.\\
				\textbf{\hyperlink{poly-log}{Poly-log}}:  & $\mu_n=(C+o(1))(\log n)^{-(\beta-1)}$, & $k_n:=\exp((\log n)^{(1+\eps)/\tau})$. 
			\end{tabular}
		\end{table}
		
		\noindent One can directly verify that $\mu_nf(0,k_n)=\Theta(\mu_nh(k_n))$ diverges with $n$, where $h$ is some regularly-varying function with exponent $1$.
		
		For the three mixed weights cases (depending on the degree function $s$), we can set $J_n$ as follows, where $K$ is a sufficiently large constant:
		\begin{table}[H]
			\centering
			\small
			\begin{tabular}{ll}
				\textbf{\hyperlink{superlinear}{Super-linear}}:  &  $J_n:=Kn^{\eps(p-1)/p}$. \\
				\textbf{\hyperlink{log-stretched}{Log-stretched}}:  & $J_n:=KI_n$, with $I_n$ as in~\eqref{eq:inf1mix}.\\
				\textbf{\hyperlink{poly-log}{Poly-log}}:  & $J_n:=KI_n$, with $I_n$ as in~\eqref{eq:inf2mix}.
			\end{tabular}
		\end{table}
		
		\noindent Using that for all $j\geq J_n$, 
		\be 
		\mu_n f(j,k_n)\geq \mu_n f(J_n,k_n)\geq \mu_n g(k_n) s(J_n), 
		\ee  
		it is readily verified in all three cases that $\mu_n f(j,k_n)\geq 4(\alpha+\beta)/\alpha$ for all $j\geq J_n$ and all sufficiently large $n$. It also holds that the bounds used in the proof of Lemma~\ref{lemma:checklaplace} still hold when using this choice of $J_n$ in the mixed weights cases, so that the conclusions from Lemma~\ref{lemma:checklaplace} are valid here, too.
	\end{proof}

	\begin{proof}[Proof of Lemma~\ref{lemma:checklaplacegen}, Rayleigh case]
		We have 
		\be\ba\label{eq:rayleighbound}
		\E{\cL_{c\mu_n^{-1}}(\cP_n;W)}\leq \prod_{j=0}^{n-1} \bigg(1-\frac{c\mu_n^{-1}}{f(j,k_n)}\e^{\mu_n^{-2}/(\pi f(j,k_n)^2)}\Big(1-\mathrm{erf}\Big(\frac{c\mu_n^{-1}}{\sqrt\pi f(j,k_n)}\Big)\Big)\bigg)+\P{W\geq k_n}.
		\ea\ee 
		We now use that, for any $x>0$, 
		\be \label{eq:erfineq}
		\e^{x^2}(1-\mathrm{erf}(x))=\frac{2}{\sqrt \pi}\e^{x^2}\int_x^\infty \e^{-t^2}\, \dd t=\frac{2}{\sqrt\pi}\int_0^\infty \e^{-s(s+2x)}\,\dd s\geq \frac{2}{\sqrt\pi}\int_0^\infty \e^{-(s+2x)^2}\,\dd s=1-\mathrm{erf}(2x), 
		\ee 
		where we use a variable substitution $s=t-x$. Also using that $1-x\leq \e^{-x}$ for all $x\in \R$, we can bound the first term on the right-hand side of~\eqref{eq:rayleighbound} from above from 
		\be 
		\exp\bigg(-c\mu_n^{-1}\sum_{j=0}^{n-1} \frac{1}{f(j,k_n)}\Big(1-\mathrm{erf}\Big(\frac{2c\mu_n^{-1}}{\sqrt \pi f(j,k_n)}\Big)\Big)\bigg).
		\ee 
		Since the error function is increases and $\lim_{x\to\infty}\mathrm{erf}(x)=1$, we can now repeat the same argument as in the \textbf{Beta} case: we create an upper bound by starting the sum from some index $J_n$, such that we can bound 
		\be 
		1-\mathrm{erf}\Big(\frac{2c\mu_n^{-1}}{\sqrt \pi f(j,k_n)}\Big)\geq \delta,\qquad \text{for all }j\geq J_n,
		\ee 
		for some small constant $\delta>0$. The choice of $J_n$ can be the same as in the \textbf{Beta} case, and the result thus follows through here, as well.        
	\end{proof}

	\subsection{Conditions for an infinite path, Assumption~\ref{ass:path}.} We need to verify Conditions~\eqref{eq:div-condition} and~\eqref{eq:smallest-expl-prob}. Here, we choose $\nu_n^w=d\mu_n^w$ for some arbitrary constant $d\in(0,1)$, as in the proof of Theorem~\ref{thm:star-path-rif}. We recall from the proof of Lemma~\ref{lemma:pathcond} that the lower bound in~\eqref{eq:tildepnlb} uses Markov's inequality only. Hence, since we assume that the mean of the inter-birth time $X_w(j)$ equals $1/f(j-1,w)$, irrespective of the its distribution, it follows that the proofs of Lemmas~\ref{lemma:pathcond} and~\ref{lemma:pathcondapp} immediately follow for the other choices of inter-birth distributions in Remark~\ref{rem:otherdistr}. 
	
	It thus remains to verify~\eqref{eq:smallest-expl-prob}. We observe that the proof of~\eqref{eq:smallest-expl-prob} for exponentially distributed inter-birth times, as in the proof of Theorem~\ref{thm:star-path-rif}, holds more generally, as long as for any $i\in\N$ and $w\geq 0$, and for some $K>0$,
	\be 
	\Var(X_w(i))\leq K\E{X_w(i)}^2=\frac{K}{f(i-1,w)^2}.
	\ee 
	This is readily checked for all the cases in Remark~\ref{rem:otherdistr}, and is related to Remark~\ref{rem:more-general}.

	\section{Proof of Lemma~\ref{lemma:regexp}}\label{sec:regapp}

	\begin{proof}[Proof of Lemma~\ref{lemma:regexp}] 
		The aim of the proof is to provide an upper and lower bound for  $\P{r(W)\geq x}$ that are asymptotically equivalent (i.e.\ the same up to a $(1+o(1))$ multiplicative factor). First, we define, for a function $r:[0,\infty)\to (0,\infty)$, the \emph{generalised inverse} as
		\be\label{eq:geninv}
		r^{\sss\leftarrow}(x):=\inf\{y\geq 0: r(y)\geq x\}, \qquad x\geq 0.
		\ee   
		We start by  proving an upper bound. Suppose $r(W)\geq x$. Then $W\in \{y\geq 0: r(y)\geq x\}$, so that by the definition of the generalised inverse, it follows that $W\geq r^{\sss\leftarrow}(x)$. As a result
		\be \label{eq:rwub}
		\P{r(W)\geq x}\leq \P{W\geq r^{\sss\leftarrow}(x)},
		\ee
		which concludes the upper bound. For a lower bound, suppose $r(W)<x$. Then, $\{y\geq 0: r(y)\geq x\}\subseteq \{y\geq 0: r(y)\geq r(W)\}$. Hence, the infimum of the left-hand side is larger than the infimum of the right-hand side, so that 
		$r^{\sss\leftarrow}(x)\geq r^{\sss\leftarrow}(r(W))$. As a result, 
		\be 
		\P{r(W)\geq x}\geq \P{r^{\sss\leftarrow}(r(W))> r^{\sss\leftarrow}(x)}
		\ee

		We use~\cite[Proposition $1.5.12$]{BinGolTeu89} to obtain that $r^{\sss\leftarrow}(r(x))=(1+o(1))x$ as $x\to\infty$. Hence, since $r^{\sss\leftarrow}(r(W))>r^{\sss\leftarrow}(x)$ implies $W\geq r^{\sss\leftarrow}(x)$, we can write $r^{\sss\leftarrow}(r(W))=(1+t(W))W$, for some function $t$ such that $t(x)\to 0$ as $x\to \infty$. We thus obtain 
		\be
		\P{r^{\sss\leftarrow}(r(W))>r^{\sss\leftarrow}(x)}=\P{(1+t(W))W>r^{\sss\leftarrow}(x)}.
		\ee 
		Now, for any $\eps>0$ we can take $x$ sufficiently large so that, since $W\geq r^{\sss\leftarrow}(x)$, $|t(W)|<\eps$. We thus obtain the lower bound, 
		\be 
		\P{W>r^{\sss\leftarrow}(x)/(1-\eps)}.
		\ee 
		As the tail distribution of $W$ is regularly varying with exponent $-(\zeta-1)$, we obtain 
		\be 
		\P{W>r^{\sss\leftarrow}(x)/(1-\eps)}=((1-\eps)^{\zeta-1}+o(1))\P{W>r^{\sss\leftarrow}(x)}.
		\ee 
		As $\eps$ is arbitrary, combining this with the upper bound in~\eqref{eq:rwub}, we obtain 
		\be 
		\P{r(W)\geq x}=(1+o(1))\P{W\geq r^{\sss\leftarrow}(x)}.
		\ee 
		We now use that, by~\cite[Theorem $1.5.12$]{BinGolTeu89}, the function $r^{\sss\leftarrow}$ is regularly varying with exponent $1/\rho$. This implies, by Lemma~\ref{lemma:regvar} and the assumption on the tail distribution of $W$, that the right-hand side is regularly varying with exponent $-(\zeta-1)/\rho$ as desired.
	\end{proof}

\end{document}